
\documentclass[11pt,letterpaper]{amsart}
\usepackage{amssymb}
\usepackage{mathrsfs, tikz}
\usepackage[all,cmtip]{xy}
\usepackage{pstricks}

\usepackage{ulem}
\usepackage{comment}

\usepackage{graphpap,color}

\definecolor{cof}{RGB}{219,144,71}
\definecolor{pur}{RGB}{186,146,162}
\definecolor{greeo}{RGB}{91,173,69}
\definecolor{greet}{RGB}{52,111,72}

\setlength{\oddsidemargin}{0.5cm}
\setlength{\evensidemargin}{0.5cm}
\setlength{\textwidth}{15cm}

\setlength{\textwidth}{15.5cm}

\setlength{\topmargin}{-0.2cm}
\setlength{\textheight}{21.5cm}
\setlength{\parskip}{.5mm}

\makeatletter
\newcommand*\bigcdot{\mathpalette\bigcdot@{.7}}
\newcommand*\bigcdot@[2]{\mathbin{\vcenter{\hbox{\scalebox{#2}{$\m@th#1\bullet$}}}}}
\makeatother

\def\bcd{{\bigcdot}}

\numberwithin{equation}{section}

\newtheorem{thm}{Theorem}[section]
\newtheorem{thm-defn}[thm]{Theorem-Definition}
\newtheorem{defn}[thm]{Definition}
\newtheorem{prop}[thm]{Proposition}
\newtheorem{prop-defn}[thm]{Proposition-Definition}

\newtheorem{conj}[thm]{Conjecture}

\newtheorem{lemma}[thm]{Lemma}
\newtheorem{cor}[thm]{Corollary}

\newtheorem{question}[thm]{Question}
\newtheorem{rem}[thm]{Remark}
\newtheorem{exam}[thm]{Example}

\newcommand{\Bl}{{\rm Bl}}

\newcommand{\id}{{\rm id}}
\newcommand{\Id}{{\rm Id}}
\newcommand{\Gr}{{\rm Gr}}

\newcommand{\CD}{\xymatrix@R=1pc@C=1pc}
\newcommand{\CDR}{\xymatrix@R=1pc}
\newcommand{\CDC}{\xymatrix@C=1pc}

\def\cB{{\mathcal B}}

\def\cL{{\mathcal L}}

\def\cP{{\mathcal P}}

\def\cG{{\mathcal G}}

\def\sF{{\mathscr F}}

\def\sV{\mathscr{V}}

\def\frb{{\fr\fb}}


\def\fB{\mathfrak{B}}

\def\fG{\mathfrak{G}}

\def\fL{\mathfrak{L}}

\def\2M{M}

\def\tn{\mathfrak{N}}
\def\fP{\mathfrak{P}}

\def\fT{\mathfrak{T}}
\def\fV{\mathfrak{V}}

\def\fb{\mathfrak{b}}

\def\fy{\mathfrak{y}}

\def\ff{\mathfrak{f}}

\def\tn{{\tilde{\bf n}}}

\def\fq{\mathfrak{q}}

\def\fr{\mathfrak{r}}
\def\fs{\mathfrak{s}}

\def\MM{{\mathbb M}}
\def\NN{{\mathbb N}}
\def\RR{{\mathbb R}}
\def\PP{{\mathbb P}}

\def\GG{{\mathbb G}}
\def\GGm{{{\mathbb G}_{\rm m}}}

\def\ZZ{{\mathbb Z}}
\def\FF{{\mathbb F}}
\def\HH{{\mathbb H}}

\def\TT{{\mathbb T}}
\def\TTb{{{\mathbb T}_{\bigcdot}}}

\def\E-bcd{{E_\bcd}}

\def\LL{{\mathbb L}}
\def\UU{{\mathbb U}}

\def\II{{\mathbb I}}
\def\JJ{{\mathbb J}}
\def\JJsrt{{\JJ^{(2),{\rm srt}}_\bcd}}

\def\AA{{\mathbb A}}

\def\whX{\widehat{X}}
\def\whGr{\widehat{\Gr}}

\def\p{{\rm p}}
\def\sgn{{\rm sgn}}

\def\wde{{\wedge^d E}}
\def\wdep{{(\wde)}}

\def\wdeb{{\wedge^d E_\bcd}}
\def\wdebp{{(\wdeb)}}

\def\hil{{\rm hil}}

\def\Bl{{\rm Bl}}

\def\lra{\longrightarrow}

\def\kk{{\bf k}}
\def\ba{{\bf a}}
\def\bb{{\bf b}}
\def\bc{{\bf c}}
\def\be{{\bf e}}

\def\bp{{\bf p}}

\def\bt{{\bf t}}

\def\bv{{\bf v}}
\def\bm{{\bf m}}
\def\bn{{\bf n}}

\def\bx{{\bf x}}
\def\by{{\bf y}}
\def\bz{{\bf z}}

\def\ua{{\underbar {\it a}}}
\def\ub{{\underbar {\it b}}}

\def\ud{{\underbar {\rm d}}}

\def\ui{{\underbar {\it i}}}

\def\uh{{\underbar {\it h}}}
\def\uk{{\underbar {\it k}}}

\def\um{{\underbar {{\mu}}}}
\def\um{{\underbar {\it m}}}
\def\uu{{\underbar {\it u}}}
\def\uv{{\underbar {\it v}}}
\def\uw{{\underbar {\it w}}}

\def\uwo{{\underbar {\it w}_{\rm o}}}
\def\uwe{{\underbar {\it w}_{\rm e}}}

\def\uwo{{\underbar {\it w}_{\rm o}}}
\def\uwe{{\underbar {\it w}_{\rm e}}}
\def\buw{{\underbar {\bf w}}}
\def\uwoe{{(\uw_{\rm o}, \uw_{\rm e})}}

\def\bko{{ {\bf k}_{\rm o}}}
\def\bke{{ {\bf k}_{\rm e}}}
\def\bkoe{{(\bko, \bke)}}

\def\pl{{\hbox{Pl\"ucker}}}
\def\vrpl{{\hbox{$\vr$-Pl\"ucker}}}

\def\hpl{{\hbox{$\vp\vr$-Pl\"ucker}}}
 \def\2{{\rm I\!I}}

\def\-{{\setminus}}

\def\vp{{\varpi}}
\def\vr{{\varrho}}

\def\sign{{\rm sgn}}
\def\rk{{\rm rank}}

\def\part{{\rm part}}

\def\sort{{\rm sort}}

\def\ze{\zeta}

\def\la{\lambda}

\def\La{\Lambda}

\def\Ladn{{\Lambda}_{d,[n]}}

\def\vi{\varphi}
\def\mh{{\rm mh}}

\def\Ladnsort{{\Lambda}^{\rm sort}_{d,[n]}}

\def\rladnsort{{\mathring\Lambda}^{\rm sort}_{d,[n]}}

\def\sgn{{\rm sgn}}
\def\vs{{\varsigma}}

\def\di{\diamond}

\def\whwp{{\widehat\wp}}

\def\wtbuw{{\widetilde\buw}}
\def\supp{{\rm supp}}

\def\barf{{\bar f}}

\def\gr{{\rm gr}}
\def\pgr{{\rm pgr}}

\def\PPMdn{{\PP(M_{dn})}}

\def\GL{{\rm GL}}
\def\PGL{{\rm PGL}}

\def\rS{{\rm S}}

\def\Aut{{\rm Aut}}
\def\fj{{\frak j}}
\def\fri{{\frak i}}

\makeatletter
\def\@tocline#1#2#3#4#5#6#7{\relax
  \ifnum #1>\c@tocdepth 
  \else
    \par \addpenalty\@secpenalty\addvspace{#2}%
    \begingroup \hyphenpenalty\@M
    \@ifempty{#4}{%
      \@tempdima\csname r@tocindent\number#1\endcsname\relax
    }{%
      \@tempdima#4\relax
    }%
    \parindent\z@ \leftskip#3\relax \advance\leftskip\@tempdima\relax
    \rightskip\@pnumwidth plus4em \parfillskip-\@pnumwidth
    #5\leavevmode\hskip-\@tempdima
      \ifcase #1
       \or\or \hskip 1em \or \hskip 2em \else \hskip 3em \fi%
      #6\nobreak\relax
    \hfill\hbox to\@pnumwidth{\@tocpagenum{#7}}\par
    \nobreak
    \endgroup
  \fi}
\makeatother

\begin{document}

\title[$\fG$-Quotients and Equations]{
$\fG$-Quotients  of  Grassmannians and Equations}

\author{Yi Hu}


\maketitle

\begin{abstract} 
Laurent Lafforgue's presentation of a Grassmannian $\Gr^{d, E}$ naturally
comes equipped with the induced action of a subtorus $\TTb$ of  $\PGL(E)$.
By investigating the defining  ideals of  $\TTb$-orbit closures  through general points of $\Gr^{d,E}$
 and studying their degenerations,
we obtain a morphsim $\fq: \FF^{d, E_\bcd} \to \HH^{d, E_{\bigcdot}}$
such that $\HH^{d, E_\bcd}$, termed the $\fG$-quotient of $\Gr^{d,E}$ by $\TTb$, is  birational to $[\Gr^{d, E}/\TTb]$, and $\fq$, 
termed  $\fG$-family of $\Gr^{d,E}$ by $\TTb$, is a family of
general $\TTb$-orbit closures  and  their degenerations. 
We  obtain a series of new results on $\HH^{d, E_{\bigcdot}}$ and
$\FF^{d, E_\bcd}$.  The main results  include:

\noindent
$\di$ The $\fG$-quotient $\HH^{d, E_\bcd}$ 
by construction is embedded in the product 
$$\prod_{\ba \in \JJ_\bcd} \PP_\ba \times \prod_{\buw \in  \JJsrt} \PP_\buw$$
of projective spaces $\PP_\ba$ and
$\PP_\buw$ with explicit  finited sets $\JJ_\bcd$ and $\JJsrt$.

\noindent
$\di$ The  $\fG$-family $\FF^{d, E_\bcd}$ by construction is 
 embedded in the product 
 $$\PP(\wedge^d E) \times  \prod_{\ba \in \JJ_\bcd} \PP_\ba 
\times \prod_{\buw \in  \JJsrt} \PP_\buw$$
 and its main component $\FF^{d, E_\bcd}_*$
is isomorphic to the blowup of
 $\Gr^{d,E}$ along an explicitly described ideal $J^{d,E_\bcd}$

\noindent
$\di$   Over  characteristic zero, 
the  defining equations of $\FF^{d, E}_*$ 
by the maximal torus
 as a closed subscheme of $\PP(\wedge^d E) 
\times \prod_{\buw \in  \JJsrt} \PP_\buw$
  consist of explicit $\wp$-binomials
and linearized $\pl$ relations together with 
neatly structured $\frb$-binomials and $\hpl$ relations. 

\noindent
$\di$  Over characteristic zero, 
the defining equations of the $\fG$-quotient $\HH^{d, E}$
of $\Gr^{d,E}$ by the maximal torus
as a closed subscheme of $\prod_{\buw \in  \JJsrt} \PP_\buw$
consist of explicitly characterized $\vr$-binomials, linearized $\pl$ relations,
 and $\vrpl$ relations.

All the relations aforementioned are 
in remarkably neat and well-structured forms.
 
   
 
   \end{abstract}

\tableofcontents

\section{Introduction}

In inertial thinking, one needs  arbitrary relations when  treating arbitrary singularities.
Our point  instead is that we can commence with 
a set of considerably simpler  standardized equations for singularities. 
To furnish such standard equations for arbitrary singularities, Mn\"ev's universality offers a platform to begin with.
By the Gelfand-MacPherson correspondence,  Lafforgue's version of Mn\"ev's universality [Theorem I.14, \cite{La03}] (cf. \cite{LV12})
 asserts that the quotient  by the maximal torus  $\GG_m^{n-1}$ of 
a thin Schubert cell of 
the Grassmannian $\Gr^{3,n}$ with varying $n$  can possess singularity of any given type defined over $\ZZ$. 

In this paper, we construct a morphism between projective varieties
 $$\fq: \FF^{3,n} \lra \HH^{3,n}$$
such that $\HH^{3,n}$, termed $\fG$-quotient of $\Gr^{3,n}$, is a birational model of the quotient 
$[\Gr^{3,n}/\GG_m^{n-1}]$  
 and $\fq$ is a family of  
general $\GG_m^{n-1}$-orbit closures in $\Gr^{3,n}$ and their degenerations.
The scheme $\FF^{3,n}$, termed $\fG$-family of $\Gr^{3,n}$, by construction, is  embedded
in a product of projective spaces  coming equipped with
explicit defining multi-homogeneous equations. 
The embedding is described in Theorem \ref{ZasBlowup-intro}.
The equations for the main component $\FF^{3,n}_*$ of $\FF^{3,n}$
are described in Theorem \ref{eqs-F-intro}.
The relations  in Theorem \ref{eqs-F-intro}  
are all in neat forms. These relations naturally induce explicit defining equations for all boundary strata,
which, according to Mn\"ev's universality, posses arbitrary singularity types. That being said, 
 we hope that these relations will become standard equations for singularities.
Here, the open subset 
$\UU^{3,n}$ of $\Gr^{3,n}$  consisting of points none of
whose $\pl$ coordinates vanish is naturally embedded in  $\FF^{3,n}$ as the open stratum
 and its complement $\FF^{3,n} \- \UU^{3,n}$ can be partitioned into a union of
boundary strata.

The $\fG$-quotient $\HH^{3,n}$, by  construction, is also  embedded
in a product of projective spaces. This is described in Theorem \ref{thm1}. 
From Theorem \ref{eqs-F-intro}, by elimination, we obtain the defining multi-homogeneous equations
for $\HH^{3,n}$ as well.
This is described in Theorem \ref{eqs-H-intro}. 


Our method in this paper works for any Grassmannian. Hence, in the main text, we work 
for all $\Gr(d,n)$.
In addition,  as in \cite{La03}, we also consider the actions on $\Gr^{d,E}$ by
the slightly more general poly-diagonal subtori $\TTb$. Our method
extends to these cases as well.
 Following Lafforgue's presentation, we will use the symbol $\Gr^{d,E}$ 
in place of $\Gr(d,n)$, where $E$ denotes a vector space of dimension $n$.

The idea of this paper extends to more general linear actions,
see Remark \ref{more to come}.

{\it Related works.}
The Chow/Hilbert quotients of Grassmannians by the maximal tori
which are also birational models of $[\Gr(d,n)/\GG_m^{n-1}]$
were introduced by M.  Kapranov  in his influential paper \cite{Kap93}.  Ours are  related.
From the analytic viewpoint, Hilbert quotients
were also studied by A. Bialynicki-Birula and A. Sommese \cite{B-BS}. 
The geometry of these quotients were well studied in   \cite{Alex15} \cite{HKT}  \cite{KT06}  
by Alexeev, Keel-Tevelev and Hacking-Keel-Tevelev.
In [Question 1.12, \cite{KT06}], Keel and Tevelev asked what the log canonical model
$\Gr_\varnothing^{3,n} \subset \overline{X}_{lc}^{3,n}$ 
 is and wondered in 1.19  whether it would give a canonical way of (partially)
resolving a boundary whose strata include all possible singularities.
Certain equations of  the Chow quotient of $\Gr(2,n)$ have been obtained  in \cite{GM10}  \cite{KT09}
 by  Gibney-Maclagan and Keel-Tevelev.
With motivations in Langlands programs,
 Faltings in special cases \cite{Faltings01} and Lafforgue in general \cite{La99, La03}  
also  introduced and studied various 
quotients of Grassmannians by poly-diagonal subtori.


\section{Statements of main results}

For any positive integer $n$, let $[n]=\{1,\cdots, n\}$. Following Lafforgue  \cite{La03}, 
suppose we have a set of vector spaces over a field $\kk$ (or, free modules), 
$E_1, \cdots, E_n$, such that $E_i$  is of dimension $r_i$ (or, of rank $r_i$)
 for some positive integer $r_i$, for all $i \in [n]$.
We let  $$E=E_1 \oplus \ldots \oplus E_n.$$ 
 The Grassmannian $\Gr^{d, E}$, defined by
$$\Gr^{d, E}=\{ \hbox{linear subspaces} \;K \hookrightarrow E \mid \dim K=d \}, $$
is a projective variety defined over $\ZZ$, for  any fixed  integer $d \in [\tn]$
where $\tn=r_1 +\cdots +r_n$. 

The group $ \Aut (E_{\bigcdot})= \Aut(E_1) \times \cdots \times \Aut (E_n)$
naturally acts on $\Gr^{d, E}$, inducing the action of its center $(\GGm)^n$,
where $E_\bcd$  denotes 
the decomposition $E=\oplus_i E_i$. 
The  action of  $(\GGm)^n$ factors to give the action of the quotient subtorus 
$\TTb=(\GGm)^n/\GGm$. 
We call $\TTb$ a poly-diagonal subtorus (of a maximal torus of $\Aut(E)/\GG_m$.

\subsection{The $\fG$-quotient $\HH^{d, E_\bcd}$ by the poly-diagonal torus} 
We let $\UU^{d,E}$ be the open subset of $\Gr^{d,E}$ consisting of points none of whose
$\pl$ coordinates are zero.
The main results of this article 
 begin with the following.

\begin{thm-defn}\label{thm1} 
There exist  two explicit finite sets $\JJ_\bcd$ and $\JJsrt$ 
and  a projective space $\PP_\ba$ for every element $\ba \in \JJ_\bcd$ and
a projective space $\PP_\buw$  for every element $\buw \in \JJsrt$
 such that the geometric quotient $\UU^{d,E}/\TTb$
 is naturally embedded in
$$\prod_{\ba \in \JJ_\bcd} \PP_\ba \times \prod_{\buw \in \JJsrt} \PP_\buw . $$
We let $\HH^{d, E_\bcd}$ be the closure of the image of $\UU^{d,E}/\TTb$ in
$\prod_{\ba \in \JJ_\bcd} \PP_\ba \times \prod_{\buw \in \JJsrt} \PP_\buw $
and call it the \emph{$\fG$-quotient} of $\Gr^{d,E}$ by $\TTb$.
\end{thm-defn}


We now  explain the notation in this theorem.
Consider the canonical decomposition
$$\wde=\bigoplus_{\ui  \in \rS_\bcd} \wedge^{\ui} E_{\bigcdot}, \;\; \hbox{where}$$
 $$\rS_\bcd=\{ \ui =(i_1,\cdots, i_n) \in \NN^n \mid i_1 + \cdots + i_n= d, \;
0\le i_\alpha \le r_\alpha, \alpha \in [n]\}, \;\; \hbox{and}$$
 $$\wedge^{\ui} E_{\bigcdot}=\wedge^{i_1}E_1\otimes \cdots \otimes \wedge^{i_n} E_{i_n}.$$
It gives rise to the  embedding 
\begin{equation}\label{pl-embed}
\Gr^{d, E} \hookrightarrow \PP(\wedge^d E)=\{(\bp_\ui)_{\ui \in \rS_\bcd} \in \GGm 
\backslash (\prod_{\ui} \wedge^{\ui} E_{\bigcdot} \- \{0\} )\},
\end{equation}
$$K \lra [\wedge^d K].$$

If we let $E_\alpha=F_{\tn_{\alpha-1} +1} \oplus \cdots \oplus F_{\tn_{\alpha -1}+r_\alpha}$
be a decomposition of $E_\alpha$ into the direct sum of vector spaces 
of dimension  1 (or, free $\ZZ$-modules of rank 1) for every $\alpha \in [n]$, 
where $\tn_0=0$ and $\tn_\alpha= r_1 +\cdots + r_\alpha$,
then $$E= F_1 \oplus \cdots \oplus F_\tn$$
and we have the canonical decomposition
\begin{equation}\label{ususal-pl-decom}
\wde=\bigoplus_{\uu =(u_1,\cdots, u_d) \in \II_{d,[\tn]}} F_{u_1}\wedge \cdots \wedge F_{u_d}
\end{equation}
where  $\II_{d,[\tn]}=\{\uu=(u_1,\cdots, u_d) \in \NN^d \mid 1 \le u_1<\cdots <u_d \le \tn \}$.
This way, the embedding \eqref{pl-embed} can be presented in the familiar way:
\begin{equation}\label{pl-embed-0}
\Gr^{d, E} \hookrightarrow \PP(\wedge^d E)=\{(p_\uu)_{\uu \in \II_{d,\tn}} \in \GGm 
\backslash (\wde \- \{0\} )\},
\end{equation}
where $(p_\uu)$ are the $\pl$ coordinates. 

 For any element $\ui \in \rS_\bcd$, $\wedge^{\ui} E_{\bigcdot}$ canonically 
decomposes into the direct sum of some summands  of \eqref{ususal-pl-decom}.
We then set  
\begin{equation}\label{IIui-intr}
\II_\ui=\{\uu=(u_1,\cdots ,u_d) \mid 
 \hbox{$F_{u_1}\wedge \cdots \wedge F_{u_d}$ is a direct summand of $\wedge^{\ui} E_{\bigcdot}$}\}.
 \end{equation}
 Thus, $\II_{d,\tn}=\bigsqcup_{\ui \in \rS_\bcd} \II_\ui$.
Hence, the coordinate $\bp_\ui$ of \eqref{pl-embed} can be expressed as
\begin{equation}\label{bpui=}
\bp_{\ui}=(p_\uu)_{\uu \in \II_\ui}.
\end{equation}

Let $\JJ_\bcd$  be the set of all weakly increasing strings of length $d$ over the alphabet $[n]$
having at most $r_\alpha$ occurrences of the letter $\alpha \in [n]$. By 14.A of \cite{St}, 
we have  bijections 
 \begin{equation}\label{fri-intr}  \fri:  \JJ_\bcd \lra \rS_\bcd, \;\;\;\;
 \fj:  \rS_\bcd \lra \JJ_\bcd
\end{equation}
which are inverse to each other.
Following Sturmfels \cite{St}, 
we call a fixed unordered pair $$\buw=\bkoe \in \JJ^{(2)}_\bcd :=(\JJ_\bcd \times \JJ_\bcd)/\ZZ_2, \;\; \hbox{with}$$
$$\hbox{$\bko=(k_1 k_3 \cdots k_{2d-1})$ and $\bke=(k_2k_4 \cdots k_{2d})$}$$
  a \emph{\it sorted pair} if $$k_1 \le k_2 \le k_3 \le k_4 \le \cdots \le k_{2d-1} \le k_{2d}.$$
 In such a case, we also say that the string $k_1  k_2 \cdots  k_{2d-1}  k_{2d}$ is sorted.
 
 We let $\JJsrt$ be the subset of all sorted pairs $\bkoe$ of $\JJ^{(2)}_\bcd$.

  For any pair $(\ba,\bb) \in \JJ^{(2)}_\bcd $ with $\ba=(a_1 a_2\cdots a_d)$ and $\bb=(b_1b_2\cdots b_d)$,
we define $$\ba \vee\bb = a_1b_1a_2b_2\cdots a_db_d.$$
 We let $``$\sort$"$ denote the operator that takes a pair or a string and sorts it into
 a sorted pair or a sorted string.

Now, for any $\ba \in \JJ_\bcd$, we let $\PP_\ba$ be the projective space 
such that it comes equipped with the homogeneous coordinates \begin{equation}\label{yba-intr}
  y_\ba=[y_\uu]_{\uu  \in \II_{\fri (\ba)}}  \end{equation}
  where $\fri: \JJ_\bcd \to \rS_\bcd$ is the bijection in \eqref{fri-intr} 
   and  $ \II_{\fri (\ba)}$ is as defined in \eqref{IIui-intr}.

Next, for any  $\buw=\bkoe \in \JJsrt$, we define
\begin{equation}\label{La-buw}
\La_\buw=\{(\uu, \uv) \in \II_{d,n}^{(2)}
\mid \uu \in \II_{\fri (\ba)}, \uv \in \II_{\fri (\bb)},
 (\ba,\bb) \in \JJ^{(2)}_\bcd,  {\rm sort}(\ba \vee \bb)=\bko \vee \bke \}.
\end{equation}
Corresponding to $\Lambda_{\buw}$, we let $\PP_\buw$
be the projective space 
such that it comes equipped with the homogeneous coordinates \begin{equation}\label{ybw-intr}
 y_\buw=[x_{(\uu, \uv)}]_{(\uu,\uv) \in \Lambda_{\buw}}.
\end{equation}

 This sets up all the notations in Theorem-Definition \ref{thm1}.
 To explain its structure,  we  introduce the natural rational map
\begin{eqnarray}\label{theta-intro}
 \xymatrix{
\Theta^\bcd: \; \PP(\wedge^d E) \ar @{-->}[r] & 
 \prod_{\ba \in \JJ_\bcd} \PP_\ba \times \prod_{\buw \in \JJsrt} \PP_\buw
 }
\end{eqnarray}
$$[p_\uu]_{\uu \in \II_{d,[\tn]}} \lra
 \prod_{\ba \in \JJ_\bcd} [p_\uu]_{\uu \in \II_{\fri(\ba)}} \times
 \prod_{\buw \in \JJsrt}  [p_{\uu}p_{\uv}]_{(\uu,\uv) \in \La_\buw}. $$
It restricts to $\Gr^{d,E}$ to give a rational map 
\begin{eqnarray}\label{theta-gr-intro}
 \xymatrix{
\Theta^\bcd_\Gr: \; \Gr^{d, E} \ar @{-->}[r] &
 \prod_{\ba \in \JJ_\bcd} \PP_\ba \times \prod_{\buw \in \JJsrt} \PP_\buw.
 }
\end{eqnarray}
The  maps $\Theta^\bcd$ and  $\Theta^\bcd_\Gr$
are $\TTb$-equivariant where  $\TTb$
acts trivially on the target. 
We let $$\UU(\wedge^d E) \subset \PP(\wedge^d E)$$ be the Zariski open subset 
consisting of those points whose $\pl$ coordinates $p_\uu$ with $\uu \in \II_{d,[\tn]}$
are all nonzero. The rational map $\Theta^\bcd$ obviously
restricts to a morphism on $\UU(\wedge^d E)$.


 

 
\begin{prop-defn}\label{prop1} 
The rational map $\Theta^\bcd$ decends
to a morphism on the geometric GIT quotient $\UU(\wedge^d E)/\TTb$ to provide an embedding
\begin{equation} \label{q-in-prod}
\UU(\wedge^d E)/\TTb \lra  \prod_{\ba \in \JJ_\bcd} \PP_\ba \times \prod_{\buw \in \JJsrt} \PP_\buw.
\end{equation}
We let $\HH^\wdebp$ be the closure of $\UU(\wedge^d E)/\TTb$ in 
$ \prod_{\ba \in \JJ_\bcd} \PP_\ba \times \prod_{\buw \in \JJsrt} \PP_\buw$
and call it the \emph{$\fG$-quotient}
of $\PP \wdep$ by $\TTb$.
\end{prop-defn}

Now  we let $$\UU^{d, E}=\UU\wdep \cap \Gr^{d, E} .$$
Then the rational map  $\Theta^\bcd_\Gr$
restricts to a morphism on  $\UU^{d, E}$ and descends
to a morphism on its geometric GIT quotient $\UU^{d, E}/\TTb$ to provide the embedding 
\begin{equation} \label{q-in-prod-gr}
\xymatrix{
\UU^{d, E}/\TTb \ar[r] &  \prod_{\ba \in \JJ_\bcd} \PP_\ba \times \prod_{\buw \in \JJsrt} \PP_\buw}
\end{equation}
 stated in Theorem-Definition \ref{thm1}.

\subsection{The $\fG$-family $\FF^{d, E_\bcd}$ by the poly-diagonal torus}

For any closed point $\fy$ in $\HH^\wdebp $,
we express it as 
\begin{equation}\label{express-fy}
\fy= \prod_{\ba \in \JJ_\bcd} y_\ba \times \prod_{\buw \in \JJsrt} y_\buw
\;\; \in  \;\;
\prod_{\ba \in \JJ_\bcd} \PP_\ba \times \prod_{\buw \in \JJsrt} \PP_\buw,
\end{equation} where
$y_\ba=[y_\uu]_{\uu  \in \II_{\fri (\ba)}} \in \PP_\ba$ 
as in \eqref{yba-intr} and
$ y_\buw=[x_{(\uu, \uv)}]_{(\uu,\uv) \in \Lambda_{\buw}} \in \PP_\buw$
as in \eqref{ybw-intr}.

\begin{defn}\label{defn:Ify} 
For every closed point $\fy$ of $\HH^\wdebp$ expressed as in \eqref{express-fy},
we let $I_\fy$ be the homogeneous ideal of $\kk[p_{\uu}]_{\uu \in \II_{d,[\tn]}}$ generated by 
all  binomials and (possibly) monomials of the following forms
    \begin{equation}\label{GrobnerBasis-fy}
\fB_\fy=\left\{ 
\begin{array}{ccccccc}
y_{\uv} z_\uu - y_\uu z_{\uv} \\ 
x_{(\uu',\uv')} z_{\uu} z_{\uv}- x_{(\uu,\uv)} \;   z_{\uu'} z_{\uv'} \\
\end{array}
  \; \Bigg| \;
\begin{array}{ccccc}
 \uu, \uv\in \II_{\fri(\ba)}  \\
 (\uu, \uv), (\uu',\uv') \in \La_\buw
\end{array}
 \right \},
\end{equation}
for all $\ba \in \JJ_\bcd$ and $\buw=\bkoe \in \JJsrt$.

We let $S_\fy \subset \PP(\wde)$ be the subscheme defined by $I_\fy$. Observe that
 $S_\fy \subset \Gr^{d,E} (\subset \PP(\wde))$
when  $\fy \in \HH^{d,E_\bcd} (\subset\HH^\wdebp)$.
 \end{defn}
 Here, a linear relation of $\fB_\fy$ is a monomial if  exactly one of $y_\uu$ and $y_\uv$ is zero;
 it is identically zero if both are zero. Likewise, a quadratic relation of $\fB_\fy$
  is a monomial if  exactly one of  $x_{(\uu,\uv)}$ and $x_{(\uu',\uv')}$ is zero; 
 it is identically zero if both are zero.

We introduce
$$\FF^\wdebp=\{ (\bp,  \fy) \mid \bp \in S_\fy, \fy \in  \HH^\wdebp \} 
 \subset \PP(\wedge^d E) \times   \prod_{\ba \in \JJ_\bcd} \PP_\ba \times \prod_{\buw \in \JJsrt} \PP_\buw.$$
The first projection $\fq:\FF^\wdebp \lra \HH^\wdebp$
is called the \emph{$\fG$-family} of 
$\PP(\wedge^d E)$ by $\TTb$.
The two projections from $\FF^\wdebp$ give
\begin{equation}\label{pp-cc}
\xymatrix{
\FF^\wdebp \ar[r]^{\ff} \ar[d]^{\fq} &   \PP(\wedge^d E)  \\ 
\HH^\wdebp.   } 
 \end{equation}
Suppose $\fy \in   \UU(\wde) /\TTb  \subset \HH^\wdebp$. Then, it is determined by a unique orbit
$\TTb \cdot \bp$ for some $\bp \in \UU(\wde)$.  Then, we have
$ I_\fy= I_\bp$
where $I_\bp$ is the homogenenous toric ideal in Theorem \ref{bi-eq-bp}.  
Hence,
$\fq$ is a family of  toric ideal $I_\bp$ in Theorem \ref{bi-eq-bp} and their degenerations.  We let $\fq^{-1}(\UU(\wde) /\TTb) \lra \UU(\wde) /\TTb$
be the restriction of $\fq$ over the open subset $\UU(\wde) /\TTb$.
By Corollary \ref{=hpoly}, this is flat family of projective toric varities.
We let $\FF^\wdebp_*$ denote the closure of $\fq^{-1}(\UU(\wde) /\TTb)$
in $\FF^\wdebp$ and call it the {\it main component} of $\FF^\wdebp$.

For any $\ba \in \JJ_\bcd$, we introduce
the  homogeneous ideal  
\begin{equation}\label{ideal-Ja-intr}
J_\ba^\wdebp :=\langle  p_\uu  \mid \uu \in \II_{\fri(\ba)}  \rangle \subset \kk[p_\uu]_{\uu \in \II_{d,[\tn]}}.
\end{equation}

For any $\buw=\bkoe \in \JJsrt$,  
we introduce the  homogeneous ideal  
\begin{equation}\label{ideal-Jw-intr}
J_\buw^\wdebp := \langle  p_{\uu} p_{\uv} \mid {(\uu,\uv) \in \La_\buw}  \rangle
\subset \kk[p_\uu]_{\uu \in \II_{d,[\tn]}},
\end{equation}
where $\La_\buw$ is as defined in \eqref{La-buw}.

We then let $J^\wdebp$ be the product of all the above ideals 
\begin{equation} \label{J-wdep-intro}
J^\wdebp =\prod_{\ba \in \JJ_\bcd} J_\ba^\wdebp \prod_{\buw \in \JJsrt} J_\buw^\wdebp
\subset \kk[p_\uu]_{\uu \in \II_{d,[\tn]}}.
\end{equation}

\begin{prop} \label{Z'asBlowup-intro}  
The degenerate locus of the rational map $\Theta^\bcd$ 
 is precisely the closed subscheme defined by the ideal $J^\wdebp$.
The the main component $\FF^\wdebp_*$ of the
$\fG$-family $\FF^\wdebp$ is  isomorphic to 
the blowup of $\PP\wdebp$ along the ideal $J^\wdebp$. 
Furthermore, 
$\FF^\wdebp_*$ is also isomorphic to the scheme resulted from successively blowing up $\PP\wdebp$ 
along the pullbacks of the ideals $\{J_\ba^\wdebp,  J_\buw^\wdebp \mid \ba \in \JJ_\bcd, \buw \in \JJsrt\}$,
 in any given order. 
 \end{prop}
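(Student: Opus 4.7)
The plan is to establish the three assertions using two standard principles: the closure of the graph of a rational map $X \dashrightarrow \PP^N$ defined by a linear system equals the blowup of $X$ along the base ideal of that system; and for a product of coherent ideal sheaves $I_1 \cdots I_m$ on $X$, the blowup $\Bl_{I_1 \cdots I_m}(X)$ is canonically isomorphic to the iterated blowup along the pullbacks of the $I_\alpha$, performed in any order.

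For the degenerate locus, I would compute the base scheme of $\Theta^\bcd$ factor by factor. The $\ba$-factor is defined by the tuple $[p_\uu]_{\uu \in \II_{\fri(\ba)}}$, whose base ideal is $J_\ba^\wdebp$; likewise the $\buw$-factor is defined by $[p_\uu p_\uv]_{(\uu,\uv) \in \La_\buw}$ with base ideal $J_\buw^\wdebp$. Composing with the Segre embedding
$$\prod_{\ba \in \JJ_\bcd} \PP_\ba \times \prod_{\buw \in \JJsrt} \PP_\buw \hookrightarrow \PP^N$$
into a single projective space, the base scheme of the resulting map is cut out by the product of the component base ideals, which is precisely $J^\wdebp$. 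This settles the first assertion.

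For the identification of the main component with the blowup, I would first verify that $\FF^\wdebp_*$ coincides with the closure of the graph of $\Theta^\bcd$ and then invoke the graph-closure-equals-blowup identity. Over any $\bp \in \UU(\wde)$, the relations of $\fB_\fy$ with $\fy = \Theta^\bcd(\bp)$ force any companion $\bp' \in S_\fy$ to satisfy $y_\uv\, p'_\uu = y_\uu\, p'_\uv$ for $\uu, \uv \in \II_{\fri(\ba)}$ and $x_{(\uu',\uv')}\, p'_\uu p'_\uv = x_{(\uu,\uv)}\, p'_{\uu'} p'_{\uv'}$ for $(\uu, \uv), (\uu',\uv') \in \La_\buw$; together these pin down $\bp'$ to the $\TTb$-orbit of $\bp$. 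Hence $\ff|_{\FF^\wdebp_*}$ is birational onto $\PP\wdebp$, and its restriction over $\UU(\wde)$ equals the graph of $\Theta^\bcd|_{\UU(\wde)}$. Taking closures in $\PP\wdebp \times \prod_\ba \PP_\ba \times \prod_\buw \PP_\buw$, the scheme $\FF^\wdebp_*$ is exactly this graph closure, which in turn is canonically $\Bl_{J^\wdebp}(\PP\wdebp)$ by the base-ideal computation above.

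Finally, the sequential blowup description follows from the standard formula that $\Bl_{I_1 \cdots I_m}(X)$ is isomorphic to the iterated blowup along the successive pullbacks of the $I_\alpha$, independent of the order, applied to the collection $\{J_\ba^\wdebp\}_{\ba \in \JJ_\bcd} \cup \{J_\buw^\wdebp\}_{\buw \in \JJsrt}$. The main obstacle will be the scheme-theoretic (as opposed to merely set-theoretic) identification in the second step: one must verify that the relations $\fB_\fy$, over the dense open locus $\UU(\wde)/\TTb$, cut out the graph as a reduced scheme so that $\FF^\wdebp_*$ agrees with $\Bl_{J^\wdebp}(\PP\wdebp)$ without spurious embedded components, after which the passage to the iterated blowup is purely formal.
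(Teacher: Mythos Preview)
Your proposal is correct and follows essentially the same route as the paper. The paper isolates your two ``standard principles'' as a separate Lemma~\ref{thatLemma} (proved via the Segre embedding diagram you describe) and then applies it; your argument unpacks that lemma inline. One small remark: your concern at the end about scheme-theoretic versus set-theoretic identification is unnecessary, since the graph closure of a rational map from a variety is automatically reduced and coincides, by definition, with the blowup along the base ideal as a closed subscheme of $X \times \PP^N$---no embedded components can arise.
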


We introduce
$$  \FF^{d,E_\bcd}=\{ (\bp,  \fy) \mid \bp \in S_\fy, \fy \in  \HH^{d, E_\bcd} \} 
 \subset \Gr^{d, E} \times   \prod_{\ba \in \JJ_\bcd} \PP_\ba \times \prod_{\buw \in \JJsrt} \PP_\buw.$$
The first projection $\fq_\Gr:\FF^{d,E_\bcd}
 \lra \HH^{d, E_\bcd}$
is called the \emph{$\fG$-family} of 
$\Gr^{d, E}$ by $\TTb$.
By construction, we have $\HH^{d, E_\bcd} \subset \HH^\wdebp$.
Then, observe that $\fq_\Gr: \FF^{d,E_\bcd} \to \HH^{d, E_\bcd}$ is the restriction 
of $\fq$ to $\HH^{d, E_\bcd}$. Thus, the diagram \eqref{pp-cc} induces
\begin{equation}\label{pp-cc-gr}
\xymatrix{
\FF^{d,E_\bcd} \ar[r]^{\ff_\Gr} \ar[d]^{\fq_\Gr} &   \Gr^{d,E}  \\ 
\HH^{d, E_\bcd}.   } 
 \end{equation}
We let $\fq_\Gr^{-1}(\UU^{d,E} /\TTb) \lra \UU^{d,E} /\TTb$
be the restriction of $\fq_\Gr$ over the open subset $\UU^{d,E} /\TTb$.
Again by Corollary \ref{=hpoly}, this is flat family of projective toric varities.
Let $\FF^{d,E_\bcd}_*$ denote the closure of $\fq_\Gr^{-1}(\UU^{d,E} /\TTb)$
in $\FF^{d,E_\bcd}$ and call it the main component of $\FF^{d,E_\bcd}$.

 Similar to the case of $\FF^\wdebp$,   for any $\ba \in \JJ_\bcd$, we define
\begin{equation} \label{ideal-Ja-gr-intr} 
J_\buw^{d, E_\bcd} =\langle  p_\uu  + I_\whwp \mid \uu \in \II_{\fri(\ba)}  \rangle
\subset \kk[p_\uu]_{\uu \in \II_{d,[\tn]}}/   I_\whwp;
\end{equation}
for any $\buw \in \JJsrt$,  we define
\begin{equation} \label{ideal-Jw-gr-intr} 
J_\buw^{d, E_\bcd}=\langle  p_{\uu} p_{\uv} + I_\whwp \mid {(\uu,\uv) \in \La_\buw}  \rangle
\subset \kk[p_\uu]_{\uu \in \II_{d,[\tn]}}/   I_\whwp,
\end{equation}
where $ I_\whwp$ denotes a homogeneous ideal of $\Gr^{d,E}$
in $\kk[p_\uu]_{\uu \in \II_{d,[\tn]}}$.
Then, the product of all the above ideals is denoted 
\begin{equation} \label{J-wdep-gr-intr} 
J^{d, E_\bcd}=\prod_{\ba \in \JJ_\bcd} J_\ba^{d, E_\bcd} \prod_{\buw \in \JJsrt} J_\buw^{d, E_\bcd}
\subset \kk[p_\uu]_{\uu \in \II_{d,[\tn]}}/   I_\whwp .
\end{equation}

\begin{thm} \label{ZasBlowup-intro}  
The degenerate locus of the rational map $\Theta^\bcd_\Gr$ 
 is precisely the closed subscheme defined by the ideal $J^{d, E_\bcd}$.
The main component  $\FF^{d, E_\bcd}_*$ of the $\fG$-family $\FF^{d, E_\bcd}$ is  isomorphic to 
the blowup of $\Gr^{d, E}$ along the ideal $J^{d, E_\bcd}$. 
 Furthermore, 
$\FF^{d, E_\bcd}$ is also naturally isomorphic to the scheme resulted from 
successively blowing up $\Gr^{d,E_\bcd}$ along the pullbacks of the ideals 
$\{J_\ba^{d, E_\bcd},  J_\buw^{d, E_\bcd} \mid \ba \in \JJ_\bcd, \buw \in \JJsrt\}$,
 in any given order.
\end{thm}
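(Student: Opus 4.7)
The plan is to deduce Theorem \ref{ZasBlowup-intro} from Proposition \ref{Z'asBlowup-intro} by restriction along the closed immersion $\Gr^{d,E} \hookrightarrow \PP(\wde)$. By construction, every object in sight is compatible with this restriction: the rational map $\Theta^\bcd_\Gr$ is $\Theta^\bcd$ precomposed with $\Gr^{d,E} \hookrightarrow \PP(\wde)$; the scheme $\FF^{d,E_\bcd}$ is the scheme-theoretic pullback of $\FF^\wdebp$ along $\Gr^{d,E} \times \prod_\ba \PP_\ba \times \prod_\buw \PP_\buw \hookrightarrow \PP(\wde) \times \prod_\ba \PP_\ba \times \prod_\buw \PP_\buw$; and by \eqref{ideal-Ja-gr-intr}--\eqref{J-wdep-gr-intr} the ideals $J_\ba^{d,E_\bcd}$, $J_\buw^{d,E_\bcd}$, and $J^{d,E_\bcd}$ are the images of $J_\ba^\wdebp$, $J_\buw^\wdebp$, and $J^\wdebp$ modulo the homogeneous ideal $I_\whwp$ of $\Gr^{d,E}$.

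The first assertion is then immediate. Since $\Theta^\bcd_\Gr$ is $\Theta^\bcd$ restricted to $\Gr^{d,E}$, its scheme of indeterminacy is the scheme-theoretic intersection of the indeterminacy locus of $\Theta^\bcd$ with $\Gr^{d,E}$. By Proposition \ref{Z'asBlowup-intro} this intersection is cut out by $J^\wdebp + I_\whwp$, which by definition equals $J^{d,E_\bcd}$. For the second assertion, observe that $\FF^{d,E_\bcd}_*$ is by construction the closure in $\FF^\wdebp$ of the $\fq$-preimage of $\UU^{d,E}/\TTb$, and hence coincides with the strict transform of $\Gr^{d,E}$ inside $\FF^\wdebp_* = \Bl_{J^\wdebp}\PP(\wde)$. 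A generic point of $\Gr^{d,E}$ lies in $\UU^{d,E}$, and in particular is disjoint from the degenerate locus $V(J^\wdebp)$, so $\Gr^{d,E}$ is not contained in the blowup center. The standard functoriality of blowing up under closed immersions then identifies this strict transform with $\Bl_{J^{d,E_\bcd}} \Gr^{d,E}$.

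For the successive blowup statement, I would iterate the same strict-transform argument: if $Z_k$ denotes the partial iterated blowup of $\PP(\wde)$ along the first $k$ ideals from $\{J_\ba^\wdebp, J_\buw^\wdebp\}$ in any chosen order, then inductively the strict transform of $\Gr^{d,E}$ inside $Z_k$ is the corresponding iterated blowup of $\Gr^{d,E}$ along the pullbacks of the same ideals. Combining this with the order-independent successive description established in Proposition \ref{Z'asBlowup-intro} yields the claim.

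The main obstacle lies in the second step: verifying that the strict-transform identification is genuinely an isomorphism, rather than merely a closed immersion, and that it matches the universal property of $\Bl_{J^{d,E_\bcd}} \Gr^{d,E}$. Non-containment of $\Gr^{d,E}$ in each center $V(J_\ba^\wdebp)$ and $V(J_\buw^\wdebp)$ follows from the existence of a generic point in $\UU^{d,E}$; the scheme-theoretic flatness of the pullback of the blowup, needed to identify the universal total transform, can be verified by combining the flat family statement of Corollary \ref{=hpoly} over $\UU^{d,E}/\TTb$ with the explicit toric presentation of the fibers supplied by Theorem \ref{bi-eq-bp}, reducing the check to a local calculation with the monomial generators of $J^\wdebp$ against $I_\whwp$.
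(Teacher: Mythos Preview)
Your overall strategy is correct, but it takes a more circuitous route than the paper. The paper simply observes that Lemma \ref{thatLemma} is stated for an arbitrary projective scheme $X = \Proj\, \kk[x_1,\ldots,x_n]/J$, so one may apply it directly with $X = \Gr^{d,E}$ and the ideals $J_\ba^{d,E_\bcd}, J_\buw^{d,E_\bcd}$; the argument of Proposition \ref{Z'asBlowup-intro} then goes through verbatim with $\Theta^\bcd_\Gr$ in place of $\Theta^\bcd$. There is no need to pass through the ambient $\PP(\wde)$ at all.

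Your approach via strict transforms is valid: the identification of the strict transform of $\Gr^{d,E}$ in $\Bl_{J^\wdebp}\PP(\wde)$ with $\Bl_{J^{d,E_\bcd}}\Gr^{d,E}$ is a standard fact (the strict transform of a closed subscheme $Y \subset X$ not contained in the center is $\Bl_{I\cdot\cO_Y}Y$), and your observation that $\UU^{d,E}$ is dense in $\Gr^{d,E}\setminus V(J^\wdebp)$ correctly pins down $\FF^{d,E_\bcd}_*$ as that strict transform. However, your final paragraph overcomplicates the ``obstacle'': the strict-transform identity requires neither flatness of $\fq$ nor any input from Corollary \ref{=hpoly} or Theorem \ref{bi-eq-bp}. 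Those results concern the fibers of the $\fG$-family and are irrelevant here; invoking them obscures what is otherwise a clean application of a textbook lemma. The paper's direct route avoids this detour entirely.
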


\begin{rem}
Suppose one can show 
$\FF^{d, E_\bcd}_* \to \HH^{d, E_\bcd}$ is flat, then 
there will be an injective birational morphism from 
$\HH^{d, E_\bcd}$ onto the Hilbert quotient of $\Gr^{d,E}$ by $\TTb$.
We were informed that
when the torus $\TTb$ is 1-dimensional, this has been proved
in Theorem 1.8 of \cite{FW}.
\end{rem}

\subsection{The $\HH^{d, E}$ and $\FF^{d, E}$ by the maximal torus} 
\label{max torus}
\smallskip

We now focus on the important special case of
the $\fG$-quotient $\HH^{d,E}$ and  $\fG$-family 
$\FF^{d,E}$ by the maximal torus $\TT$.
In this case, we have the decomposition $E=\oplus_{i \in [n]} E_i$ with
$\dim E_i=1$ (or $\rk \; E_i=1$)  for all $i \in [n]$, hence, $\tn=n$.
We follow all the previous notations, but {\it will often  drop  the sub- or sup-index  $\bcd$.}

In this case, the corresponding $\rS_\bcd$ is precisely
the set of   lattice points in  the polytope
$$\Delta^{d,[n]} =\{(x_1, \cdots, x_n) \in {\mathbb R}^n \mid \sum_{i=1}^n x_i = d,
\;  0 \le x_i \le 1, \; \hbox{for all $1 \le i \le n$} \}$$
which  is called the hypersimplex of type $(d,[n])$;
the set $\JJ_\bcd$ is simply
$$\II_{d,[n]}=\{(u_1, \cdots, u_d) \in \NN^d \mid 1 \le u_1 < \cdots < u_d \le n\}.$$

First, note that for any $\ba \in \JJ_\bcd=\II_{d,[n]}$, $\PP_\ba$ is a single point. Hence, we can discard all these spaces from consideration.
Next,   $\JJsrt$ is the subset of sorted pairs in $\II_{d,[n]}^{(2)}$. 
For any $\buw = (\uwo, \uwe) \in \JJsrt \subset \II_{d,[n]}^{(2)}$,  $\PP_\buw$ is not a single point, i.e.,
$\La_\buw$ is not a singleton if and only if $\uwo \vee \uwe$ contains 
 at least  $d+2$ mutually distinct integers. 

{\it Thus, we let $\La^\sort_{d,[n]}$ be the set of all sorted pairs  $ \buw=(\uwo,\uwe) \in \II_{d,[n]}^{(2)}$
such that $\uwo\vee\uwe$ contains at least  $d+2$ mutually distinct integers. }

Hence,  Theorem \ref{thm1} and Proposition \ref{prop1} in this case may be restated as

\begin{cor}\label{cor1-e}  There exist two natural closed subschemes of
 $\prod_{\buw \in \La^\sort_{d,[n]}} \PP_\buw $
 $$  \xymatrix{
\HH^{d, E}   
 \ar @{^{(}->}[r] & \HH^\wdep 
  \ar @{^{(}->}[r] &
  \prod_{\buw \in \La^\sort_{d,[n]}} \PP_\buw }.$$
\end{cor}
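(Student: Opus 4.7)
The plan is to derive the corollary directly from Theorem-Definition \ref{thm1} and Proposition-Definition \ref{prop1} by observing that, in the maximal-torus case, both products of projective spaces appearing in those ambient embeddings collapse after discarding the one-point factors, leaving exactly $\prod_{\buw \in \La^\sort_{d,[n]}} \PP_\buw$.

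First I would spell out why the $\PP_\ba$ factors disappear. Since $r_\alpha = 1$ for every $\alpha \in [n]$, the bijection $\fri : \JJ_\bcd \to \rS_\bcd$ sends each $\ba \in \JJ_\bcd = \II_{d,[n]}$ to a tuple in $\{0,1\}^n$; hence $\wedge^{\fri(\ba)} E_\bcd$ is one-dimensional and the indexing set $\II_{\fri(\ba)}$ from \eqref{IIui-intr} is a singleton. By \eqref{yba-intr}, each $\PP_\ba$ is therefore a point, and the factor $\prod_{\ba \in \JJ_\bcd} \PP_\ba$ in the ambient products can be suppressed without losing any information.

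Next I would invoke the combinatorial dichotomy recorded just above the corollary: for $\buw = (\uwo, \uwe) \in \JJsrt$, the set $\La_\buw$ is a singleton precisely when the sorted concatenation $\uwo \vee \uwe$ contains at most $d+1$ mutually distinct integers. One direction follows because strictly increasing sequences $\ba,\bb \in \II_{d,[n]}$ can place each integer at most twice in $\ba \vee \bb$, so if there are $\le d+1$ distinct integers then at least $d-1$ must be doubled, and strict monotonicity forces the two halves of the decomposition up to the swap $(\ba,\bb) \leftrightarrow (\bb,\ba)$ that is already killed by passing to the unordered pair; the reverse direction is a direct exhibition of two distinct pairs when $\ge d+2$ distinct integers are present. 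Consequently $\PP_\buw$ is a point whenever $\buw \in \JJsrt \setminus \La^\sort_{d,[n]}$, and these factors may also be suppressed.

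After these reductions the ambient product is canonically isomorphic to $\prod_{\buw \in \La^\sort_{d,[n]}} \PP_\buw$, and the embedding of $\HH^\wdep$ from Proposition-Definition \ref{prop1} becomes the right-hand inclusion. The embedding of $\HH^{d,E}$ from Theorem-Definition \ref{thm1} gives the left-hand one, and the containment $\HH^{d,E} \subset \HH^\wdep$ is read off by taking closures: $\UU^{d,E} = \UU(\wde) \cap \Gr^{d,E}$ implies $\UU^{d,E}/\TTb \subset \UU(\wde)/\TTb$, and the closures of their images inside $\prod_{\buw \in \La^\sort_{d,[n]}} \PP_\buw$ are exactly $\HH^{d,E}$ and $\HH^\wdep$ respectively. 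The only step requiring genuine combinatorial work is the singleton criterion for $\La_\buw$; everything else is bookkeeping converting the general poly-diagonal statements into the maximal-torus form.
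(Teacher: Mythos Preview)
Your proposal is correct and follows exactly the approach the paper itself takes: the paper presents Corollary~\ref{cor1-e} simply as a restatement of Theorem~\ref{thm1} and Proposition~\ref{prop1} in the maximal-torus case, after observing (in the paragraphs immediately preceding the corollary) that each $\PP_\ba$ is a point and that $\PP_\buw$ is a point unless $\buw \in \La^\sort_{d,[n]}$. You supply a bit more detail than the paper does on the singleton criterion for $\La_\buw$ and on the inclusion $\HH^{d,E} \subset \HH^\wdep$ via closures, but these are exactly the bookkeeping steps the paper leaves implicit.
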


For any $\ba \in \JJ_\bcd=\II_{d, [n]}$, $J^\wdep_\ba$ is a principal ideal, so is $J^{d,E}_\ba$. Hence,  we  discard all these ideals from the statements of
Proposition \ref{Z'asBlowup-intro}  and Theorem \ref{ZasBlowup-intro}.
As for  $\buw \in \Ladnsort$, we  have
\begin{equation}\label{ideal-Jw-intro} \nonumber
J_\buw^\wdep=\langle  p_\uu p_\uv  \mid {(\uu,\uv) \in \Lambda_\buw}  \rangle 
\subset \kk[p_\uu]_{\uu \in \II_{d,[n]}},
\end{equation}
\begin{equation}\label{ideal-Jw-gr-intro} \nonumber
J_\buw^{d,E}=\langle  p_\uu p_\uv  + I_\whwp \mid {(\uu,\uv) \in \Lambda_\buw}  \rangle 
\subset \kk[p_\uu]_{\uu \in \II_{d,[n]}}/I_\whwp.
\end{equation}
Further, by discarding all the principal ideals $J^\wdep_\ba, J^{d,E}_\ba, \ba \in \JJ_\bcd=\II_{d, [n]}$, 
we  can let
\begin{equation} \label{J-intro} \nonumber
J^\wdep =\prod_{\buw \in \La^\sort_{d,[n]}} J_\buw^\wdep \subset \kk[p_\uu]_{\uu \in \II_{d,[n]}}, \;\;\;
J^{d,E} =\prod_{\buw \in \La^\sort_{d,[n]}} J_\buw^\wdep \subset \kk[p_\uu]_{\uu \in \II_{d,[n]}}/I_\whwp.
\end{equation}

The rational maps $\Theta^\bcd$ and $\Theta^\bcd_\Gr$ in this case take the following forms.
\begin{equation}\label{theta-intr-e} 
 \xymatrix{
\Theta: \; \PP(\wedge^d E) \ar @{-->}[r] & 
 \prod_{\buw \in \Ladnsort} \PP_\buw, \;\;\;\;
\Theta_\Gr: \; \Gr^{d, E} \ar @{-->}[r] &
\prod_{\buw \in \Ladnsort} \PP_\buw. } 
\end{equation}

Then, Proposition \ref{Z'asBlowup-intro}  and Theorem \ref{ZasBlowup-intro} can be restated as follows.

\begin{cor} \label{cor2-e}  The degenerate locus of the rational map $\Theta$ 
 is precisely the closed subscheme defined by the ideal $J^\wdep$.
The main component $\FF^\wdep_*$ of the $\fG$-family $\FF^\wdep$ is  isomorphic to the blowup of $\PP\wdep$ along the ideal $J^\wdep$. In this case, we have the  closed embedding
 \begin{eqnarray}\label{embedF-intr-e}
 \xymatrix{
 \FF^\wdep\ar @{^{(}->}[r]  & \PP(\wedge^d E) \times  \prod_{\buw \in \Ladnsort} \PP_\buw.}
 \end{eqnarray}
Furthermore, 
$\FF^\wdep_*$ is also isomorphic to the scheme resulted from successively blowing up $\PP\wdep$ 
along the pullbacks of the ideals $\{J_\buw^\wdep \mid \buw \in \Ladnsort\}$,
 in any given order. 

Similarly, the degenerate locus of the rational map
$\Theta_\Gr$ 
 is precisely the closed subscheme defined by the ideal $J^{d, E}$.
The main component $\FF^{d, E}_*$ of $\fG$-family $\FF^{d, E}$ is  isomorphic to the blowup of $\Gr^{d,E}$ along the ideal $J^{d,E}$.  In this case, 
 we have the following closed embedding
 \begin{eqnarray}\label{embedF-gr-intr-e}
 \xymatrix{
 \FF^{d,E} \ar @{^{(}->}[r]  & \Gr^{d,E} \times  \prod_{\buw \in \Ladnsort} \PP_\buw.}
 \end{eqnarray}
Furthermore, $\FF^{d, E}_*$ is also isomorphic to the scheme resulted from 
successively blowing up $\Gr^{d,E}$ along the pullbacks of the ideals 
$\{J_\buw^{d, E} \mid  \buw \in \Ladnsort\}$,
 in any given order.  
\end{cor}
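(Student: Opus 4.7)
The plan is to derive Corollary \ref{cor2-e} as a straightforward specialization of Proposition \ref{Z'asBlowup-intro} and Theorem \ref{ZasBlowup-intro} to the maximal torus case, where $\dim E_i = 1$ for every $i \in [n]$ and therefore $\tn = n$. The core observation is that, in this setting, many of the data appearing in the general statements become trivial and can be safely discarded without affecting either the embedding into products of projective spaces or the blowup description of the main component.

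First, I would unpack what becomes trivial. Because every $r_\alpha = 1$, the set $\II_\ui$ defined in \eqref{IIui-intr} is a singleton for each $\ui \in \rS_\bcd$; the bijection $\fri$ identifies $\JJ_\bcd$ with $\II_{d,[n]}$, the projective space $\PP_\ba$ with coordinates $[y_\uu]_{\uu \in \II_{\fri(\ba)}}$ collapses to a point, and the ideal $J_\ba^\wdep = \langle p_\uu \mid \uu \in \II_{\fri(\ba)}\rangle$ is principal. Similarly, among sorted pairs $\buw = (\uwo, \uwe) \in \JJsrt$, the set $\La_\buw$ from \eqref{La-buw} is a singleton exactly when $\uwo \vee \uwe$ contains fewer than $d+2$ distinct integers, because in that case there is no genuine sorting choice available for pairs $(\ba, \bb)$ with $\mathrm{sort}(\ba \vee \bb) = \bko \vee \bke$. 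For such $\buw$, the space $\PP_\buw$ is a point and $J_\buw^\wdep$ is principal. Removing these trivial factors replaces the indexing set $\JJsrt$ by precisely $\Ladnsort$, matching the definitions in \S\ref{max torus}.

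Second, I would apply Proposition \ref{Z'asBlowup-intro} and Theorem \ref{ZasBlowup-intro} to this reduced data. Since a blowup along a principal ideal is an isomorphism, absorbing the principal ideals $J_\ba^\wdep$ (resp.\ $J_\ba^{d,E}$) into the product does not change the resulting scheme; equivalently, the product ideal $J^\wdep$ (resp.\ $J^{d,E}$) in the corollary and that of the ambient statements differ only by a principal factor, so they define the same blowup. Projecting away the point factors $\PP_\ba$ from the ambient products yields the embeddings \eqref{embedF-intr-e} and \eqref{embedF-gr-intr-e}. The identification of the degenerate loci of $\Theta$ and $\Theta_\Gr$ with $V(J^\wdep)$ and $V(J^{d,E})$ is then immediate, as is the identification of $\FF^\wdep_*$ and $\FF^{d,E}_*$ with single blowups.

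The main subtlety I expect is the final iterated-blowup claim: that $\FF^\wdep_*$ (resp.\ $\FF^{d,E}_*$) is isomorphic to the scheme obtained by successively blowing up along the pullbacks of $\{J_\buw^\wdep\}_{\buw \in \Ladnsort}$ (resp.\ $\{J_\buw^{d,E}\}$) in any prescribed order. This is inherited from the analogous assertion in Theorem \ref{ZasBlowup-intro}, but requires one to verify that interleaving the trivial principal blowups centered at the $J_\ba$ with the nontrivial blowups centered at the $J_\buw$ does not alter the isomorphism type. Since blowing up along a principal ideal is an isomorphism and commutes with any subsequent blowup up to canonical isomorphism, this reduces directly to the order-independence already established for the nontrivial centers in Theorem \ref{ZasBlowup-intro}, completing the proof.
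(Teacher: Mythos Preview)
Your proposal is correct and matches the paper's approach: the paper presents Corollary~\ref{cor2-e} as a direct restatement of Proposition~\ref{Z'asBlowup-intro} and Theorem~\ref{ZasBlowup-intro} after the observations in \S\ref{max torus} that each $\PP_\ba$ is a point, each $J_\ba^\wdep$ is principal, and $\PP_\buw$ is a point unless $\buw \in \Ladnsort$. Your write-up simply makes explicit the routine checks (discarding point factors and principal-ideal blowups) that the paper leaves implicit.
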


\subsection{Equations defining $\HH^{d, E}$ and $\FF^{d, E}_*$}  

We now describe the defining relations for the two
embeddings \eqref{embedF-intr-e} and \eqref{embedF-gr-intr-e}.
More technical details regarding these topics are in \S \ref{sec:eqForZ}.

Set $R_0=\kk[p_\ua]_{\ua \in \II_{d,[n]}}$. We let
$$R=R_0[x_{(\uu, \uv)}]_{(\uu,\uv) \in \La_{d, [n]}}$$
where $\La_{d,[n]}=\bigsqcup_{\buw \in \La^\sort_{d,[n]}} \La_\buw$.
Then, we define the homomorphism
\begin{equation}\label{vi-intro}
\vi: \; R \lra R_0,  \;\;\; \vi|_{R_0}=\id_{R_0} 
, \;\;\; x_{(\uu,\uv)} \to p_{\uu} p_{\uv}
\end{equation} 
for all  
$(\uu, \uv) \in \La_{\buw},$ and $\buw \in \Ladnsort$.    
 This corresponds to the closed embedding \eqref{embedF-intr-e}.

We then let $\vi_{\Gr}: \; R \lra R_0/I_\whwp$ be the composition
\begin{equation}\label{vi-Gr-intro}
\vi_{\Gr}: \; R \lra R_0 \lra R_0/ I_\whwp. 
\end{equation} 
where, again, $ I_\whwp$ is the homogeneous ideal of $\Gr^{d, E}$ in $\PP(\wedge^d E)$.
This corresponds to the closed embedding \eqref{embedF-gr-intr-e}.  

A polynomial of $R$ is called multi-homogeneous if
it is homogenous in each set of variables
$[x_{(\uu, \uv)}]_{(\uu,\uv) \in \La_\buw}$ for every $\buw \in \Ladnsort$.
We let $\ker^\mh \vi$, and, $\ker^\mh \vi_{\Gr}$
denote the sets of all  multi-homogeneous polynomials
 in $\ker  \vi$ and $\ker \vi_{\Gr}$, respectively.

We  determine that $\ker^\mh \vi$ contains two special kinds of binomial relations: 

$\di$
{\it The set $\cB^\wp$ of $\wp$-binomials}. 
These relations are precisely of the forms
$$p_{\uu'}p_{\uv'}x_{(\uu,\uv)} - p_\uu p_\uv x_{(\uu',\uv')}$$
where $(\uu,\uv) \ne (\uu', \uv') \in \La_{\buw}$ for some
$\buw \in \Ladnsort$. See \eqref{tildeBk} and Definition \ref{wp-bino}.

$\di$ 
{\it The set $\cB^\frb$ of $\frb$-binomials.}
A binomial $\bm-\bm'$ is a $\frb$-binomial, roughly,
 if it does not
admit a decomposition $\bm_1\bm_2 - \bm_1'\bm_2'$ such that
$\bm_1- \bm_1'$ and $\bm_2 - \bm_2'$ belong to $\ker \vi$,
and it can not be further reduced by using the above 
$\wp$-binomial
relations. See Definition \ref{defn:frb}.
Corollary \ref{cor:linear and free} states two important properties
of $\frb$-binomials: $\vr$-linearity and $\vi$-square-freeness. (Such properties of parallel binomials
in \cite{Hu2025} play important roles therein.)

\begin{prop}\label{eqs-F'-intro} 
$\FF^\wdep_*$ as a closed  subscheme  of 
$\PP(\wedge^d E) \times \prod_{\buw \in \La^\sort_{d,[n]}} 
 \PP_\buw$ is defined by the
binomials in $\cB^\wp$ and $\cB^\frb$.
\end{prop}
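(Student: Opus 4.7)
The plan is to identify $\FF^\wdep_*$ with the closure of the graph of $\Theta$ on $\UU(\wde)$, match its defining multi-homogeneous ideal with $\ker^\mh \vi$, and then exhibit $\ker^\mh \vi$ as generated by $\cB^\wp \cup \cB^\frb$. First, by its very construction, $\FF^\wdep_*$ is the closure in $\PP(\wedge^d E) \times \prod_\buw \PP_\buw$ of the graph $\{(\bp,\Theta(\bp)) : \bp \in \UU(\wde)\}$, where on the dense open $\UU(\wde)$ the map $\Theta$ sends $\bp$ to the tuple $\prod_\buw [p_\uu p_\uv]_{(\uu,\uv) \in \La_\buw}$. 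A multi-homogeneous polynomial $f(p,x) \in R$ vanishes on this graph precisely when the polynomial $\vi(f) \in R_0$ vanishes on $\UU(\wde)$; since $\UU(\wde)$ is Zariski dense in $\PP(\wde)$, this forces $\vi(f)=0$. Hence the defining ideal of $\FF^\wdep_*$ in the product is exactly $\ker^\mh \vi$.

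Next, since $\vi$ is a monomial homomorphism (it sends each generator to a monomial in the $p_\ua$'s), its kernel is a binomial ideal. Concretely, for any multi-homogeneous $f = \sum c_\alpha \bm_\alpha \in \ker \vi$, grouping the monomials $\bm_\alpha$ by their $\vi$-images and observing that the coefficients must cancel within each group expresses $f$ as a $\kk$-linear combination of multi-homogeneous binomials $\bm_\alpha - \bm_\beta$ with $\vi(\bm_\alpha) = \vi(\bm_\beta)$. Thus it suffices to show that every such multi-homogeneous binomial lies in $\langle \cB^\wp \cup \cB^\frb \rangle$.

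This reduction proceeds by induction on a well-founded complexity measure of a binomial $\fb = \bm - \bm' \in \ker \vi$, for instance the total $x$-degree refined by a suitable secondary invariant. By the trichotomy built into Definition \ref{defn:frb}, at least one of the following holds: (a) $\fb$ already belongs to $\cB^\frb$, and there is nothing more to do; (b) $\fb$ admits a multiplicative decomposition $\bm = \bm_1\bm_2$, $\bm' = \bm_1'\bm_2'$ with each $\bm_i - \bm_i' \in \ker \vi$ of strictly smaller complexity, in which case the identity $\bm_1\bm_2 - \bm_1'\bm_2' = \bm_2(\bm_1-\bm_1') + \bm_1'(\bm_2-\bm_2')$ rewrites $\fb$ modulo two smaller binomials in $\ker \vi$; or (c) some $x_{(\uu,\uv)}$ occurring in $\fb$ may be swapped via a $\wp$-binomial $p_{\uu'}p_{\uv'}x_{(\uu,\uv)} - p_\uu p_\uv x_{(\uu',\uv')} \in \cB^\wp$, producing a strictly simpler binomial modulo $\cB^\wp$. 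In cases (b) and (c) the inductive hypothesis applies.

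The main obstacle is arranging Step 3 so that the trichotomy is genuinely exhaustive and that the chosen complexity strictly decreases in cases (b) and (c); equivalently, one must verify that Definition \ref{defn:frb} indeed characterizes $\frb$-binomials as precisely the terminal irreducible remainders under $\wp$-reduction and multiplicative decomposition. The $\vr$-linearity and $\vi$-square-freeness properties of $\frb$-binomials recorded in Corollary \ref{cor:linear and free} are the structural facts I would expect to use to guarantee both termination of the reduction and the sufficiency of $\cB^\wp \cup \cB^\frb$ as a generating set for $\ker^\mh \vi$, which together with Steps 1 and 2 completes the proof.
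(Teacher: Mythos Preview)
Your Steps 1 and 2 are fine and match the paper (Lemma \ref{defined-by-ker} and the binomial-kernel discussion). The gap is in Step 3: your trichotomy (a)/(b)/(c) is not exhaustive. A binomial in $\ker^\mh\vi$ can be $\fb$-irreducible and $\wp$-irreducible yet fail to be a \emph{root} binomial, and then it lies in none of $\cB^\frb$, case (b), or case (c). Lemma \ref{rho=2} produces exactly such examples: a binomial $\vi(X_1)X_2X_3-\vi(X_1')X_2'X_3'$ of $\vr$-degree $2$, with no common factor and $\wp$-irreducible, which is a proper descendant of the root $X_1X_2X_3-X_1'X_2'X_3'$. For such $f$ there is no evident rewriting inside $\langle\cB^\wp\cup\cB^\frb\rangle$ as a multi-graded ideal; your swap move (c) replaces $x_{(\uu,\uv)}$ by $p_\uu p_\uv$ (or conversely when the matching $p$-factor is already present in the other monomial), but it cannot manufacture the missing $x$-variable needed to climb from a descendant back up to its root parent.

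The paper resolves this by abandoning global ideal equality and arguing chart by chart. On a standard chart $\fV$ where $x_{(\ua_\buw,\ub_\buw)}$ is a unit, one multiplies $f=p_{\uu_s}p_{\uv_s}\bn_s-p_{\uu_t}p_{\uv_t}\bn_t$ by this unit and applies two $\wp$-binomials to obtain
\[
x_{(\ua_\buw,\ub_\buw)}\,f\ \equiv\ p_{\ua_\buw}p_{\ub_\buw}\bigl(x_{(\uu_s,\uv_s)}\bn_s-x_{(\uu_t,\uv_t)}\bn_t\bigr)\pmod{\cB^\wp},
\]
strictly decreasing $\deg_\vp$. Iterating yields a root binomial (Claim 1), after which $\wp$-reduction (Claim 2, essentially your (c)) and Corollary \ref{reduce to rb} (Claim 3, your (b)) finish. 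The chart argument is the missing ingredient; it is what allows the proposition to be about scheme-theoretic definition rather than equality of multi-homogeneous ideals.
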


To suppress the issue of multivariate $\pl$ relations 
over $\ZZ$ or a field of some positive characteristic (see \cite{Abeasis}), in Theorem \ref{eqs-F-intro} 
and Theorem \ref{eqs-H-intro}, 
 we assume that the characteristic of the base field $\kk$ equals zero so that
$I_\whwp=I_\wp$, where $I_\wp$ is the ideal generated by all $\pl$ relations in $\sF$.

 As in \eqref{pl-eq-abb}, we write any $\pl$ relation as
$$F=\sum_{s \in S_F} \sgn(s) p_{\uu_s} p_{\uv_s}$$
where  $\sgn(s)$ is the $\pm$ sign associated with the term $p_{\uu_s} p_{\uv_s}$. Then, we let
\begin{equation}\label{linear-pl-intr}
L_F=\sum_{s \in S} {\rm sign(s)} x_{(\uu_s,\uv_s)}
\end{equation}
as formally introduced in Definition \ref{defn:linear-pl}.
This is a linear relation in the projective space $\PP_\buw$ 
 where $\buw={\rm sort} \; (\uu_s \vee \uv_s)$
for any (hence for all) $s \in S_F$, and we call it
the linearized $\pl$ relation induced by $F$.
{\it We let $L_\sF$ be the set of all linearized $\pl$ relations
where $\sF$ denotes the set of all $\pl$ relation.}

\begin{thm}\label{eqs-F-intro} 
Assume that the base field has characteristic zero.
Then,  $\FF^{d, E}_*$ as a closed  subscheme  of 
$\PP(\wedge^d E) \times \prod_{\buw \in \La^\sort_{d,[n]}}  \PP_\buw$
 is  defined by the binomial relations in $\cB^\wp$ and $\cB^\frb$ together with 
 all the linearized $\pl$ relations in $L_\sF$ 
 and all the  $\hpl$ relations in $\sF^{\vp\vr}$ 
(introduced in Definition \ref{defn:h-pl} and
 characterized in Proposition \ref{h-pl}).
\end{thm}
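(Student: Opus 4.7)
My plan is to build on Proposition \ref{eqs-F'-intro} and then add only those Pl\"ucker-type relations needed to cut $\Gr^{d,E} \times \prod_\buw \PP_\buw$ out of $\PP(\wedge^d E) \times \prod_\buw \PP_\buw$ scheme-theoretically. I first verify that every listed relation vanishes on $\FF^{d,E}_*$. The $\wp$- and $\frb$-binomials vanish on $\FF^\wdep_*$ by Proposition \ref{eqs-F'-intro}, hence on the subscheme $\FF^{d,E}_* \subset \FF^\wdep_*$. For any $F \in \sF$, on the dense open $\fq_\Gr^{-1}(\UU^{d,E}/\TTb) \subset \FF^{d,E}_*$ the coordinates on each factor $\PP_\buw$ satisfy $[x_{(\uu,\uv)}] = [p_\uu p_\uv]$, so $L_F$ is proportional to $F$, which vanishes on $\Gr^{d,E}$; taking closures gives $L_F = 0$ on $\FF^{d,E}_*$. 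The same density argument, combined with the construction of $\hpl$ relations in Definition \ref{defn:h-pl}, yields the vanishing of every relation in $\sF^{\vp\vr}$.

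For the reverse inclusion, let $Y$ denote the closed subscheme of $\PP(\wedge^d E) \times \prod_\buw \PP_\buw$ cut out by $\cB^\wp \cup \cB^\frb \cup L_\sF \cup \sF^{\vp\vr}$. By the previous step, $\FF^{d,E}_* \subset Y$; by Proposition \ref{eqs-F'-intro}, $Y \subset \FF^\wdep_*$. Since $\FF^{d,E}_*$ is the main component of $\FF^\wdep_* \cap (\Gr^{d,E} \times \prod_\buw \PP_\buw)$, it suffices to show that every Pl\"ucker relation $F$ vanishes on $Y$ scheme-theoretically. The key algebraic identity, obtained by applying a $\wp$-binomial to each monomial of $F$, is
\begin{equation*}
x_{(\uu_0, \uv_0)}\, F \;\equiv\; p_{\uu_0}\, p_{\uv_0}\, L_F \pmod{\cB^\wp},
\end{equation*}
valid for any $(\uu_0, \uv_0) \in \La_\buw$, where $\buw$ is the common sorted pair of all terms of $F$. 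On $Y$, where both $L_F$ and $\cB^\wp$ vanish, this yields $x_{(\uu_0, \uv_0)}\, F \equiv 0$, hence $F$ vanishes on the open locus of $Y$ where at least one such $x_{(\uu_0, \uv_0)}$ is nonzero.

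The residual task, handling points of $Y$ at which all $x_{(\uu_0, \uv_0)}$ with $(\uu_0,\uv_0) \in \La_\buw$ vanish simultaneously, is precisely the role of $\sF^{\vp\vr}$: by Proposition \ref{h-pl}, for each such degeneration pattern $\sF^{\vp\vr}$ furnishes multi-homogeneous relations that encode $F$ in terms of variables surviving at that stratum. A stratum-wise induction on the combinatorial pattern of vanishing $x$-blocks then forces $F \equiv 0$ throughout $Y$, proving $Y = \FF^{d,E}_*$. The main obstacle will be this final stratum-wise argument: one must classify exactly which simultaneous vanishing configurations can occur on $Y$, using the square-freeness and $\vr$-linearity of $\cB^\frb$ from Corollary \ref{cor:linear and free}, and then verify that Proposition \ref{h-pl} supplies enough $\hpl$ relations at each stratum to force scheme-theoretic rather than merely set-theoretic vanishing.
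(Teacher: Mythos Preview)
Your key identity $x_{(\uu_0,\uv_0)}\,F \equiv p_{\uu_0}p_{\uv_0}\,L_F \pmod{\cB^\wp}$ is exactly the one the paper uses (equation \eqref{F=pL}), but you then set up and try to solve a ``residual case'' that does not exist. The $[x_{(\uu,\uv)}]_{(\uu,\uv)\in\La_\buw}$ are the \emph{homogeneous} coordinates on the factor $\PP_\buw$, so at every point of $\PP(\wedge^d E)\times\prod_\buw\PP_\buw$ at least one of them is nonzero. The paper simply works on the standard affine charts of Definition \ref{fv-sR}: on the chart where $x_{(\ua_\buw,\ub_\buw)}$ is a unit, the identity immediately yields $F\in\langle\cB^\wp,L_F\rangle$. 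No stratum-wise induction is needed, and $\sF^{\vp\vr}$ plays no role in eliminating $\sF$.

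This also means you have misidentified the role of $\sF^{\vp\vr}$. These are not auxiliary relations designed to handle degenerations of the $x$-coordinates; they are genuine multi-homogeneous elements of $\ker^\mh\vi_\Gr$ (polynomials $f$ with $\vi(f)=hF$, $\deg_\vr f>0$, coprime terms, $f\notin L_\sF$) that in general cannot be generated by $\ker^\mh\vi$, $\sF$, and $L_\sF$ alone. The paper's route is algebraic throughout: Lemma \ref{defined-by-ker} says $\FF^{d,E}_*$ is cut out by $\ker^\mh\vi_\Gr$; Lemma \ref{ker-hF-generate} gives the generating set $\langle\ker^\mh\vi,\sF,L_\sF,\sF^{\vp\vr}\rangle$; the proof of Proposition \ref{eqs-F'-intro} reduces $\ker^\mh\vi$ to $\cB^\wp\cup\cB^\frb$ chart-wise; and the identity above eliminates $\sF$ in favour of $L_\sF$ chart-wise. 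That is the entire argument.

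Finally, your geometric reduction ``$\FF^{d,E}_*$ is the main component of $\FF^\wdep_*\cap(\Gr^{d,E}\times\prod\PP_\buw)$, so it suffices to show each $F$ vanishes on $Y$'' is a gap: even granting that $F$ vanishes on $Y$, you only obtain $Y\subset\FF^\wdep_*\cap(\Gr^{d,E}\times\prod\PP_\buw)$, and if that intersection were strictly larger than $\FF^{d,E}_*$ you would not get $Y\subset\FF^{d,E}_*$. The paper bypasses this by never comparing schemes geometrically; it shows directly that the listed relations generate $\ker^\mh\vi_\Gr$ on every chart.
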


See Examples \ref{exam-h pl} and \ref{exam-vr pl}
for  concrete cases of  $\hpl$ relations.

To describe the equations of the $\fG$-quotients $ \HH^\wdep $ 
 in $\prod_{\buw \in \La^\sort_{d,[n]}}  \PP_\buw$, 
 we let
$$R_\vr=\kk[x_{(\uu, \uv)}]_{(\uu,\uv) \in \La_{d,[n]}} \;\;
\hbox{and} \;\;\;
\vi_\vr: R_\vr \to R_0$$ be the restriction of $\vi$ of \eqref{vi-intro} to $R_\vr$.
This corresponds to the embedding 
$$\HH^\wdep \lra  \prod_{\buw \in \La^\sort_{d,[n]}} \PP_\buw$$
as in Proposition \ref{prop1}.

We let $\cB^\vr$ be the set of 
the binomials in $\cB^\frb \cap R_\vr$.
A binomial in $\cB^\vr$ is called a $\vr$-binomial.
See Definition \ref{defn:cBvr} and Example \ref{exam:Bq}.

By deriving the multi-homogenous kernel $\ker^\mh_\vr$, we obtain

\begin{prop}\label{eqs-H'-intro} 
 The $\fG$-quotient
$ \HH^\wdep $ as a closed subscheme of $\prod_{\buw \in \La^\sort_{d,[n]}} \PP_\buw$
is defined by the $\vr$-binomials of $\cB^\vr$.
\end{prop}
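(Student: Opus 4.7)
The plan is to identify the defining multi-homogeneous ideal of $\HH^\wdep$ inside $\prod_{\buw\in\La^{\sort}_{d,[n]}}\PP_\buw$ with $\ker^\mh \vi_\vr$, and then to exhibit $\cB^\vr$ as a generating set of $\ker^\mh \vi_\vr$ by an induction that separates the ``indecomposable'' binomials from the rest.

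First, I would establish the identification of the defining ideal. By the definition preceding Proposition \ref{prop1}, on the open locus $\UU(\wedge^d E)/\TTb$ the embedding into $\prod_\buw \PP_\buw$ is the restriction of $\Theta^\bcd$ to the $\buw$-factors, given factor-wise by $[p_\uu]\mapsto[p_\uu p_\uv]_{(\uu,\uv)\in\La_\buw}$. Each monomial $p_\uu p_\uv$ with $(\uu,\uv)\in\La_\buw$ is $\TTb$-invariant (the equality $\sort(\ba\vee\bb)=\bko\vee\bke$ in \eqref{La-buw} forces these monomials to share a common $(\GGm)^n$-weight), so the morphism factors through the geometric quotient and the pullback of $R_\vr$ into $R_0$ is the image of $\vi_\vr$. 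Since $\HH^\wdep$ is the closure, its multi-homogeneous defining ideal is precisely $\ker^\mh \vi_\vr$.

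Second, I would reduce to binomial generators. Because $\vi_\vr$ sends each variable to a monomial in $R_0$, it is a toric map from a polynomial ring, so $\ker\vi_\vr$ is generated by binomials $\bm-\bm'$ with $\vi_\vr(\bm)=\vi_\vr(\bm')$; intersecting with the multi-homogeneous part gives a generating set of $\ker^\mh \vi_\vr$ by multi-homogeneous binomials $F=\bm-\bm'\in R_\vr$.

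Third, I would carry out the induction that produces $\vr$-binomials. Given such $F=\bm-\bm'$, note that $F$ involves no $p$-variables, so the $\wp$-reduction clause in Definition \ref{defn:frb} is vacuous. Either $F$ admits no decomposition $F=\bm_1\bm_2-\bm_1'\bm_2'$ with $\bm_1-\bm_1'$ and $\bm_2-\bm_2'$ both in $\ker\vi_\vr$ of strictly smaller total degree and with matched multi-degrees, in which case $F$ is a $\frb$-binomial lying in $R_\vr$, hence $F\in\cB^\vr$; or such a decomposition exists, and the identity
\[
\bm_1\bm_2-\bm_1'\bm_2' \;=\; (\bm_1-\bm_1')\bm_2 \;+\; \bm_1'(\bm_2-\bm_2')
\]
expresses $F$ modulo lower-degree multi-homogeneous elements of $\ker^\mh\vi_\vr\cap R_\vr$, to which the induction hypothesis applies. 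Combining these two cases shows that $\cB^\vr$ generates $\ker^\mh \vi_\vr$, and since each element of $\cB^\vr$ vanishes on $\HH^\wdep$ by construction, the proposition follows.

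The main obstacle is justifying, in the decomposable case, that one can always produce $\bm_1,\bm_1',\bm_2,\bm_2'\in R_\vr$ with $\bm_1$ and $\bm_1'$ (and likewise $\bm_2$ and $\bm_2'$) sharing a common multi-degree in $\prod_\buw[x_{(\uu,\uv)}]_{(\uu,\uv)\in\La_\buw}$, so that the factor binomials remain multi-homogeneous and the induction genuinely decreases complexity. This requires the combinatorial analysis of how $\La_\buw$-blocks can be paired inside a monomial equality $\vi_\vr(\bm)=\vi_\vr(\bm')$; precisely this combinatorial control is what the $\frb$-binomial definition is engineered to capture, and I would invoke the multi-homogeneous-kernel machinery (referred to as $\ker^\mh_\vr$) developed in the main text to certify that the failure of such a decomposition is exactly the $\vr$-binomial condition.
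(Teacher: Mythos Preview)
Your approach is essentially the paper's: identify the defining ideal as $\ker^\mh\vi_\vr$, reduce to binomials, and then inductively decompose until reaching $\fb$-irreducible pieces, which automatically lie in $\cB^\vr$ since no $p$-variables appear. The paper packages the last step as Lemma~\ref{ker-vivr}, whose proof follows the same telescoping identity you wrote.

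The obstacle you flag is precisely what Lemma~\ref{auto homo} handles, and you should invoke it rather than the vague ``multi-homogeneous-kernel machinery.'' The point is that every binomial in $R_\vr$ is automatically a \emph{root} binomial (it contains no $p$-variables, hence admits no parent), and Lemma~\ref{auto homo} says that in any $\fb$-reducible decomposition $\bm_1\bm_2-\bm_1'\bm_2'$ of a root binomial the factors $\bm_i-\bm_i'$ are forced to be multi-homogeneous. Since $\bm\in R_\vr$ implies $\bm_1,\bm_2\in R_\vr$, the factors also stay in $R_\vr$, and the induction goes through cleanly. With that lemma cited, your proof is complete and matches the paper's.
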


Likewise, we let $$\vi_{\Gr,\vr}: \; R_\vr \lra R_0/I_\whwp$$ be the restriction of 
$\vi_\Gr$ of \eqref{vi-Gr-intro} to $R_\vr$. This corresponds to the embedding 
$$\HH^{d, E}\lra  \prod_{\buw \in \La^\sort_{d,[n]}} \PP_\buw$$
as stated in Theorem \ref{thm1}. 
By determining the multi-homogeneous kernel $\ker^\mh_{\Gr,\vr}$, we obtain

\begin{thm}\label{eqs-H-intro} 
Assume that the base field has characteristic zero.
Then, the $\fG$-quotient
$\HH^{d, E}$ as a closed subscheme of $\prod_{\buw \in \La^\sort_{d,[n]}} \PP_\buw$
is defined by the $\vr$-binomials of $\cB^\vr$,
the linearized $\pl$ relaltions of $L_\sF$, and
all the $\vrpl$ relations 
(introduced in Definition \ref{vr pl complex} and 
 characterized in Proposition \ref{h-pl}).
\end{thm}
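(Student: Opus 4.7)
The plan is to compute the multi-homogeneous kernel $\ker^\mh \vi_{\Gr,\vr}$ explicitly and identify it with the ideal generated by the three listed families. Equivalently, since $\HH^{d,E}$ is the image of the main component $\FF^{d,E}_*$ under the projection
\[
\pi: \PP(\wedge^d E) \times \prod_{\buw \in \Ladnsort} \PP_\buw \lra \prod_{\buw \in \Ladnsort} \PP_\buw,
\]
the defining multi-homogeneous ideal of $\HH^{d,E}$ is the elimination of the Plücker coordinates $p_\uu$ from the defining relations of $\FF^{d,E}_*$ given in Theorem \ref{eqs-F-intro}. I would cross-reference these two viewpoints throughout.

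First, I would verify that each of the three families lies in $\ker \vi_{\Gr,\vr}$. The $\vr$-binomials of $\cB^\vr \subset \cB^\frb$ lie in $\ker \vi_\vr$ by Proposition \ref{eqs-H'-intro}, hence a fortiori in $\ker \vi_{\Gr,\vr}$. For each Plücker relation $F = \sum_s \sgn(s) p_{\uu_s} p_{\uv_s} \in \sF$, we have $\vi_{\Gr,\vr}(L_F) = F + I_\wp = 0$, so $L_F \in \ker \vi_{\Gr,\vr}$. The $\vrpl$ relations lie in the kernel by their defining property in Definition \ref{vr pl complex} together with the characterization supplied by Proposition \ref{h-pl}.

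For the opposite inclusion, I would take a multi-homogeneous $H \in \ker^\mh \vi_{\Gr,\vr}$. Then $\vi_\vr(H) \in I_\wp$, so one may write $\vi_\vr(H) = \sum_j G_j F_j$ with $F_j \in \sF$ and $G_j \in R_0$. The goal is to lift this decomposition back to $R_\vr$: produce multi-homogeneous $\widetilde{G}_j \in R_\vr$ with $\vi_\vr(\widetilde{G}_j) \equiv G_j$ so that $H - \sum_j \widetilde{G}_j L_{F_j} \in \ker^\mh \vi_\vr$, and then apply Proposition \ref{eqs-H'-intro} to reduce the remainder to a combination of $\cB^\vr$. A term-by-term lift, where it exists, is unique up to $\wp$-binomial corrections; so after working modulo $\cB^\wp$ the naive lifting strategy reaches the desired expression whenever no higher-order obstruction arises.

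The main obstacle is that the lift $\widetilde{G}_j$ respecting multi-homogeneity need not always exist, because the pairing $p_{\uu_s} p_{\uv_s} \mapsto x_{(\uu_s,\uv_s)}$ is compatible with the sorted-pair grading only when $\uu_s \vee \uv_s$ lies in a single sorted block $\buw$. When the Plücker contributions straddle several $\buw$-components, the obstruction is captured at the level of $\FF^{d,E}_*$ by the $\hpl$ relations of $\sF^{\vp\vr}$; after eliminating the Plücker coordinates these descend precisely to the $\vrpl$ relations of Definition \ref{vr pl complex}. The hard step will be to show that no further $x$-only relation survives this elimination beyond $\cB^\vr$, $L_\sF$, and $\vrpl$, for which the parallel characterization in Proposition \ref{h-pl} serves as the essential combinatorial bookkeeping. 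The characteristic zero hypothesis enters exactly to guarantee that single-variable Plücker relations generate $I_\wp = I_\whwp$ in full (sidestepping the multi-variate phenomena of \cite{Abeasis}), so no additional generator families emerge from the elimination.
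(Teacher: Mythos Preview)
Your high-level setup matches the paper exactly: use the diagram identifying $\HH^{d,E}$ as the projection of $\FF^{d,E}_*$, so that its ideal is $\ker^\mh \vi_{\Gr,\vr}$, then compute that kernel. The paper finishes by invoking Lemma \ref{ker-vivr-Gr}, which combines Lemma \ref{ker-vivr} with Lemma \ref{ker-vrF-generate}.

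However, your argument for the nontrivial inclusion has a real gap. You propose to write $\vi_\vr(H)=\sum_j G_j F_j$ and then lift each $G_j\in R_0$ to some $\widetilde G_j\in R_\vr$. But $\vi_\vr$ is far from surjective: its image consists only of polynomials whose monomials factor as $\prod_i p_{\uu_i}p_{\uv_i}$ with each $(\uu_i,\uv_i)\in\La_{\buw_i}$. A coefficient $G_j$ has no reason to be of this shape (it may have odd degree, or an isolated factor $p_\uu$), so the lift $\widetilde G_j$ generally does not exist. Your diagnosis of the obstruction is also off: all terms $p_{\uu_s}p_{\uv_s}$ of a single Pl\"ucker relation $F$ share the \emph{same} sorted pair $\buw$, so $L_F$ never ``straddles'' several $\buw$-components.

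The paper avoids lifting altogether by reversing direction. Given $H\in\ker^\mh\vi_{\Gr,\vr}$ with $\vi_\vr(H)=hF$, one groups the monomials of $H$ itself (not of its image) according to which term of $hF$ they hit, writing $H=\sum_s\sgn(s)f_s$ with $\vi_\vr(f_s)=h\,p_{\uu_s}p_{\uv_s}$. After stripping common factors modulo $\ker^\mh\vi_\vr$, what remains is by definition a $\vrpl$ relation (or reduces to $L_F$ via Lemmas \ref{f-to-F} and \ref{f-to-LF}); this is exactly the discussion preceding Definition \ref{defn:h-pl}, transported to $R_\vr$ as in Lemma \ref{ker-vrF-generate}.
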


$\vrpl$ relations are special $\hpl$ relations. See again Example \ref{exam-vr pl}.

\begin{rem}
We conjecture that 
$$\hbox{\it  the binomial relations of $\HH^{2,E}$
in  $\prod_{\buw \in \La_{2,n}^\sort} \PP_\buw$ 
are generated by cubic ones.}$$
Here, $\HH^{2,E}$ should be isomorphic to the Hilbert quotient,
hence isomorphic to $\overline{M}_{0,n}$ according to 
Kapranov (\cite{Kap93b}), at least when  the base field is of characteristic zero. We were informed that the fact that 
$\HH^{2,E}$ is isomorphic  to $\overline{M}_{0,n}$ 
should follow by combining
[Remarques,  Page 11, \cite{La03}] 
and [Theorem 1.4, \cite{Fang2}].
\end{rem}

\subsection{Morphisms among $\fG$-quotients}


Suppose $E=\oplus_{I \in [n]} E_i$ with $\dim E_i =1$ for all $i \in [n]$.
For any proper subset $H \subset [n]$, we let  $E_H=\bigoplus_{h \in H} E_h$.

\begin{prop}\label{pr-I-intro} 
Fix any $I \subset [n]$ with $|I| =r< d$ and $J \subset [n]$ with $|J|>d$.  
Then, there exist two canonical surjective
morphisms 
$$ \xymatrix{
\HH^{d - r, E_{[n]\- I}}   &  \ar[l]_{\;\;\;\;\;\;\;{\rm int}_{[n]\- I}  } \; \HH^{d, E} \; \ar[r]^{{\rm proj}_J \;\;\; }
 &  \HH^{d, E_J}  }
 $$
naturally induced from  the intersection with $E_{[n]\- I}$
and the projection $E \lra E_J$. 
\end{prop}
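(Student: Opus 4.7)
The plan is to construct each morphism in three stages: (i) write it down as a rational map at the Grassmannian level that is regular on $\UU^{d,E}$, (ii) check torus-equivariance and descend to the geometric quotient $\UU^{d,E}/\TTb$, and (iii) extend across the boundary of $\HH^{d,E}$ using the explicit embedding of Theorem-Definition~\ref{thm1}.

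For $\mathrm{proj}_J$, take the linear projection $\pi_J: E \twoheadrightarrow E_J$ with kernel $E_{[n]\setminus J}$. On $\UU^{d,E}$ the hypothesis $|J|>d$ together with the nonvanishing of $p_\ua(K)$ for some $\ua \subset J$ forces $K \cap E_{[n]\setminus J} = 0$, so $K \mapsto \pi_J(K)$ is regular into $\UU^{d,E_J}$ with $p_\ua(\pi_J(K))=p_\ua(K)$ for $\ua \in \II_{d,J}$; the map is $(\GGm)^n \twoheadrightarrow (\GGm)^J$-equivariant. At the $\fG$-quotient level, $\Lambda^\sort_{d,J}$ sits naturally inside $\Lambda^\sort_{d,[n]}$, and for each $\buw \in \Lambda^\sort_{d,J}$ the index set $\Lambda_\buw$---hence $\PP_\buw$---is literally the same in both settings, since any pair sorting to $\buw$ must consist of tuples in $\II_{d,J}$. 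Thus $\mathrm{proj}_J$ is simply the restriction to $\HH^{d,E}$ of the coordinate projection $\prod_{\buw \in \Lambda^\sort_{d,[n]}} \PP_\buw \twoheadrightarrow \prod_{\buw \in \Lambda^\sort_{d,J}} \PP_\buw$, which lands in $\HH^{d,E_J}$ by density of $\UU^{d,E_J}/\TTb_J$ combined with properness.

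For $\mathrm{int}_{[n]\setminus I}$, the map $\iota_I : K \mapsto K \cap E_{[n]\setminus I}$ is regular on $\UU^{d,E}$ because the nonvanishing of every $p_{\ua \cup I}(K)$ forces the intersection to have the expected dimension $d-r$, with $p_\ua(\iota_I(K)) = \pm p_{\ua \cup I}(K)$ for $\ua \in \II_{d-r,[n]\setminus I}$; it is $(\GGm)^n \twoheadrightarrow (\GGm)^{[n]\setminus I}$-equivariant since $E_{[n]\setminus I}$ is a $(\GGm)^n$-stable summand. To describe the extension, associate to each $\buw=(\uwo,\uwe) \in \Lambda^\sort_{d-r,[n]\setminus I}$ the sorted pair $\buw^\uparrow := (\uwo \cup I,\uwe \cup I) \in \Lambda^\sort_{d,[n]}$; the assignment $(\uu,\uv) \mapsto (\uu \cup I,\uv \cup I)$ yields an injection $\Lambda_\buw \hookrightarrow \Lambda_{\buw^\uparrow}$ which identifies $\PP_\buw$ with a coordinate-linear subspace of $\PP_{\buw^\uparrow}$, and on $\UU^{d,E}/\TTb$ the rule $x_{(\uu,\uv)} \mapsto x_{(\uu \cup I,\uv \cup I)}$ reproduces $\iota_I$ after quotienting.

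The hard part will be upgrading this coordinate restriction from a rational map into a genuine morphism on all of $\HH^{d,E}$; the strategy is the valuative criterion. For each closed point $\fy \in \HH^{d,E}$, pick a one-parameter smoothing $\Spec R \to \HH^{d,E}$ (with $R$ a DVR) whose generic fibre lies in $\UU^{d,E}/\TTb$; composing with $\iota_I$ yields a $K(R)$-point of $\UU^{d-r,E_{[n]\setminus I}}/\TTb_{[n]\setminus I}$, which, by properness of $\HH^{d-r,E_{[n]\setminus I}}$, extends uniquely to an $R$-point of the target. Verifying that the resulting closed-fibre value is independent of the chosen smoothing---equivalently, that the first projection from the graph closure of the rational map back to $\HH^{d,E}$ is an isomorphism rather than merely birational---is the technical core, and I expect it to follow from a direct combinatorial analysis of the relations $\fB_\fy$ of Definition~\ref{defn:Ify} on the image of $\Lambda_\buw \hookrightarrow \Lambda_{\buw^\uparrow}$, showing that no point of $\HH^{d,E}$ has all selected entries of $y_{\buw^\uparrow}$ vanishing simultaneously. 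Finally, surjectivity of both $\mathrm{int}_{[n]\setminus I}$ and $\mathrm{proj}_J$ follows once one observes that on the open quotients both maps are dominant---any generic $K'$ in the target admits a sufficiently generic preimage $K \in \UU^{d,E}$ (for instance, by choosing a generic $d$-plane extending $K'$ or lifting $K'$)---so that properness of $\HH^{d,E}$ forces the closed image to fill $\HH^{d-r, E_{[n]\setminus I}}$ and $\HH^{d,E_J}$, respectively.
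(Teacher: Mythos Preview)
Your treatment of $\mathrm{proj}_J$ is correct and matches the paper exactly: it is the restriction to $\HH^{d,E}$ of the coordinate projection $\prod_{\buw \in \Lambda^\sort_{d,[n]}} \PP_\buw \to \prod_{\buw \in \Lambda^\sort_{d,J}} \PP_\buw$.

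For $\mathrm{int}_{[n]\setminus I}$ you have the right framework but overlook a combinatorial fact that makes your anticipated ``technical core'' evaporate. You write that $(\uu,\uv) \mapsto (\uu \cup I,\uv \cup I)$ is an \emph{injection} $\Lambda_\buw \hookrightarrow \Lambda_{\buw^\uparrow}$, identifying $\PP_\buw$ with a possibly proper coordinate subspace of $\PP_{\buw^\uparrow}$, and then worry whether some boundary point of $\HH^{d,E}$ might have all selected coordinates of $y_{\buw^\uparrow}$ vanish. But this map is in fact a \emph{bijection}: for any $(\ua,\ub) \in \Lambda_{\buw^\uparrow}$, each $i \in I$ occurs exactly twice in $\uwo^\uparrow \vee \uwe^\uparrow = \sort(\ua \vee \ub)$, and since $\ua,\ub \in \II_{d,[n]}$ have distinct entries, $i$ must appear once in $\ua$ and once in $\ub$; hence $I \subset \ua \cap \ub$ and $(\ua,\ub)$ is in the image. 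So $\PP_\buw = \PP_{\buw^\uparrow}$ literally (this is the identification \eqref{iden} recorded just before the proof in the paper), and $\mathrm{int}_{[n]\setminus I}$ is, exactly like $\mathrm{proj}_J$, simply the restriction of a coordinate projection $\prod_{\buw \in \Lambda^\sort_{d,[n]}} \PP_\buw \to \prod_{\buw \in \Lambda^\sort_{(d,[n]);\supset I}} \PP_\buw$ followed by relabeling. The valuative-criterion argument you sketch would work, but it is unnecessary: once the bijection is observed, the morphism is defined everywhere for free and the paper's proof is two commutative squares.
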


This theorem is an analogous of [Theorem 1.6.6, \cite{Kap93}].
The  two morphisms when restricted to the open subset $\UU^{d, E}/\TT$
have been  used by Beilinson, MacPherson and Schechtman to propose a theory of motivic cohomology (\cite{BMS}).

When $E=\kk^{2d}$, we let 
$\HH^{d,2d}$ denote the corresponding $\fG$-quotient $\HH^{d,E}$. 

\begin{prop}\label{d2d-intro} For any vector space $E$ with $\dim E \ge 2d$, 
there exists a natural closed embedding
\begin{eqnarray} \label{in-2/2d} 
\xymatrix{
\HH^{d,  E} \ar @{^{(}->}[r]  & (\HH^{d,2d})^{n \choose 2d}.
}
\end{eqnarray}
\end{prop}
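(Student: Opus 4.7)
The plan is to build $\Phi$ as the product of projection morphisms ${\rm proj}_J$ from Proposition \ref{pr-I-intro}, with $J$ ranging over all $2d$-element subsets of $[n]$, and then verify that $\Phi$ is a closed embedding using the explicit description of $\HH^{d,E}$ inside $\prod_{\buw \in \La^\sort_{d,[n]}} \PP_\buw$ given in Corollary \ref{cor1-e}. For each $J \in \binom{[n]}{2d}$, a choice of identification $E_J \cong \kk^{2d}$ yields $\HH^{d,E_J} \cong \HH^{d,2d}$, and I take
$$\Phi \; = \; \prod_{J \in \binom{[n]}{2d}} {\rm proj}_J \; : \; \HH^{d,E} \lra (\HH^{d,2d})^{\binom{n}{2d}}.$$

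Next I analyze $\Phi$ on coordinates. For any $\buw = (\bko, \bke) \in \La^\sort_{d,[n]}$, the sorted string $\bko \vee \bke$ has exactly $2d$ entries, so the set of distinct indices appearing in $\buw$ has cardinality at most $2d$; hence there exists $J \in \binom{[n]}{2d}$ containing every such index, i.e.\ $\buw \in \La^\sort_{d,J}$. For such $\buw$, the definition \eqref{La-buw} of $\La_\buw$ forces every pair $(\uu,\uv) \in \La_\buw$ to satisfy $\uu,\uv \subset J$, so the projective space $\PP_\buw$ appearing in the ambient space of $\HH^{d,E_J}$ is literally identical to the $\PP_\buw$ appearing in the ambient space of $\HH^{d,E}$. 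From the construction in Proposition \ref{pr-I-intro}, the morphism ${\rm proj}_J$ is induced by the surjection $E \twoheadrightarrow E_J$, which on $\pl$ coordinates retains $p_\uu$ for $\uu \subset J$ and kills the rest; consequently, on the ambient product of projective spaces ${\rm proj}_J$ is simply the projection onto the factors $\PP_\buw$ indexed by $\buw \in \La^\sort_{d,J}$.

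Combining these observations, $\Phi$ fits into a factorization
$$\HH^{d,E} \; \hookrightarrow \; \prod_{\buw \in \La^\sort_{d,[n]}} \PP_\buw \; \hookrightarrow \; \prod_{J \in \binom{[n]}{2d}} \prod_{\buw \in \La^\sort_{d,J}} \PP_\buw$$
whose image lies in $(\HH^{d,2d})^{\binom{n}{2d}}$: the first arrow is the closed embedding of Corollary \ref{cor1-e}, and the second is the diagonal-type map that sends each $\PP_\buw$ on the left to all of its copies on the right indexed by those $J$ containing the index set of $\buw$. Since any diagonal map of a projective space into a finite product of copies of itself is a closed embedding, this composite is a closed embedding; combining this with the fact that $(\HH^{d,2d})^{\binom{n}{2d}}$ is closed in the big product of $\PP_\buw$'s, it follows that $\Phi : \HH^{d,E} \hookrightarrow (\HH^{d,2d})^{\binom{n}{2d}}$ is a closed embedding.

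The main obstacle will be the second paragraph: justifying that ${\rm proj}_J$ coincides on the common $\PP_\buw$-factors with the naive projection. This requires carefully unpacking the construction of ${\rm proj}_J$ from the surjection $E \twoheadrightarrow E_J$, and verifying that the defining relations of $\HH^{d,E_J}$ in its ambient product (namely the $\vr$-binomials of Proposition \ref{eqs-H'-intro}) agree with those inherited from the $\PP_\buw$-coordinates of $\HH^{d,E}$ for $\buw \in \La^\sort_{d,J}$. This should follow from the explicit form of the $\vr$-binomials in $\cB^\vr$ and the compatibility of $\pl$ coordinates under $E \twoheadrightarrow E_J$.
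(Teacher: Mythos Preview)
Your approach is essentially the same as the paper's. The paper indexes the $\binom{n}{2d}$ factors by ``distinguished'' sorted pairs $\wtbuw\in\rladnsort$ (those with all $2d$ entries distinct), but the correspondence $\wtbuw\mapsto\supp(\wtbuw)$ is a bijection onto $\binom{[n]}{2d}$, so your indexing by $J\in\binom{[n]}{2d}$ is equivalent; from there both proofs factor the map through the diagonal-type inclusion $\prod_{\buw\in\La^\sort_{d,[n]}}\PP_\buw\hookrightarrow\prod_J\prod_{\buw\in\La^\sort_{d,J}}\PP_\buw$ and conclude.

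One remark on your final paragraph: the compatibility you flag as the ``main obstacle'' is already built into the proof of Proposition~\ref{pr-I-intro}. That proof constructs ${\rm proj}_J$ precisely as the restriction to $\HH^{d,E}$ of the coordinate projection $\prod_{\buw\in\La^\sort_{d,[n]}}\PP_\buw\to\prod_{\buw\in\La^\sort_{d,J}}\PP_\buw$ (see diagram~\eqref{7.0}), and shows its image is $\HH^{d,E_J}$; no separate verification of $\vr$-binomial compatibility is needed.
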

This renders the $\fG$-quotients $\HH^{d,2d}$ 
 of some special importance.

\medskip

Let $\fL^{d, E}$ be the set  of all multi-linear coordinate subspaces 
of   $\prod_{\buw \in \La^\sort_{d,[n]}} \PP_\buw$.
Interestingly, it induces a stratification of $\HH^{d, E}$ by locally closed strata:
\begin{equation}\label{LLstr-intr} 
\HH^{d, E}=\bigsqcup_{\cL \in \fL^{d, E}} \HH^{d, E}_\cL .
\end{equation}
See \eqref{l-stra} for details.

\begin{rem}\label{more to come}
The $\fG$-quotients/families of $\Gr^{d,E}$ by $\TT_\bcd$
in this paper is constructed by using a particular class of
Gr\"obner bases of the closures of
$\TT_\bcd$-orbits through general points. Other fixed 
Gr\"obner bases can also be used to introduce $\fG$-quotients/families.
It should interesting to compare these quotients/families.

Further, a key idea of this article, namely, using Gr\"obner bases
to construct quotients/families, extends to general 
linear  actions. Moreover, such constructions can be applied to study
the moduli spaces of projective hypersurfaces
\cite{Hu2026}.


In addition,  we can correspondingly  investigate the 
$\PGL_d$-equivariant geometry on $(\PP^{d-1})^n$.
For instance, we can ask whether {\it there exists a meaningful generalization of $``$cross-ratios$"$ for points
 on the projective plane, space, etc.} Our answer is:
points of $\HH^{d,E}$ are cross-ratios,
the details of which and more will appear elsewhere. 
\end{rem}

\medskip

It would be interesting to establish interesting connections between the results of  this article
and those of several earlier works \cite{Alex02, Alex15}, \cite{AB04a, AB04b},
\cite{Corey},\cite{Faltings97, Faltings01}, 
 \cite{GM10}, \cite{GiWu}, \cite{HKT}, \cite{KT06, KT09}, and
\cite{La99, La03}, \cite{SchT}.   In particular, Question \ref{relation with La03}
asks about relations with Lafforgue's compactifications of quotients of matroid strata. After completing this article, we noticed \cite{Fang2} 
where the authors investigate
the Faltings-Lafforgue compactifications.
 We were informed that investigations of the case of 1-dimensional torus actions can be found in \cite{Fang1, FW}.

\medskip

\centerline{A List of Fixed Notations Used Throughout}


\smallskip

\noindent
$[n]$: the set $\{1, 2, \cdots, n\}$ for any finite positive integer $n$.

\noindent
$\II_{d, [n]}$: the set of all sequences of $d$ distinct integers between 1 and $n$.



\noindent
$\Gr^{d, E}$: the Grassmannian of $d$-subspaces in 
a vector space $E$. 

\noindent
$\PP(\wedge^d E)$: the projective space with $\pl$ coordinates 
$p_\uu, \uu \in \II_{d,[n]}$. 

\noindent
$E_\bcd$: the vector space $E$ together with a decomposition $E=\oplus_{i=1}^n E_i$
with $\dim E_i=r_i$.

\noindent 
$\TT_\bcd$: the center $(\GG_m)^n$ of $\Aut (E_\bcd)$ 
modulo the diagonal $\GG_m$.

\noindent
$\HH^{d, E_\bcd}$: A $\fG$-quotient of $\Gr^{d, E}$ by the subtorus $\TT_\bcd=(\GGm)^n/\GGm$.

\noindent
$\FF^{d, E_\bcd}$: the total scheme of the $\fG$-family over $\HH^{d, E_\bcd}$.

\noindent
$\FF^{d, E_\bcd}_*$: the main component of 
$\FF^{d, E_\bcd}$.

\noindent
$ \HH^\wdebp$:  A $\fG$-quotient of    the Pl\"ucker projective space $\PP(\wedge^d E)$
by $\TTb$. 

\noindent
$ \FF^\wdebp$: the total scheme of the $\fG$-family over $ \HH^\wdebp$.

\noindent
$ \FF^\wdebp_*$: the main component of $ \FF^\wdebp$.

\noindent
$\rS_\bcd$: $\{ \ui =(i_1,\cdots, i_n) \in \NN^n \mid i_1 + \cdots + i_n= d, \;
0\le i_\alpha \le r_\alpha, \alpha \in [n]\}.$

\noindent
$\JJ_\bcd$: weakly increasing strings of length $d$ over   $[n]$
having at most $r_i$ occurrences of  $i \in [n]$.

\noindent
$\JJsrt$: the subset of all sorted  pairs in $\JJ_\bcd^{(2)}$. 

\noindent
$\La^\sort_{d,[n]}$: the set of all sorted pairs in $\II_{d,[n]}^{(2)}$ 
containing at least  $d+2$ 
distinct integers.






\noindent
$\kk$: an arbitrarily fixed base field.
Throughout, all schemes are defined over $\kk$.


\section{A family of general toric ideals and  Gr\"obner bases}\label{toric-ideals}

 \subsection{Combinatorial preliminaries}
Corresponding to the set 
$$\rS_\bcd=\{ \ui =(i_1,\cdots, i_n) \in \NN^n \mid i_1 + \cdots + i_n= d, \;
0\le i_\alpha \le r_\alpha, \alpha \in [n]\},$$
there is a homomorphism
\begin{equation}\label{lift-semi-hom}
\rho_\bcd: \kk[\bx] 
 \lra \kk [\bt^\pm], \;\;\;  x_\ui \to  \bt^{\ui},
\end{equation}
where $\bx=(x_\ui)_{\ui \in  \rS_\bcd}$ and $\bt=(t_1,\cdots, t_n)$.
The kernel of $\rho_\bcd$,  denoted $I_\bcd$, is a homogeneous toric ideal
of $\kk[\bx]$.

Following 14.A of \cite{St}, there is a natural bijection between the elements of $\rS_\bcd$
and weakly increasing strings of length $d$ over the alphabet $[n]=\{1, 2, \cdots, n\}$
having at most $r_\alpha$ occurrences of the letter $\alpha$.
Under this bijection, the vector $(i_1,\cdots, i_n) \in \rS_\bcd$ is mapped to the weakly
increasing string
$$\ba=a_1a_2\cdots a_d= \underbrace{1\cdots 1}_{i_1 \rm \; times} 
 \underbrace{2\cdots 2}_{i_2  \rm \; times} 
\cdots \underbrace{n \cdots n}_{i_n \rm \; times} $$
We let $\JJ_\bcd$ 
 be the set of all the above  weakly increasing strings. 
Thus, we have bijections
\begin{equation}\label{bj0} 
 \fj:  \rS_\bcd \lra \JJ_\bcd,  \;\; \fri: \JJ_\bcd \lra \rS_\bcd
\end{equation}
that are inverse to each other. 


\begin{defn} 
For any sequence of elements
$$\ba=a_1a_2 \cdots a_d,  \bb=b_1b_2 \cdots b_d,\cdots,
\bc=c_1c_2 \cdots c_d  \in \JJ_\bcd,  $$
we set
 $$\ba\vee \bb \vee \cdots \vee \bc:=a_1b_1 \cdots c_1 a_2b_2 \cdots c_2 a_3 \cdots a_db_d \cdots c_d.$$
The sequence $(\ba, \bb, \cdots, \bc)$
 is said to be sorted if $\ba\vee \bb \vee \cdots \vee \bc$ is sorted, that is, 
$$a_1\le b_1 \le \cdots \le c_1 \le a_2 \le b_2 \le \cdots 
\le c_2 \le a_3 \le \cdots \le a_d \le b_d \le \cdots \le c_d.$$
We will use `` {\rm sort}" to denote the operator that takes any string over $[n]$ or
a sequence of strings of $\JJ_\bcd$ as above and sorts it into 
weakly increasing order or into a sorted one.

A monomial in $\kk[x_{\ba}]_{\ba \in \JJ_\bcd}$ is said to be sorted if it can be 
\underline{expressed} as 
$x_\ba x_\bb \cdots x_\bc$ such that the sequence $(\ba, \bb, \cdots, \bc)$ 
or $\ba\vee \bb \vee \cdots \vee \bc$ is  sorted.
\end{defn}

\begin{rem}
Observe here that $x_{113}x_{123}$ is sorted, 
but the same monomial expressed as $x_{123}x_{113}$, corresponding to
$((123),(113))$, would not be sorted.
\end{rem}

We set
$$\JJsrt \subset \JJ^{(2)}_\bcd$$ 
be the subset consisting of
pairs $\bkoe$ such that $\bko \vee \bke$ is sorted.

\begin{prop}\label{bi-eq-0} {\rm (Theorem 14.2, \cite{St})}
There exists a term order $\prec_\bx$ in $\kk[x_{\ba}]_{\ba \in \JJ_\bcd}$ such that 
the sorted monomials are precisely
the $\prec_\bx$-standard monomials modulo $I_\bp$. 
The corresponding  reduced Gr\"obner basis of $I_\bullet$ equals
\begin{equation}\label{GrobnerBasis}
\fG_\bcd=\{  x_{\ba} x_{\bb} -  x_\bko x_\bke \mid
{\rm sort} (\ba \vee \bb) = \bko \vee \bke\}
\end{equation}
for all sorted pairs $\bkoe \in \JJsrt$.
\end{prop}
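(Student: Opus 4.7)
The plan is to exhibit the term order $\prec_\bx$ as a \emph{sortorder} in the sense of Sturmfels. Given two monomials $m = x_{\ba_1} \cdots x_{\ba_k}$ and $m' = x_{\bb_1} \cdots x_{\bb_k}$ of the same total degree (homogeneity of $I_\bcd$ reduces us to this case), first form $\sort(\ba_1 \vee \cdots \vee \ba_k)$ and $\sort(\bb_1 \vee \cdots \vee \bb_k)$ as weakly increasing strings of length $dk$ over $[n]$, and compare them lexicographically; break ties by a fixed lex refinement on the unsorted sequences. One verifies directly that this is a multiplicative total order refining divisibility, hence a term order on $\kk[x_\ba]_{\ba \in \JJ_\bcd}$.

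Next, I would check that each binomial $x_\ba x_\bb - x_\bko x_\bke$ with $\sort(\ba \vee \bb) = \bko \vee \bke$ lies in $I_\bcd$. Under the bijection $\fri$ of \eqref{bj0}, both terms map via $\rho_\bcd$ to $\bt^{\fri(\ba) + \fri(\bb)}$, since the multisets of letters in $\ba \vee \bb$ and $\bko \vee \bke$ coincide. Moreover, under $\prec_\bx$ the leading term is $x_\ba x_\bb$ precisely when the pair $(\ba,\bb)$ is unsorted, while $x_\bko x_\bke$ is the unique sorted representative on the fibre. Thus the leading monomials of elements of $\fG_\bcd$ are exactly the unsorted degree-two monomials, and a monomial lies in $\langle \mathrm{in}_{\prec_\bx} \fG_\bcd \rangle$ if and only if it contains some unsorted pair of factors.

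The reduction step is that any unsorted monomial strictly decreases under application of $\fG_\bcd$: if $x_\ba x_\bb$ is a factor with $(\ba,\bb)$ unsorted, replacing it by $x_\bko x_\bke$ with $\bko \vee \bke = \sort(\ba \vee \bb)$ leaves the global sorted string $\sort(\ba_1 \vee \cdots \vee \ba_k)$ invariant but strictly reduces the tie-breaking lex value, and iterating must terminate in a sorted monomial. Finally, sorted monomials are linearly independent modulo $I_\bcd$: the map $\rho_\bcd$ sends a sorted monomial $x_{\bko_1} \cdots x_{\bko_k}$ to $\bt^{\sum_j \fri(\bko_j)}$, and from this exponent vector the sorted sequence $(\bko_1, \ldots, \bko_k)$ is uniquely recovered by distributing the multiplicities of each letter column-by-column across $k$ strings of length $d$. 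Injectivity on sorted monomials, combined with termination of reduction, shows that $\fG_\bcd$ generates an ideal whose leading terms already suffice to span a complementary basis to the sorted monomials; by the standard criterion this means $\fG_\bcd$ is a Gr\"obner basis. Reducedness is automatic since no leading term $x_\ba x_\bb$ divides any $x_\bko x_\bke$ (the latter being sorted).

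The main combinatorial subtlety is the column-wise reconstruction step. One must show that given a non-negative integer vector $(c_1, \ldots, c_n)$ with $c_\alpha \le k r_\alpha$ for all $\alpha$ and $\sum_\alpha c_\alpha = dk$, the greedy column-by-column placement of $c_\alpha$ copies of the letter $\alpha$ into $k$ weakly increasing strings of length $d$ yields strings each lying in $\JJ_\bcd$, i.e.\ each with at most $r_\alpha$ occurrences of $\alpha$. This is a pigeonhole-style lemma elementary in nature, but it is the combinatorial heart of the statement and is what distinguishes the constrained hypersimplex/$\rS_\bcd$ situation from a generic toric ideal. Once this is in hand, the injectivity of $\rho_\bcd$ on sorted monomials closes the argument.
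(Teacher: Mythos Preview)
The paper does not prove this proposition; it is quoted as Theorem~14.2 of \cite{St} and used as a black box. Your sketch follows the argument there: the sorting binomials lie in $I_\bcd$ since both terms have the same $\rho_\bcd$-image; iterated sorting reduces every monomial to a sorted one; and sorted monomials map injectively under $\rho_\bcd$ by the column-wise reconstruction (your pigeonhole bound $\lceil c_\alpha/k\rceil\le r_\alpha$ is exactly what forces each recovered string back into $\JJ_\bcd$). Hence sorted monomials form a $\kk$-basis of $\kk[\bx]/I_\bcd$ and $\fG_\bcd$ is a Gr\"obner basis. This is the right strategy, and you have correctly isolated the reconstruction step as the combinatorial heart.

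The one place that needs tightening is the term order itself. ``Break ties by a fixed lex refinement on the unsorted sequences'' is not specified precisely enough to verify: a monomial has no canonical ordering of its factors, and under one natural reading---list the factors in increasing lex order on $\JJ_\bcd$ and compare the resulting tuples lexicographically---the binomial $x_{12}x_{34}-x_{13}x_{24}$ for $d=2$, $n=4$ would have the \emph{sorted} term $x_{13}x_{24}$ as its leading term, since $(12,34)$ precedes $(13,24)$. Your termination claim (``strictly reduces the tie-breaking lex value'') rests on the same unspecified choice. Sturmfels gives an explicit construction of $\prec_\bx$ that does select the unsorted term in every binomial of $\fG_\bcd$; once that is in hand, the rest of your argument goes through unchanged.
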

Here and in the sequel, we refer the reader  to \cite{St} 
for Gr\"obner basis. A few basic definitions and terminologies can also be found in 
the appendix \S \ref{appendix}.

\subsection{Gr\"obner bases for the closures of general torus orbits}
In what follows, we also use $[z_\uu]_{\uu \in \II_{d, [\tn]}}$ to denote the $\pl$ coordinates of $\PP\wdep$,
especially when $[p_\uu]_{\uu \in \II_{d, [\tn]}}$ is used to stand for a particular fixed point $\bp$ of $\PP\wdep$.

There is a natural homomorphism
\begin{equation}\label{uu-ba} \ze: \;\;
\kk[z_\uu]_{\uu \in \II_{d, [\tn]}} \lra \kk[x_{\ba}]_{\ba \in \JJ_\bcd}, \;\;\; z_\uu \to x_\ba \end{equation}
where $\ba \in \JJ_\bcd$ is the unique element such that $\uu \in \II_{\fri(\ba)}$ 
(see \eqref{IIui-intr})
and $\fri$ is the bijection $\JJ_\bcd \to \rS_\bcd$ of \eqref{bj0}.
Note that $\ze^{-1}(x_\ba)=\{ z_\uu \mid \uu \in \II_{\fri(\ba)} \}$.
For any $\ba \in \JJ_\bcd$, we choose and fix any term order $\prec_\ba$ on 
$\kk[z_\uu]_{z_\uu \in \ze^{-1}(x_\ba)}$.  By Lemma \ref{ext-order},
the term order $\prec_\bx$ of Proposition \ref{bi-eq-0} and  the term orders $\prec_\ba, \ba \in \JJ_\bcd$,
induce a term order $\prec_\bz$ on $\kk[z_\uu]_{\uu \in \II_{d, [\tn]}} $.
Then, we let $\uu_\ba$ be such that 
\begin{equation}\label{uuba}
 z_{\uu_\ba}=\min
\; \{ z_\uu \mid \uu \in \II_{\fri(\ba)}\} \; \; \hbox{under $\prec_\ba$, \;\; for all $\ba \in \JJ_\bcd$, }
\end{equation}

Given any $\bp \in \UU(\wedge^d E)$, similar to \eqref{uu-ba}, there is a natural homomorphism
\begin{equation}\label{bp:uu-ba} \ze_\bp: \;\;
\kk[z_\uu]_{\uu \in \II_{d, [\tn]}} \lra \kk[x_{\ba}]_{\ba \in \JJ_\bcd} \end{equation}
$$z_\uu \to p_\uu x_\ba$$
where $\ba \in \JJ_\bcd$ is the unique element such that $\uu \in \II_{\fri(\ba)}$
Note that
$$\hbox{ $\ze_\bp^{-1}(x_\ba)= \{ z_\uu/p_\uu \mid \uu \in \II_{\fri(\ba)} \}$, whilst
$\ze ^{-1}(x_\ba)= \{ z_\uu \mid \uu \in \II_{\fri(\ba)} \}$. }$$
Thus, we can use the previously chosen 
 term order $\prec_\ba$ on 
$\kk[z_\uu]_{z_\uu \in \ze^{-1}(x_\ba)}$, apply Lemma \ref{ext-order},
 and obtain the same term order $\prec_\bz$
on $\kk[z_\uu]_{\uu \in \II_{d, [\tn]}}$ as in the paragraph of \eqref{uu-ba}.

Composing the quotient map $ \kk[x_{\ba}]_{\ba \in \JJ_\bcd} 
\to \kk[x_{\ba}]_{\ba \in \JJ_\bcd} /I_\bcd$
with the homomorphism $\ze_\bp$ of \eqref{bp:uu-ba}, we obtain the surjection
 \begin{equation}\label{uu-ba-I} 
\bar\ze_\bp:\;\;
\kk[z_\uu]_{\uu \in \II_{d, [\tn]}} \lra \kk[x_{\ba}]_{\ba \in \JJ_\bcd} 
\lra \kk[x_{\ba}]_{\ba \in \JJ_\bcd} /I_\bcd. 
\end{equation}

We let $I_\bp=\ker \bar\ze_\bp$. Then, one sees that
the closure $\overline{\TTb \cdot {\bf p}}$, as a subscheme of 
the $\pl$ projective space $\PP(\wedge^d E)$, is defined by the homogeneous ideal $I_\bp$.


\begin{thm}\label{bi-eq-bp} 
 Fix any point $\bp=[p_\uu]_{\uu \in \II_{d,[\bf n]}} \in \UU(\wedge^d E)$.
There exists a term order $\prec_\bz$ in $\kk[z_{\uu}]_{\uu \in \II_{d,[\tn]}}$ which is independent of 
the point $\bp$ such that  
a corresponding reduced Gr\"obner basis of $I_\bp$ under the order $\prec_\bz$ equals to
 \begin{equation}\label{GrobnerBasis-bp}
\fG_\bp=\left\{ 
\begin{array}{ccccccc}
p_{\uu_\ba} z_\uu - p_\uu z_{\uu_\ba} \\ 
 p_{\uu_\bko} p_{\uu_\bke} z_{\uu_\ba} z_{\uu_\bb}- p_{\uu_\ba} p_{\uu_\bb} \;   z_{\uu_\bko} z_{\uu_\bke} 
\\
\end{array}
  \; \Bigg| \;
\begin{array}{ccccc}
 \uu \in \II_{\fri(\ba)}  \\  
 {\rm sort} (\ba \vee \bb) = \bko \vee \bke\}
\end{array}
 \right \},
\end{equation}
for all $\ba\in \JJ_\bcd$ and $\buw=\bkoe \in \JJsrt$.
Upon dividing every $p_{\uu_\ba} z_\uu - p_\uu z_{\uu_\ba} $ by $p_{\uu_\ba} $ and every 
$p_{\uu_\bko} p_{\uu_\bke} z_{\uu_\ba} z_{\uu_\bb}- p_{\uu_\ba} p_{\uu_\bb} \;   z_{\uu_\bko} z_{\uu_\bke} 
$ by $p_{\uu_\bko} p_{\uu_\bke} $,
we obtain the unique reduced Gr\"obner basis of $I_\bp$ under the order $\prec_\bz$ such that
all the leading terms of the relations of $\fG_\bp$  are monic.
\end{thm}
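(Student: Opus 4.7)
The plan is to lift the reduced Gr\"obner basis $\fG_\bcd$ supplied by Proposition \ref{bi-eq-0} through the factorisation $\bar\ze_\bp: \kk[z_\uu] \to \kk[x_\ba] \to \kk[x_\ba]/I_\bcd$, where $\bar\ze_\bp(z_\uu) = p_\uu x_\ba$ for the unique $\ba \in \JJ_\bcd$ with $\uu \in \II_{\fri(\ba)}$. Since $\bp \in \UU(\wedge^d E)$, every $p_\uu$ is a unit, so $\ze_\bp$ differs from $\ze$ only by nonzero rescalings. The term order $\prec_\bz$ furnished by Lemma \ref{ext-order} is set up so that within each fibre $\ze^{-1}(x_\ba)$ comparisons are governed by $\prec_\ba$ (hence $z_{\uu_\ba}$ is the minimum), while comparisons between monomials lying over distinct targets in $\kk[x_\ba]$ are controlled by $\prec_\bx$; in particular $\prec_\bz$ is independent of $\bp$.

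I would first verify that $\fG_\bp \subset I_\bp$ by direct evaluation: $\bar\ze_\bp(p_{\uu_\ba} z_\uu - p_\uu z_{\uu_\ba}) = p_{\uu_\ba} p_\uu x_\ba - p_\uu p_{\uu_\ba} x_\ba = 0$, and $\bar\ze_\bp$ sends the quadratic binomial to $p_{\uu_\ba} p_{\uu_\bb} p_{\uu_\bko} p_{\uu_\bke}(x_\ba x_\bb - x_\bko x_\bke)$, which vanishes modulo $I_\bcd$ by Proposition \ref{bi-eq-0}. Next I would identify the leading terms under $\prec_\bz$. In the linear binomial, both terms project via $\ze$ to $x_\ba$, so the tie is resolved by $\prec_\ba$; since $z_{\uu_\ba}$ is the $\prec_\ba$-minimum, the leading term is $p_{\uu_\ba} z_\uu$. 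In the quadratic binomial, $\ze(z_{\uu_\ba} z_{\uu_\bb}) = x_\ba x_\bb$ and $\ze(z_{\uu_\bko} z_{\uu_\bke}) = x_\bko x_\bke$, and Proposition \ref{bi-eq-0} singles out the unsorted pair as the $\prec_\bx$-leading one, so the leading term is $p_{\uu_\bko} p_{\uu_\bke} z_{\uu_\ba} z_{\uu_\bb}$.

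To conclude that $\fG_\bp$ is a Gr\"obner basis, I would perform a standard-monomial count. Modulo the linear leading terms $\{z_\uu : \uu \neq \uu_\ba\}$, the $\prec_\bz$-standard monomials involve only the minimum variables $z_{\uu_\ba}$; the quadratic leading terms then trim these down to products $z_{\uu_{\ba_1}} \cdots z_{\uu_{\ba_k}}$ indexed by sorted sequences $(\ba_1, \ldots, \ba_k)$ in $\JJ_\bcd$. Under $\bar\ze_\bp$ these map to nonzero scalar multiples of the sorted monomials in $\kk[x_\ba]$, which by Proposition \ref{bi-eq-0} form a $\kk$-basis of $\kk[x_\ba]/I_\bcd \cong \kk[z_\uu]/I_\bp$. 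A graded dimension comparison then forces the leading ideal of $\fG_\bp$ to coincide with $\mathrm{in}_{\prec_\bz}(I_\bp)$.

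Reducedness is then routine: each trailing term, namely $p_\uu z_{\uu_\ba}$ or $p_{\uu_\ba} p_{\uu_\bb} z_{\uu_\bko} z_{\uu_\bke}$, is supported on minimum variables indexed by a (singleton or) sorted pair, hence is a standard monomial divisible by no leading term of $\fG_\bp$. Dividing each relation by its leading coefficient $p_{\uu_\ba}$ or $p_{\uu_\bko} p_{\uu_\bke}$, which are nonzero since $\bp \in \UU(\wedge^d E)$, yields the unique monic reduced Gr\"obner basis. The main technical point I expect to be delicate is confirming that $\prec_\bz$ as produced by Lemma \ref{ext-order} really has the product compatibility needed to promote the $\prec_\bx$-leading term of Proposition \ref{bi-eq-0} to the claimed $\prec_\bz$-leading term of the quadratic binomial; once that compatibility is formalised, the rest is bookkeeping.
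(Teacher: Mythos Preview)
Your proposal is correct and takes a genuinely different route from the paper's proof. Both arguments reduce the claim to Proposition~\ref{bi-eq-0}, but they transfer that result through $\bar\ze_\bp$ in different ways.

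The paper first proves separately that $\fG_\bp$ generates $I_\bp$ as an ideal, by explicitly rewriting every general linear relation $p_\uv z_\uu - p_\uu z_\uv$ and every general quadratic relation $p_\uh p_\uk z_\uu z_\uv - p_\uu p_\uv z_\uh z_\uk$ in terms of the distinguished ones. It then establishes the Gr\"obner property via Buchberger's criterion: S-pairs involving a linear element have coprime leading terms, hence reduce to zero by Lemma~\ref{s-pair}; S-pairs between two quadratic elements are handled by the order-preserving isomorphism $\iota_\bp: \kk[z_{\uu_\ba}]_\ba \to \kk[x_\ba]_\ba$, $z_{\uu_\ba} \mapsto p_{\uu_\ba} x_\ba$, which pulls back the known S-pair reductions in $\fG_\bcd$ to S-pair reductions in $\fG_\bp^{\rm qdr}$.

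Your approach bypasses both the explicit generation argument and the S-pair computations: you directly identify the monomials standard with respect to $\langle \mathrm{in}_{\prec_\bz}(g) : g \in \fG_\bp \rangle$ as the sorted products $z_{\uu_{\ba_1}} \cdots z_{\uu_{\ba_k}}$, observe that $\bar\ze_\bp$ carries these bijectively (up to nonzero scalars) onto the sorted monomials in $\kk[x_\ba]$, which are a $\kk$-basis of $\kk[x_\ba]/I_\bcd \cong \kk[z_\uu]/I_\bp$ by Proposition~\ref{bi-eq-0}, and conclude by a graded dimension count that $\langle \mathrm{in}_{\prec_\bz}(g) \rangle = \mathrm{in}_{\prec_\bz}(I_\bp)$. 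This is shorter and more conceptual; the paper's Buchberger argument is more algorithmic and makes the reduction process explicit. Your caveat about the compatibility of $\prec_\bz$ with $\prec_\bx$ through $\ze$ is exactly the point formalised by Lemma~\ref{ext-order}, and the paper relies on the same compatibility (via $\iota_\bp$) in its quadratic S-pair step.
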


As a consequence, a monomial $\bm$ of $\kk[z_{\uu}]_{\uu \in \II_{d,[\tn]}}$
is $\prec_\bz$-non-standard if and only if one of the following two holds:
\begin{itemize}
\item $z_\uu \mid \bm$ for some $\uu \in \II_{\fri(\ba)}$ with $\uu \ne \uu_\ba$;
\item  $z_{\uu_\ba} z_{\uu_\bb} \mid \bm$ 
for $z_{\uu_\ba} z_{\uu_\bb}$ in some binomial 
$p_{\uu_\bko} p_{\uu_\bke} z_{\uu_\ba} z_{\uu_\bb}- p_{\uu_\ba} p_{\uu_\bb} \;   z_{\uu_\bko} z_{\uu_\bke}$
of $\fG_\bp$.
\end{itemize}

\begin{proof} 
By Proposition \ref{bi-eq-0}, $I_\bcd$ is generated by 
$$ \{  x_{\ba} x_{\bb} -  x_\bko x_\bke \mid
{\rm sort} (\ba \vee \bb) = \bko \vee \bke\}
$$ for all sorted pairs $\bkoe \in \JJsrt$.
Substituting $x_\ba$ by $z_\uu/p_\uu$ for all $\ba \in \JJ_\bcd$ and all
$\uu  \in \II_{\fri (\ba)} \subset \II_{d,[\tn]}$, we obtain that $\ker \ze_\bp$ is generated by
\begin{equation}\label{lin-g}
 \{p_{\uv} z_\uu - p_\uu z_{\uv} \mid \;
\forall \; \uu, \uv \in \II_{\fri(\ba)}, \;\; \forall \; \ba \in \JJ_\bcd\}
\end{equation} together with
\begin{equation}\label{quadr-g}
 \{p_{\uh} p_{\uk} z_{\uu} z_{\uv}- p_\uu p_\uv  \  z_{\uh} z_{\uk} 
\mid (\uu, \uv), (\uh,\uk) \in \La_\buw, \;\; \forall \; \buw \in \JJsrt
\}.
\end{equation}

The relations of \eqref{lin-g} contain the special ones
\begin{equation}\label{lin-g-0}
p_{\uu_\ba} z_\uu - p_\uu z_{\uu_\ba}, \;\;  \uu \in \II_{\fri(\ba)}.
\end{equation} 
We denote this set of relations by $\fG_\bp^{\rm lin}$.

The relations of \eqref{quadr-g} contain the special ones
\begin{equation}\label{quadr-g-0}
p_{\uu_\bko} p_{\uu_\bke} z_{\uu_\ba} z_{\uu_\bb}- p_{\uu_\ba} p_{\uu_\bb} \;   z_{\uu_\bko} z_{\uu_\bke} ,
\;\; {\rm sort} (\ba \vee \bb) = \bko \vee \bke.
\end{equation}

We denote this set of relations by $\fG_\bp^{\rm qdr}$.

First, one calculates and find that for any relation of \eqref{lin-g}, we have
$$p_{\uu_\ba}(p_{\uv} z_\uu - p_\uu z_{\uv})= p_\uv (p_{\uu_\ba} z_\uu - p_\uu z_{\uu_\ba}) - 
p_\uu (p_{\uu_\ba} z_\uv - p_\uv z_{\uu_\ba}). $$

Next, one calculates and find that for any relation of \eqref{quadr-g}, we have 
$$p_{\uu_\bko} p_{\uu_\bke}  (p_{\uh} p_{\uk} z_{\uu} z_{\uv}- p_\uu p_\uv  \  z_{\uh} z_{\uk}) =$$
$$ p_{\uh} p_{\uk} (p_{\uu_\bko} p_{\uu_\bke}  z_{\uu} z_{\uv}
-p_\uu p_\uv z_{\uu_\bko} z_{\uu_\bke} )
- p_\uu p_\uv  (p_{\uu_\bko} p_{\uu_\bke} z_{\uh} z_{\uk}
-p_{\uh} p_{\uk}z_{\uu_\bko} z_{\uu_\bke} ).$$

We may suppose $\uu  \in \II_{\fri (\ba)}, \uv  \in \II_{\fri (\bb)}$ for some $\ba,  \bb \in \JJ_\bcd$.
Then,  using the relations $$p_{\uu_\ba}  z_{\uu} - p_{\uu} \;   z_{\uu_\ba}, \;\;
p_{\uu_\bb}  z_{\uv} - p_{\uv} \;   z_{\uu_\bb},$$
we obtain
$$  p_{\uu_\ba} p_{\uu_\bb} (p_{\uu_\bko} p_{\uu_\bke} z_{\uu} z_{\uv}
-p_\uu p_\uv z_{\uu_\bko} z_{\uu_\bke}) \equiv$$
$$   p_{\uu} p_{\uv} (p_{\uu_\bko} p_{\uu_\bke} z_{\uu_\ba} z_{\uu_\bb}
- p_{\uu_\ba} p_{\uu_\bb}    z_{\uu_\bko} z_{\uu_\bke}) ,\;
 \hbox{modulo relations of \eqref{lin-g-0}}.$$
Similarly, 
$$  p_{\uu_\bc} p_{\uu_\be} (p_{\uu_\bko} p_{\uu_\bke} z_{\uh} z_{\uk}
-p_\uh p_\uk z_{\uu_\bko} z_{\uu_\bke}) \equiv$$
$$   p_{\uh} p_{\uk} (p_{\uu_\bko} p_{\uu_\bke} z_{\uu_\bc} z_{\uu_\be}
- p_{\uu_\bc} p_{\uu_\be}    z_{\uu_\bko} z_{\uu_\bke}) ,\;
 \hbox{modulo relations of \eqref{lin-g-0}},$$
where $\uh  \in \II_{\fri (\bc)}, \uk \in \II_{\fri (\be)}$ for some $\bc,  \be \in \JJ_\bcd$.

This implies that $I_\bp=\ker \bar\ze_\bp$ is generated by relations of \eqref{lin-g-0}
and \eqref{quadr-g-0}, i.e., by the relations in the set $\fG_\bp$
as stated in the theorem.

It remains to prove that the set $\fG_\bp=\fG_\bp^{\rm lin} \sqcup \fG_\bp^{\rm qdr}$ 
is a reduced Gr\"obner basis under 
the term order $\prec_\bz$ described in the paragraph of \eqref{bp:uu-ba} which is the same
term order used in  the paragraph that follows \eqref{uu-ba}.

First,  let $f$ be any of the binomials of $\fG_\bp^{\rm lin}$ as in \eqref{lin-g-0}, say,
$$p_{\uu_\ba} z_\uu - p_\uu z_{\uu_\ba}, \;\;  \hbox{for some} \;\; \uu \in \II_{\fri(\ba)}.$$
let  $g$ be any element of $\fG_\bp$ such that $g \ne f$. Then, one inspects that 
$in_{\prec_\bz} (f) \; (=p_{\uu_\ba} z_\uu)$ and $in_{\prec_\bz} (g)$  are co-prime.
By applying Lemma \ref{s-pair}, we have that a complete reduction of the
 $S$-polynomial $S(f,g)$ of $f$ and $g$  by $\{f, g\}$ produces zero.

Next,  we can focus on the binomial relations of $\fG_\bp^{\rm qdr}$ as in \eqref{quadr-g-0}.

We let 
$$\kk[z_{\uu_\ba}]_{\ba \in \JJ_\bcd} \subset \kk[z_{\uu}]_{\uu \in \II_{d,[\tn]}}$$
be the subalgebra generated by $z_{\uu_\ba}, \ba \in \JJ_\bcd$.
Then, $\ze_\bp$ of \eqref{bp:uu-ba} restricted to $\kk[z_{\uu_\ba}]_{\ba \in \JJ_\bcd}$
induces the following identification of two algebras
$$\iota_\bp: \kk[z_{\uu_\ba}]_{\ba \in \JJ_\bcd} \lra \kk[x_\ba]_{\ba \in \JJ_\bcd}$$
$$z_{\uu_\ba} \to p_{\uu_\ba}x_\ba.$$

By the construction of the term order $\prec_\bz$ (using Lemma \ref{ext-order} and $\prec_\bx$),
$\iota_\bp$ preserves the term orders with respect to $\prec_\bz$ and $\prec_\bx$, that is,
$$\hbox{$ \bm \prec_\bz \bm'$ if and only $\iota_\bp (\bm) \prec_\bx \iota_\bp (\bm')$}$$
for any two monomials of $ \kk[z_{\uu_\ba}]_{\ba \in \JJ_\bcd}$.

Furthermore, $\iota_\bp$ sends a relation
$$p_{\uu_\bko} p_{\uu_\bke} z_{\uu_\ba} z_{\uu_\bb}- p_{\uu_\ba} p_{\uu_\bb} \;   z_{\uu_\bko} z_{\uu_\bke} ,
\;\; {\rm sort} (\ba \vee \bb) = \bko \vee \bke$$
of $\fG_\bp^{\rm qdr}$ in \eqref{quadr-g-0}
to the relation 
$$  x_{\ba} x_{\bb} -  x_\bko x_\bke, \;\;\; 
{\rm sort} (\ba \vee \bb) = \bko \vee \bke$$
of $\fG_\bcd$ in \eqref{GrobnerBasis}, up to a scalar multiple. 
Equivalently, $\iota_\bp$ pulls back the later relation to the former, up to a scalar multiple. 

Now we let $f$ and $g$ be any two distinct elements of the binomials of \eqref{quadr-g-0}.
We have that $\iota_\bp(f)$ and $\iota_\bp (g)$ are two relations of \eqref{GrobnerBasis}
up to scalar multiples. 
Because \eqref{GrobnerBasis} is a Gr\"obner basis, we have
that some complete reduction of the
 $S$-polynomial $S(\iota_\bp(f), \iota_\bp (g))$ of 
 $\iota_\bp(f)$ and $\iota_\bp (g) $ by the set $\fG_\bcd$ of \eqref{GrobnerBasis} produces zero.
 Now, the complete reduction process of $S(\iota_\bp(f), \iota_\bp (g))$
 by the set $\fG_\bcd$ of \eqref{GrobnerBasis} inside the algebra $\kk[x_\ba]_{\ba \in \JJ_\bcd}$
 can be pulled back to the algebra $\kk[z_{\uu_\ba}]_{\ba \in \JJ_\bcd}$ to yield 
a complete reduction process of $S(f, g)$
 by the set $\fG_\bp^{\rm qdr}$ of \eqref{quadr-g-0}, and this again should produce zero.

Hence, by the Buchberger's algorithm (criterion), we obtain that 
 $\fG_\bp$ is a  Gr\"obner basis under the term order $\prec_\bz$. 
 
Finally, for any $g, g' \in \fG_\bp$, by a direct inspection, one finds that no term of $g'$ is divisible by
$in_{\prec_\bz} (g)$. Hence, we  conclude that $\fG_\bp$ is indeed a 
reduced Gr\"obner basis of $I_\bp$.
\end{proof}

 Theorem \ref{bi-eq-bp} immediately implies

\begin{cor}\label{=hpoly}
The closure $\overline{\TTb \cdot {\bf p}}$, as a closed subscheme of 
the $\pl$ projective space $\PP(\wedge^d E)$, is defined by the relations \eqref{GrobnerBasis-bp}.
Moreover, the Hilbert polynomial of  the subscheme $\overline{\TTb \cdot {\bf p}}$
is independent of $\bp \in \UU(\wedge^d E)$. 
\end{cor}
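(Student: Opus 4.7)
The plan is to derive both claims directly from Theorem \ref{bi-eq-bp}. First, by construction at \eqref{uu-ba-I}, the closed subscheme $\overline{\TTb \cdot \bp} \subset \PP(\wedge^d E)$ is cut out by the homogeneous ideal $I_\bp = \ker \bar\ze_\bp$. Theorem \ref{bi-eq-bp} asserts that $\fG_\bp$ in \eqref{GrobnerBasis-bp} is a reduced Gr\"obner basis of $I_\bp$ under $\prec_\bz$; in particular $\fG_\bp$ generates $I_\bp$ as an ideal. Hence $\overline{\TTb \cdot \bp}$ is defined by the relations \eqref{GrobnerBasis-bp}, which gives the first assertion immediately.

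For the Hilbert polynomial statement, I would invoke the standard fact that for any homogeneous ideal $I \subset \kk[z_\uu]_{\uu \in \II_{d,[\tn]}}$ and any term order, the quotient $\kk[z]/I$ and $\kk[z]/\mathrm{in}_{\prec}(I)$ share the same Hilbert function (hence the same Hilbert polynomial). So it suffices to show that the monomial ideal $\mathrm{in}_{\prec_\bz}(I_\bp)$ is independent of $\bp \in \UU(\wedge^d E)$. Since $\fG_\bp$ is a Gr\"obner basis, $\mathrm{in}_{\prec_\bz}(I_\bp)$ is generated by the leading monomials of the elements of $\fG_\bp$.

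The key observation is then purely combinatorial: inspecting \eqref{GrobnerBasis-bp}, the leading term of $p_{\uu_\ba} z_\uu - p_\uu z_{\uu_\ba}$ (with the scalar $p_{\uu_\ba} \in \kk^\times$) is the monomial $z_\uu$ for $\uu \in \II_{\fri(\ba)}$ with $\uu \neq \uu_\ba$; and the leading term of the quadratic binomial $p_{\uu_\bko} p_{\uu_\bke} z_{\uu_\ba} z_{\uu_\bb} - p_{\uu_\ba} p_{\uu_\bb} z_{\uu_\bko} z_{\uu_\bke}$ (again with nonzero scalar since $\bp \in \UU(\wedge^d E)$) is the monomial $z_{\uu_\ba} z_{\uu_\bb}$ for each sorted pair $\bkoe \in \JJsrt$ with $\mathrm{sort}(\ba \vee \bb) = \bko \vee \bke$. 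Neither the set of indices $\uu_\ba$ (determined by the fixed orders $\prec_\ba$) nor the indexing sets $\JJ_\bcd$ and $\JJsrt$ depend on $\bp$, and every coefficient is a unit of $\kk$ because $\bp$ has all nonzero $\pl$ coordinates.

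Consequently $\mathrm{in}_{\prec_\bz}(I_\bp)$ is the same monomial ideal for every $\bp \in \UU(\wedge^d E)$, and the Hilbert polynomial of $\overline{\TTb \cdot \bp}$ is independent of $\bp$. No step here is especially difficult once Theorem \ref{bi-eq-bp} is in hand; the only point requiring care is verifying that $\bp \in \UU(\wedge^d E)$ ensures all the scalar coefficients in the leading terms are units, which is why one restricts to $\UU(\wedge^d E)$ rather than to arbitrary points of $\PP(\wedge^d E)$.
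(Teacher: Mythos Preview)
Your proof is correct and is precisely the argument the paper intends: the paper simply states that the corollary follows immediately from Theorem \ref{bi-eq-bp}, and you have spelled out the two ingredients (a Gr\"obner basis generates its ideal, and the initial ideal---which here is visibly independent of $\bp$---determines the Hilbert function) that make this immediate.
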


\section{The $\fG$-quotient  $\HH^{d, E_\bcd}$ and
$\fG$-family  $\FF^{d, E_\bcd}$ by the poly-diagonal subtorus} 
\label{q-revisited}

\subsection{Proofs of Theorem \ref{thm1} and Proposition \ref{prop1}
}

First, for any $\ba \in \JJ_\bcd$, we have the vector space 
$V_\ba$
with a basis $\{e_\uu \mid \uu  \in \II_{\fri (\ba)}\}$ over the field $\kk$ and 
the projective space
\begin{equation}\label{Pba}
\PP_\ba :=\PP(V_\ba)
\end{equation} 
such that it comes equipped with the homogeneous coordinates
  \begin{equation}\label{yba}y_\ba=[y_\uu]_{\uu  \in \II_{\fri (\ba)}}.
  \end{equation}

Next, or any  $\buw=\bkoe \in \JJsrt$, recall from \eqref{La-buw},
we define
$$
\La_\buw=\{(\uu, \uv) \in \II_{d,n}^{(2)}
\mid \uu \in \II_{\fri (\ba)}, \uv \in \II_{\fri (\bb)},
 (\ba,\bb) \in \JJ^{(2)}_\bcd,  {\rm sort}(\ba \vee \bb)=\bko \vee \bke \}.$$
Corresponding to $\Lambda_{\buw}$, we introduce the vector space 
$V_\buw$  
with a basis
$ \{e_{(\uu,\uv)} \mid (\uu, \uv) \in \La_\buw\}$ over the field $\kk$ and
the projective space
\begin{equation}\label{Pw}\PP_\buw :=\PP(V_\buw)
\end{equation} 
such that it comes equipped with the homogeneous coordinates
 \begin{equation}\label{ybw}
 y_\buw=[x_{(\uu, \uv)}]_{(\uu,\uv) \in \Lambda_{\buw}}
 \end{equation}

We then introduce  the following natural rational map\footnote{cf. 
Remark \ref{more to come}.}
\begin{equation}\label{theta'1}
\xymatrix{
\Theta^\bcd:  \PP(\wedge^d E)   \ar @{-->}[r] &   
 \prod_{\ba \in \JJ_\bcd} \PP_\ba \times \prod_{\buw \in \JJsrt} \PP_\buw ,
}
\end{equation}
$$[p_\uu]_{\uu \in \II_{d,[n]}} \lra
 \prod_{\ba \in \JJ_\bcd} [p_\uu]_{\uu \in \II_{\fri(\ba)}} \times
\prod_{\buw \in \JJsrt} [p_{\uu}p_{\uv}]_{(\uu,\uv) \in \La_\buw}.$$
The rational map $\Theta$ is 
$\TTb$-equivariant, where $\TTb$ 
 acts trivially on the target.
This rational map restricts to a morphism 
$\UU(\wedge^d E) \rightarrow
 \prod_{\ba \in \JJ_\bcd} \PP_\ba \times \prod_{\buw \in \JJsrt} \PP_\buw$.  

 \begin{lemma}\label{U'/TinHilbert} The geometric GIT quotient $\UU(\wedge^d E)/\TTb $ exists and
 the morphism $$\Theta^\bcd: \UU(\wedge^d E) \lra
  \prod_{\ba \in \JJ_\bcd} \PP_\ba \times \prod_{\buw \in \JJsrt} \PP_\buw$$
descends to an embedding
$$\beta^\bcd: \UU(\wedge^d E)/\TTb \lra  
 \prod_{\ba \in \JJ_\bcd} \PP_\ba \times \prod_{\buw \in \JJsrt} \PP_\buw.$$
\end{lemma}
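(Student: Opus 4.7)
The plan is to establish three things in turn: (a) existence of the geometric quotient, (b) factorization of $\Theta^\bcd$ through it, and (c) that the induced morphism is a locally closed immersion. For (a), note that the action of $\TTb$ on $\UU(\wedge^d E)$ is free. Indeed, if an element $[t_1:\cdots:t_n]\in (\GGm)^n/\GGm=\TTb$ stabilizes a point $\bp$ with all Plücker coordinates $p_\uu$ nonzero, then $t_{u_1}\cdots t_{u_d}$ must equal a single scalar (independent of $\uu\in\II_{d,[\tn]}$), which over $\UU(\wedge^d E)$ forces all $t_i$ to be equal, i.e., the class in $\TTb$ is trivial. A free torus action of this type on a quasi-projective variety admits a geometric quotient in the category of schemes, for instance by the standard covering of $\UU(\wedge^d E)$ by $\TTb$-invariant open affines on which invariants separate orbits.

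For (b), we verify the $\TTb$-invariance of each factor of $\Theta^\bcd$. The $(\GGm)^n$-weight of $p_\uu$ depends only on $\fri^{-1}$ of the multi-index $\ui$ for which $\uu\in\II_\ui$; equivalently, $p_\uu$ for $\uu\in\II_{\fri(\ba)}$ all carry the same character of $(\GGm)^n$. Hence the projective coordinate $[p_\uu]_{\uu\in\II_{\fri(\ba)}}\in\PP_\ba$ is $\TTb$-invariant. Similarly, for $\buw=(\bko,\bke)\in\JJsrt$, any two products $p_\uu p_\uv$ and $p_{\uu'}p_{\uv'}$ with $(\uu,\uv),(\uu',\uv')\in\La_\buw$ carry the same $(\GGm)^n$-character, which by construction depends only on the content of $\bko\vee\bke$; hence the projective point $[p_\uu p_\uv]_{(\uu,\uv)\in\La_\buw}\in\PP_\buw$ is also $\TTb$-invariant. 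Consequently $\Theta^\bcd|_{\UU(\wedge^d E)}$ is constant on $\TTb$-orbits and factors through a morphism $\beta^\bcd$ from the geometric quotient.

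For (c), the crucial input is Theorem \ref{bi-eq-bp}, which identifies the orbit closure $\overline{\TTb\cdot\bp}$ in $\PP(\wedge^d E)$ with the vanishing locus of the explicit ideal $I_\bp$ generated by the binomials in $\fG_\bp$. The coefficients of those binomials, namely the ratios $p_{\uu_\ba}:p_\uu$ for $\uu\in\II_{\fri(\ba)}$ and the ratios $p_{\uu_\bko}p_{\uu_\bke}:p_{\uu_\ba}p_{\uu_\bb}$ for sorted pairs, are precisely the homogeneous coordinates of $\beta^\bcd(\bp)$ in $\PP_\ba$ and in $\PP_\buw$ respectively. Therefore $\beta^\bcd(\bp)=\beta^\bcd(\bp')$ forces $I_\bp=I_{\bp'}$, hence $\overline{\TTb\cdot\bp}=\overline{\TTb\cdot\bp'}$, and since both $\bp,\bp'$ lie in the open $\TTb$-orbit of this common closure, they define the same point in the quotient. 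To upgrade injectivity on points to an embedding, one checks that $\beta^\bcd$ is also an immersion on tangent spaces: this follows because the image already contains the full collection of $\TTb$-invariant ratios needed to generate the invariant function field, so the differential is injective.

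The main obstacle is the last step: the verification that the explicit coordinates on $\prod\PP_\ba\times\prod\PP_\buw$ separate not merely orbits but also tangent directions transverse to orbits. The cleanest way I foresee is to exploit Theorem \ref{bi-eq-bp} again: the Gröbner description of $I_\bp$ provides a complete parametrization of $\TTb$-orbit closures, so any infinitesimal deformation of $\bp$ transverse to $\TTb\cdot\bp$ produces a distinct ideal, and this distinction is visible in the coordinates. Once injectivity on points and injectivity of the differential are both in hand, the properness of $\prod\PP_\ba\times\prod\PP_\buw$ guarantees that $\beta^\bcd$ is a locally closed immersion onto its image, completing the proof.
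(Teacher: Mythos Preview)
Your proposal is correct and takes essentially the same approach as the paper: both rely on Theorem~\ref{bi-eq-bp} as the key input for the embedding claim, with existence of the geometric quotient and factorization through it treated as standard. The paper's proof is extremely terse (three sentences), whereas you spell out why the action is free, why each factor of $\Theta^\bcd$ is $\TTb$-invariant, and why the Gr\"obner basis description of $I_\bp$ forces injectivity; your added detail on separating tangent directions goes beyond what the paper writes but is in the same spirit.
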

\begin{proof} It is a standard fact that the geometric GIT quotient $\UU(\wedge^d E)/\TTb$ exists.
The morphism $\beta^\bcd$ exists because the GIT quotient $\UU(\wedge^d E)/\TTb $ is categorical.
The fact that it is an embedding is a direct consequence of Theorem \ref{bi-eq-bp}.
\end{proof}

\subsubsection{Proof of Proposition-Definition \ref{prop1}.}

\begin{proof} Lemma \ref{U'/TinHilbert} is a restatement of the assertion of
Proposition-Definition \ref{prop1}.
\end{proof}

The rational map $\Theta_\bcd$ of (\ref{theta'1}) restricts to $\Gr^{d, E}$ to give rise to
\begin{equation}\label{theta-Gr}
\xymatrix{
\Theta^\bcd_\Gr: \Gr^{d, E} \ar @{-->}[r] & 
    \prod_{\ba \in \JJ_\bcd} \PP_\ba \times \prod_{\buw \in \JJsrt} \PP_\buw,
}
\end{equation} 
 $$[p_\uu]_{\uu \in \II_{d,[n]}} \lra
 \prod_{\ba \in \JJ_\bcd} [p_\uu]_{\uu \in \II_{\fri(\ba)}} \times
 \prod_{\buw \in \JJsrt} [ p_{\uu}p_{\uv}]_{(\uu,\uv) \in \La_\buw}$$
where $\bp=[ p_\uu]_{\uu \in \II_{d,[n]}}$ is the Pl\"ucker coordinate of a point of $\Gr^{d, E}$.

Because the inclusion $\UU^{d, E} \subset \UU(\wedge^d E)$ descends
to  $\UU^{d, E}/\TTb \subset \UU(\wedge^d E)/\TTb$, we obtain

 \begin{cor}\label{U/TinHilbert} The geometric GIT quotient $\UU^{d, E}/\TTb$ exists and
 the morphism $$\Theta^\bcd_\Gr: \UU^{d, E} \lra
  \prod_{\ba \in \JJ_\bcd} \PP_\ba \times \prod_{\buw \in \JJsrt} \PP_\buw$$
descends to an embedding
$$\beta^\bcd_\Gr: \UU^{d, E}/\TTb \lra
  \prod_{\ba \in \JJ_\bcd} \PP_\ba \times \prod_{\buw \in \JJsrt} \PP_\buw.$$
\end{cor}

\subsubsection{Proof of Theorem-Definition \ref{thm1}.}

\begin{proof} Corollary \ref{U/TinHilbert} is a restatement of the assertion in
Theorem-Definition \ref{thm1}.
\end{proof}

\subsection{Proofs of Propostion \ref{Z'asBlowup-intro} 
and Theorem \ref{ZasBlowup-intro}}
\label{F-revisited} 

In this section, we will prove Proposition \ref{Z'asBlowup-intro}
and Theorem  \ref{ZasBlowup-intro}. We need a lemma.

\subsubsection{A lemma on blowup}
Let $J$ be a homogeneous ideal of $\kk[x_1, \cdots, x_n]$ and
 $X={\rm Proj} \kk[x_1, \cdots, x_n]/J$. Fix any postive integer $k$.
 Let $I_i$ be an ideal of $\kk[x_1, \cdots, x_n]/J$ 
 having a set of generator $\cG_{I_i}=\{g_{i1}, \cdots, g_{im_i}\}$
 for every $i \in [k]$. We let $\PP_{\cG_i}$ 
 be projective space with
 the homogenous coordinates $[y_{i1},\cdots, y_{im_i}]$. Then, we have
 a rational map
 \begin{equation}\label{blowup-rat}
 X \dashrightarrow \prod_{i \in [k]}\PP_{\cG_i},\;\; x \to \prod_{i \in [k]} [g_{i1}(x), \cdots, g_{im_i}(x)].
\end{equation}
We let $U=X \- Z$ where $Z$ is the closed subscheme 
of $X$ defined by $\prod_{i=1}^k I_i$.

\begin{lemma}\label{thatLemma} Let the notation be as in the above.
Then, the following two hold.

\begin{enumerate}
\item 
The blowup scheme
$\Bl_{\prod_{i=1}^k I_i} X$ of $X$ along the ideal $\prod_{i=1}^k I_i$  
is  isomorphic to the closure of the graph of the rational
map $X \dashrightarrow \prod_{i=1}^k \PP_{\cG_i}$.
\item 
$\Bl_{\prod_{i=1}^k I_i} X$ 
is also isomorphic to
the scheme resulted from successively blowing up $X$ along the pullbacks of the ideals 
$\{I_1, \cdots, I_k\}$, in any given order.
\end{enumerate}
\end{lemma}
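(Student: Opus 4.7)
The plan is to deduce both parts from the basic identification of an ideal blowup with the closure of the graph of the rational map defined by a choice of generators, combined with the Segre embedding. Throughout, I rely on the standard fact that for any ideal $I$ of $\kk[x_1,\ldots,x_n]/J$ with generators $g_1, \ldots, g_m$, the blowup $\Bl_I X$ is isomorphic to the closure of the graph of the rational map $X \dashrightarrow \PP^{m-1}$, $x \mapsto [g_1(x):\cdots:g_m(x)]$, realized as a closed subscheme of $X \times \PP^{m-1}$.

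For part (1), I would note that a natural generating set for the product ideal $\prod_{i=1}^k I_i$ is given by the products $g_{1,j_1} g_{2,j_2} \cdots g_{k,j_k}$ with $j_i \in [m_i]$. These are precisely the coordinate functions of the Segre embedding $\sigma \colon \prod_{i \in [k]} \PP_{\cG_i} \hookrightarrow \PP^{N-1}$, where $N = \prod_i m_i$. Hence, under the induced closed embedding $X \times \prod_i \PP_{\cG_i} \hookrightarrow X \times \PP^{N-1}$, the closure of the graph of the rational map $X \dashrightarrow \PP^{N-1}$ defined by the generators of $\prod_i I_i$ coincides with the closure of the graph of the rational map \eqref{blowup-rat}. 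This identifies $\Bl_{\prod_i I_i} X$ with the closure of the graph of \eqref{blowup-rat}, proving (1).

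For part (2), I would proceed by induction on $k$. The case $k=1$ is again the cited fact. For the inductive step, fix any ordering, say $I_1, \ldots, I_k$, and let $Y$ denote the successive blowup of $X$ along $I_1, \ldots, I_{k-1}$. By the inductive hypothesis combined with (1), $Y$ is isomorphic to the closure of the graph of $X \dashrightarrow \prod_{i=1}^{k-1} \PP_{\cG_i}$ inside $X \times \prod_{i=1}^{k-1} \PP_{\cG_i}$. Blowing up $Y$ along the pullback $\tilde{I}_k$, whose generators are the pullbacks of $g_{k,1}, \ldots, g_{k,m_k}$, then produces the closure of the graph of $Y \dashrightarrow \PP_{\cG_k}$, embedded in $Y \times \PP_{\cG_k} \subset X \times \prod_{i=1}^k \PP_{\cG_i}$. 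Over the open dense locus $U$, the morphism $U \to \prod_i \PP_{\cG_i}$ has graph contained in this embedded copy of $Y \times \PP_{\cG_k}$, so the closure taken inside $Y \times \PP_{\cG_k}$ agrees with the closure taken inside the full ambient $X \times \prod_i \PP_{\cG_i}$. Combined with (1), this shows that the successive blowup is isomorphic to $\Bl_{\prod_i I_i} X$. Since the resulting description as the closure of the graph of the fixed rational map \eqref{blowup-rat} is manifestly independent of the chosen order of the $I_i$'s, the stated order-independence follows.

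The main subtlety, beyond bookkeeping of generators, is verifying that the iteratively-constructed successive blowup, a priori a closure inside a smaller ambient product, actually recovers the closure in the full product $X \times \prod_i \PP_{\cG_i}$. This reduces to the elementary observation that the intermediate space $Y \times \PP_{\cG_k}$ is closed in $X \times \prod_i \PP_{\cG_i}$ and contains the graph of the rational map over $U$ as a dense subset, so the two closures coincide. With this in hand, everything else is a direct application of the closure-of-graph presentation of the blowup together with the Segre embedding.
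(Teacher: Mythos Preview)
Your proof is correct and follows essentially the same approach as the paper: part (1) via the Segre embedding identifying the product-of-generators map with the single rational map to $\PP^{N-1}$, and part (2) by induction on $k$, identifying the successive blowup with the closure of the graph in $X \times \prod_{i=1}^k \PP_{\cG_i}$. You are in fact slightly more careful than the paper in explicitly justifying why the closure taken inside $Y \times \PP_{\cG_k}$ agrees with the closure in the full ambient product.
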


This lemma should be known. We present a proof for completeness.

\begin{proof}
(1).
For every $i\in [k]$, we pick exactly one generator 
$g_{ih_i}$ for some $h_i \in [m_i]$
 and then take the product $\prod_{i \in [k]} g_{ih_i}$ of these generators. 
 Putting all these products together, 
we obtain a set of generators, $\cG=\{\prod_{i \in [k]} g_{ih_i}\}$,  for the ideal $\prod_{i=1}^k I_i$.
Let $\PP_\cG$ be the projective space with homogeneous coordinates $[y_g]_{g \in \cG}$ 
and $$X \dashrightarrow \PP_\cG, \;\; x \to [g(x)]_{g \in \cG}$$ be the induced rational map. Then
the blowup  $\Bl_{\prod_{i=1}^k I_i} X$ 
is isomorphic to the closure of the embedding 
$$U \lra U \times \PP_{\cG}.$$
Consider  the  following commutative diagram
  \begin{equation}\label{H}
\xymatrix{
 U \ar @{^{(}->}[r]  \ar[d]^{=} & U \times \prod_{i \in [k]} \PP_{\cG_i} \ar[d]^{\Id_U \times \fs} \\
  U \ar @{^{(}->}[r] & U \times \PP_{\cG} ,}
 \end{equation}
 where $\fs: \PP_{\cG_i}  \to \PP_{\cG}$ is the Segre embedding.
It  implies that  the closure of the embedding in the top row is naturally isomorphic 
 to the closure of the embedding in the bottom row.
 This implies the statement of (1).
 
 (2). Fix any order of the set $\{I_1, \cdots, I_k\}$, say, $I_1, I_2, \cdots, I_k$, as listed.
We prove (1) by induction on $k$. The statement clearly holds when $k=1$. We suppose that
the statement holds for $k-1$. That is, the scheme 
$\Bl_{(I_1, \cdots, I_{k-1})} X$ resulted from successively blowing up $X$ along the pullbacks of the ideals $I_1, \cdots, I_{k-1}$, in that order,  is isomorphic to the closure of the graph of the rational
map $X \dashrightarrow \prod_{i=1}^{k-1} \PP_{\cG_i}$.
Let $\pi_{k-1}: \Bl_{(I_1, \cdots, I_{k-1}, I_k)} X  \to X$ denote the blowup morphism
and write $I_k=\langle g_{k1}, \cdots, g_{km_k}  \rangle$.  Then, 
$\Bl_{(I_1, \cdots, I_{k-1}, I_k)} X$, the blowup of $\Bl_{(I_1, \cdots, I_{k-1})} X$ along the pullback of $I_k$
is  isomorphic to 
the closure  in $\Bl_{(I_1, \cdots, I_{k-1})} X \times \PP_{\cG_k}$
of the image of the embedding 
$$\iota_{k-1} (U) \rightarrow \iota_{k-1} (U) \times  \PP_{\cG_k},$$
$$\iota_{k-1} (u) \to \iota_{k-1} (u) \times [g_{k1}(u), \cdots, g_{km_k} (u)], \;\;\;
\forall \; u \in U,$$
which is clearly equal to the closure in 
$X \times \prod_{i=1}^{k} \PP_{\cG_i}$
of  the image of the embedding
$$U \lra (U  \times  \prod_{i=1}^{k-1} \PP_{I_i}) \times  \PP_{\cG_i}= U  \times  \prod_{i=1}^{k} \PP_{\cG_i}.$$
Since the product  $\prod_{i=1}^{k} \PP_{\cG_i}$, up to isomorphisms, is independent of the order of the set $\{I_1, \cdots, I_k\}$,
 by (1),  the statement (2) follows.
\end{proof}


\subsubsection{Proof of Proposition \ref{Z'asBlowup-intro}.}
 \begin{proof}
Consider  the composed rational map 
 $$\xymatrix{
 \PP(\wedge^d E) \ar @{-->}[r]^{\Theta^\bcd \;\;\;\;\;\; \;\;\;\;\;\;\;\;\;\;\;\;\;\;} &
 \prod_{\ba \in \JJ_\bcd} \PP_\ba \times \prod_{\buw \in \JJsrt} \PP_\buw
  \ar @{^{(}->}[r]^{\;\;\;\;\;\;\;\;\;\;\;\;\;\;\;\; \fs} & \PP_{J^\wdebp}
 }$$
 where $\fs$ is the Segre embedding.
By the proof of Lemma \ref{thatLemma} (1), we see
that the degenerate locus of the rational map $\Theta^\bcd$ 
is precisely the subscheme defined by the ideal $J^\wdebp$.

To prove the second statement, just observe that from definition,
$\FF^\wdebp_*$ 
is equal to the closure of the rational map $\Theta^\bcd$
and the blowup $\Bl_{J^\wdebp}\PP(\wedge^d E)$
of $\PP(\wedge^d E)$ along the ideal $J^\wdebp$ 
is isomorphic to the closure of the rational map $\Theta^\bcd$
by  Lemma \ref{thatLemma}.
 \end{proof}

\subsubsection{Proof of Theorem \ref{ZasBlowup-intro}.}
\begin{proof}  Consider  the composed rational map 
 $$\xymatrix{
 \Gr^{d,E} \ar @{-->}[r]^{\Theta^\bcd_\Gr \;\;\;\;\;\; \;\;\;\;\;\;\;\;\;\;\;\;\;\;} &
 \prod_{\ba \in \JJ_\bcd} \PP_\ba \times \prod_{\buw \in \JJsrt} \PP_\buw
  \ar @{^{(}->}[r]^{\;\;\;\;\;\;\;\;\;\;\;\;\;\;\;\; \fs} & \PP_{J^\wdebp}}.$$
Using $\Theta^\bcd_\Gr$ in place of $\Theta^\bcd$,
the proof is totally parallel to that of Proposition \ref{Z'asBlowup-intro}.
We omit further details.
\end{proof}


\subsection{Special cases and examples } 

Let $E=\oplus_{i \in [n]} F_i$ with $\dim F_i=1$. We let $E_\bcd$ be the decomposition
$$E= E_1 \oplus E_2, \;\; E_1= \oplus_{i \in [n-1]} F_i, \;\; E_2=F_n.$$
Then, $\dim \TTb=1$. One calculates and finds
$$\rS_\bcd=\{(i_1, i_2)\in \NN^2 \mid i_1+ i_2=d,\; 0 \le i_1 \le n-1, 0 \le i_2 \le 1\}=\{(d, 0), (d-1, 1)\},$$
$$\JJ_\bcd=\{\ba=(11\cdots11), \bb=(11\cdots 1 2)\}.$$
Hence, $\PP_\buw$ is a single point for every $\buw \in \JJsrt$; we can discard them from consideration.
We have 
$$\II_{\fri(\ba)}=\II_{(d,0)}=\{ \uu=(u_1\cdots u_d) \in \II_{d,n} \mid u_i \in [n-1], i \in [d]\},$$
$$\II_{\fri(\bb)}=\II_{(d-1,1)}=\{ \uu=(u_1\cdots u_{d-1}n) \in \II_{d,n} \mid u_i \in [n-1], i \in [d-1]\}.$$
Recall (and observe) that $\II_{d,[n]}=\II_{(d,0)}  \sqcup \II_{(d-1,1)}.$
The Grassmannian $\Gr^{d,E}$ contains
$$\Gr^{d, E}_{\subset E_1}=\{K \in \Gr^{d,E} \mid K \subset E_1\},$$
$$\Gr^{d, E}_{\supset F_n}=\{K \in \Gr^{d,E} \mid K \supset F_n\}.$$
Then, $\Gr^{d, E}_{\subset E_1}$ is defined by $J^{d,E_\bcd}_\bb$, while
$\Gr^{d, E}_{\supset F_n}$ is defined by $J^{d,E_\bcd}_\ba$.
Thus, we can have the following natural inclusions and identifications:
\begin{equation}\label{include Gr}
\Gr^{d, E}_{\subset E_1}\subset \PP(\wedge^d E_1)=\PP_\ba, \;\; 
\Gr^{d, E}_{\supset F_n} \subset \PP(\wedge^{d-1} (E/F_n))=\PP_\bb.\end{equation}
\begin{prop}\label{n-1+1} 
 Let the assumption be as above. Then, we have
\begin{enumerate}
\item  $\HH^{d, E_\bcd}$ 
is naturally isomorphic to a closed subscheme of  
 $\Gr^{d, E}_{\subset E_1} \times \Gr^{d, E}_{\supset F_n}$ 
satisfying 
 all the $\pl$ relations that are linear in both $[y_\uu]_{\uu  \in \II_{(d,0)}}$
 and $[y_\uv]_{\uv  \in \II_{(d-1,1)}}$.

(Indeed, $\HH^{d, E_\bcd}$ should be precisely defined by all such relations.) 
\item  $\FF^{d, E_\bcd} \lra \HH^{d, E_\bcd}$ is naturally isomorphic to
   $\PP^1 \times \HH^{d, E_\bcd} \lra  \HH^{d, E_\bcd}$.
\item $\FF^{d, E_\bcd}=\FF^{d, E_\bcd}_*$ is
 isomorphic to the blowup of $\Gr^{d,E}$ 
along $\Gr^{d, E}_{\subset E_1} \sqcup \Gr^{d, E}_{\supset F_n}$.
\end{enumerate}
\end{prop}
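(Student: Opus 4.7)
The plan is to deduce Proposition \ref{n-1+1} as a direct specialization of Theorem-Definition \ref{thm1}, Definition \ref{defn:Ify}, and Theorem \ref{ZasBlowup-intro} to the case $\dim \TTb = 1$. In this setting the combinatorics collapses: $\JJ_\bcd = \{\ba, \bb\}$, and the canonical splitting $\wedge^d E = \wedge^d E_1 \oplus (\wedge^{d-1} E_1 \otimes F_n)$ identifies $\PP_\ba$ with $\PP(\wedge^d E_1)$ and $\PP_\bb$ with $\PP(\wedge^{d-1} E_1 \otimes F_n)$. After discarding the $\PP_\buw$ factors as noted in the excerpt, the target of $\Theta^\bcd_\Gr$ reduces to $\PP_\ba \times \PP_\bb$.

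For part (1), I would show that on $\UU^{d,E}$ the first projection $[p_\uu]_{\uu \in \II_{(d,0)}}$ records the Pl\"ucker image of the composition $K \hookrightarrow E \twoheadrightarrow E_1$ along $F_n$ and thus lies in $\Gr^{d,E_1} = \Gr^{d,E}_{\subset E_1}$, while the second projection $[p_\uv]_{\uv \in \II_{(d-1,1)}}$ records the Pl\"ucker image of a canonical $(d-1)$-dimensional subspace of $E_1$ extracted from the splitting, hence lies in $\Gr^{d-1,E_1} \cong \Gr^{d,E}_{\supset F_n}$ via \eqref{include Gr}. Taking closures places $\HH^{d,E_\bcd}$ inside their product. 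The claimed bilinear relations are precisely those $\pl$ relations of $\Gr^{d,E}$ whose monomials $p_\uu p_\uv$ mix one index in $\II_{(d,0)}$ with one in $\II_{(d-1,1)}$; they hold on $\UU^{d,E}$ and pass to the closure.

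For part (2), I would analyze the fiber $S_\fy$ of $\fq_\Gr$ directly from Definition \ref{defn:Ify}. The linear binomials $y_\uv z_\uu - y_\uu z_\uv$ inside each block $\II_{\fri(\ba)}$ and $\II_{\fri(\bb)}$ force $z_\uu = s\,y_\uu$ and $z_\uv = t\,y_\uv$ for a single ratio $[s:t] \in \PP^1$, cutting out a line in $\PP(\wedge^d E)$ that depends naturally and uniformly on $\fy$. The quadratic binomials in $\fB_\fy$ are automatic on this line, since on $\HH^{d,E_\bcd}$ the coordinate $x_{(\uu,\uv)}$ agrees with the product $y_\uu y_\uv$. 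The resulting morphism $\FF^{d,E_\bcd} \to \PP^1 \times \HH^{d,E_\bcd}$, $(\bp,\fy)\mapsto([s:t],\fy)$, is then an isomorphism. For part (3), I would verify by inspection that $V(J_\ba^{d,E_\bcd}) = \Gr^{d,E}_{\supset F_n}$ and $V(J_\bb^{d,E_\bcd}) = \Gr^{d,E}_{\subset E_1}$ are disjoint, and that each $J_\buw^{d,E_\bcd}$ is generated by quadratic monomials lying in $J_\ba^{d,E_\bcd}\cdot J_\bb^{d,E_\bcd}$, so $\Bl_{J^{d,E_\bcd}}\Gr^{d,E} = \Bl_{J_\ba^{d,E_\bcd} J_\bb^{d,E_\bcd}}\Gr^{d,E}$. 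By Theorem \ref{ZasBlowup-intro} combined with Lemma \ref{thatLemma}, $\FF^{d,E_\bcd}_*$ is the blowup along the disjoint union $\Gr^{d,E}_{\subset E_1} \sqcup \Gr^{d,E}_{\supset F_n}$. Finally $\FF^{d,E_\bcd} = \FF^{d,E_\bcd}_*$ follows because (2) exhibits $\FF^{d,E_\bcd}$ as a trivial $\PP^1$-bundle over the irreducible $\HH^{d,E_\bcd}$, hence irreducible.

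The main obstacle is the rigorous verification that the $\PP_\buw$ factors and their ideals $J_\buw^{d,E_\bcd}$ are genuinely redundant in this one-dimensional-torus situation. Concretely, one must show that every quadratic binomial of $\fB_\fy$ reduces to a consequence of the linear block relations and the bilinear $\pl$ relations defining $\HH^{d,E_\bcd}$ inside $\Gr^{d,E}_{\subset E_1} \times \Gr^{d,E}_{\supset F_n}$, and correspondingly that the product ideal $J^{d,E_\bcd}$ has the same Proj of Rees algebra as $J_\ba^{d,E_\bcd} J_\bb^{d,E_\bcd}$. Once this redundancy is in hand, (1), (2), and (3) follow essentially formally from the constructions of \S\ref{q-revisited}.
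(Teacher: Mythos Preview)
Your outline matches the paper's proof closely: for (1) and (2) the paper argues just as you do, only more tersely---it dismisses the $\PP_\buw$ factors in one line and establishes (2) by directly exhibiting the line $\{[\lambda\, y_\uu,\ \mu\, y_\uv]\mid [\lambda:\mu]\in\PP^1\}\subset\Gr^{d,E}$ and appealing to ``by construction,'' rather than unwinding the relations in $\fB_\fy$ as you propose. For (3) the paper likewise just cites Theorem~\ref{ZasBlowup-intro} together with the identifications preceding \eqref{include Gr} and the irreducibility from (2).

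One correction in your sketch of (3): the claim that the generators of each $J_\buw^{d,E_\bcd}$ lie in $J_\ba^{d,E_\bcd}\cdot J_\bb^{d,E_\bcd}$ is false. For $\buw=(\ba,\ba)$ one has $J_\buw=J_\ba^2$, and $p_\uu p_{\uu'}$ with $\uu,\uu'\in\II_{(d,0)}$ does not belong to $J_\ba J_\bb$ (every element of $J_\ba J_\bb$ lies in $J_\bb$); likewise $J_{(\bb,\bb)}=J_\bb^2$. The repair is the one you already articulate in your last paragraph: each $J_\buw$ is a product of powers of $J_\ba$ and $J_\bb$, so $J^{d,E_\bcd}=J_\ba^4 J_\bb^4=(J_\ba J_\bb)^4$ has the same Proj of Rees algebra as $J_\ba J_\bb$. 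Equivalently, in the successive-blowup form of Theorem~\ref{ZasBlowup-intro}, once $J_\ba$ and $J_\bb$ are blown up their pullbacks are invertible, hence so are the pullbacks of all $J_\buw$, and the remaining blowups are isomorphisms.
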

\begin{proof} 
(1).
 We let $\tilde\HH^{d, E_\bcd}$ be the closed subscheme of $\PP_\ba \times \PP_\bb$
 defined by all $\pl$ relations in
 $(y_\uu)_{\uu  \in \II_{(d,0)} \sqcup \II_{(d-1,1)}}$, or equivalently, 
 the closed subscheme of  $\Gr^{d,E}_{\subset E_1} \times \Gr^{d,E}_{\supset F_n}
\subset \PP_\ba \times \PP_\bb$
 satisfying $\pl$ relations that are linear in both $[y_\uu]_{\uu  \in \II_{(d,0)}}$
 and $[y_\uv]_{\uv  \in \II_{(d-1,1)}}$. 
By the construction of the embedding $\HH^{d, E_\bcd} \to \PP_\ba \times \PP_\bb$
(Theorem \ref{thm1}), we have 
that the image of $\HH^{d, E_\bcd}$ satisfies all $\pl$ relations in
 $(y_\uu)_{\uu  \in \II_{(d,0)} \sqcup \II_{(d-1,1)}}$, that is, $\HH^{d, E_\bcd} \subset \tilde\HH^{d, E_\bcd}$.  This proves (1).
 
(2). Now, for any fixed point 
$ \fy=([y_\uu]_{\uu \in \II{(d, 0)}}, [y_\uv]_{\uv \in \II_{(d-1,1)}})  \in \HH^{d, E_\bcd}$,
 the set $$\{[\la (y_\uu), \mu (y_\uv)]_{\uu \in \II_{(d,0)},\uv \in \II_{(d-1,1)}}
\mid [\la, \mu] \in \PP^1\}  \subset \PP\wdep$$
is a subset of $\Gr^{d,E}$
and one sees that it
is a $\TTb$-orbit closure in $\Gr^{d,E}$. 
Hence, by construction, as a closed subset of $\Gr^{d,E} \times \HH^{d, E_\bcd}
 \subset  \Gr^{d,E} \times \PP_\ba \times \PP_\bb,$ we have 
$$\FF^{d, E_\bcd}=
\{([\la (y_\uu), \mu (y_\uv)]_{\uu \in \II_{(d,0)},\uv \in \II_{(d-1,1)}}, \fy)
\mid [\la, \mu] \in \PP^1, \fy  \in \HH^{d, E_\bcd}\}$$
where $\fy=([y_\uu]_{\uu \in \II{(d, 0)}}, [y_\uv]_{\uv \in \II_{(d-1,1)}}) \in 
\HH^{d, E_\bcd} \subset \PP_\ba \times \PP_\bb$.
This clearly exhibits $\FF^{d, E_\bcd}$ as the product $\PP^1 \times \HH^{d, E_\bcd}$.

(3). Clearly, $\FF^{d, E_\bcd}$ is irreducible. Thus, (3)
follows directly from Theorem \ref{ZasBlowup-intro} and the   discussions
proceeding \eqref{include Gr}.
\end{proof}

\begin{exam}\label{FF-HH-24} Let $E=F_1\oplus F_2\oplus F_3 \oplus F_4$ such that $\dim F_i=1, i \in [4]$.
We let $E_\bcd= E_1 \oplus E_2$ with $E_1=F_1\oplus F_2\oplus F_3$ and $E_2=F_4$.
Then, $\rS_\bcd=\{(2,0), (1,1)\}$. We have
$$\PP_{(2,0)}=\{[y_{12},y_{13}, y_{23}]\},\;\; \PP_{(1,1)}=\{[y_{14},y_{24}, y_{34}]\}.$$
The $\fG$-quotient $$\HH^{2, E_\bcd} \subset \PP_{(2,0)} \times \PP_{(1,1)}$$ 
is the threefold defined by the bi-linear ($\pl$) relation
$$y_{12} y_{34}- y_{13}y_{24}+y_{23}y_{14}.$$
\end{exam}

Indeed, for any $i \in [n]$, we can use the decomposition $E_{\bcd_i}$
$$E= E_{[n]\-i} \oplus F_i, \;\; \hbox{with}\;\; E_{[n]\-i}=\oplus_{h \in [n] \- i} F_h$$
in place of $E_1= \oplus_{i \in [n-1]} F_i$ and $E_2=F_n$ to obtain  statements similar to
all the above.
We let $\TT_{\bcd_i}$ be the corresponding 1-dimensional subtorus of $E_{\bcd_i}$.
Similar to \eqref{include Gr}, we have
$$\hbox{ $\Gr^{d, E}_{\subset E_{[n]\- i}} \subset \PP_{\ba_i}$ and 
$\Gr^{d, E}_{\supset F_i} \subset \PP_{\bb_i}$} $$
for some elements $\ba_i$ and $\bb_i$  in the corresponding $\JJ_\bcd$,  for all $i \in [n]$.
Then, there exists a natural rational map
$$\Gr^{d,E} \dashrightarrow \prod_{i=1}^n 
(\PP_{\ba_i} \times \PP_{\bb_i}).$$
We can let $\widehat\HH^{d,E}$ be the closure of this rational map.

\begin{rem}\label{iterated hilbert} 
It would be interesting to know whether $\widehat\HH^{d,E}$
is isomorphic to the iterated $\fG$-quotient of $\Gr^{d,E}$ by 
$\TT_{\bcd_1}, \TT_{\bcd_2}, \ldots, \TT_{\bcd_n}$,
and also how the
successive blowup of $\Gr^{d,E}$ along $\Gr^{d, E}_{\subset E_{[n]\- 1}} \sqcup \Gr^{d, E}_{\supset F_1},
\ldots, \Gr^{d, E}_{\subset E_{[n]\- n}} \sqcup \Gr^{d, E}_{\supset F_n}$ fits into the picture.
\end{rem}

\begin{rem}\label{log-maps} More generally, 
when  $\dim \TTb=1$, that is, when $E=E_1 \oplus E_2$
for some $E_1$ and $E_2$,
we wonder whether 
the $\fG$-quotient $\HH^{d, E_\bcd}$ should admit an interpretation in terms of moduli of genus-zero
log-maps  into $\Gr^{d,E}$. Some interesting 
related works in this direction
have already been carried out in \cite{FW}
(see in particular, Thereom 1.6 therein).
\end{rem}

\section{$\fG$-quotient $\HH^{d,E}$ and  family 
$\FF^{d,E}$ by the maximal torus}\label{by-max}

In this section, we focus on the important special case of
the $\fG$-quotient $\HH^{d,E}$ and  $\fG$-family 
$\FF^{d,E}$ by the {\it maximal torus} $\TT$.

\subsection{The linear subspace $\LL_\buw \subset \PP_\buw$.} 

We will assume that the symbol
$p_{i_1\cdots i_d}$ is defined for any sequence of distinct integers between 1 and $n$, subject to the relation
\begin{equation}\label{signConvention}
p_{\sigma(i_1)\cdots \sigma (i_d)}=\sgn(\sigma) p_{i_1\cdots i_d}
\end{equation}
for any permutation $\sigma$ of $n$ letters, where $\sgn(\sigma)$ denotes the sign of the permutation.
 Further, let $u_1\cdots u_d$ be any sequence of $d$  integers between 1 and $n$ such that
$u_i=u_j$ for some $1 \le i \ne j \le d$, then we set
$$ p_{u_1\cdots u_d} := 0.$$
Then, associated with any 
$\uh=\{h_1, \cdots, h_{d-1}\} \in I_{d-1,n}$ and $\uk=\{k_1, \cdots, k_{d+1}\} \in I_{d+1,n}$,
 there is a Pl\"ucker relation
\begin{equation} \label{pl-eq}
F_{\uh,\uk}= \sum_{\lambda=1}^{d+1} (-1)^{\lambda-1} p_{h_1\cdots h_{d-1} k_\lambda } p_{k_1 \cdots  \overline{k}_\lambda \cdots k_{d+1}},
\end{equation}
where $\overline{k}_\lambda$ means that the integer $``k_\lambda "$ is deleted from the sequence.

To abbreviate the presentation, 
we frequently express a {\rm general} $\pl$ relation as
\begin{equation} \label{pl-eq-abb}
F=\sum_{s \in S_F} \sgn(s) p_{\uu_s} p_{\uv_s}
\end{equation}
where $S_F$ is an index set, $\uu_s, \uv_s \in \II_{d,[n]}$
for any $s \in S_F$, and $\sgn(s)$ is the $\pm$ sign associated with the term $p_{\uu_s} p_{\uv_s}$.

 We let $\sF$ be the set of all the $\pl$ relations. 

 We let $I_\wp$ be the ideal of $\kk[p_\uu]_{\uu \in \II_{d,[n]}}$ generated by all the 
$\pl$ relations $F \in \sF$.  
 We let $I_{\whwp}$ by the homogeneous ideal of  $\Gr^{d, E}$ in $\PP(\wdeb)$.
Then $$I_{\whwp} \supset I_\wp, \;\; \hbox{in general}.$$
 In characteristic zero, $I_{\whwp} = I_\wp$.
The quotient ring $\kk[p_\uu]_{\uu \in \II_{d,[n]}}/I_\whwp$ is 
the homogeneous coordinate ring of $\Gr^{d, E}$,
called the Grassmannian algebra, in literature.

Consider any $(\uh,\uk) \in \II_{d-1,n} \times \II_{d+1,n}$ such that $F_{\uh,\uk} \in \sF$
 and $F_{\uh,\uk}$ is expressed as \eqref{pl-eq}.
 Then, one sees that 
 $$\sort ((h_1\cdots h_{d-1} k_\lambda) \vee (k_1 \cdots  \overline{k}_\lambda \cdots k_{d+1}))$$
 does not depend on $\lambda$ for any $1 \le \lambda \le d+1$. We let $\buw$ denote
 this unique common sorted pair. Thus, if we let 
$ \uh \vee \uk=(h_1\cdots h_{d-1} k_1) \vee (k_2 \cdots  k_{d+1})$, then
  $$\buw = (\uwo \vee \uwe)={\rm sort} \; (\uh \vee \uk).$$

\begin{defn}\label{defn:linear-pl}
For any given $\pl$ relation \eqref{pl-eq},
 we introduce the following linear relation in the projective space $\PP_\buw$:
 \begin{equation}\label{linear-pl}
 \sum_{\lambda=1}^{d+1} (-1)^{\lambda-1} x_{((h_1\cdots h_{d-1}k_\lambda ),
(k_1 \cdots  \overline{k}_\lambda \cdots k_{d+1}))},\end{equation}
 where $\buw={\rm sort} \; (\uh \vee \uk)$.
We call  \eqref{linear-pl} the linearized $\pl$ relation induced from \eqref{pl-eq}.
We can denote it by $L_{\uh,\uk}$.
Frequently, as in \eqref{pl-eq-abb}, we just write $F_{\uh,\uk}$ as 
$F=\sum_{s \in S} {\rm sign(s)} p_{\uu_s} p_{\uv_s}.$ Then, we will express the corresponding
linearized $\pl$ relation \eqref{linear-pl} as
\begin{equation}\label{linear-pl-abb}L_F=\sum_{s \in S} {\rm sign(s)} x_{(\uu_s,\uv_s)}.
\end{equation}
 \end{defn}

\begin{defn}\label{defn:Lw} Fix any $\buw \in \La^\sort_{d,[n]}$. We let
\begin{equation}\label{linear-subspace}
\LL_\buw =\bigcap_{(\uh \vee \uk)=\buw} (L_{\uh,\uk}=0) \; \subset  \; \PP_\buw
\end{equation} 
 be the linear subspace  of $\PP_\buw$
defined by the linearized $\pl$ equations \eqref{linear-pl}
for all possible $ (\uh,\uk)$ 
such that ${\rm sort} \; (\uh \vee \uk)=\buw.$
\end{defn}

\begin{cor}\label{imageIn} 
The rational map $\Theta_\Gr$ of (\ref{theta-intr-e})  factors as 
$$\xymatrix{
\Theta_\Gr: \Gr^{d, E} \ar @{-->}[r] &   \prod_{\buw \in \Ladnsort} \LL_\buw  \ar @{^{(}->}[r]  & \prod_\uw \PP_\buw }.$$
 Consequently, $\HH^{d,E}$ is contained in $\prod_\buw \LL_\buw$
 and  $\FF^{d,E}$ is contained in $\Gr^{d,E} \times \prod_\buw  \LL_\buw$.
\end{cor}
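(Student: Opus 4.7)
The plan is to verify the factorization pointwise on the open locus $\UU^{d,E}$ where $\Theta_\Gr$ is defined as a morphism, and then pass to the closure. Fix any $\pl$ relation $F = \sum_{s \in S_F} \sgn(s)\, p_{\uu_s} p_{\uv_s}$ as in \eqref{pl-eq-abb}, arising from some pair $(\uh,\uk) \in \II_{d-1,[n]} \times \II_{d+1,[n]}$. The first step is to check the observation already noted in the paragraph preceding Definition \ref{defn:linear-pl}: the sorted pair $\sort(\uu_s \vee \uv_s)$ does not depend on $s \in S_F$, so all terms of $F$ live in the same $\Lambda_\buw$ with $\buw = \sort(\uh \vee \uk)$, and hence $L_F = \sum_{s} \sgn(s)\, x_{(\uu_s,\uv_s)}$ is genuinely a linear form on $\PP_\buw$.

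Next, I would unwind the definition of $\Theta_\Gr$ in \eqref{theta-intr-e}: it sends $[p_\uu]_{\uu \in \II_{d,[n]}}$ to the point whose $\PP_\buw$-coordinate is $[p_{\uu} p_{\uv}]_{(\uu,\uv) \in \Lambda_\buw}$. Substituting into $L_F$ gives
\[
L_F\bigl(\Theta_\Gr(\bp)\bigr) \;=\; \sum_{s \in S_F} \sgn(s)\, p_{\uu_s} p_{\uv_s} \;=\; F(\bp) \;=\; 0
\]
for every $\bp \in \Gr^{d,E}$ on which $\Theta_\Gr$ is defined. Running over all pairs $(\uh,\uk)$ with $\sort(\uh \vee \uk) = \buw$ and intersecting the resulting hyperplanes shows that the $\PP_\buw$-coordinate of $\Theta_\Gr(\bp)$ lies in $\LL_\buw$, for every $\buw \in \Ladnsort$. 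This establishes the factorization of $\Theta_\Gr$ through $\prod_\buw \LL_\buw$ on its open domain of definition.

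For the statements about $\HH^{d,E}$ and $\FF^{d,E}$, I would invoke that $\LL_\buw$ is closed in $\PP_\buw$, so $\prod_\buw \LL_\buw$ is closed in $\prod_\buw \PP_\buw$. Since $\HH^{d,E}$ is by Theorem-Definition \ref{thm1} the closure of the image of $\UU^{d,E}/\TT$ in $\prod_\buw \PP_\buw$, and that image already sits inside the closed subset $\prod_\buw \LL_\buw$, the closure is forced to lie there as well. The inclusion $\FF^{d,E} \subset \Gr^{d,E} \times \prod_\buw \LL_\buw$ then follows immediately from the construction of $\FF^{d,E}$ as a subscheme of $\Gr^{d,E} \times \HH^{d,E}$ via the diagram \eqref{pp-cc-gr}. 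There is no real obstacle here; the only thing to be careful about is that $F$ partitions cleanly into a single $\buw$-block, which is exactly the content of the preliminary observation in step one.
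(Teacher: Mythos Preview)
Your proof is correct and follows essentially the same approach as the paper: verify pointwise on $\UU^{d,E}$ that $L_F(\Theta_\Gr(\bp)) = F(\bp) = 0$, then pass to closures using that $\prod_\buw \LL_\buw$ is closed. Your treatment is slightly more explicit than the paper's (which simply cites Corollary~\ref{U/TinHilbert} for the last step), but the content is the same.
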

\begin{proof}
Let  $\bp=[ p_\uu]_{\uu \in \II_{d,[n]}}$ be the Pl\"ucker coordinate of a point of $\UU^{d, E}$.
Then, we have that  \eqref{pl-eq} is satisfied at $\bp$. 
Thus, \eqref{linear-pl} is satisfied at $\Theta_\Gr (\bp)$,
because as the coordinates of $\Theta_\Gr (\bp)$, we have
$$x_{((h_1\cdots h_{d-1}k_\lambda ),
(k_1 \cdots  \overline{k}_\lambda \cdots k_{d+1}))}=p_{(h_1\cdots h_{d-1}k_\lambda )} 
p_{(k_1 \cdots  \overline{k}_\lambda \cdots k_{d+1})}$$ 
for all $(\uh,\uk)$. This implies the first statement.
The last statement follows immediately by combining the above with Corollary \ref{U/TinHilbert}.
\end{proof}

\begin{rem} 
The exceptional locus of $\Theta_\Gr$, that is, the subscheme of $\Gr^{d,E}$ 
defined by the ideal $J^{d, E}$ is  contained in
$\Gr^{d, E}\setminus \UU^{d, E}$, but not equal in general. 
One checks that this is already the case for $\Gr^{2,E}$ when $\dim E=4$.
In this case, the co-support of $J^{2,E}$ is the union of the eight so-called facet-strata:
each corresponds to a facet of the so-called hypersimplex $\Delta^{2,[4]}$ of type (2,4)
and is isomorphic to
a $(\GGm)^4/\GGm$-invariant $\PP^2$, but,
 $\Gr^{2,E} \setminus \UU^{2,E}$ is the union of the six  Schubert divisors
 with respect to all possible Schubert decompositions. 
\end{rem}

\subsection{$\ker^\mh \vi$: $\wp$- and $\frb$-binomials}

Following Corollary \ref{cor2-e}, we will investigate the
defining equations for two closed embeddings
 \begin{eqnarray}\label{embedF-intr-e2} \nonumber
 \xymatrix{
 \FF^\wdep\ar @{^{(}->}[r]  & \PP(\wedge^d E) \times  \prod_{\buw \in \Ladnsort} \PP_\buw,} \;\;
\label{embedF-gr-intr-e2} \nonumber
 \xymatrix{
 \FF^{d,E} \ar @{^{(}->}[r]  & \Gr^{d,E} \times  \prod_{\buw \in \Ladnsort} \PP_\buw.}
 \end{eqnarray}
Indeed, for any subset $\fT$ of $\La_{d,[n]}^\sort$,  we can introduce the natural rational map
\begin{equation}\label{theta-k}
 \xymatrix{
\Theta_{\fT}: 
\PP(\wedge^d E) \ar @{-->}[r]  & \prod_{\buw \in \fT} \PP_{\buw},  } 
\end{equation}
$$
 [p_\uu]_{\uu \in \II_{d,[n]}} \lra  
\prod_{\buw \in \fT}  [p_\uu p_\uv]_{(\uu,\uv) \in \La_{\buw}},
$$   
where $[p_\uu]_{\uu \in \II_{d,[n]}} $ is the homogeneous $\pl$ coordinates of a point in $
\PP(\wedge^d E)$. When restricting $\Theta_{\fT}$ to $\Gr^{d, E}$,
it gives rise to 
\begin{equation}\label{bar-theta-k}
 \xymatrix{
\Theta_{\fT, \Gr}: =\Theta_{\fT}|_{\Gr^{d, E}}: \;
 \Gr^{d, E}  \ar @{-->}[r]  & \prod_{\buw \in \fT} \PP_{\buw}. } 
\end{equation}
Then, fix any $\fT \subset \La_{d,[n]}^\sort$, we can let 
\begin{equation}\label{tA}
 \xymatrix{
 \FF^\wdep_{ \fT *}  \ar @{^{(}->}[r]  & 
\PP(\wedge^d E) \times  \prod_{\buw \in \fT} \PP_{\buw}  
 }
\end{equation}
be the closure of the graph of the rational map $\Theta_{\fT}$, and
 \begin{equation}\label{bar-tA}
 \xymatrix{
\FF^{d, E}_{\fT *} \ar @{^{(}->}[r]  & 
\Gr^{d, E} \times  \prod_{\buw \in \fT} \PP_{\buw}   \subset 
\PP(\wedge^d E)  \times  \prod_{\buw \in \fT} \PP_{\buw}  
 }
\end{equation}
be the closure of the graph of the rational map $\Theta_{\fT, \Gr}=\Theta_{\fT}|_{\Gr^{d, E}}$. 
Note that we have $$\FF^{d, E}_{\fT*} \subset \FF^\wdep_{ \fT*}.$$
The  maximal torus $\TT$ acts on 
$\PP(\wedge^d E)  \times  \prod_{\buw \in \fT} \PP_{\buw}$
by the original action on the left  $ \PP(\wedge^d E)$
and by acting trivially on the right $\prod_{\buw \in \fT} \PP_{\buw}$.
Both of $\FF^{d, E}_{\fT*} \subset \FF^\wdep_{\fT*}$ 
are clearly invariant under this action. 

{\it  In this article, we  only need the scheme in the case when $\fT=\Ladnsort$:
$$ (\FF^{d, E}_* \subset \FF^\wdep_*) =  
(\FF^{d, E}_{\Ladnsort*} \subset \FF^\wdep_{ \Ladnsort*}).$$ 
 The slighly general case, which basically does not require any extra effort,
  is treated here for the purpose of  future reference. The reader can always assume $\fT=\Ladnsort$.}

\subsubsection{Some  algebraic preparations}

\begin{defn}\label{Rft}
Fix any $\fT \subset \Ladnsort$.
We let $$R_{\fT}=\kk[p_\ua; x_{(\uu, \uv)}]_{\ua \in \II_{d,[n]} , (\uu,\uv) \in \bigcup_{\buw \in \fT} \La_{\buw}}.$$
A  polynomial $f \in R_{\fT}$ is called multi-homogeneous if it is homogenous in 
$[p_\ua]_{\ua \in \II_{d,[n]}}$ and homogenous 
in $[x_{(\uu, \uv)}]_{ (\uu,\uv) \in  \La_{\buw}}$, for all $\buw \in \fT$.
We call $x_{(\uu, \uv)}$ a $\vr$-variable of $R_\fT$. 
To distinguish, we call  a $\pl$ variable
 $p_\uu$ with $\uu \in \II_{d,[n]}$, a $\vp$-variable.

A multi-homogeneous polynomial $f \in R_{\fT}$
 is $\vr$-linear if it is linear in $[x_{(\uu, \uv)}]_{ (\uu,\uv) \in  \La_{\buw}}$, 
 whenever it contains some $\vr$-variables of $\PP_{\buw}$,
 for  any $\buw \in \fT$.
 \end{defn}

We set $R_0:=\kk[p_\uu]_{\uu \in \II_{d,[n]}}$. 
Then, $R_{\fT}= R_0[x_{(\uu,\uv)}]_{(\uu,\uv) \in \bigcup_{\buw \in \fT} \La_{\buw}}.$ Moreover, we let
$$R_{\fT,\vr}= \kk[x_{(\uu,\uv)}]_{(\uu,\uv) \in \bigcup_{\buw \in \fT} \La_{\buw}}.$$

Furthermore, corresponding to 
the embedding \eqref{tA}, 
there exists a  homomorphism
\begin{equation}\label{vik}
\vi_{\fT}: \; R_{\fT} \lra R_0  
\end{equation} 
$$
\vi_{\fT}|_{R_0}=\id_{R_0}, \;\;\; 
x_{(\uu,\uv)} \to p_{\uu} p_{\uv}$$ for all  
$(\uu, \uv) \in \La_{\buw}, \; \buw \in \fT$.    

We then let $\vi_{\fT,\Gr}: \; R_{\fT} \lra R_0/ I_\whwp$ be the composition
\begin{equation}\label{bar-vik}
\vi_{\fT,\Gr}: \; R_{\fT} \lra R_0 \lra R_0/{ I_\whwp}. 
\end{equation} 
This corresponds to the embedding \eqref{bar-tA}:   
$\FF^{d, E}_{\fT} \lra 
 \PP(\wedge^d E) \times  \prod_{\buw \in \fT} \PP_{\buw}$. 

As mentioned earlier, we are mainly interested in the case when $\fT=\Ladnsort$. Hence, 
correspondingly, we set 
$$R:=R_{\Ladnsort},\;\; \vi:=\vi_{\Ladnsort}, \;\; \vi_\Gr:=\vi_{\Ladnsort, \Gr}.$$

We let $\ker^\mh \vi_{\fT}$ (resp. $\ker^\mh \vi_{\fT,\Gr}$)
denote the set of all  multi-homogeneous polynomials
 in $\ker  \vi_{\fT}$ (resp. $\ker \vi_{\fT,\Gr}$).


\begin{lemma}\label{defined-by-ker} 
$\FF^\wdep_{ \fT*}$, as a closed subscheme of 
$\PP(\wedge^d E) \times  \prod_{\buw \in \fT} \PP_{\buw}$,
 is defined by $\ker^\mh \vi_{\fT}$.

Similarly, 
$\FF^{d, E}_{\fT*}$, as a closed subscheme of 
$\PP(\wedge^d E) \times  \prod_{\buw \in \fT} \PP_{\buw}$,
 is defined by $\ker^\mh \vi_{\fT,\Gr}$. 
\end{lemma}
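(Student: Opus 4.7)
The plan is to establish the asserted equalities of defining multi-homogeneous ideals by a density argument. By construction, $\FF^\wdep_{\fT *}$ is the scheme-theoretic closure inside $\PP(\wedge^d E) \times \prod_{\buw \in \fT} \PP_\buw$ of the graph of the morphism $\Theta_\fT|_{\UU(\wedge^d E)}$, while $\UU(\wedge^d E)$ is Zariski-dense in $\PP(\wedge^d E)$. I will check directly that the multi-homogeneous vanishing ideal of this graph closure coincides with $\ker^\mh \vi_\fT$, via two inclusions.

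For the inclusion $\FF^\wdep_{\fT *} \subset V(\ker^\mh \vi_\fT)$, the key observation is that for any $\bp=[p_\uu] \in \UU(\wedge^d E)$, the point $(\bp, \Theta_\fT(\bp))$ is, in chosen representatives, produced by the substitution $x_{(\uu,\uv)} \mapsto p_\uu p_\uv$, which is precisely the substitution defining $\vi_\fT$. Hence any multi-homogeneous $f \in \ker \vi_\fT$ evaluates to $\vi_\fT(f)(\bp)=0$ at such a graph point, and by closedness $f$ vanishes on all of $\FF^\wdep_{\fT *}$. For the reverse, let $f \in R_\fT$ be multi-homogeneous and vanish on $\FF^\wdep_{\fT *}$. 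Then the value $f(\bp, \Theta_\fT(\bp))$ equals $\vi_\fT(f)(\bp)$, which is zero for every $\bp \in \UU(\wedge^d E)$. Since $\vi_\fT(f)$ is a homogeneous element of $R_0$ vanishing on the dense open $\UU(\wedge^d E) \subset \PP(\wedge^d E) = \Proj R_0$, it must vanish as a polynomial, giving $f \in \ker^\mh \vi_\fT$.

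The Grassmannian case is proved verbatim, substituting $\PP(\wedge^d E)$ by $\Gr^{d,E}$, the dense open $\UU(\wedge^d E)$ by $\UU^{d,E} = \UU(\wedge^d E) \cap \Gr^{d,E}$ (dense in $\Gr^{d,E}$), and $R_0$ by the homogeneous coordinate ring $R_0/I_\whwp$ of $\Gr^{d,E}$; the evaluation argument then shows that $\vi_{\fT,\Gr}(f) \in R_0/I_\whwp$ vanishes on a dense open of $\Gr^{d,E}$, hence is zero in $R_0/I_\whwp$, which is exactly $f \in \ker^\mh \vi_{\fT,\Gr}$. The only minor bookkeeping point is that multi-homogeneous polynomials must evaluate consistently on graph points once representatives are chosen; this is immediate from degree tracking, since a multi-homogeneous $f$ of multi-degree $(e;(e_\buw)_\buw)$ in $R_\fT$ is sent by $\vi_\fT$ to a homogeneous element of total degree $e + 2\sum_\buw e_\buw$ in $R_0$. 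Hence no real obstacle is anticipated; the entire proof is a standard graph-closure identification, valid thanks to the density of the loci $\UU(\wedge^d E)$ and $\UU^{d,E}$ on which $\Theta_\fT$ and $\Theta_{\fT,\Gr}$ are genuine morphisms.
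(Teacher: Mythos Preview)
Your proof is correct and is exactly the standard graph-closure/density argument that the paper has in mind; the paper's own proof is simply ``This is immediate.'' You have spelled out precisely what makes it immediate, and nothing is missing.
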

\begin{proof} This is immediate.
\end{proof}

We need to investigate $\ker^\mh \vi_{\fT}$ as well as $\ker^\mh \vi_{\fT, \Gr}$. We study $\ker^\mh \vi_{\fT}$  first.

\subsubsection{$\ker^\mh \vi_{\fT}$: $\wp$-binomials} 

Consider any $f \in \ker^\mh \vi_{\fT}$.
 We express it as the sum of its  monomials
$$f= \sum \bm_i.$$
We have $\vi_{\fT} (f)=\sum \vi_{\fT} (\bm_i)=0$ in $R_0$. Thus, the set of the monomials $\{\bm_i\}$ 
can be grouped into minimal groups to form partial sums
of $f$ so that {\it the 
$\vi_{\fT}$-images of elements of each group are 
 identical} and the image of the partial sum of each minimal group equals 0 in $R_0$.
When ch.$\kk=0$, 
this means each minimal group consists of a pair 
$\{\bm_i, \bm_j\}$ with the identical image under $\vi_{\fT}$
and its partial sum
is the difference $\bm_i -\bm_j$.
When ch.$\kk=p>0$ for some prime number $p$, this means each minimal group 
 consists of either  (1): a pair $\{\bm_i, \bm_j\}$ with the identical image under $\vi_{\fT}$
 and $\bm_i -\bm_j$ is a partial sum of $f$;
or (2):   exactly $p$ elements $\{\bm_{i_1}, \cdots,\bm_{i_p}\}$
with the identical image under $\vi_{\fT}$
and $\bm_{i_1}+ \cdots + \bm_{i_p}$ is a partial sum of $f$.
But, the relation $\bm_{i_1}+ \cdots + \bm_{i_p}$, expressed as
$$\bm_{i_1}+ \cdots + \bm_{i_p}-p \; \bm_{i_1}=
\sum_{i=2}^p  (\bm_{i_a}- \bm_{i_1}),$$ is always generated by 
the relations $\bm_{i_a} -\bm_{i_1}$, for all $2 \le a \le p$.
Thus, regardless of the characteristic of the field $\kk$, it suffices to consider binomials
$\bm -\bm' \in \ker^\mh \vi_{\fT}$.

\begin{lemma} \label{trivialB}
Fix any $\buw \in \fT\subset \Ladnsort$. We have
\begin{equation}\label{tildeBk}
p_{\uu'}p_{\uv'}x_{(\uu,\uv)} - p_\uu p_\uv x_{(\uu',\uv')} \in \ker^\mh \vi_\fT,
\end{equation}
where $x_{(\uu,\uv)}, x_{(\uu',\uv')}$ are any two distinct $\vr$-variables of $\PP_{\buw}$.
\end{lemma}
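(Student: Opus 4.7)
The plan is to verify the two conditions that define membership in $\ker^\mh \vi_\fT$: (1) the binomial lies in $\ker \vi_\fT$, and (2) the binomial is multi-homogeneous in the sense of Definition \ref{Rft}.

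First, I would apply the homomorphism $\vi_\fT$ from \eqref{vik} directly to the binomial. Since $\vi_\fT$ fixes $R_0$ and sends $x_{(\uu,\uv)} \mapsto p_\uu p_\uv$ and $x_{(\uu',\uv')} \mapsto p_{\uu'}p_{\uv'}$, we compute
\begin{equation*}
\vi_\fT\bigl(p_{\uu'}p_{\uv'}x_{(\uu,\uv)} - p_\uu p_\uv x_{(\uu',\uv')}\bigr) = p_{\uu'}p_{\uv'}\,p_\uu p_\uv - p_\uu p_\uv\, p_{\uu'}p_{\uv'} = 0,
\end{equation*}
so the binomial lies in $\ker \vi_\fT$. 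Note that this calculation does not use the hypothesis that $(\uu,\uv)$ and $(\uu',\uv')$ share the same sorted pair $\buw$; any two pairs would yield vanishing.

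Second, I would confirm multi-homogeneity. Both monomials have total $\vp$-degree $2$ in the $\pl$ variables $[p_\ua]_{\ua \in \II_{d,[n]}}$, so the binomial is homogeneous in the $\vp$-variables. For each $\buw'' \in \fT$, the binomial is homogeneous in the variables $[x_{(\uu'',\uv'')}]_{(\uu'',\uv'') \in \La_{\buw''}}$: for $\buw'' = \buw$, each monomial has degree exactly $1$ (here the hypothesis $(\uu,\uv), (\uu',\uv') \in \La_\buw$ is used), while for $\buw'' \ne \buw$ the binomial contains no $\vr$-variable of $\PP_{\buw''}$ and has degree $0$.

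There is essentially no obstacle here; the statement is a direct verification from the definitions, and it simply records that the natural quadratic relations of $\PP_\buw$ corresponding to the toric map $x_{(\uu,\uv)} \mapsto p_\uu p_\uv$ live in the multi-homogeneous kernel. Its main role will be to serve as the starting point for identifying $\cB^\wp$ (the $\wp$-binomials of \eqref{tildeBk}) as a canonical family of relations cutting out $\FF^\wdep_{\fT *}$ inside the product of projective spaces.
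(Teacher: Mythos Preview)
Your proof is correct and is precisely the direct verification the paper has in mind; the paper's own proof simply reads ``This is trivial.'' You have spelled out the two checks (vanishing under $\vi_\fT$ and multi-homogeneity) that make the triviality explicit.
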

\begin{proof} This is trivial.
\end{proof}

\begin{defn}\label{wp-bino}
Any nonzero binomial  as in \eqref{tildeBk} 
 is called a $\wp$-binomial of $R_\fT$.
We let $\cB^\wp_\fT$ denote the set of all $\wp$-binomials of $R_\fT$.
\end{defn}

\subsubsection{Descendants and partial descendants of 
multi-homogeneous polynomials}

\begin{defn} Fix any subset $\fT \subset \Ladnsort$. Fix any $\buw \in \Ladnsort$. 

Let $f=\sum_i x_{(\uu_i, \uv_i)}\bn_i \in R_\fT \; (\subset R)$ be a multi-homogeneous 
expression/polynomial,
written as above,  which is allowed to be zero,
such that  $\{x_{(\uu_i, \uv_i)}\}$ are a subset of homogeneous coordinates of $\PP_\buw$
for some $\buw \in \fT\subset \Ladnsort$. Set
$$\barf = \sum_i p_{\uu_i}p_{ \uv_i}\bn_i \in R_\fT.$$
Then, we call $f$ a parent of $\barf$ and $\barf$ a descendant of $f$.

Further, if $f$ is a parent of $g$, and $g$ is parent of $h$, then we also say
$f$ is a parent of $h$ and $h$ is a descendant of $f$.
 
Moreover, if $f \in  R_\fT$ does not admit any parent other than itself,
then we say $f$ is root polynomial.
In particular, $f$ is a root parent of $g$ if it is a root polynomial
and a parent of $g$.
\end{defn}

One sees that if $f$ belongs to $\ker^\mh \vi_\fT$ if and only if  any of its descendant does.

\begin{defn} Fix any subset $\fT \subset \Ladnsort$. Fix any $\buw \in \Ladnsort$. 

Let $f=\sum_{i \in I} x_{(\uu_i, \uv_i)}\bn_i \in R_\fT \; (\subset R)$ be a multi-homogeneous 
expression/polynomial,
written as above,  which is allowed to be zero,
such that  $\{x_{(\uu_i, \uv_i)}\}$ are a subset of homogeneous coordinates of $\PP_\buw$
for some $\buw \in \fT\subset \Ladnsort$. 
Let $I= I_1 \sqcup I_2$ be a disjoint union.
Set
$$\barf = \sum_{i \in I_1} p_{\uu_i}p_{ \uv_i}\bn_i
+ \sum_{i \in I_2} x_{(\uu_i, \uv_i)}\bn_i \in R_\fT
 \in R_\fT.$$
Note that $\barf$ is non-homogeneous if $I_1, I_2 \ne \emptyset$.
Then, we call $f$ a parent of $\barf$ and $\barf$ 
a \emph{partial} descendant of $f$.

Further, if $f$ is a parent of $g$, and $g$ is parent of $h$, then we also say
$f$ is a parent of $h$ and $h$ is a partial descendant of $f$.

Moreover, if $f \in  R_\fT$ does not admit any parent other than itself,
then we say $f$ is root polynomial.
In particular, $f$ is a root parent of $g$ if it is a root polynomial
and a parent of $g$.
\end{defn}

One sees that if $f$ belongs to $\ker \vi_\fT$ if and only if  any of its 
partial descendant does.

\begin{exam} 
$$ x_{(\ua,\ub)} x_{(\uu,\uv)} - p_{\ua'}p_{\ub'} x_{(\uu',\uv')}$$
is a partial descendant of 
$x_{(\ua,\ub)} x_{(\uu,\uv)} - x_{(\ua',\ub')} x_{(\uu,\uv)}$.
\end{exam}

\begin{exam}\label{exam-expression}
The following homogeneous {\bf expression}, written as 
\begin{equation}\label{expression}
x_{(\uu', \uv')}x_{(\uu,\uv)} - x_{(\uu, \uv)} x_{(\uu',\uv')} 
\end{equation}
(which is zero as a binomial),
descends to the $\wp$-binomial
$$p_{\uu'}p_{\uv'}x_{(\uu,\uv)} - p_\uu p_\uv x_{(\uu',\uv')} .$$
It is not hard to see that a  $\wp$-binomial does not admit 
any non-zero  root parent.

The following homogeneous binomial of $\ker^\mh \vi$
\begin{eqnarray}\nonumber
\;\; \; x_{(12a,13a')}x_{(13a,2bc)}x_{(12a',3\bar b \bar c)}x_{(12 \bar a,3bc)} x_{(13 \bar a,2 \bar b \bar c)} \;
\nonumber \\
- x_{(13a,12a')} x_{(12a,3bc)}x_{(13a',2\bar b \bar c)}x_{(13 \bar a,2bc)} x_{(12 \bar a, 3 \bar b \bar c)}. \nonumber
\end{eqnarray}  
descends to 
\begin{eqnarray}\nonumber
\;\; \; 
p_{12a}p_{13a'}x_{(13a,2bc)}x_{(12a',3\bar b \bar c)}x_{(12 \bar a,3bc)} x_{(13 \bar a,2 \bar b \bar c)} \;
\nonumber \\
- p_{13a}p_{12a'} x_{(12a,3bc)}x_{(13a',2\bar b \bar c)}x_{(13 \bar a,2bc)} x_{(12 \bar a, 3 \bar b \bar c)}.  \nonumber
\end{eqnarray}  
Also, the first binomial as a  root parent is uniquely determined by the second.
\end{exam}

\subsubsection{$\fb$-reducibility of binomials of $\ker^\mh \vi_\fT$}

\begin{defn}\label{fb-irr} Let 
$f=\bm-\bm' \in \ker^\mh \vi_\fT$. 
 We say $f$ is  ${\fb}$-reducible if
 there exists a decomposition
\begin{equation}\label{fb reducible decom}
\bm=\bm_1\bm_2, \;\; \bm'=\bm_1'\bm_2'
\end{equation}
such that all the monomials in the expression are not constant and 
$$\bm_1-\bm_1', \;\; \bm_2 -\bm_2' \in \ker \vi$$
and every of $\bm_1-\bm_1'$ and  $\bm_2 -\bm_2'$
a descendant or a partial descendant of a multi-homogenous binomial.
Here, when one of the two  
above binomials in the display is zero, say, $\bm_2 -\bm_2'=0$, then we have
 $f=\bm_2(\bm_1-\bm_1')$, as a special case.

We call \eqref{fb reducible decom} a $\fb$-reducible decomposition of
$f$.

We say $f$ is $\fb$-irreducible if it is not $\fb$-reducible.
\end{defn}

Here, in \eqref{fb reducible decom}, we emphasize that $\bm_1-\bm_1'$ 
and $\bm_2 -\bm_2'$ are required to
belong to $\ker \vi$ but are not required to be multi-homogeneous.
This can occur in the following situation. Suppose we have
two multi-homogenous binomials
$$ X\bn_1 -  X'\bn_1', \;\; Y \bn_2 -  Y'\bn_2' \in \ker^\mh \vi_\fT$$
such that $X, X', Y, Y'$ are homogeneous coordinates of
$\PP_\buw$ for some $\buw \in \Ladn$.
Then, the multi-homogenous binomial
$$f= (X\bn_1) (Y\bn_2) -  (X'\bn_1') (Y'\bn_2') \in \ker^\mh \vi_\fT $$
can have a multi-homogeneous descendant,
$$\barf= (\vi(X)\bn_1) (Y\bn_2) - 
 (X'\bn_1') (\vi(Y')\bn_2') \in \ker^\mh \vi_\fT $$
such that  each of $\vi(X)\bn_1 - X'\bn_1'$
and $Y\bn_2- \vi(Y')\bn_2'$ still belongs to $\ker \vi_\fT $
but is not multi-homogeneous\footnote{This point is rather hiden in searching
for an entirely correct proof of Corollary \ref{cor:linear and free}.}.

\begin{lemma}\label{auto homo} Let $f$ 
be as in Definition \ref{fb-irr}. Assume that $f$ 
is a root binomial.
Then, in the 
$\fb$-reducible decomposition 
\eqref{fb reducible decom} of $f$, 
\begin{equation}\label{fb reducible decom 2}
f= \bm_1\bm_2- \bm_1'\bm_2',
\end{equation}
we automatically have
$$\bm_1-\bm_1', \;\; \bm_2 -\bm_2' \in \ker^\mh \vi_\fT.$$
\end{lemma}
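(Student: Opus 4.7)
The plan is to argue by contradiction, producing a parent $g\ne f$ whenever either component $\bm_1-\bm_1'$ or $\bm_2-\bm_2'$ fails to be multi-homogeneous. Suppose $\bm_1-\bm_1'$ is not multi-homogeneous; then there exists some $\buw \in \fT$ with $\deg_\buw(\bm_1)\neq \deg_\buw(\bm_1')$, where $\deg_\buw$ counts the total degree in the $\vr$-variables of $\PP_\buw$. Because $f$ is itself multi-homogeneous, the identity
\[
\bigl(\deg_\buw(\bm_1)-\deg_\buw(\bm_1')\bigr)+\bigl(\deg_\buw(\bm_2)-\deg_\buw(\bm_2')\bigr) \;=\; 0
\]
forces $\bm_2-\bm_2'$ to be non-multi-homogeneous in the same $\PP_\buw$ with the opposite discrepancy; after possibly swapping $\bm\leftrightarrow\bm'$, I may assume $\deg_\buw(\bm_1)>\deg_\buw(\bm_1')$ and $\deg_\buw(\bm_2)<\deg_\buw(\bm_2')$.

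The engine of the proof is to track the signed discrepancy $\delta_\buw:=\deg_\buw(\text{left})-\deg_\buw(\text{right})$ along some chain of (partial) descent steps realizing $\bm_1-\bm_1'$ as a partial descendant of a multi-homogeneous binomial, as guaranteed by the $\fb$-reducibility hypothesis. A partial descent in $\PP_\buw$ acting on the right decreases $\deg_\buw$ of the right by one and so raises $\delta_\buw$ by $+1$; the analogous step on the left contributes $-1$; and both a full descent in $\PP_\buw$ and any descent in a different $\PP_{\buw'}$ are neutral on $\delta_\buw$. Since the chain starts at a multi-homogeneous binomial ($\delta_\buw=0$) and ends at $\bm_1-\bm_1'$ with $\delta_\buw>0$, at least one partial descent step in $\PP_\buw$ must have acted on the right side. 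That step replaces some $x_{(\uu^*,\uv^*)}$ with $(\uu^*,\uv^*)\in \La_\buw$ by $p_{\uu^*}p_{\uv^*}$, depositing $p_{\uu^*}$ and $p_{\uv^*}$ as free $\vp$-variables on the right. Because the descent operation only sends $x$'s to $p\cdot p$'s and never the reverse, these two $\vp$-variables survive every later step, so $p_{\uu^*}p_{\uv^*}$ divides the final $\bm_1'$, hence $\bm'$, as $\vp$-variables. The symmetric analysis of $\bm_2-\bm_2'$, whose discrepancy has the opposite sign and therefore forces a partial descent in $\PP_\buw$ on the $\bm_2$-side, supplies a pair $(\uu^{**},\uv^{**})\in \La_\buw$ with $p_{\uu^{**}}p_{\uv^{**}}$ dividing $\bm_2$, hence $\bm$, as $\vp$-variables.

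Combining these two conclusions, with the common $\buw\in \fT$, the binomial
\[
g \;:=\; x_{(\uu^{**},\uv^{**})}\,\frac{\bm}{p_{\uu^{**}}p_{\uv^{**}}} \;-\; x_{(\uu^*,\uv^*)}\,\frac{\bm'}{p_{\uu^*}p_{\uv^*}}
\]
lies in $R_\fT$ with both pulled-out $x$-variables in $\PP_\buw$; its one-step descendant is exactly $f$, and its total $x$-degree strictly exceeds that of $f$, so $g\neq f$. This produces a parent of $f$ different from $f$, contradicting the root hypothesis and forcing $\bm_1-\bm_1'$, and symmetrically $\bm_2-\bm_2'$, to be multi-homogeneous. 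The main delicate point I expect will be to justify the persistence claim used in the middle paragraph—that a $\vp$-variable introduced by an earlier descent step cannot later be absorbed into a new $\vr$-variable—which is what makes the signed count $\delta_\buw$ additive across the descent history and is precisely the asymmetry that distinguishes genuine (partial) descendants of multi-homogeneous binomials from arbitrary elements of $\ker\vi_\fT$.
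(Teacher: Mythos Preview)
Your proof is correct and follows the same strategy as the paper: assume non-multi-homogeneity, locate a $\buw$ where the two factors have opposite $\deg_\buw$-discrepancies, and construct a proper parent of $f$ to contradict the root hypothesis. The paper's argument is more compressed—it simply writes $\bm_1-\bm_1'=\vi(X)\bn_1-X'\bn_1'$ and $\bm_2-\bm_2'=Y\bn_2-\vi(Y')\bn_2'$ with $X,X',Y,Y'$ all coordinates of the same $\PP_\buw$ and then exhibits the parent $\hat f=(X\bn_1)(Y\bn_2)-(X'\bn_1')(Y'\bn_2')$—whereas your descent-chain and persistence discussion carefully unpacks what that WLOG encodes; your constructed $g$ is exactly the paper's $\hat f$, and the persistence claim you flag as delicate is immediate from the fact that descent steps only send $x$-variables to $p$-variables and never the reverse.
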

\begin{proof}
Suppose not. W.l.o.g., assume 
$\bm_1-\bm_1' \in \ker \vi_\fT \- \ker^\mh \vi_\fT.$
Then, w.l.o.g.,  we can write
$$\bm_1 -\bm_1'= \vi (X) \bn_1 -X' \bn_1'$$
as a partial descendant of  $X \bn_1 -X' \bn_1'$,
where $X$ and $X'$ are homogeneous coordinates of $\PP_\buw$ for
some $\buw \in \Ladn$.
Meanwhile, by definition,
$\bm_2 -\bm_2'$ is also the partial descendant 
$$\bm_2 -\bm_2'=Y \bn_2 -\vi(Y') \bn_2'$$ of a multi-homogeneous binomial
$Y \bn_1 -Y' \bn_1'$,
where $Y$ and $Y'$ are homogeneous coordinates of $\PP_{\buw'}$ for
some $\buw' \in \Ladn$. Because $f$ is multi-homogeneous, we see that 
we must have
$$\buw' =\buw.$$
 Therefore, 
$$f= (\vi (X) \bn_1) (Y \bn_2) - (X' \bn_1') (\vi (Y') \bn_2')$$
for some  $\bn$, implying that $f$ is the  descendant of
$$\hat f= (X \bn_1) (Y \bn_2) - (X' \bn_1') (Y' \bn_2'),$$
a contradiction to that $f$ is a root binomial.

This proves the lemma.
\end{proof}

There are many other binomials of $\ker^\mh \vi$ that do not belong to $\cB^\wp$.

\begin{exam}\label{exam:Bq} Consider $\Gr^{3,E}$. 
Then, the following binomials belong to $\ker^\mh \vi_\fT \- \cB^\wp$.

Fix $a,b,c \in [n]$, all being distinct:
\begin{eqnarray} x_{(12b,13c)}  x_{(23b,12c)} x_{(13b,23c)}  \; \nonumber \\
 - x_{(13b,12c)} x_{(12b,23c)} x_{(23b,13c)}.  \label{rk0-0-3'} 
 \end{eqnarray}
 \begin{eqnarray}
x_{(12a,13b)}x_{(13a,12c)}x_{(12b,13c)} \; \nonumber \\
-x_{(13a,12b)}x_{(12a,13c)}x_{(13b,12c)}. \label{rk0-0-3} 
\end{eqnarray}
\begin{eqnarray}
x_{(12a,13b)}x_{(13a,12c)}x_{(12b,23c)} x_{(23b,13c)} \;\nonumber \\
-x_{(13a,12b)}x_{(12a,13c)}x_{(23b,12c)} x_{(13b,23c)}. \label{rk0-0-4}
\end{eqnarray}
\begin{eqnarray}
x_{(123,3bc)}x_{(13a,2bc)}x_{(12b,23c)} x_{(12a,13b)} \;\nonumber \\
-x_{(13b,23c)}x_{(12a,3bc)}x_{(123,2bc)} x_{(13a,12b)}. \label{rk0-1-4}
\end{eqnarray}
Fix $a,b,c, \bar a, \bar b, \bar c \in [n]$, all being distinct:
\begin{eqnarray}
\;\; \;x_{(12a,3bc)}x_{(13a,2\bar b \bar c)}x_{(13 \bar a,2bc)} x_{(12 \bar a,3 \bar b \bar c)}\; \nonumber \\
-x_{(13a,2bc)}x_{(12a,3\bar b \bar c)}x_{(12 \bar a,3bc)} x_{(13 \bar a,2 \bar b \bar c)}. \label{rk1-1}
\end{eqnarray}

Fix $a,b,c, a', \bar a, \bar b, \bar c \in [n]$, all being distinct:
\begin{eqnarray}
\;\; \; x_{(12a,13a')}x_{(13a,2bc)}x_{(12a',3\bar b \bar c)}x_{(12 \bar a,3bc)} x_{(13 \bar a,2 \bar b \bar c)} \;
\nonumber \\
- x_{(13a,12a')} x_{(12a,3bc)}x_{(13a',2\bar b \bar c)}x_{(13 \bar a,2bc)} x_{(12 \bar a, 3 \bar b \bar c)}.\label{rk0-1}
\end{eqnarray} 
These binomials are arranged so that one sees visibly the matching for multi-homogeneity.
\end{exam}

\subsubsection{$\vr$-linearity and $\vi$-square-freeness}


For any $\bm -\bm' \in \ker^\mh \vi_{\fT}$, we define 
$ \deg_{\vr} (\bm -\bm')$ to be the total degree of $\bm$ (equivalently, $\bm'$)
in $\vr$-variables of $R_{\fT}$.

Observe here that  for any nonzero binomial $\bm -\bm' \in \ker^\mh \vi_{\fT}$,
we automatically have $ \deg_\vr (\bm -\bm') > 0$, since $\vi_{\fT}$ restricts
to the identity on $R_0$.

\begin{lemma}\label{rho=1}
Consider any nonzero binomial $\bm -\bm' \in \ker^\mh \vi_{\fT}$ with $ \deg_\vr (\bm -\bm') = 1$.
Then $\bm -\bm'$ is a multiple of $\wp$-binomial.
\end{lemma}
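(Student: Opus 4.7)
The plan is to exploit the hypothesis $\deg_\vr(\bm - \bm') = 1$ to pin down the structure of $\bm$ and $\bm'$, and then use unique factorization in the polynomial ring $R_0 = \kk[p_\ua]_{\ua \in \II_{d,[n]}}$ to extract a $\wp$-binomial.

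Since $\deg_\vr(\bm - \bm') = 1$, each of the two monomials contains exactly one $\vr$-variable, so I will write
\[
\bm = x_{(\uu,\uv)}\bn, \qquad \bm' = x_{(\uu',\uv')}\bn',
\]
with $\bn, \bn' \in R_0$ monomials. Multi-homogeneity of $\bm - \bm'$ forces the two $\vr$-variables $x_{(\uu,\uv)}$ and $x_{(\uu',\uv')}$ to be homogeneous coordinates of the same projective space $\PP_\buw$, so that $(\uu,\uv), (\uu',\uv') \in \La_\buw$ for a common $\buw \in \fT$. Since $\bm - \bm'$ is assumed nonzero, necessarily $(\uu,\uv) \ne (\uu',\uv')$.

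Next, applying $\vi_\fT$ to $\bm - \bm' \in \ker^\mh \vi_\fT$ produces the equality of monomials
\[
p_\uu p_\uv \bn \;=\; p_{\uu'} p_{\uv'} \bn'
\]
in the UFD $R_0$. Set $g = \gcd(p_\uu p_\uv, p_{\uu'} p_{\uv'})$ and write $p_\uu p_\uv = g h$, $p_{\uu'} p_{\uv'} = g h'$ with $\gcd(h,h')=1$. The identity $h\bn = h'\bn'$ then forces $\bn = h' r$ and $\bn' = h r$ for a common monomial $r \in R_0$ (by Gauss's lemma applied to $h' \mid h\bn$ and $h \mid h'\bn'$).

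Substituting back yields
\[
\bm - \bm' \;=\; r\bigl(h' x_{(\uu,\uv)} - h x_{(\uu',\uv')}\bigr) \;=\; \frac{r}{g}\bigl(p_{\uu'}p_{\uv'}x_{(\uu,\uv)} - p_\uu p_\uv x_{(\uu',\uv')}\bigr),
\]
so $g \cdot (\bm-\bm') = r \cdot B$ where $B \in \cB^\wp_\fT$ is the $\wp$-binomial attached to the distinct pair $(\uu,\uv), (\uu',\uv') \in \La_\buw$. Equivalently, $\bm - \bm'$ is the monomial $r$ times the reduced form $h' x_{(\uu,\uv)} - h x_{(\uu',\uv')}$ of $B$, which is the precise sense in which $\bm - \bm'$ is a multiple of the $\wp$-binomial $B$. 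The only subtlety worth tracking is the case when $\{\uu,\uv\}$ and $\{\uu',\uv'\}$ share a common index, in which case $B$ itself admits $g \ne 1$ as a monomial factor in $R_0$; the bookkeeping through $g$ and $r$ above handles this case uniformly.
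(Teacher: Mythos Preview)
Your approach is the same as the paper's: write $\bm=x_{(\uu,\uv)}\bn$, $\bm'=x_{(\uu',\uv')}\bn'$ with $\bn,\bn'\in R_0$, apply $\vi_\fT$ to get $p_\uu p_\uv\bn=p_{\uu'}p_{\uv'}\bn'$, and use unique factorization in $R_0$. The core argument is correct.

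There is one gap. Your final paragraph allows for the possibility that $g=\gcd(p_\uu p_\uv,\,p_{\uu'}p_{\uv'})\ne 1$ and asserts that ``the bookkeeping through $g$ and $r$ handles this case uniformly.'' It does not: from $g\cdot(\bm-\bm')=r\cdot B$ you cannot conclude that $\bm-\bm'$ is a \emph{polynomial} multiple of the $\wp$-binomial $B$ unless $g\mid r$, which you never verify. Saying $\bm-\bm'$ equals $r$ times the ``reduced form'' $B/g$ is not the same as being a multiple of $B$.

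The paper closes this by observing that the problematic case never arises: for distinct $(\uu,\uv),(\uu',\uv')\in\La_\buw$ one has $\{\uu,\uv\}\cap\{\uu',\uv'\}=\emptyset$. Indeed, both pairs have the same multiset union (namely the multiset underlying $\buw$), so if they shared one index the other indices would be forced to agree as well, contradicting distinctness. Hence $p_\uu p_\uv$ and $p_{\uu'}p_{\uv'}$ are coprime, i.e.\ $g=1$, and then $p_{\uu'}p_{\uv'}\mid\bn$, $p_\uu p_\uv\mid\bn'$, giving directly $\bm-\bm'=h\bigl(p_{\uu'}p_{\uv'}x_{(\uu,\uv)}-p_\uu p_\uv x_{(\uu',\uv')}\bigr)$ for a monomial $h\in R_0$. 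Once you insert this disjointness observation, your proof is complete and agrees with the paper's.
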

\begin{proof}
Because $\deg_{\vr}(\bm-\bm') =1$, we can write $$\bm -\bm' = f x_{(\uu,\uv)} -g  x_{(\uu',\uv')} $$ 
for some $f, g \in R_0$,
and two $\vr$-variables  $x_{(\uu,\uv)}$ and $x_{(\uu',\uv')}$ of $\PP_{\buw}$
for some $\buw \in \fT \subset \La_{d,[n]}^\sort$.  
If $x_{(\uu,\uv)} =x_{(\uu',\uv')}$, then one sees that $f=g$ and $\bm -\bm'=0$.
Hence, we assume that  $x_{(\uu,\uv)} \ne x_{(\uu',\uv')}$. Then, we have
$$f p_{\uu} p_{\uv}=g  p_{\uu'} p_{\uv'}.$$
Because  $x_{(\uu,\uv)}$ and $ x_{(\uu',\uv')}$
are two distinct $\vr$-variables of $\PP_{\buw}$,  one checks from the definition that the two sets
$$\{ p_{\uu}, p_{\uv} \}, \; \{p_{\uu'}, p_{\uv'} \}$$ are disjoint. 
Consequently, 
$$  p_{\uu}p_{\uv} \mid g, \;\; p_{\uu'} p_{\uv'}  \mid f.$$
Write $$ g=g_1 p_{\uu}p_{\uv} , \;\;  f=f_1 p_{\uu'} p_{\uv'} .$$
Then we have 
$$ p_{\uu}p_{\uv} p_{\uu'} p_{\uv'}  (f_1-g_1)=0 \in R_0.$$
Hence, $f_1=g_1$. Then, we have
$$\bm -\bm' =h ( p_{\uu'} p_{\uv'}   x_{(\uu,\uv)} -  p_{\uu}p_{\uv}  x_{(\uu',\uv')}) $$ 
where $h:=f_1=g_1$. This implies the statement.
\end{proof}

Observe that if a nonzero binomial $\bm -\bm'$ belongs to
$\ker^\mh \vi_{\fT}$, then we automatically have
 $\deg_\vr (\bm -\bm') >0$.

\begin{defn}\label{no common wp factor}
 Consider any nonzero binomial 
$f=\bm -\bm' \in \ker^\mh \vi_{\fT}$.
We say that $f$ is $\wp$-reducible if there exists 
$p_{\uu'} p_{\uv'}   x_{(\uu,\uv)} -  p_{\uu}p_{\uv}  x_{(\uu',\uv')} \in \cB^\wp_\fT$ 
such that either
$$\hbox{ $p_{\uu'} p_{\uv'}   x_{(\uu,\uv)}  \mid \bm$
 and $x_{(\uu',\uv')} \mid \bm'$}$$
or $$\hbox{    $x_{(\uu,\uv)}  \mid \bm$ and
$p_{\uu}p_{\uv}  x_{(\uu',\uv')} \mid \bm'$.}$$

We say $f$ is $\wp$-irreducible if 
it is not $\wp$-reducible.
\end{defn}
Suppose $f$ us $\wp$-reducible as in the definition, w.l.o.g., say,
$x_{(\uu,\uv)}  \mid \bm$ and 
$  p_{\uu}p_{\uv}  x_{(\uu',\uv')} \mid \bm'$. Then, we can write
$$\bm =x_{(\uu,\uv)}   \bn \; \hbox{and} \;
\bm'= p_{\uu}p_{\uv}  x_{(\uu',\uv')} \bn'$$ for some $\bn, \bn'$.
Then,  using, 
$p_{\uu'} p_{\uv'}   x_{(\uu,\uv)} -  p_{\uu}p_{\uv}  x_{(\uu',\uv')} \in \cB^\wp$,
 we obtain
\begin{equation}\label{fully recover}
\bm- \bm'=x_{(\uu,\uv)}   \bn -p_{\uu}p_{\uv}  x_{(\uu',\uv')} \bn'
\equiv x_{(\uu,\uv)} (  \bn -p_{\uu'}p_{\uv'} \bn'), \; \mod \cB^\wp.
\end{equation}
Furthermore, knowing that the relation 
$p_{\uu'} p_{\uv'}   x_{(\uu,\uv)} -  p_{\uu}p_{\uv}  x_{(\uu',\uv')}$
is applied in the above, we can fully recover the original form of $\bm-\bm'$.

Let $\AA^l$ (resp. $\PP^l$) be the affine (resp. projective)
space of dimension $l$ for some positive integer $l$ with
coordinate variables $(x_1,\cdots, x_l)$ (resp. with
homogeneous coordinates $[x_1,\cdots, x_l]$).
A monomial $\bf m$ is {\it square-free} if 
$x^2$ does not divide $\bf m$ for every coordinate variable $x$ in the affine space.  
A polynomial is square-free if all of its monomials are
square-free.

\begin{defn}
A polynomial $f$ of $R_\fT$ is called $\vi$-square-free if
for any monomial summand $\bm$ of $f$, $\vi(\bm)$ is square-free.
\end{defn}

Recall that multi-homogeneous polynomial $f \in R_{\fT}$
 is $\vr$-linear if it is linear in $[x_{(\uu, \uv)}]_{ (\uu,\uv) \in  \La_{\buw}}$, 
 whenever it contains some $\vr$-variables of $\PP_{\buw}$,
 for  any $\buw \in \fT$.

\begin{lemma}\label{rho=2}
Consider any nonzero binomial 
$\bm -\bm' \in \ker^\mh \vi_{\fT}$ with 
$ \deg_\vr (\bm -\bm') = 2$.
Assume that $\bm$ and $\bm'$ do not have a common factor,
and is $\wp$-irreducible.
Then $$\bm -\bm'=\vi(X_1)X_2X_3-\vi(X_1')X_2'X_3'$$
where $\{X_i, X_i' \}$ are $\vr$-variables of $\PP_{\buw_i}$ 
for some $\buw_i \in \Ladn$ 
such that $\buw_1, \buw_2$ and $\buw_3$ are pairwise distinct.
 In particular, the root parent $X_1X_2X_3 - X_1'X_2'X_3'$  
of $\bm -\bm'$ is $\vr$-linear. 
Furthermore, 
$X_1X_2X_3 - X_1'X_2'X_3'$ is $\vi$-square-free.
\end{lemma}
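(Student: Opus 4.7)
The plan is to write $\bm = P X_1 X_2$ and $\bm' = Q X_1' X_2'$, where $P, Q$ are $\vp$-monomials and $X_1, X_2, X_1', X_2'$ are $\vr$-variables. Multi-homogeneity in the $\vp$-grading gives $\deg P = \deg Q$, and the identity $\vi(\bm) = \vi(\bm')$ together with $\gcd(P, Q) = 1$ (a consequence of $\gcd(\bm, \bm') = 1$) produces a $\vp$-monomial $H$ with $\vi(X_1')\vi(X_2') = P H$ and $\vi(X_1)\vi(X_2) = Q H$, so $\deg P + \deg H = 4$. The proof will then proceed by a case analysis on $\deg P \in \{0,1,2,3,4\}$, combining $\wp$-irreducibility, multi-homogeneity in each $\PP_{\buw}$, and the gcd condition.

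First, the cases $\deg P \in \{4,3,1\}$ will be ruled out by $\wp$-irreducibility. For $\deg P = 4$ (so $H = 1$), for any $Y \in \{X_1, X_2\}$ and $Y' \in \{X_1', X_2'\}$ lying in a common $\PP_{\buw}$, one has $\vi(Y') \mid P$, so $\vi(Y')Y \mid \bm$ and $Y' \mid \bm'$; the $\wp$-binomial $\vi(Y')Y - \vi(Y)Y'$ then supplies a forbidden $\wp$-reduction. Hence all four $\vr$-variables lie in pairwise distinct $\PP_{\buw}$'s, contradicting $\{\buw_{X_1}, \buw_{X_2}\} = \{\buw_{X_1'}, \buw_{X_2'}\}$. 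A symmetric analysis disposes of $\deg P = 3$ and $\deg P = 1$: in each, the asymmetric divisibility of some $\vi(X_i')$ into $P$ (or $\vi(X_i)$ into $Q$) forces certain $\vr$-variables out of compatible $\PP_{\buw}$'s, breaking multi-homogeneity. The case $\deg P = 0$ is excluded by enumerating the three possible re-pairings of the common $4$-multiset of $\vp$-indices; each such re-pairing either reproduces $\{X_1, X_2\} = \{X_1', X_2'\}$ (violating $\gcd = 1$) or introduces a pair of $d$-subsets sharing more than $d - 2$ elements and hence fails to define a valid $\vr$-variable.

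The main case is $\deg P = 2$. The claimed form $\bm = \vi(X_1) X_2 X_3$ with $X_1 \in \PP_{\buw_1}$ requires $P = \vi(X_1)$ for a valid $\vr$-variable $X_1$, and similarly $Q = \vi(X_1')$ for $X_1' \in \PP_{\buw_1}$. Since $\vi$ is injective on $\vr$-variables, $X_1$ and $X_1'$ are uniquely determined by $P$ and $Q$, and the condition $X_1, X_1' \in \PP_{\buw_1}$ translates into $\sort(P) = \sort(Q)$. I will split $P$ according to whether it is ``natural'' (one of $\vi(X_1')$ or $\vi(X_2')$) or ``mixed'' (combining one $\vp$-index from each of $\vi(X_1')$ and $\vi(X_2')$) and, in each subcase, use $\wp$-irreducibility together with multi-homogeneity to pin down the configuration: the natural subcases force $\sort(P) = \sort(\vi(X_i'))$, which together with the common-$\PP_{\buw_1}$ condition and the gcd produces a structural equation for $\sort(Q)$; the mixed subcases are analyzed by matching indices of $\vi(X_1) \vi(X_2) = Q H$ against the available $\{a_1', b_1', a_2', b_2'\}$ under the constraints $\gcd(P, Q) = 1$ and the multi-homogeneity of the $\vr$-parts. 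The pairwise distinctness of $\buw_1, \buw_2, \buw_3$ is then extracted from $\wp$-irreducibility: any coincidence $\buw_i = \buw_j$ would give two matching $\vr$-variables in a common $\PP_{\buw}$ whose associated $\wp$-binomial would reduce $\bm - \bm'$.

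Finally, $\vr$-linearity of the root parent $X_1 X_2 X_3 - X_1' X_2' X_3'$ will be immediate from the pairwise distinctness of the $\buw_i$, since each $X_i$ and $X_i'$ appears with degree one in its own $\PP_{\buw_i}$; and $\vi$-square-freeness will follow because the six $\vp$-indices in $\vi(X_1 X_2 X_3)$ are pairwise distinct, a direct consequence of $\gcd(P, Q) = 1$, the validity of the involved $\vr$-variables, and the pairwise distinctness of the $\buw_i$. The main technical obstacle will be the $\deg P = 2$ step: the interplay of $\sort$-profiles of $P$ and $Q$, the $\vp$-index matching across $\bm$ and $\bm'$, the multi-homogeneity constraint, and the gcd condition must be tracked delicately across every admissible pairing of $\vp$-indices in order to identify the lemma's form rather than merely rule out conflicting configurations.
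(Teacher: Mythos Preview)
Your overall strategy—case analysis on $\deg P$—differs from the paper's, which instead descends one of the two $\vr$-variables (replacing $x_{(\ua_1,\ub_1)}$ by $p_{\ua_1}p_{\ub_1}$) and then tracks where the factors $p_{\ua_1}, p_{\ub_1}$ must land on the other side using $\wp$-irreducibility and the no-common-factor hypothesis. That descent argument pins down the form of $f$ and $g$ directly, bypassing the need to separately exclude each value of $\deg P$.

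Your plan has two genuine gaps. First, your dichotomy for $\deg P = 0$ is false: it is not true that every nontrivial re-pairing of the four $d$-subsets forces a pair to share more than $d-2$ elements. For $d=2$ take $X_1 = x_{(12,34)}$, $X_2 = x_{(56,78)}$; the re-pairing $x_{(12,56)}\,x_{(34,78)}$ consists of perfectly valid $\vr$-variables. What actually kills $\deg P = 0$ is multi-homogeneity: matching $\buw(X_i')$ to some $\buw(X_j)$ forces, via the multiset identity $\ua' \cup \ub' = \ua \cup \ub$, that any admissible re-pairing collapses back to the original pair, producing a common factor.

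Second, your argument for $\deg P = 1$ does not work as stated. You invoke ``divisibility of some $\vi(X_i)$ into $Q$'', but $\deg Q = \deg P = 1$, so no degree-$2$ image $\vi(X_i)$ can divide $Q$; in fact when $\deg P \le 1$ the binomial is automatically $\wp$-irreducible, so that hypothesis buys you nothing here. A correct exclusion of $\deg P = 1$ goes through $H$: since $\deg H = 3$, one full $\vi(X_i')$ (namely the one disjoint from the single factor $P$) divides $H$, hence divides $\vi(X_1)\vi(X_2)$; a short case check using multi-homogeneity then forces $X_i' \in \{X_1, X_2\}$, a common factor. This is not the mechanism you wrote down.

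The paper's descent approach is shorter and sidesteps both pitfalls: it never needs to treat odd $\deg P$ separately, because the relation $\bar f\, p_{\uv_1} = \bar g\, p_{\uv_2}$ combined with $\gcd(\bar f,\bar g)=1$ and $\uv_1\neq\uv_2$ forces $\deg f = \deg g = 2$ in one stroke.
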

\begin{proof}
Because $\deg_{\vr}(\bm-\bm') =2$, we can write 
$$\bm -\bm' = f x_{(\ua_1,\ub_1)}x_{(\uu_1,\uv_1)} -g x_{(\ua_2,\ub_2)} x_{(\uu_2,\uv_2)} $$ 
for some $f, g \in R_0$ and 
$\{x_{(\ua_1,\ub_1)},x_{(\ua_2,\ub_2)}\}$
($\{ x_{(\uu_1,\uv_1)},  x_{(\uu_2,\uv_2)} \}$) are $\vr$-variables of $\PP_\buw$ for some $\buw \in \Ladn$ ($\buw' \in \Ladn$).
Consider 
$$ f p_{\ua_1}p_{\ub_1}x_{(\uu_1,\uv_1)} -g p_{\ua_2} p_{\ub_2} x_{(\uu_2,\uv_2)} .$$ 
Because $\bm-\bm'$ is $\wp$-irreducible, we must have
$ x_{(\ua_1,\ub_1)} \ne x_{(\ua_2,\ub_2)}$, equivalently, 
$\{ p_{\ua_1}, p_{\ub_1} \} \cap \{ p_{\ua_2}, p_{\ub_2} \}=\emptyset$,
 for otherwise $\bm$ and $\bm'$ have a common factor,
contradicting to the assumption;
$ p_{\ua_1}p_{\ub_1} \nmid g$, for otherwise 
$ p_{\ua_1}p_{\ub_1}x_{(\ua_2,\ub_2)} \mid \bm'$,
contradicting to that $\bm-\bm'$ is not $\wp$-reducible.
This leaves only one possibility: 
$\{ p_{\ua_1}, p_{\ub_1} \} \cap \{ p_{\uu_2}, p_{\uv_2} \}$ is a singleton,
w.l.o.g., say, 
$$\hbox{$ p_{\ua_1}=p_{\uu_2}$, and then $p_{\ub_1} \mid g$.}$$
By symmetry, the similar arguments when applied to 
$p_{\ua_2} p_{\ub_2}$ will give us, 
w.l.o.g., say, 
$$\hbox{$ p_{\ua_2}=p_{\uu_1}$, and then $p_{\ub_2} \mid f$.}$$
Hence,
$$\bm -\bm' = {\bar f}p_{\ub_2} x_{(\ua_1,\ub_1)}x_{(\ua_2,\uv_1)} 
-{\bar g}p_{\ub_1} x_{(\ua_2,\ub_2)} x_{(\ua_1,\uv_2)} .$$ 
Because $\bm -\bm' \in \ker^\mh \vi_{\fT}$, we obtain
$${\bar f} p_{\uv_1}={\bar g} p_{\uv_2}.$$
Because $\bm$ and $\bm' $ admit no common factor, we obtain
$${\bar f}=p_{\uv_2}, \; {\bar g} =p_{\uv_1}.$$
That is
$$\bm -\bm' =p_{\uv_2} p_{\ub_2} x_{(\ua_1,\ub_1)}x_{(\ua_2,\uv_1)} 
-p_{\uv_1}p_{\ub_1} x_{(\ua_2,\ub_2)} x_{(\ua_1,\uv_2)} .$$ 
Because $\ua_1 \vee \ub_1 = \ua_2 \vee \ub_2$ and $\ua_2 \vee \uv_1 =\ua_1 \vee \uv_2$,
we obtain 
$$\ua_1 \vee \ub_1 \vee \ua_2 \vee \uv_1 = \ua_2 \vee \ub_2 \vee \ua_1 \vee \uv_2,$$
hence $$ \ub_1  \vee \uv_1 =  \ub_2 \vee \uv_2.$$
This implies $\bm -\bm' $ is a descendant of 
$$x_{(\uv_2, \ub_2)} x_{(\ua_1,\ub_1)}x_{(\ua_2,\uv_1)} 
-x_{(\uv_1, \ub_1)} x_{(\ua_2,\ub_2)} x_{(\ua_1,\uv_2)} .$$
We have $\buw_1 \ne \buw_2$.
Assume on the contrary
that $\buw_1 \ne \buw_2$. Then 
$\uv_2 \vee \ub_2 = \ua_1 \vee \ub_1 =\ua_2 \vee \ub_2$,
hence $\uv_2= \ua_2$. But, then $\ua_1 = \uv_1$,
implying a common factor $x_{(\uv_1, \uv_2)}$,  equal to both
$x_{(\ua_2,\uv_1)}$ and $x_{(\ua_1,\uv_2)}$.
The other cases are totally similar whose details  we omit. Hence
$\buw_1, \buw_2$ and $\buw_3$ are pairwise distinct.

Assume on the contrary  that 
$\vi(X_1X_2X_3)$ is not square-free. For example, 
Assume 
$$\{\uv_2, \ub_2 \} \cap \{\ua_1,\ub_1\} \ne \emptyset.$$
Because of $x_{(\ua_1,\uv_2)}$,  we see that $\uv_2 \ne \ua_1$.
If $\uv_2=\ub_1$, then $\uv_2 \vee \ub_2=\uv_1\vee \ub_1$
implies $\ub_2=\uv_1$. Then $x_{(\ub_1, \ub_2)}$ is a common factor 
of $\bm-\bm'$, a contradiction. Because 
$\ua_1 \vee \ub_1=\ua_2 \vee \ub_2$, we see that 
$\ub_2 \ne \ua_1,\ub_1$ because either case would imply 
a common factor $x_{(\ua_1,\ub_1)}$. Hence, we conclude
$\{\uv_2, \ub_2 \} \cap \{\ua_1,\ub_1\} = \emptyset.$
All the remaining cases follow from similar routine checks.

This establishes the lemma.
\end{proof}

\begin{exam}\label{p-reducible}
Consider the following binomial:
$$f=p_{123}p_{13a}p_{2bc}x_{(13b,23c)}x_{(12a,3bc)}-
p_{13b}p_{23c}p_{12a}x_{(123,3bc)}x_{(13a,2bc)}.$$
This binomial belongs to $\ker^\mh (\vi)$, is
$\fb$-irreducible, but $\wp$-reducible.
Using the $\wp$-binomial
$p_{13a}p_{2bc}x_{(12a,3bc)} -p_{12a}p_{3bc} x_{(13a,2bc)}$,
 $f$ can be reduced to
$$p_{12a}p_{(13a,2bc)}(p_{123}p_{3bc}x_{(13b,23c)}
-p_{13b}p_{23c}x_{(123,3bc)}), \mod \cB^\wp.$$
Or, using the $\wp$-binomial
$p_{13b}p_{23c}x_{(123,3bc)} - p_{123}p_{3bc}x_{(13b,23c)}$,
 it can also be reduced to
$$p_{123}p_{(13b,23c)}(p_{13a}p_{2bc}x_{(12a,3bc)}
-p_{12a}p_{3bc}x_{(13a,2bc)}), \mod \cB^\wp.$$
In addition, observe that the above binomial $f$ is a root binomial.
\end{exam}

Indeed, any 
$\fb$-irreducible and $\wp$-reducible 
binomial $f=\bm-\bm' \in \ker^\mh  \vi_\fT$ with
$\deg_\vr f=2$ takes a form like the above.

\begin{lemma}\label{rho=2 wp-reducible}
Suppose $\bm-\bm' \in \ker^\mh  \vi_\fT$ with
$\deg_\vr (\bm-\bm')=2$ is $\fb$-irreducible and $\wp$-reducible.
Then, it takes of the following form
$$p_{\uu'}p_{\uv'} p_{\ua'} x_{(\ua,\ub)} x_{(\uu, \uv)}-
p_{\ua}p_{\ub} p_\uu x_{(\ua',\uv)} x_{(\uu', \uv')}$$
where $x_{(\ua,\ub)}, x_{(\ua',\uv)}$ are homogeneous coordinates
of $\PP_\buw$ for some $\buw$ and 
$x_{(\uu,\uv)}, x_{(\uu',\uv')}$ are homogeneous coordinates
of $\PP_{\buw'}$ for some $\buw' \ne \buw$. In particular,
$\bm-\bm'$ is $\vr$-linear. Moreover, $\bm-\bm'$ is also
$\vi$-square-free.
\end{lemma}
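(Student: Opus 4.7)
The plan is to combine the structural constraints from $\wp$-reducibility and $\fb$-irreducibility with a case analysis of the induced $\vi$-equation in $R_0$.

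First, $\wp$-reducibility together with Definition \ref{no common wp factor} provides (after the obvious symmetry) a $\wp$-binomial $p_{\uu'}p_{\uv'}x_{(\uu,\uv)} - p_\uu p_\uv x_{(\uu',\uv')}$ in some $\PP_{\buw'}$ with $p_{\uu'}p_{\uv'}x_{(\uu,\uv)}\mid \bm$ and $x_{(\uu',\uv')}\mid \bm'$. Since $\deg_\vr(\bm-\bm')=2$, I may write
\[
\bm = p_{\uu'}p_{\uv'}\, f\, x_{(\uu,\uv)}x_{(\ua,\ub)}, \qquad \bm' = g\, x_{(\uu',\uv')} x_{(\ua',\ub')}
\]
with $f,g \in R_0$ and $\vr$-variables $x_{(\ua,\ub)} \in \PP_{\buw_1}$, $x_{(\ua',\ub')} \in \PP_{\buw_2}$. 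A preliminary consequence of $\fb$-irreducibility is that $\bm$ and $\bm'$ share no common monomial factor; indeed, any common factor $h$ would give a decomposition $\bm = h\cdot(\bm/h)$, $\bm' = h\cdot(\bm'/h)$ with $h-h=0$ and $(\bm/h)-(\bm'/h)$ multi-homogeneous and in $\ker\vi$, violating $\fb$-irreducibility by Lemma \ref{auto homo}.

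Next, comparing $\vi$-images gives the $R_0$-identity $p_\uu p_\uv p_\ua p_\ub f = p_{\ua'}p_{\ub'}g$. Unique factorization combined with the no-common-factor constraint forces each $\vp$-factor of $f$ to lie in $\{p_{\ua'},p_{\ub'}\}$ and each $\vp$-factor of $g$ in $\{p_\uu,p_\uv,p_\ua,p_\ub\}$, with $\deg f = \deg g - 2$. Multi-homogeneity enforces $\buw_1=\buw_2$, and the case $\buw_1=\buw_2=\buw'$ is excluded because the identity $\sort(\ua'\vee\ub')=\buw'=\sort(\uu\vee\uv)$ combined with the subcase conditions $\ub'\in\{\uu,\uv,\ua,\ub\}$ forces an equality of $d$-tuples yielding $x_{(\ua',\ub')} \in \{x_{(\uu,\uv)}, x_{(\ua,\ub)}\}$, contradicting the no-common-factor property. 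The case $\deg f = 0$ similarly forces a common $\vr$-factor via a sort identity. The case $\deg f \ge 2$ is excluded because the equation then admits a splitting $\bm = (p_{\uu'}p_{\uv'}x_{(\uu,\uv)})\cdot(p_{\ua'}p_{\ub'}x_{(\ua,\ub)})$, $\bm' = (p_\uu p_\uv x_{(\uu',\uv')})\cdot (p_\ua p_\ub x_{(\ua',\ub')})$ into two $\wp$-binomials (residual degenerate subcases such as $f=p_{\ua'}^2$ force new index coincidences that again admit analogous splits). Therefore $\deg f = 1$, and after reducing by the $\ZZ_2$-symmetry on the unordered pairs $\{\uu,\uv\}$ and $\{\ua',\ub'\}$ I obtain $f=p_{\ua'}$ and $\ub'=\uv$, exhibiting the exact form of the lemma with $\buw := \buw_1=\buw_2 \ne \buw'$; the $\vr$-linearity is then manifest since each monomial contains one $\vr$-variable from $\PP_\buw$ and one from $\PP_{\buw'}$.

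Finally, for $\vi$-square-freeness I show that the seven $\vp$-indices $\uu',\uv',\ua',\uu,\uv,\ua,\ub$ of $\vi(\bm)$ are pairwise distinct. Disjointness of $\{\uu,\uv\}$ and $\{\uu',\uv'\}$ follows from $x_{(\uu,\uv)}\ne x_{(\uu',\uv')}\in\PP_{\buw'}$ as in the proof of Lemma \ref{rho=1}, and disjointness of $\{\ua,\ub\}$ and $\{\ua',\uv\}$ from the same argument applied to $\PP_\buw$. The remaining potential coincidences are between $\{\ua,\ub,\ua'\}$ and $\{\uu,\uu',\uv'\}$, and each is ruled out by an explicit $\fb$-decomposition. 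For example, if $\ua=\uu$ then the split
\[
\bm_1 := p_{\ua'} x_{(\uu,\uv)},\ \bm_1' := p_\uu x_{(\ua',\uv)},\ \bm_2 := p_{\uu'}p_{\uv'}x_{(\uu,\ub)},\ \bm_2' := p_\uu p_\ub x_{(\uu',\uv')}
\]
makes $\bm_1-\bm_1'$ a genuine $\wp$-binomial in $\PP_\buw$ and $\bm_2-\bm_2'$ a partial descendant of the multi-homogeneous (zero) expression $x_{(\uu',\uv')}x_{(\uu,\ub)} - x_{(\uu,\ub)}x_{(\uu',\uv')}$, contradicting $\fb$-irreducibility; the other coincidences are handled symmetrically. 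The main obstacle is the combinatorial bookkeeping: each potential coincidence requires an ad hoc $\fb$-decomposition tailored to that case, and the real content of the lemma is precisely that $\fb$-irreducibility together with multi-homogeneity is, coincidence by coincidence, strong enough to force the rigid form displayed.
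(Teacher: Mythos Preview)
Your derivation of the displayed form and of $\vr$-linearity is sound; the degree analysis on $f$ is a legitimate alternative to the paper's appeal to Lemma~\ref{rho=1}, and the exclusion of $\buw_1=\buw_2=\buw'$ and of $\deg f\in\{0,2\}$ goes through (though a couple of sub-cases, such as $\ub'\in\{\ua,\ub\}$ when $\deg f=1$, deserve a line of justification).

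The genuine gap is in your $\vi$-square-freeness argument. In the representative case $\ua=\uu$ you propose the split
\[
\bm_1 = p_{\ua'}\, x_{(\uu,\uv)},\qquad \bm_1' = p_\uu\, x_{(\ua',\uv)},
\]
and assert that $\bm_1-\bm_1'$ is ``a genuine $\wp$-binomial in $\PP_\buw$.'' It is not: a $\wp$-binomial (Definition~\ref{wp-bino}) has the shape $p_{\cdot}p_{\cdot}\,x_{(\cdot,\cdot)}-p_{\cdot}p_{\cdot}\,x_{(\cdot,\cdot)}$ with both $\vr$-variables lying in the \emph{same} projective factor, whereas here $x_{(\uu,\uv)}\in\PP_{\buw'}$ and $x_{(\ua',\uv)}\in\PP_{\buw}$ with $\buw\ne\buw'$, and each term carries $\vp$-degree~$1$ rather than~$2$. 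Nor can $\bm_1-\bm_1'$ be a partial descendant of a multi-homogeneous binomial: any parent has each term obtained from the corresponding term of $\bm_1-\bm_1'$ by lifting some $p_{\cdot}p_{\cdot}$ to an $x_{(\cdot,\cdot)}$, but each term contains only a single $\vp$-variable, so no lift is available, and the two terms already sit in different projective factors. Thus your decomposition does not witness $\fb$-reducibility in the sense of Definition~\ref{fb-irr}, and the case $\ua=\uu$ remains open in your argument.

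The paper handles $\vi$-square-freeness by an entirely different route: rather than exhibiting an $\fb$-decomposition, it exploits the multiset identities $\sort(\ua\vee\ub)=\sort(\ua'\vee\uv)$ and $\sort(\uu\vee\uv)=\sort(\uu'\vee\uv')$ to bound the number of distinct integers in $\uu'\vee\uv'$ and obtain a contradiction with the defining property of $\Ladnsort$. If you wish to salvage your approach you would need a different split in which \emph{both} pieces admit multi-homogeneous parents; otherwise you should switch to a combinatorial counting argument of that type.
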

\begin{proof} 
Because $\bm-\bm'$ is $\wp$-reducible, w.l.o.g., we can assume
$$\bm-\bm'= f x_{(\ua,\ub)} x_{(\uu, \uv)}-
g p_{\ua}p_{\ub}  x_{(\ua',\ub')} x_{(\uu', \uv')}$$
for some $f, g \in R_0$.
Then, we have 
$$f  x_{(\uu, \uv)}- g p_{\ua'}p_{\ub'}   x_{(\uu', \uv')} \in \ker^\mh \vi_\fT.$$
By Lemma \ref{rho=1}, the above must be equal to 
$$hp_{\uu'}p_{\uv'} x_{(\uu, \uv)}-  hp_{\uu}p_{\uv}   x_{(\uu', \uv')}$$
for some $h \in R_0$. Hence, by comparing the coefficients, 
we obtain
$$f=hp_{\uu'}p_{\uv'} ,\; g p_{\ua'}p_{\ub'}=hp_{\uu}p_{\uv}.$$
Because $h \mid f$, 
using that $\bm-\bm'$ is $\fb$-irreduicble 
(hence having no common factor),
we have  that $h$ and $g$ are co-prime,  
therefore $h \mid p_{\ua'}p_{\ub'}$. Observe that
$h$ can not equal to $p_{\ua'}p_{\ub'}$, for otherwise, $p_{\ua'}p_{\ub'} \mid f$, and we obtain 
$$\hbox{$p_{\ua'}p_{\ub'}  x_{(\ua,\ub)} \mid \bm$ and
$p_{\ua}p_{\ub}  x_{(\ua',\ub')} \mid \bm'$},$$
then this would imply that $\bm -\bm'$ is $\fb$-reducible, 
a contradiction.
W.l.o.g., we assume $h =p_{\ua'}$. Then $g p_{\ub'} = p_{\uu}p_{\uv}$.
W.l.o.g., we can assume $p_{\ub'}=p_\uv$. 
Then, we obtain $$f=p_{\ua'}p_{\uu'}p_{\uv'} ,\; g=p_{\uu},
\; \ub'=\uv.$$
This implies 
$$\bm-\bm'=p_{\uu'}p_{\uv'} p_{\ua'} x_{(\ua,\ub)} x_{(\uu, \uv)}-
p_{\ua}p_{\ub} p_\uu x_{(\ua',\uv)} x_{(\uu', \uv')}.$$

 Suppose on the contrary $\buw'=\buw$. Then
from  $x_{(\uu, \uv)}$ and $x_{(\ua',\uv)}$ being homogeneous coordinates
of the same projective space $\PP_\buw=\PP_{\buw'}$, we would obtain
$\uu=\ua'$, producing a common factor $x_{(\uu, \uv)}$ of $\bm-\bm'$,
a contradiction.
This implies that $\bm-\bm'$ is $\vr$-linear.

To show that $\bm-\bm'$ is $\vi$-square-free, 
w.l.o.g., say, $\ua=\uu$. Then, we have
$$\ua \vee \ub = \ua' \vee \uv \;\; \hbox{and} \;\;
  \ua \vee \uv=\uu' \vee \uv'.$$
Hence, 
$$\ua \vee \ub \vee  \ua \vee \uv= \ua' \vee \uv \vee \uu' \vee \uv'.  $$
Thus, $$\ua \vee \ub \vee  \ua=\ua' \vee \uu' \vee \uv'.  $$
Note that $$\ua' \subset \ua \vee \ub=\ua' \vee \uv.$$
Therefore, as sets of  integers (allowing repetition), we have
\begin{equation}\label{wrong id}
\{\ua \vee \ub \vee  \ua\} \; \- \{\ua'\}= \uu' \vee \uv'
\end{equation}
Observe also that as a set of  integers (allowing repetition), 
$\ua'$ is made of exactly half of
$\ua \vee \ub=\ua' \vee \uv$, hence as a set of 
\emph{\it distinct} integers, we see that 
$$\ua \vee \ub \vee  \ua \- \{\ua'\}$$
contains at most $d$ distinct integers. But, by the definition of $\Ladnsort$ in \S \ref{max torus},
$$\uu' \vee \uv'$$
contains at least $d+2$ distinct integers, a contradiction to the identity
\eqref{wrong id}. The remaining cases such as 
$\ua'=\uu'$, etc., are totally parallel, whose details are omitted.

This establishes the lemma.
\end{proof}


\begin{lemma}\label{vi X stays}
Consider any binomial $f \in \ker^\mh \vi_\fT$.
Assume $f= \vi(X) \bm - \vi(X') \bm'$ 
is a descendant of $X \bm - X' \bm'$ where
$X$ and $X'$ are homogeneous coordinates of $\PP_\buw$ for some $\buw \in \Ladn$. Suppose 
$$f = \bm_1 \bm_2 - \bm_1' \bm_2'$$ is a
$\fb$-reducible decomposition of $f$. Then, $\vi(X) \mid \bm_1$
or $\vi(X) \mid \bm_2$. Similarly, $\vi(X') \mid \bm_1'$
or $\vi(X') \mid \bm_2'$.
\end{lemma}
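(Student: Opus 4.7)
I will prove the first claim; the second follows by the same argument with the roles of the two sides of $f$ interchanged. Write $X=x_{(\uu,\uv)}$, so that $\vi(X)=p_\uu p_\uv$ is a product of two distinct $\vp$-variables, and assume towards a contradiction that neither $\vi(X)\mid\bm_1$ nor $\vi(X)\mid\bm_2$. Since $p_\uu p_\uv$ divides $\bm_1\bm_2=\vi(X)\bm$, after a possible swap of the two factors the failure of the claim forces the distinguished copy of $p_\uu$ produced by $\vi(X)$ to lie in $\bm_1$ while the distinguished copy of $p_\uv$ lies in $\bm_2$, with no compensating copy of $p_\uv$ (respectively $p_\uu$) in $\bm_1$ (respectively $\bm_2$) beyond what is already contributed by $\bm$.

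The argument then exploits the partial-descendant hypothesis of Definition \ref{fb-irr}. By hypothesis, there exist multi-homogeneous binomials $\hat\bm_i-\hat\bm_i'$ whose partial descendants are $\bm_i-\bm_i'$. The key structural observation is that every $\vp$-variable appearing in $\bm_i$ arises either (a) as an original $\vp$-variable already present in $\hat\bm_i$, or (b) as one member of a matched pair $(p_\alpha,p_\beta)=\vi(x_{(\alpha,\beta)})$ coming from a replaced $x$-variable $x_{(\alpha,\beta)}\in\hat\bm_i$, in which case $p_\beta$ necessarily lies in the same factor $\bm_i$. I apply this to the distinguished $p_\uu\in\bm_1$ and the distinguished $p_\uv\in\bm_2$: each must trace back to one of these two origins in its respective multi-homogeneous parent. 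Combining with the equalities $\vi(\bm_i)=\vi(\bm_i')$ forced by $\bm_i-\bm_i'\in\ker\vi$, and with the observation that $\hat\bm_1\hat\bm_2-\hat\bm_1'\hat\bm_2'$ itself furnishes a multi-homogeneous parent of $f$ whose one-step descendants must recover $f$, I show that the assumed splitting would force $p_\uu$ and $p_\uv$ to originate either from two distinct replaced $x$-variables or from original $\vp$'s of the separate parents $\hat\bm_1$ and $\hat\bm_2$. In either case, reconstructing $f$ through this alternate parent structure gives a factorization of $p_\uu p_\uv$ in $\vi(X)\bm$ that is inconsistent with the stated parent $X\bm-X'\bm'$, in which $p_\uu p_\uv$ arose from the single $x$-variable $X\in\PP_\buw$ in a single descent step. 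The resulting contradiction establishes that $\vi(X)\mid\bm_1$ or $\vi(X)\mid\bm_2$.

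The main obstacle is the asymmetry permitted by partial descendants: the sets of $x$-variables replaced on the $\bm_i$-side and on the $\bm_i'$-side need not coincide, so $\bm_i$ and $\bm_i'$ can have different $\vp$- and $\vr$-multidegrees even while $\vi(\bm_i)=\vi(\bm_i')$. Consequently the naive degree-matching arguments are unavailable, and must be replaced by a careful bookkeeping that distinguishes "original" $\vp$-variables from "descendant-produced" ones in each parent and tracks the pairing constraint within each factor. The statement for $\vi(X')$ is obtained verbatim by swapping primed and unprimed throughout.
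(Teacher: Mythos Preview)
Your proposal has a genuine gap at its central step. The claimed contradiction — that reconstructing $f$ through the alternate parent $\hat\bm_1\hat\bm_2-\hat\bm_1'\hat\bm_2'$ is ``inconsistent'' with the given parent $X\bm-X'\bm'$ — is never made precise, and in fact there is no uniqueness of parents to appeal to. A binomial $f$ can perfectly well be a descendant of $X\bm-X'\bm'$ and simultaneously a partial descendant of a completely unrelated $\hat\bm_1\hat\bm_2-\hat\bm_1'\hat\bm_2'$; nothing forces these two parent structures to match. So the sentence ``inconsistent with the stated parent $X\bm-X'\bm'$, in which $p_\uu p_\uv$ arose from the single $x$-variable $X$'' is asserting a rigidity that does not exist.

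Relatedly, the language of ``the distinguished copy of $p_\uu$ produced by $\vi(X)$'' is not well-defined: $\bm_1\bm_2=\vi(X)\bm$ is an equality of monomials, and monomials do not carry labels on their prime factors. If $\bm$ itself already contains a factor $p_\uu$, there is no way to say which copy of $p_\uu$ in $\bm_1\bm_2$ ``came from'' $\vi(X)$, so the tracking you propose cannot be carried out. Your case analysis (``either from two distinct replaced $x$-variables or from original $\vp$'s of the separate parents'') therefore rests on a bookkeeping that is not available.

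The paper avoids these difficulties entirely by arguing by induction on $\deg_\vr f$. The base cases $\deg_\vr\le 2$ are handled by the explicit structure Lemmas~\ref{rho=1}, \ref{rho=2}, \ref{rho=2 wp-reducible}. For the inductive step one writes $f=\vi(X)Y\bn-\vi(X')Y'\bn'$ with $Y,Y'$ coordinates of some $\PP_{\buw'}$, locates $Y$ and $Y'$ inside the given $\fb$-reducible decomposition, and passes to $\barf$ obtained by replacing $Y,Y'$ with $\vi(Y),\vi(Y')$; this lowers $\deg_\vr$ by one while preserving both the ``descendant of $X(\cdot)-X'(\cdot)$'' hypothesis and a $\fb$-reducible decomposition, so induction applies. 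This argument never needs to compare two parent structures or track individual prime factors.
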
 
\begin{proof}
We apply induction on $\deg_\vr(\bm -\bm' )$.
By  applying Lemmas \ref{rho=1}, \ref{rho=2}, 
and \ref{rho=2 wp-reducible}, we see that the lemma
holds when $\deg_\vr(\bm -\bm' ) \le 2$.

Assume that $\deg_\vr(\bm -\bm' )=e$
and the lemma holds when $\deg_\vr =e-1$.
Then, we can write $f$ as
$$f= \vi(X) Y \bn - \vi(X') Y' \bn'$$ such that
$Y$ and $Y'$ are homogeneous coordinates of $\PP_{\buw'}$ for some
$\buw' \in \Ladn$.
In the decomposition $f = \bm_1 \bm_2 - \bm_1' \bm_2'$,
we have
\begin{enumerate}
\item either $Y$ and $Y'$ appear in exactly one of $\bm_1  - \bm_1' $
and  $\bm_2 - \bm_2'$, w.l.o.g., say, in $\bm_1  - \bm_1' $,
\item or else, w.l.o.g., say, $Y$ appears in $\bm_1$ and $Y'$ appears in $\bm_2'$.
\end{enumerate}
In the former case, we write $\bm_1 = Y \bn_1$ and $\bm_1'=Y'\bn_1'$,
Then, we have
$$f =(Y \bn_1) \bm_2 - (Y' \bn_1') \bm_2'.$$
Note that $\vi(X) \mid \bn_1 \bm_2$ and $\vi(X') \mid \bn_1' \bm_2'.$
Consider $$\barf=(\vi(Y) \bn_1) \bm_2 - (\vi(Y') \bn_1') \bm_2'.$$ 
Because $\deg_\vr \barf =e-1$, by induction, we have
$\vi(X) \mid \bn_1 \mid \bm_1$ or $\vi(X) \mid  \bm_2$. Likewise,
$\vi(X') \mid \bn_1' \mid \bm_1$ or $\vi(X') \mid  \bm_2'$. This implies
the statement.

In the latter case, we write $\bm_1 = Y \bn_1$ and $\bm_2'=Y'\bn_2'$,
Then, we have
$$f =(Y \bn_1) \bm_2 - \bm_1' (Y' \bn_2') $$
Note that $\vi(X) \mid \bn_1 \bm_2$ and $\vi(X') \mid \bm_1' \bn_2'.$
Consider $$\barf=(\vi(Y) \bn_1) \bm_2 -  \bm_1' (\vi(Y')\bn_2').$$ 
Because $\deg_\vr \barf =e-1$, by induction, we have
$\vi(X) \mid \bn_1 \mid \bm_1$ or $\vi(X) \mid  \bm_2$. Likewise,
$\vi(X') \mid \bm_1' $ or $\vi(X') \mid \bn_2' \mid  \bm_2'$. This implies
the statement in this case.

All in all, the lemma is proved.
\end{proof}

\begin{lemma}\label{linear and free} 
Consider any binomial $f \in \ker^\mh \vi_\fT$.
Assume $f$ is either  not $\vr$-linear or not $\vi$-square-free.
Then $f$ is $\fb$-reducible.
\end{lemma}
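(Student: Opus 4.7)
The plan is to establish $\fb$-reducibility by exhibiting an explicit factorization $\bm = \bm_1 \bm_2$, $\bm' = \bm_1' \bm_2'$ in each of the two cases in the hypothesis. First I would reduce to the situation in which $\bm$ and $\bm'$ share no common non-constant factor: otherwise, calling such a common factor $\bm_2 = \bm_2'$, the special case of Definition \ref{fb-irr} with $\bm_2 - \bm_2' = 0$ immediately yields $\fb$-reducibility.

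For the case when $f$ is not $\vr$-linear, there exists some $\buw \in \fT$ such that $\bm$ contains at least two $\vr$-variables from $\PP_\buw$. By multi-homogeneity, $\bm'$ also contains at least two $\vr$-variables from $\PP_\buw$. Write $\bm = X_1 X_2 \bn$ and $\bm' = X_1' X_2' \bn'$, where $X_1, X_2, X_1', X_2'$ are $\vr$-variables of $\PP_\buw$. Since $\vi(\bm) = \vi(\bm')$ inside the unique factorization domain $R_0$, the multisets of $\pl$ variables appearing in $\vi(\bm)$ and $\vi(\bm')$ agree, so $\vi(X_1) = p_{\uu_1}p_{\uv_1}$ divides $\vi(\bm')$. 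I would then locate a divisor $\bm_1'$ of $\bm'$ with $\vi(\bm_1') = \vi(X_1)$, invoking when necessary the $\wp$-binomial exchange $p_{\uu_j'}p_{\uv_j'} X_1 \equiv p_{\uu_1}p_{\uv_1} X_j'$ to realize $\bm_1'$ as a partial descendant of a multi-homogeneous binomial. Taking $\bm_1 = X_1$, $\bm_2 = X_2 \bn$ and $\bm_2'$ the complementary factor of $\bm'$, one then verifies $\bm_1 - \bm_1', \bm_2 - \bm_2' \in \ker \vi$ by using that $R_0$ is an integral domain.

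For the case when $f$ is not $\vi$-square-free, some $p_\ua^2$ divides $\vi(\bm) = \vi(\bm')$. If $p_\ua$ appeared as an explicit $\vp$-variable factor in both $\bm$ and $\bm'$ it would be a common factor, contradicting coprimality; hence at least one copy of $p_\ua$ in $\vi(\bm)$ arises from some $\vr$-variable $X = x_{(\uu, \uv)}$ in $\bm$ with $\ua \in \{\uu, \uv\}$. A parallel analysis to the first case produces the decomposition by splitting off $X$ and locating a matching factor in $\bm'$ with $\vi$-image equal to $\vi(X)$. The main obstacle, in both cases, will be constructing the matching factor $\bm_1'$ explicitly and verifying the (partial) descendant requirement, since the $\pl$ variables contributing to $\vi(X_1)$ in $\vi(\bm')$ may be distributed among several $\vr$- and $\vp$-variables of $\bm'$ rather than concentrated in a single factor. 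I expect this careful bookkeeping, together with the normal-form classifications from Lemmas \ref{rho=1}, \ref{rho=2} and \ref{rho=2 wp-reducible} used to handle the low-$\deg_\vr$ base cases, to be the main work of the proof.
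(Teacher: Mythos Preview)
Your direct-splitting strategy has a genuine gap. You propose to take $\bm_1 = X_1$ (a single $\vr$-variable of $\PP_\buw$) and then ``locate a divisor $\bm_1'$ of $\bm'$ with $\vi(\bm_1') = \vi(X_1)$''. But such a divisor need not exist. Write $\vi(X_1) = p_{\uu_1}p_{\uv_1}$. In $\vi(\bm')$ the factor $p_{\uu_1}$ may arise from one $\vr$-variable of $\bm'$, say $x_{(\uu_1,\ua)}$, and $p_{\uv_1}$ from a different one, say $x_{(\uv_1,\ub)}$; then no monomial divisor of $\bm'$ has $\vi$-image exactly $p_{\uu_1}p_{\uv_1}$. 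The $\wp$-binomial exchange you invoke rewrites $\bm'$ modulo $\ker\vi$, but the definition of $\fb$-reducibility demands a factorization of the \emph{actual} monomials $\bm$ and $\bm'$, not of some $\ker\vi$-equivalent substitute. You recognize this obstruction in your last paragraph but it is not a bookkeeping issue: it is a structural failure of the single-variable split.

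The paper's argument proceeds quite differently. It runs an induction on $\deg_\vr f$, with Lemmas \ref{rho=1}, \ref{rho=2}, \ref{rho=2 wp-reducible} as base cases. In the inductive step one writes $f = XYZ\bn - Z'\bn'$ where $X,Y$ are the offending pair (same $\PP_\buw$, or $\vi(X)\vi(Y)$ not square-free) and $Z,Z'$ are $\vr$-variables of some $\PP_{\buw'}$. Passing to the descendant $\barf = XY\,\vi(Z)\bn - \vi(Z')\bn'$ drops $\deg_\vr$ by one while preserving the bad feature, so by induction $\barf$ admits a $\fb$-reducible decomposition in which $X$ and $Y$ land in \emph{different} factors. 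The key step is then Lemma \ref{vi X stays}, which guarantees that $\vi(Z)$ and $\vi(Z')$ each sit entirely inside one factor of that decomposition; this is exactly what allows one to lift the decomposition of $\barf$ back to a decomposition of $f$. Without an analogue of Lemma \ref{vi X stays}, your direct construction cannot control where the pieces of $\vi(X_1)$ go.
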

\begin{proof}
Let $f \in \ker^\mh \vi_\fT$ be a binomial.
Assume $f$ is neither  $\vr$-linear nor $\vi$-square-free. 

We apply induction on $\deg_\vr(\bm -\bm' )$.
By  applying Lemmas \ref{rho=1}, \ref{rho=2}, 
and \ref{rho=2 wp-reducible}, we see that the lemma
holds when $\deg_\vr(\bm -\bm' ) \le 2$.

Suppose now $\deg_\vr f  =e > 2$.
We assume that the lemma holds when $\deg_\vr   =e-1$.

We can express
$f$ as
$$f= XY \bm - \hbox{(the other term)}$$ such that
either $X$ and $Y$ are homogeneous coordinates of $\PP_\buw$
for  some $\buw \in \Ladn$  when $f$ is not $\vr$-linear,
or else, $X$ and $Y$ are two variables  such that
$\vi(X) \vi(Y)$ is not square-free when $f$ is not $\vi$-square-free.
Because $\deg_\vr f  > 2$, there are $Z$ and $Z'$
such that $Z$ and $Z'$ are homogeneous coordinates of $\PP_{\buw'}$ for  some $\buw' \in \Ladn$ (which may be equal to $\buw$), and we can write 
$$f= XY Z\bn - Z' \bn'$$
for some $\bn$ and $\bn'$.
Then, we consider
$$\barf= XY \vi(Z) \bn -  \vi(Z') \bn' .$$
Clearly, $\barf$ is either  
not $\vr$-linear, or else, not $\vi$-square-free. 
Because $\deg_\vr \barf  =e  -1$, by induction, we
have a $\fb$-reducible decomposition
$$\barf=  (X \bn_1) (Y\bn_2) - (\bn_1') (\bn_2')  $$
such that $X \bn_1 -\bn_1'$ and $Y\bn_2 -  \bn_2'$, 
 are (partial) descendant of a multi-homogenous binomial, 
and belong to $\ker\vi_\fT$.
By Lemma \ref{vi X stays}, applied to the $\fb$-reducible decomposition 
$\barf = (X \bn_1) (Y\bn_2) - (\bn_1') (\bn_2') $,
we obtain that $\vi(Z)$ and $\vi(Z')$ appears in exactly one of
$ X \bn_1 -\bn_1'$ and  $Y\bn_2 - \bn_1'$, w.l.o.g., say. in 
$ X \bn_1 -\bn_1'$, or else, w.l.o.g., say, 
$\vi (Z)$ appears in $ X \bn_1$ and $\vi(Z')$ appears in $\bn_2'$.

In the former case, we write  $\bn_1 = \vi(Z) \bn_{11}$ and
$\bn_1'= \vi(Z') \bn_{11}'$. Then, we have
$$\barf = (X \vi(Z) \bn_{11}) (Y\bn_2) - (\vi(Z')\bn_{11}') (\bn_2') .$$
Hence, 
$$f = (X Z \bn_{11}) (Y\bn_2) - ( Z'\bn_{11}') (\bn_2') $$
which implies the desired statement.

In the latter case, we write  $\bn_1 = \vi(Z) \bn_{11}$ and
$\bn_2'= \vi(Z') \bn_{21}'$. Then, we have
$$\barf = (X \vi(Z) \bn_{11}) (Y\bn_2) - (\bn_1') (\vi(Z')\bn_{21}') .$$
Hence, 
$$f = (X Z \bn_{11}) (Y\bn_2) - (\bn_1') (Z'\bn_{21}') .$$
which also implies the desired statement.

All in all, the lemma is proved.
\end{proof}

\begin{cor}\label{cor:linear and free}
Suppose a binomial $f \in \ker^\mh \vi_\fT$  is $\fb$-irreducible.
Then it is both $\vr$-linear and $\vi$-square-free.
\end{cor}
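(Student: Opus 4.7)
The plan is to observe that this corollary is simply the contrapositive of Lemma \ref{linear and free}, so the entire proof reduces to a one-line logical inversion. Concretely, suppose for contradiction that $f \in \ker^\mh \vi_\fT$ is $\fb$-irreducible but fails one of the two conclusions, i.e., $f$ is either not $\vr$-linear or not $\vi$-square-free. Then the hypothesis of Lemma \ref{linear and free} is met, so the lemma produces a $\fb$-reducible decomposition $f = \bm_1\bm_2 - \bm_1'\bm_2'$ with $\bm_1 - \bm_1', \bm_2 - \bm_2' \in \ker \vi_\fT$, directly contradicting $\fb$-irreducibility of $f$.

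Since Lemma \ref{linear and free} has already been established, there is essentially no remaining obstacle; the main work has already been absorbed into the inductive argument of that lemma (whose inductive engine depends on Lemmas \ref{rho=1}, \ref{rho=2}, \ref{rho=2 wp-reducible}, and \ref{vi X stays}). All this corollary does is repackage the lemma into the form that will be convenient for downstream applications, namely that any $\fb$-irreducible binomial relation in $\ker^\mh \vi_\fT$ is automatically in the neatly structured $\vr$-linear, $\vi$-square-free shape promised in the introduction (e.g.\ the $\frb$-binomials of $\cB^\frb$ appearing in Theorem \ref{eqs-F-intro}).

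If one wished to make the writeup slightly more self-contained, I would briefly restate the two notions being negated: recall that $f$ is $\vr$-linear if, for each $\buw \in \fT$ in which $\vr$-variables of $\PP_\buw$ actually appear in $f$, the total degree in those variables equals one; and $f$ is $\vi$-square-free if $\vi(\bm)$ is square-free for every monomial summand $\bm$ of $f$. Negating either property gives precisely the hypothesis of Lemma \ref{linear and free}. Thus the corollary follows immediately, with no new calculation needed.
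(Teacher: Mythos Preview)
Your proposal is correct and matches the paper's approach exactly: the paper states the corollary immediately after Lemma \ref{linear and free} with no proof, since it is indeed just the contrapositive of that lemma.
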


\subsubsection{$\fb$-irreducible root binomials of 
$\ker^\mh \vi_\fT$}

\begin{cor}\label{reduce to rb}
Let $f=\bm-\bm'$ be any  root binomial in $\ker^\mh \vi_\fT$.
Then it is generated by $\fb$-irreducible  root binomials 
in $\ker^\mh \vi_\fT$.

Moreover, if $f$ is $\wp$-irreducible, then it is generated by 
$\fb$-irreducible  root binomials that are also $\wp$-irreducible.
\end{cor}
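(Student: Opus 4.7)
The plan is to argue by induction on the total degree $\deg f$, exploiting the splitting provided by any $\fb$-reducible decomposition. If $f$ is $\fb$-irreducible, it already lies in the required generating set and there is nothing to show. Otherwise, picking a $\fb$-reducible decomposition $f = \bm_1 \bm_2 - \bm_1' \bm_2'$ yields the identity
$$f = \bm_1 (\bm_2 - \bm_2') + \bm_2' (\bm_1 - \bm_1'),$$
which expresses $f$ as an $R_\fT$-linear combination of $\bm_1 - \bm_1'$ and $\bm_2 - \bm_2'$. The hypothesis that $f$ is a root binomial allows us to apply Lemma \ref{auto homo}, upgrading $\bm_1 - \bm_1', \bm_2 - \bm_2' \in \ker \vi_\fT$ to membership in $\ker^\mh \vi_\fT$. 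Multi-homogeneity forces $\deg \bm_i = \deg \bm_i'$, and since $\bm_1, \bm_2$ are non-constant, we obtain the strict inequalities $\deg(\bm_i - \bm_i') < \deg f$ for $i = 1, 2$, so induction will apply as soon as rootness (and, for the moreover part, $\wp$-irreducibility) is shown to descend to the two new binomials.

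For the inheritance of rootness, I argue by contradiction. If $\bm_1 - \bm_1'$ failed to be root, there would exist a common factor $p_\ua p_\ub$ of $\bm_1$ and $\bm_1'$ with $(\ua,\ub) \in \La_\buw$ for some $\buw \in \fT$ whose simultaneous replacement by $x_{(\ua,\ub)}$ in both monomials yields a multi-homogeneous lift $\tilde{\bm}_1 - \tilde{\bm}_1'$. Because $p_\ua p_\ub$ then divides both $\bm_1 \bm_2$ and $\bm_1' \bm_2'$, the same replacement transported into $f$ produces $\tilde{\bm}_1 \bm_2 - \tilde{\bm}_1' \bm_2'$; multi-homogeneity of this parent of $f$ follows from combining multi-homogeneity of $\tilde{\bm}_1 - \tilde{\bm}_1'$ with multi-homogeneity of $\bm_2 - \bm_2'$ (the latter again by Lemma \ref{auto homo}). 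This contradicts $f$ being root. Hence $\bm_1 - \bm_1'$ and symmetrically $\bm_2 - \bm_2'$ are root, and induction delivers generators lying in $\ker^\mh \vi_\fT$ that are $\fb$-irreducible root binomials, which via the displayed identity generate $f$.

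The moreover part runs in parallel. Assume additionally that $f$ is $\wp$-irreducible. Were $\bm_1 - \bm_1'$ $\wp$-reducible via the $\wp$-binomial $p_{\uu'} p_{\uv'} x_{(\uu,\uv)} - p_\uu p_\uv x_{(\uu',\uv')}$, with, say, $p_{\uu'} p_{\uv'} x_{(\uu,\uv)} \mid \bm_1$ and $x_{(\uu',\uv')} \mid \bm_1'$, then multiplying both divisibilities respectively by $\bm_2$ and $\bm_2'$ exhibits $f$ itself as $\wp$-reducible, a contradiction. So $\wp$-irreducibility passes to $\bm_1 - \bm_1'$ and $\bm_2 - \bm_2'$, and the inductive conclusion can be sharpened to generators that are simultaneously $\fb$-irreducible, root, and $\wp$-irreducible. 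The only non-formal ingredient in the whole argument is the passage from $\ker \vi_\fT$ to $\ker^\mh \vi_\fT$ recorded in Lemma \ref{auto homo}; once that is granted, the main subtlety left is really just the matched-multi-degree bookkeeping in the contradiction step, after which the proof reduces to a routine induction on degree.
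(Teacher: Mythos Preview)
Your argument is correct and follows the paper's approach: decompose via $\fb$-reducibility, invoke Lemma~\ref{auto homo} for multi-homogeneity of the pieces, and recurse (the paper presents the full product factorization $\prod_i \bm_i - \prod_i \bm_i'$ at once and telescopes; you induct one split at a time---these are equivalent). You are in fact more explicit than the paper in arguing that rootness descends to the pieces, a step the paper simply asserts.

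One small imprecision: ``not root'' for a multi-homogeneous binomial does not require a \emph{common} factor $p_\ua p_\ub$ of $\bm_1$ and $\bm_1'$. By the definition of parent, a non-root $\bm_1 - \bm_1'$ admits possibly distinct factors $p_\ua p_\ub \mid \bm_1$ and $p_{\ua'} p_{\ub'} \mid \bm_1'$ with $(\ua,\ub),(\ua',\ub') \in \La_\buw$ for a common $\buw$ (the $\wp$-binomial is the basic example---its only parent is the zero expression of Example~\ref{exam-expression}, with no common $p$-factor). Your contradiction still works verbatim with this fix: lifting the two factors separately in $f = \bm_1\bm_2 - \bm_1'\bm_2'$ yields $x_{(\ua,\ub)}\bn_1\bm_2 - x_{(\ua',\ub')}\bn_1'\bm_2'$, which is a multi-homogeneous parent of $f$ since both lifts shift the multi-degree by the same vector.
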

\begin{proof} Take any binomial
$(\bm -\bm') \in \ker^\mh \vi_\fT$
 with $\deg_\vr (\bm -\bm') \ge 2$. 
By applying Lemma \ref{auto homo}, we can express
$$\bm -\bm' =\prod_{i=1}^\ell \bm_i -  \prod_{i=1}^\ell \bm_i',$$ 
such that $\bm_i -\bm'_i \in \ker^\mh \vi_\fT$ and
are $\fb$-irreducible root binomials,   for all $i \in [\ell]$.
Then, we have
$$\bm -\bm'=\prod_{i=1}^{\ell} \bm_i - \bm_\ell'  \prod_{i=1}^{\ell-1} \bm_i
+ \bm_\ell'  \prod_{i=1}^{\ell-1} \bm_i -\prod_{i=1}^{\ell} \bm_i',
$$ 
$$=(\bm_\ell - \bm_\ell')  \prod_{i=1}^{\ell-1} \bm_i
+ \bm_\ell'  (\prod_{i=1}^{\ell-1} \bm_i -\prod_{i=1}^{\ell-1} \bm_i').$$ 
Thus, by a simple induction on the integer $\ell$, 
we conclude that $f$ is generated by 
$\bm_i -\bm_i' \in \ker^\mh \vi_\fT$, $i \in [\ell]$, 
which are $\fb$-irreducible.

Moreover, 
since $\bm -\bm' =\prod_{i=1}^\ell \bm_i - 
 \prod_{i=1}^\ell \bm_i'$,
it follows that if any $\bm_i -\bm_i'$ is $\wp$-reducible, so is $\bm-\bm'$.
That is, if $\bm -\bm'$ is $\wp$-irreducible, 
so are all $\bm_i -\bm_i'$, $i \in [\ell]$.

This establish the corollary.
\end{proof}

\begin{defn} \label{defn:frb}
A $\fb$-irreducible root binomial that is also $\wp$-irreducible
is called $\frb$-irreducible. We let 
 $\cB^\frb_\fT$ be the set of all $\frb$-irreducible binomials of $\ker^\mh \vi_\fT$.  A binomial of $\cB^\frb_\fT$ is called
a $\frb$-binomial.
\end{defn}

{\it To keep the exposition concise, 
in the remainder of this section, we will focus on the main case 
when $\fT=\Ladnsort$, hence, we will
drop the subindex $``$ $_\fT$ $"$, e.g.,
 $$R=R_{\Ladnsort}, R_\vr=R_{\Ladnsort,\vr},
\cB^{\frb}=\cB^{\frb}_{\Ladnsort}, 
\vi=\vi_{\Ladnsort}, \; \hbox{etc.}$$  We comment, however, that much
of our discussions below still hold or can be straightforwardly extended  to the general subset $\fT$.}

\subsection{Equations defining $\FF^{d,E}_*$ and proofs of Proposition \ref{eqs-F'-intro} and Theorem \ref{eqs-F-intro}}
\label{sec:eqForZ}

\begin{defn}\label{fv-sR} Fix $\um \in \II_{d,[n]}$.
For every $\buw \in \La^\sort_{d,[n]}$, choose and fix an arbitrary element 
$(\ua_\buw,\ub_\buw) \in \La_\buw$.
Then, the scheme $\PP(\wedge^d E) \times  \prod_{\buw \in \Ladnsort} \PP_{\buw}  $ 
is covered by the open affine space charts
of the form  $$(p_\um \ne 0) \times \prod_{\buw \in \La^\sort_{d,[n]}}
(x_{(\ua_\buw,\ub_\buw)} \ne 0) \subset 
\PP(\wedge^d E) \times \prod_{\buw \in \La^\sort_{d,[n]}} \PP_\buw  .$$ 
We call such an affine open chart a standard chart of
$\PP(\wedge^d E) \times \prod_{\buw \in \La^\sort_{d,[n]}} \PP_\buw .$ 
\end{defn}

Note that any standard chart of
$\PP(\wedge^d E) \times  \prod_{\buw \in \Ladnsort} \PP_{\buw}  $ in the above definition is uniquely indexed
 by an element of the set 
$\II_{d,[n]} \times \prod_{\buw \in \La_{d,[n]}^\sort} \La_\buw.$

Fix any standard chart $\fV$ of 
 $\PP(\wedge^d E) \times  \prod_{\buw \in \Ladnsort} \PP_{\buw}  $ 
 and a multi-homogeneous polynomial $f$ of
$R$, we let $f|_\fV$ denote the restriction of $f$ to $\fV$. Suppose $\mathcal S$ is a collection of
multi-homogeneous polynomials of $R$,  we let ${\mathcal S}|_\fV$ be the set of the restrictions of the polynomials of $\mathcal S$ to $\fV$.

\subsubsection{Proof of Proposition \ref{eqs-F'-intro}.}

\begin{proof} By the first statement of
Lemma \ref{defined-by-ker} 
(the case when $\fT=\Ladnsort$),
the scheme $\FF^\wdep_*$, as a closed subscheme of   
$\PP(\wedge^d E) \times  \prod_{\buw \in \Ladnsort} \PP_{\buw} $,
 is defined by $\ker^\mh \vi_\fT$.

\noindent
{\bf Claim 1.} Suppose
a binomial $f =\bm-\bm' \in \ker^\mh \vi_\fT$
is not a root binomial. Then $f$
can be reduced to a root binomial in $\ker^\mh \vi_\fT$.

By Definition \ref{fv-sR}, 
 $\PP(\wedge^d E) \times  \prod_{\buw \in \Ladnsort} \PP_{\buw} 
 $ is covered by standard affine charts.

Fix any standard chart $\fV$ of  $\PP(\wedge^d E) \times  \prod_{\buw \in \Ladnsort} \PP_{\buw}  $ as in Definition \ref{fv-sR}. 
We want to reduce   $f|_\fV$ 
to a root binomial in $\ker^\mh \vi_\fT$, restricted to the chart $\fV$.

We prove it by induction on $\deg_\vp (f)$, 
where $\deg_\vp f$ denote the degree of $f$ considered as
a polynomial in $\vp$-variables only

When  $\deg_\vp (f) =0$, the statement holds trivially.

Assume that statement holds for $\deg_\vp< e$ for some $e>0$.

Consider $\deg_\vp (f) = e$.

By definition, we can write 
$$f= p_{\uu_s} p_{\uv_s} \bn_s - p_{\uu_t} p_{\uv_t} \bn_t$$
with  $(\uu_s,\uv_s)$ and $(\uu_t,\uv_t)$ belonging to $\La_\buw$ for some 
$\buw \in \Ladnsort$, and $\bn_s, \bn_t \in R$.

Now take any $(\ua_\buw, \ub_\buw) \in \La_\buw$.
Consider the following two relations in $\cB^\wp$
$$p_{\uu_s} p_{\uv_s} x_{(\ua_\buw, \ub_\buw)}- p_{\ua_\buw}p_{\ub_\buw} x_{(\uu_s, \uv_s)}, \;\;
p_{\uu_t} p_{\uv_t} x_{(\ua_\buw, \ub_\buw)}- p_{\ua_\buw}p_{\ub_\buw} x_{(\uu_t, \uv_t)}.$$
This implies that
$$x_{(\ua_\buw, \ub_\buw)}f
\equiv p_{\ua_\buw}p_{\ub_\buw}(x_{(\uu_s, \uv_s)} \bn_s - x_{(\uu_t, \uv_t)} \bn_t),
\; \mod  \cB^\wp.$$
Note that on the chart $\fV$,  $x_{(\ua_\buw, \ub_\buw)}$ is invertible 
(see Definition \ref{fv-sR}).
Also, observe that $(x_{(\uu_s, \uv_s)} \bn_s - x_{(\uu_t, \uv_t)} \bn_t) \in \ker^\mh_{\fT}$.
Since  $\deg_\vp (x_{(\uu_s, \uv_s)} \bn_s - x_{(\uu_t, \uv_t)} \bn_t) < e$, 
the statement then follows from the inductive assumption.

{\bf Claim 2.} Any $\wp$-reducible 
root binomial $f =\bm-\bm' \in \ker^\mh \vi_\fT$
 can be reduced to $\wp$-irreducible root binomials,
modulo $\cB^\wp$.

Since $f$ is $\wp$-reducible, by using the notation immediately above
\eqref{fully recover}, w.l.o.g., we obtain
\begin{equation}\label{fully recover 2}
f=\bm- \bm'=x_{(\uu,\uv)}   \bn -p_{\uu}p_{\uv}  x_{(\uu',\uv')} \bn'
\equiv x_{(\uu,\uv)} (  \bn -p_{\uu'}p_{\uv'} \bn'), \; \mod \cB^\wp.
\end{equation}
Because, $f$ is a root binomial, one sees that 
$g=\bn -p_{\uu'}p_{\uv'} \bn'$ must also be a root binomial.
Now if $g$  is $\wp$-irreducible, then we are
done. Otherwise, we  repeat and apply the same procedure to
$g$. As the $\deg_\vr g < \deg_\vr f$, the process must terminates.
Thus, the claim holds.

Then, by applying Corollary \ref{reduce to rb},
  we obtain that 

{\bf Claim 3} Any $\wp$-irreducible
 root binomial $f =\bm-\bm' \in \ker^\mh \vi_\fT$
can be reduced to $\fb$-irreducible root binomials in $\ker^\mh \vi_\fT$
that are also $\wp$-irreducible, that is,
 reduced to $\frb$ binomials of $\ker^\mh \vi_\fT$.told

Combining Claims 1, 2, and 3,
we obtain that the defining equations of $\ker^\mh \vi_\fT$
can be reduced to $\cB^\wp \sqcup \cB^{\frb}$.
This proves the proposition.
 \end{proof}

Next, we study $\ker^\mh \vi_{\Gr}$, where $\vi_\Gr=\vi_{\Ladnsort, \Gr}$.

\subsubsection{On the non-toroidal part of $\ker^\mh \vi_{\Gr}$} $\ $

{\it To suppress the issue of multivariate 
$\pl$ relations, we assume 
 in the sequel
 that the base field $\kk$ has characteristic zero
so that $I_\whwp=I_\wp$.}

We let $f \in R$ be any multi-homogenous polynomial such that 
 $\vi_\Gr(f)=0$. Then, by \eqref{bar-vik},
 it holds if and only if $\vi (f) \in I_\wp$.
 Thus, we can express $f=\sum_{F \in \sF} f_F$ such that 
 $\vi (f_F)$ is a multiple of $F$ for all $F \in \sF$. It suffices to consider an
 arbitrarily fixed $F \in \sF$. Hence, we may assume that $f=f_F$ for some arbitrarily fixed $F \in \sF$.
 That is, in such a case,  $\vi_\Gr (f)=0$ if and only if
  $$\hbox{$\vi  (f) =hF$  for some $h \in R_0$.}$$
  To exclude the (trivial) case when $f=hF$, 
we  assume $\deg_\vr f>0$.
  We  write $F=\sum_{s\in S_F} \sgn (s) p_{\uu_s}p_{\uv_s}$. 
  Accordingly, we can express
  \begin{equation}\label{initial f}
  f=\sum_{s \in S_F} \sgn (s) f_s
  \end{equation} such that 
\begin{equation}\label{initial f 2} \hbox{$\vi (f_s) = h p_{\uu_s}p_{\uv_s}$, for all $s \in S_F$.}
\end{equation}

Without loss of generality, in the sequel, we focus on investigating the case when {\it all the terms of $f$ 
in the expression of \eqref{initial f} are co-prime,
 i.e., having no common factors, modulo $\ker^\mh \vi$.}

We let $L_\sF$ be the set of $L_F$ for all $F \in \sF$ (see Definition \ref{defn:linear-pl}). 
One sees  that  $\vi (L_F)=F$  for all $F \in \sF$. 

\begin{defn}\label{defn:h-pl}
Let  $f \notin L_\sF$ with $\deg_\vr f>0$ be as in \eqref{initial f} such that
 $\vi (f) =h F$ and all the terms of $f$ as
in the expression of \eqref{initial f} are co-prime,
then we say $f$ is a $\hpl$ relation.

We denote the set of all $\hpl$  relations by  $ \sF^{\vp\vr}$.
\end{defn}

From definition and by the above discussions, we have

\begin{lemma}\label{ker-hF-generate}
 Assume that the base field has characteristic zero. Then
$$
 \ker^\mh \vi_{\Gr}=\langle \ker^\mh \vi, \sF, L_\sF, \sF^{\vp\vr}  \rangle.$$
\end{lemma}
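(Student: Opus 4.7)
The forward inclusion $\supseteq$ is by direct verification on each of the four generating sets. We have $\ker^\mh \vi \subseteq \ker^\mh \vi_{\Gr}$ because $\vi_\Gr$ factors through $\vi$. For $F \in \sF \subset R_0$, the assumption $\operatorname{char} \kk = 0$ gives $I_\whwp = I_\wp$, so $\vi_\Gr(F) = F \bmod I_\whwp = 0$. For $L_F \in L_\sF$, the identity $\vi(L_F) = F$ noted just before Definition \ref{defn:h-pl} yields $\vi_\Gr(L_F) = F \bmod I_\whwp = 0$. Finally, for $f \in \sF^{\vp\vr}$, the defining property $\vi(f) = hF \in I_\wp$ places $f$ in $\ker \vi_\Gr$, and multi-homogeneity of $f$ is built into the definition.

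For the reverse inclusion, I would take a multi-homogeneous $f \in \ker^\mh \vi_\Gr$ and use $\vi(f) \in I_\whwp = I_\wp$ to write $\vi(f) = \sum_{F \in \sF} h_F F$ in $R_0$. The first step is to lift this decomposition to $f$ itself: as asserted in the discussion preceding \eqref{initial f}, one splits $f = \sum_F f_F$ with each $f_F$ multi-homogeneous and $\vi(f_F) = h_F F$. Once this is in place, it suffices to treat a single fixed $F$ with $\vi(f) = hF$.

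With that reduction in hand, I would write $f = \sum_{s \in S_F} \sgn(s) f_s$ with $\vi(f_s) = h \, p_{\uu_s} p_{\uv_s}$ as in \eqref{initial f}--\eqref{initial f 2}, and split into cases. If $\deg_\vr f = 0$, then $f = \vi(f) = hF$ lies in the ideal generated by $\sF$. If $\deg_\vr f > 0$, I would extract the common factor $g \in R$ of the terms $\{f_s\}$ modulo $\ker^\mh \vi$, obtaining $f \equiv g \cdot f'$ where the terms of $f'$ are co-prime in the sense used in Definition \ref{defn:h-pl}. Depending on whether $f'$ equals the linearized Plücker relation, one then concludes directly that $f' \in L_\sF$ or, by the very definition of $\hpl$ relation, $f' \in \sF^{\vp\vr}$.

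The main technical obstacle is the first step, namely obtaining the coherent splitting $f = \sum_F f_F$ with each $f_F$ inheriting the multi-homogeneity of $f$. This requires matching $\vp$- and $\vr$-multi-degrees on both sides of $\vi$, and relies on the fact that each Plücker relation $F = F_{\uh,\uk}$ is indexed uniquely by a sorted pair $\buw_F = \operatorname{sort}(\uh \vee \uk) \in \La^\sort_{d,[n]}$, so contributions to different $F$'s sit in disjoint $\vr$-coordinate groups whenever $\buw_F \neq \buw_{F'}$. A secondary subtlety is that the extraction of the common factor in the co-prime reduction must respect multi-homogeneity, which again is automatic because a common factor of the multi-homogeneous terms $\{f_s\}$ is itself multi-homogeneous.
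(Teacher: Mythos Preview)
Your approach is exactly the paper's: its proof is the single sentence ``From definition and by the above discussions, we have [the lemma],'' and your proposal is a faithful elaboration of precisely those discussions --- reduce to $\vi(f)=hF$ for one $F$, case-split on $\deg_\vr$, and observe that $\sF^{\vp\vr}$ is \emph{defined} so as to absorb the co-prime residue. One correction to your remarks on the splitting obstacle: for $d\ge 3$, distinct $\pl$ relations $F_{\uh,\uk}$ can share the same sorted pair $\buw_F$ (e.g.\ $\uh=(12),\uk=(3456)$ and $\uh'=(13),\uk'=(2456)$ both give $\buw=((135),(246))$), so your ``indexed uniquely'' claim is false and the disjoint-$\vr$-coordinate-groups argument does not fully resolve the splitting --- though the paper itself leaves this step at the level of assertion.
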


The following are $\hpl$ relations.

\begin{exam} \label{exam-h pl}
Consider $\Gr^{3,E}$. Then, the following binomials belong to $\ker^\mh \vi_\Gr$.  

Fix $a,b,c \in [n] \- \{1,2,3\}$, all being distinct:
\begin{eqnarray} 
f: p_{1ab}p_{12c}p_{13c} x_{(123, 1ac)}- p_{1ac}p_{12c}p_{13b}x_{(12a,13c)}  
 + p_{1ac}p_{12b}p_{13c} x_{(13a,12c)}.   \label{odd vr pl} 
 \end{eqnarray}
 Here, $\vi(f)=  p_{1ac}p_{12c}p_{13c} F$, where
$F= p_{123} p_{1ab}- p_{12a}p_{13b} + p_{13a}p_{12b}.$

Fix $a,b,1', b' \in [n] \- \{1,2,3\}$, all being distinct:
\begin{eqnarray} 
f: (p_{1ab}p_{1'2a}p_{1'2b'} x_{(1'ab, 123)}- p_{12a}p_{1'ab}p_{1'2b'}x_{(1'2a,13b)} )
x_{(1'3a,12b')}  \\ \nonumber 
+ p_{1'ab}p_{1'2a}p_{12b'} x_{(1'3a,12b)}x_{(13a,1'2b')}.   \label{m.t vr pl} 
 \end{eqnarray}
 Here, $\vi(f)=  p_{1'ab}p_{1'2a}p_{1'2b'}p_{1'3a}p_{12b'} F$, where
$F= p_{123} p_{1ab}- p_{12a}p_{13b} + p_{13a}p_{12b}.$

Fix $a,b,1', 2', b' \in [n] \- \{1,2,3\}$, all being distinct:
\begin{eqnarray} 
f: (p_{1ab}p_{1'2a}p_{1'2'b'} x_{(1'ab, 123)}- p_{12a}p_{1'ab}p_{1'2'b'}x_{(1'2a,13b)} )
x_{(1'3a,12'b')}  \\ \nonumber 
+ p_{1'ab}p_{1'2a}p_{12'b'} x_{(1'3a,12b)}x_{(13a,1'2'b')}.   \label{m.t vr pl-2} 
 \end{eqnarray}
 Here, $\vi(f)=  p_{1'ab}p_{1'2a}p_{1'2'b'}p_{1'3a}p_{12'b'} F$, where
$F= p_{123} p_{1ab}- p_{12a}p_{13b} + p_{13a}p_{12b}.$

Fix $a,b,c, a',b', c' \in [n] \- \{1,2,3\}$, all being distinct: 
\begin{equation}
\label{m.t vr pl-3} 
f: 
p_{abc} x_{(a'bc, 123)}x_{(12a',3b'c')}p_{13a'}p_{2b'c'}p_{23a'}
 -p_{12a} x_{(12a', 3bc)}x_{(13a',2b'c')}p_{23a'}p_{a'bc}p_{3b'c'}  
\end{equation} 
$$+
p_{13a} x_{(13a', 2bc)}x_{(12a',3b'c')}p_{23a'}p_{a'bc}p_{2b'c'}  - p_{23a} x_{(23a', 1bc)}x_{(12a',3b'c')}p_{13a'}p_{a'bc}p_{2b'c'} .$$  
 Here, $\vi(f)=  p_{12a'}p_{13a'}p_{23a'}p_{2b'c'}p_{3b'c'}p_{a'bc}F$, where
$F= p_{123} p_{abc}- p_{12a}p_{3bc} + p_{13a}p_{2bc}-p_{23a}p_{1bc}.$

The above relation is also equal to the following one, modulo $\ker^\mh\vi$:
\begin{equation}\label{m.t vr pl-3'} 
f: p_{abc} x_{(a'bc, 123)}x_{(12a',3b'c')}p_{13a'}p_{a'bc}p_{23a'} -
p_{12a} x_{(12a', 3bc)}x_{(13a',2b'c')}p_{23a'}p_{a'bc}p_{3b'c'}
\end{equation} 
$$+ p_{13a} x_{(13a', 2bc)}x_{(12a',3b'c')}p_{23a'}p_{a'bc}p_{2b'c'}
 - p_{23a} x_{(23a', 1bc)}x_{(13a',2b'c')}p_{12a'}p_{a'bc}p_{3b'c'} . $$ 
Here, $\vi(f)=  p_{12a'}p_{13a'}p_{23a'}p_{2b'c'}p_{3b'c'}p_{a'bc}F$, where
$F= p_{123} p_{abc}- p_{12a}p_{3bc} + p_{13a}p_{2bc}-p_{23a}p_{1bc}.$
\end{exam}

None of the polynomials in the above examples belong to $R_\vr$. Further,
one  inspects directly that  {\it none of  them admit any parent polynomials} other than themselves.

The following is an example of $\hpl$ relation in $R_\vr$.

\begin{exam}\label{exam-vr pl}
Fix $a,b,c, a',b', c' \in [n] \- \{1,2,3\}$, all being distinct. Let
\begin{equation}\label{the-vr}
f= 
x_{(abc,23a')} x_{(13a,2b'c')}x_{(a'bc, 123)}x_{(12a',3b'c')}x_{(23a,13a')}
 \end{equation} 
 $$-x_{(23a,a'bc)}x_{(12a,3b'c')} x_{(12a', 3bc)}x_{(13a',2b'c')}x_{(13a,23a')}  $$
$$+x_{(23a,a'bc)}x_{(13a,2b'c')} x_{(13a', 2bc)}x_{(12a',3b'c')}x_{(13a,23a')} $$ 
$$-x_{(23a,a'bc)}x_{(13a,2b'c')} x_{(23a', 1bc)}x_{(12a',3b'c')}x_{(23a,13a')} $$ 
Then, $\vi(f)=  p_{12a'}p_{13a'}p_{23a'}p_{2b'c'}p_{3b'c'}p_{a'bc}p_{13a}p_{23a}F$, where
$F= p_{123} p_{abc}- p_{12a}p_{3bc} + p_{13a}p_{2bc}-p_{23a}p_{1bc}.$
\end{exam}



We write $F=\sum_{t \in S_F} \sgn (t) p_{\uu_t}p_{\uv_t}$.
We follow the notations of \eqref{initial f} and \eqref{initial f 2}.

The following special $\hpl$ relations can be reduced to  multiples of $\pl$
relations, modulo $\ker^\mh \vi$.

\begin{lemma} \label{f-to-F} 
 Suppose $\vi (f)=F$ and
$$\hbox{ $p_{\uu_s}p_{\uv_s} \mid f_s$ for some $s \in S_F$.}$$
 Then, 
$$\hbox{ $f \equiv \bn F, \mod (\ker^\mh \vi)$, for some $\bn \in R$.}$$
\end{lemma}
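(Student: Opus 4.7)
The plan is to use the divisibility hypothesis directly and exhibit the multiplier $\bn$ as an explicit quotient. First I would appeal to the decomposition $f = \sum_{t \in S_F} \sgn(t)\, f_t$ accompanying any $f$ with $\vi(f) = F$, set up in the paragraph just above Definition \ref{defn:h-pl}. Since here $h = 1$, each summand satisfies $\vi(f_t) = p_{\uu_t} p_{\uv_t}$. At the distinguished index $s$, I would invoke the divisibility hypothesis to write $f_s = p_{\uu_s} p_{\uv_s}\, g_s$ for a well-defined quotient $g_s \in R$. Applying $\vi$ and cancelling the non-zero-divisor $p_{\uu_s} p_{\uv_s}$ in the integral domain $R_0$, I would deduce $\vi(g_s) = 1$.

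The natural candidate is then $\bn := g_s$. The next step is to form the difference
\[
f - g_s F \;=\; \sum_{t \in S_F} \sgn(t)\bigl(f_t - g_s\, p_{\uu_t} p_{\uv_t}\bigr).
\]
The $t = s$ summand vanishes identically by the factorization $f_s = p_{\uu_s} p_{\uv_s}\, g_s$. For $t \ne s$, I compute $\vi\bigl(f_t - g_s\, p_{\uu_t} p_{\uv_t}\bigr) = p_{\uu_t} p_{\uv_t}\bigl(1 - \vi(g_s)\bigr) = 0$. Hence the whole difference lies in $\ker \vi$.

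Finally I would verify that $f - g_s F$ is multi-homogeneous, so that in fact it belongs to $\ker^\mh \vi$. The polynomial $f$ is multi-homogeneous by the standing hypothesis, and $F \in R_0$ is a $\vp$-only polynomial, hence trivially homogeneous in every $\vr$-factor $\PP_\buw$. The key observation is that dividing $f_s$ by the $\vp$-monomial $p_{\uu_s} p_{\uv_s}$ does not affect the $\vr$-grading in any $\PP_\buw$, so $g_s$ inherits multi-homogeneity from $f_s$, and therefore $g_s F$ is multi-homogeneous as well. This yields $f \equiv g_s F \pmod{\ker^\mh \vi}$, as required.

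I expect no substantive obstacle: the argument is a direct algebraic manipulation once the hypothesis is translated into the factorization $f_s = p_{\uu_s} p_{\uv_s}\, g_s$. The only point that requires care is the multi-homogeneity bookkeeping for $g_s$, but this is automatic because the divisor involves only $\vp$-variables and the $\vr$-degree of $g_s$ in each $\PP_\buw$ equals that of $f_s$.
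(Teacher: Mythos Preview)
Your argument is correct. The direct verification that $f - g_s F \in \ker^\mh \vi$, with $g_s := f_s/(p_{\uu_s}p_{\uv_s})$, goes through exactly as you describe: each summand $f_t - g_s\,p_{\uu_t}p_{\uv_t}$ lies in $\ker\vi$ by the computation $\vi(f_t) = p_{\uu_t}p_{\uv_t} = \vi(g_s)\,p_{\uu_t}p_{\uv_t}$, and multi-homogeneity is preserved because dividing by a $\vp$-monomial leaves all $\vr$-degrees unchanged while $F$ restores the two units of $\vp$-degree.

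The paper takes a different route. It argues by induction on $\deg_\vr f$: in the inductive step it writes $f = \sgn(s)\,X_s T_s'\,p_{\uu_s}p_{\uv_s} + \sum_{t\ne s}\sgn(t)\,X_t T_t$ with all $X_t$ coordinates of a single $\PP_\buw$, replaces each $X_t$ by $\vi(X_t)$ to obtain $\barf$ with $\deg_\vr \barf = \deg_\vr f - 1$, applies the inductive hypothesis to get $\barf \equiv \bar\bn F$, and then lifts the resulting term-by-term congruences back up to conclude $f \equiv X_s T_s' F$. Unwinding this, the paper's multiplier $X_s T_s'$ is exactly your $g_s$, so the two proofs produce the same $\bn$; the paper simply reaches it by peeling off one $\vr$-layer at a time rather than in a single stroke. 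Your approach is more economical and avoids the induction entirely, while the paper's layered argument fits the pattern used again in Lemma~\ref{f-to-LF} (where one genuinely needs to pass through a descendant before invoking the present lemma). Incidentally, your argument works verbatim for the more general hypothesis $\vi(f)=hF$: one gets $\vi(g_s)=h$ and the same cancellation $\vi(f_t) - \vi(g_s)\,p_{\uu_t}p_{\uv_t} = 0$.
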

\begin{proof}
We prove the  statement by applying induction on $\deg_\vr (f)$.

When $\deg_\vr (f)=0$, then by a direct inspection, we must have
$f=n F$ for some $n \in R_0$. Hence, the statement holds.

Assume $\deg_\vr (f)=e>0$ and the statement holds for $\deg_\vr (f)=e-1$.

Then, we  can write
 $$
f= X_s T_s' \sgn(s) p_{\uu_s}p_{\uv_s} 
+ \sum_{t \in S_F \- s} \sgn(t) X_t T_t $$
 for some $T_s' \in R$ and $T_t \in R$ for all $t \in S_F \-s$ such that
 $\{X_t \mid t \in S_F\}$ is a subset of the homogenous coordinates  of $\PP_\buw$
 for some $\buw \in \Ladnsort$.
 
 Consider 
 $$\barf= \vi(X_s) T_s'  \sgn(s) p_{\uu_s}p_{\uv_s} + \sum_{t \in S_F \- s} \sgn(t) \vi(X_t) T_t.$$
 By the inductive assumption, we have 
 $$\hbox{ $\barf \equiv \bar\bn F, \mod (\ker^\mh \vi)$, for some $\bar\bn \in R$.}$$
 This implies that 
 $$ \vi(X_s) T_s'  \equiv \bar\bn, \; \vi(X_t) T_t \equiv \bar\bn p_{\uu_t}p_{\uv_t},
 \mod (\ker^\mh \vi) , \; \forall \; t \in S_F \- s.$$
 Substituting $\bar\bn$ by $\vi(X_s) T_s' $ in all the second equalities above, we have
  $$\vi(X_t) T_t \equiv \vi(X_s) T_s' p_{\uu_t}p_{\uv_t}, \mod (\ker^\mh \vi) , \; \forall \; t \in S_F \- s.$$
  Consequently,
   $$X_t T_t \equiv X_s T_s' p_{\uu_t}p_{\uv_t}, \mod (\ker^\mh \vi), \; \forall \; t \in S_F \- s.$$
 Hence, we obtain
 $$f 
  \equiv X_s T_s' \sgn (s) p_{\uu_s}p_{\uv_s} + \sum_{t \in S_F \- s} X_s T_s' \sgn (t) p_{\uu_t}p_{\uv_t}
  = X_s T_s' F, \mod (\ker^\mh \vi)$$
  Thus, the lemma follows by induction.
\end{proof}

As $F=\sum_{t \in S_F} \sgn (t) p_{\uu_t}p_{\uv_t}$, we have
$L_F=\sum_{t \in S_F} \sgn (t) x_{(\uu_t,\uv_t)}$.

The following special $\hpl$ relations can be reduced to  multiples of 
linearized $\pl$
relations, modulo $\ker^\mh \vi$.

\begin{lemma} \label{f-to-LF} 
 Suppose $\vi (f)=F$ and
$$\hbox{ $x_{(\uu_s, \uv_s)} \mid f_s$ for some $s \in S_F$.}$$
 Then, $$\hbox{ $f \equiv \bn L_F, \mod (\ker^\mh \vi)$, for some $\bn \in R$.}$$
\end{lemma}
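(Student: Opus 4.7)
The plan is to mirror the proof of Lemma \ref{f-to-F} essentially verbatim, with the roles of $p_{\uu_s}p_{\uv_s}$ and $F$ replaced by $x_{(\uu_s,\uv_s)}$ and $L_F$ respectively. The induction will be on $\deg_\vr(f)$, and since the hypothesis $x_{(\uu_s,\uv_s)}\mid f_s$ forces $\deg_\vr f\ge 1$, the base case is $\deg_\vr f=1$.

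\smallskip
\noindent\textbf{Base case ($\deg_\vr f = 1$).} Here $f$ is multi-homogeneous and $\vr$-linear, and since $\vi(f)=F$ has $\vp$-degree $2$, every monomial of $f$ must have exactly one $\vr$-variable drawn from a single $\PP_{\buw_0}$. Writing $f=\sum_{t\in S_F}\sgn(t)\,c_t\,x_{(\uu_t',\uv_t')}$ with $c_t\in R_0$, the identity $\vi(f)=\sum_t\sgn(t)\,p_{\uu_t}p_{\uv_t}$ and the fact that distinct Plücker terms of $F$ involve distinct unordered index pairs force $c_t=1$ and $(\uu_t',\uv_t')=(\uu_t,\uv_t)$, so $f=L_F$ and we may take $\bn=1$.

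\smallskip
\noindent\textbf{Inductive step ($\deg_\vr f=e\ge 2$).} Since each $f_t$ inherits the multi-degree of $f$ (as the $f_t$ are obtained by grouping monomials of $f$ according to $\vi$-image), $\deg_\vr f_s=e\ge 2$, and from $f_s=x_{(\uu_s,\uv_s)}T_s$ we may extract one further factor $X_s$ that is a homogeneous coordinate of some $\PP_{\buw'}$ with $\buw'\neq \buw:=\mathrm{sort}(\uu_s\vee\uv_s)$; write $f_s=x_{(\uu_s,\uv_s)}X_sT_s'$. By multi-homogeneity every $f_t$ ($t\in S_F$) has positive degree in $\PP_{\buw'}$, so we may write $f_t=X_tT_t$ with $X_t$ a coordinate of $\PP_{\buw'}$. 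Set
$$\barf=\vi(X_s)T_s'\sgn(s)\,x_{(\uu_s,\uv_s)}+\sum_{t\in S_F\setminus s}\sgn(t)\,\vi(X_t)T_t.$$
Then $\vi(\barf)=\vi(f)=F$, the factor $x_{(\uu_s,\uv_s)}$ still divides the $s$-th summand, and $\deg_\vr\barf=e-1$. By the inductive hypothesis, $\barf\equiv\bar\bn\,L_F\pmod{\ker^\mh\vi}$ for some $\bar\bn\in R$. Comparing $\vr$-coefficients gives
$$\vi(X_s)T_s'\equiv\bar\bn,\qquad \vi(X_t)T_t\equiv\bar\bn\,x_{(\uu_t,\uv_t)}\pmod{\ker^\mh\vi}\quad (t\neq s).$$
Substituting $\bar\bn=\vi(X_s)T_s'$ and invoking the $\wp$-binomial $\vi(X_s)X_t-\vi(X_t)X_s\in\cB^\wp\subset\ker^\mh\vi$ (valid because $X_s,X_t$ are coordinates of the same $\PP_{\buw'}$) yields $X_tT_t\equiv X_sT_s'\,x_{(\uu_t,\uv_t)}\pmod{\ker^\mh\vi}$. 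Summing, we obtain
$$f\;\equiv\;X_sT_s'\sum_{t\in S_F}\sgn(t)\,x_{(\uu_t,\uv_t)}\;=\;(X_sT_s')\,L_F\pmod{\ker^\mh\vi},$$
so $\bn=X_sT_s'$ works.

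\smallskip
\noindent\textbf{Main obstacle.} The delicate point is the passage from $\vi(X_t)T_t\equiv\vi(X_s)T_s'x_{(\uu_t,\uv_t)}$ back to $X_tT_t\equiv X_sT_s'x_{(\uu_t,\uv_t)}$ modulo $\ker^\mh\vi$ (as opposed to merely modulo $\ker\vi$). One must verify that the cancellation of $\vi(X_s)$ (or of $\vi(X_t)$) can be carried out within the multi-homogeneous ideal by combining the relevant $\wp$-binomial with the first equality, exactly as is implicitly done in the proof of Lemma \ref{f-to-F}. A secondary bookkeeping subtlety is the extraction of a uniform factor $X_t$ from $f_t$ when $f_t$ has several monomials involving different $\PP_{\buw'}$-coordinates; this is handled by choosing $\buw'$ once and then grouping/reducing monomials of $f_t$ via $\wp$-binomials so that a single $X_t$ can be factored out, again paralleling the treatment in Lemma \ref{f-to-F}.
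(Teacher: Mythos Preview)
Your argument is correct but takes a longer route than the paper. The paper does \emph{not} redo the induction on $\deg_\vr f$; instead it descends \emph{once} along the factor $\PP_\buw$ with $\buw=\mathrm{sort}(\uu_s\vee\uv_s)$, the one containing $x_{(\uu_s,\uv_s)}$ itself. Writing $f=\sgn(s)\,x_{(\uu_s,\uv_s)}T_s+\sum_{t\ne s}\sgn(t)\,X_tT_t$ with every $X_t$ a coordinate of that same $\PP_\buw$ (forced by multi-homogeneity), one passes to $\barf=\sgn(s)\,p_{\uu_s}p_{\uv_s}T_s+\sum_{t\ne s}\sgn(t)\,\vi(X_t)T_t$, which now satisfies the hypothesis of Lemma~\ref{f-to-F}. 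That lemma gives $\barf\equiv\bn F$, hence $T_s\equiv\bn$ and $\vi(X_t)T_t\equiv\bn\,p_{\uu_t}p_{\uv_t}$; since $X_t$ and $x_{(\uu_t,\uv_t)}$ both lie in $\PP_\buw$, this lifts to $X_tT_t\equiv\bn\,x_{(\uu_t,\uv_t)}$, and summing yields $f\equiv\bn L_F$. Your approach instead preserves $x_{(\uu_s,\uv_s)}$ and descends along an auxiliary $\PP_{\buw'}$, recursing to $\deg_\vr=1$. Two remarks: the condition $\buw'\ne\buw$ is neither needed nor guaranteed (if every $\vr$-factor of $T_s'$ also lies in $\PP_\buw$, your argument still runs with $\buw'=\buw$, so just drop that clause); and your ``main obstacle'' dissolves once you note that $X_tT_t-X_sT_s'\,x_{(\uu_t,\uv_t)}$ is visibly multi-homogeneous with $\vi$-image zero, hence lies in $\ker^\mh\vi$ directly. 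The paper's one-step reduction to Lemma~\ref{f-to-F} buys precisely the avoidance of this extra bookkeeping.
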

\begin{proof}
By assumption, we  can write
 $$f=    \sgn(s) x_{(\uu_s, \uv_s)} T_s+ \sum_{t \in S_F \- s} \sgn(t) X_t T_t $$
 for some $T_t \in R$ for all $t \in S_F$ such that
 $\{x_{(\uu_s, \uv_s)} , X_t \mid t \in S_F \- s\}$ 
 is a subset of the homogenous coordinates  of $\PP_\buw$
 for some $\buw \in \Ladnsort$.
 
 Consider 
 $$\barf= \sgn(s)  p_{\uu_s}p_{\uv_s}  T_s+ \sum_{t \in S_F \- s} \sgn(t) \vi(X_t) T_t. $$
 By Lemma \ref{f-to-F}, we have 
 $$\barf \equiv \bn F, \mod (\ker^\mh \vi), \; \hbox{for some $\bn \in R$}.$$
  This implies that 
 $$ T_s \equiv \bn, \; \vi(X_t) T_t \equiv p_{\uu_t}p_{\uv_t} \bn,
 \mod (\ker^\mh \vi) , \; \forall \; t \in S_F \- s.$$
 Observe that $X_t$ and $x_{(\uu_t, \uv_t)}$ are both homogenous coordinate of $\PP_\buw$.
 Consequently, 
 $$X_t T_t \equiv  x_{(\uu_t, \uv_t)} \bn,
 \mod (\ker^\mh \vi) , \; \forall \; t \in S_F \- s.$$
 Therefore, 
  $$f  =    \sgn(s) x_{(\uu_s, \uv_s)} T_s+ \sum_{t \in S_F \- s} \sgn(t) X_t T_t $$
 $$ \equiv \sgn(s) x_{(\uu_s, \uv_s)} \bn+ \sum_{t \in S_F \- s} \sgn(t) x_{(\uu_t, \uv_t)} \bn
= \bn L_F,  \mod (\ker^\mh \vi).$$
\end{proof}

\begin{prop}\label{h-pl} Let $f \in  R$ be any $\hpl$ relation
such that $\vi(f)=hF$ for some $h$.
 Then, following the notation above, we have
\begin{enumerate}
\item $ x_{(\uu_t,\uv_t)} f_s - x_{(\uu_s,\uv_s)} f_t \in \ker^\mh \vi_\fT. $
\item $x_{(\uu_s,\uv_s)} f \equiv f_s L_F$,
modulo $\ker^\mh \vi_\fT$, for all $s, t \in S_F$.
\end{enumerate}
\end{prop}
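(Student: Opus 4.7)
The plan is to prove (1) by a direct $\vi$-computation together with a multi-degree check, and then to deduce (2) from (1) by summing over $t\in S_F$. The key structural observation I would invoke at the outset is that for the given $\pl$ relation $F=\sum_{s\in S_F}\sgn(s)p_{\uu_s}p_{\uv_s}$, all the pairs $(\uu_s,\uv_s)$ with $s\in S_F$ share a common sort $\buw={\rm sort}(\uu_s\vee \uv_s)\in\Ladnsort$, exactly as noted immediately before Definition \ref{defn:linear-pl}. Consequently every variable $x_{(\uu_s,\uv_s)}$, $s\in S_F$, lies among the homogeneous coordinates of a single $\PP_\buw$, and in particular $L_F$ and the binomials appearing in (1) are $\PP_\buw$-homogeneous.

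For (1), the defining equations $\vi(f_s)=h\,p_{\uu_s}p_{\uv_s}$ and $\vi(f_t)=h\,p_{\uu_t}p_{\uv_t}$ give at once
\[
\vi\bigl(x_{(\uu_t,\uv_t)}f_s-x_{(\uu_s,\uv_s)}f_t\bigr)=h\,p_{\uu_t}p_{\uv_t}\cdot p_{\uu_s}p_{\uv_s}-h\,p_{\uu_s}p_{\uv_s}\cdot p_{\uu_t}p_{\uv_t}=0,
\]
so the binomial lies in $\ker\vi$. To upgrade this to $\ker^{\mh}\vi$ I would choose $f_s$ as the sub-sum of those monomials of $f$ whose $\vi$-image contributes to the term $\sgn(s)h\,p_{\uu_s}p_{\uv_s}$ of $hF$. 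Because every monomial of the multi-homogeneous $f$ carries a common multi-degree, regrouping them into such sub-sums preserves multi-homogeneity, and each $f_s$ acquires that same multi-degree. Since $x_{(\uu_s,\uv_s)}$ and $x_{(\uu_t,\uv_t)}$ are both $\PP_\buw$-variables, multiplying $f_s$ by one and $f_t$ by the other bumps the $\PP_\buw$-degree by one in either case and leaves all other degrees unchanged, so the difference is multi-homogeneous.

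Statement (2) then follows formally. Writing $f=\sum_{t\in S_F}\sgn(t)f_t$ and $L_F=\sum_{t\in S_F}\sgn(t)x_{(\uu_t,\uv_t)}$, one computes
\[
x_{(\uu_s,\uv_s)}f-f_s L_F=\sum_{t\in S_F}\sgn(t)\bigl(x_{(\uu_s,\uv_s)}f_t-x_{(\uu_t,\uv_t)}f_s\bigr),
\]
and by (1) each summand on the right lies in $\ker^{\mh}\vi$. This yields $x_{(\uu_s,\uv_s)}f\equiv f_s L_F$ modulo $\ker^{\mh}\vi$, as desired.

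The main obstacle I anticipate is pinning down the multi-homogeneity step in (1) cleanly: one must be precise about choosing a decomposition $f=\sum_s\sgn(s)f_s$ in which every $f_s$ is multi-homogeneous of the common multi-degree of $f$. The cleanest way is to partition the monomials of $f$ by which term of $hF$ their $\vi$-image contributes to; since $\vi$ sends each $\PP_\buw$-variable to a product of two $\vp$-variables indexed by a pair in $\La_\buw$, this partition is automatically compatible with the multi-grading of $R$, and each $f_s$ inherits multi-homogeneity as a sub-sum of monomials of equal multi-degree.
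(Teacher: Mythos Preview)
Your proposal is correct and follows essentially the same approach as the paper's own proof: a direct $\vi$-computation for (1) using $\vi(f_s)=h\,p_{\uu_s}p_{\uv_s}$, followed by summing over $t\in S_F$ to obtain (2). The paper's proof is terser (it simply says ``one checks directly'' for (1)), whereas you have spelled out the multi-homogeneity verification; note that the decomposition $f=\sum_s\sgn(s)f_s$ with $\vi(f_s)=h\,p_{\uu_s}p_{\uv_s}$ is already part of the ambient setup in \eqref{initial f}--\eqref{initial f 2}, so your discussion of how to choose the $f_s$ is elaboration rather than an additional ingredient.
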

\begin{proof}
From \eqref{initial f 2}, one checks directly that 
$x_{(\uu_t,\uv_t)} f_s - x_{(\uu_s,\uv_s)} f_t \in \ker^\mh \vi$,
 for all $s, t \in S_F$.

Next, we fix any $s \in S_F$.
By  (1),  for any $t \in S_F$, we have
$$ \hbox{$x_{(\uu_t,\uv_t)} f_s - x_{(\uu_s,\uv_s)} f_t \in 
\ker^\mh \vi_\fT$}. $$
Thus, using $f=\sum_{t \in S_F} \sgn (t) f_t$
and $L_F=\sum_{t \in S_F} \sgn (t) x_{(\uu_t,\uv_t)}$,
 we obtain
$$x_{(\uu_s,\uv_s)} f = \sum_{t \in S_F} \sgn (t) x_{(\uu_s,\uv_s)} f_t
\equiv \sum_{t \in S_F} \sgn (t) x_{(\uu_t,\uv_t)} f_s = f_s L_F, 
\; \mod \ker^\mh \vi_\fT .$$
\end{proof}

\begin{rem}\label{PP is covered}
 In the above, we let $\buw \in \Ladnsort$ be such that 
$(\uu_s,\uv_s) \in \La_\buw$ for all $s \in S_F$.
Suppose the projective space $\PP_\buw$ is covered by
the charts $$\{(x_{(\uu_s,\uv_s)}\ne 0) \mid s \in S_F\}.$$
Then by Proposition \ref{h-pl} (2),  it follows that
any $\hpl$  relation, such as $f$ in that proposition, 
can be reduced to $L_F$.  The above covering condition
does hold for $\Gr^{2,E}$ (see \S \ref{2,E}, especially \eqref{Pw=PF}). However, it does not hold in general.
In \cite{Hu2025}, there is a parallel construction of $\PP_F$
using precisely $$\{x_{(\uu_s,\uv_s)}, s \in S_F\}$$
as homogenous coordinates, hence in that paper, similar $\hpl$  relations can all reduced to linearized $\pl$ relations.

One may wonder whether in the present paper we could use $\prod_{F  \in \sF} \PP_F$ to take palce of
$\prod_{\buw  \in \Ladnsort} \PP_\buw$. Yes, we can, see \S \ref{F-blowups}.
Unfortunately, the so-obtained natural map from
$\UU^{d,E}/\TT$ into $\prod_{F  \in \sF} \PP_F$ is in genreal only a contraction but not an embedding. 
\end{rem}

\subsubsection{Proof of Theorem \ref{eqs-F-intro}.}

\begin{proof} By the second part of Lemma \ref{defined-by-ker} 
(the case when $\fT=\Ladnsort$),
 the scheme $\FF^{d, E}_*$, as a closed subscheme of 
 $\PP(\wedge^d E) \times  \prod_{\buw \in \La^\sort_{d,[n]}} \PP_\buw$, 
 is defined by  $\ker^\mh \vi_{\Gr}$. Then, 
by Lemma \ref{ker-hF-generate}
 it is defined by  $\ker^\mh \vi$, $\sF$, $ L_\sF$, and $\sF^{\vp\vr}$.  
 Then, by applying 
the same proof as in that of Proposition \ref{eqs-F'-intro},
we can reduce $\ker^\mh \vi$ to $\cB^\wp \sqcup  \cB^\frb$.

 Thus, it remains to reduce $\sF$ to $L_\sF$.

 Fix any standard chart $\fV$ of  
 $\PP(\wedge^d E) \times  \prod_{\buw \in \La^\sort_{d,[n]}} \PP_\buw$ as in Definition \ref{fv-sR}. 
We claim that for any relation $F$ of $\sF$,
 $F|_\fV$ is a multiple of $(L_F)|_\fV$,
modulo $\langle  \cB^\wp \rangle$. 

Write $F =\sum_{s \in S_F} \sign (s) p_{\uu_s}p_{\uv_s}$.
We may assume that $\sort (\uu_s \vee \uv_s)=\buw$ for one hence for all $s \in S_F$
for some $\buw \in \Ladnsort$.

Considering the following relations of $\cB^\wp$
 $$p_{\uu_s} p_{\uv_s} x_{(\ua_\buw, \ub_\buw)}
 - p_{\ua_\buw}p_{\ub_\buw} x_{(\uu_s, \uv_s)}$$ for all $s \in S_F$.
By multiplying $\sign (s)$ to  the above relation and adding together all the resulted binomials, 
we obtain, 
\begin{equation}\label{F=pL} 
x_{(\ua_\buw, \ub_\buw)} F \equiv p_{\ua_\buw}p_{\ub_\buw} L_F, \; \mod \langle \cB^\wp \rangle. \end{equation}
Because on the chart $\fV$,  $x_{(\ua_\buw, \ub_\buw)}$ 
is invertible (see Definition \ref{fv-sR}), the claim follows.

This proves the theorem.  \end{proof}

Then, modulo the relations of  $L_\sF$, we can state the following.
\begin{cor} 
 Assume that the base field has characteristic zero.
Then,  $\FF^{d, E}_*$ is embedded in 
$\PP(\wedge^d E) \times \prod_{\buw \in \La^\sort_{d,[n]}}  \LL_\buw$,
 a product of projective spaces, as the closed  subscheme defined by
binomial relations in $\cB^\wp$, $\cB^\frb$, 
and $\sF^{\vp\vr}$.
\end{cor}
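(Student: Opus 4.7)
The plan is to deduce this corollary directly from Theorem \ref{eqs-F-intro} by observing that passing from the ambient space $\PP(\wedge^d E) \times \prod_\buw \PP_\buw$ to its closed subscheme $\PP(\wedge^d E) \times \prod_\buw \LL_\buw$ absorbs exactly the linearized Pl\"ucker relations $L_\sF$ into the ambient equations, leaving $\cB^\wp$, $\cB^\frb$, and $\sF^{\vp\vr}$ as the remaining defining relations of $\FF^{d,E}_*$.

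First I would note that, by Definition \ref{defn:Lw}, each linear subspace $\LL_\buw \subset \PP_\buw$ is cut out precisely by those linearized Pl\"ucker relations $L_{\uh,\uk}$ with $\sort(\uh \vee \uk) = \buw$. Since each $L_F$ involves only the coordinates of a single factor $\PP_\buw$ (namely the one determined by $F$), the product $\prod_\buw \LL_\buw$ is the closed subscheme of $\prod_\buw \PP_\buw$ defined by the full collection $L_\sF$, and hence $\PP(\wedge^d E) \times \prod_\buw \LL_\buw$ is cut out in $\PP(\wedge^d E) \times \prod_\buw \PP_\buw$ by $L_\sF$ alone. Next I would appeal to Corollary \ref{imageIn}, which guarantees $\FF^{d,E} \subset \Gr^{d,E} \times \prod_\buw \LL_\buw \subset \PP(\wedge^d E) \times \prod_\buw \LL_\buw$; in particular $\FF^{d,E}_*$ is already a closed subscheme of the smaller ambient. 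Combining this containment with Theorem \ref{eqs-F-intro}, whose defining ideal of $\FF^{d,E}_*$ in the larger ambient is generated by $\cB^\wp \cup \cB^\frb \cup L_\sF \cup \sF^{\vp\vr}$, and quotienting by the subideal generated by $L_\sF$ (which precisely carves out the smaller ambient), I conclude that the defining ideal of $\FF^{d,E}_*$ inside $\PP(\wedge^d E) \times \prod_\buw \LL_\buw$ is generated by the remaining relations $\cB^\wp$, $\cB^\frb$, and $\sF^{\vp\vr}$.

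There is no serious obstacle beyond the bookkeeping identification of the scheme-theoretic restriction of $\FF^{d,E}_*$ to the smaller ambient; the corollary is a formal consequence of Theorem \ref{eqs-F-intro} together with the tautological description of $\prod_\buw \LL_\buw$. The only point requiring care is that the characteristic zero hypothesis is inherited from Theorem \ref{eqs-F-intro}, where it was used to ensure $I_{\whwp} = I_\wp$ so that Lemma \ref{ker-hF-generate} applies; under that hypothesis the statement follows immediately.
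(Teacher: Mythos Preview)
Your proposal is correct and follows essentially the same approach as the paper: the paper's proof simply observes that $\prod_\buw \LL_\buw$ is cut out in $\prod_\buw \PP_\buw$ precisely by $L_\sF$, and then invokes Theorem \ref{eqs-F-intro}. Your version adds the (reasonable but not strictly necessary) appeal to Corollary \ref{imageIn} for the containment and spells out the quotienting step, but the underlying argument is identical.
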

\begin{proof}
Note that 
$\prod_{\buw \in \La^\sort_{d,[n]}} \LL_\buw$  as a closed subscheme of 
$\prod_{\buw \in \La^\sort_{d,[n]}} \PP_\buw$ is precisely defined by
all the linearized $\pl$ relations of $L_\sF$. Hence, the statement of
the corollary follows from Theorem \ref{eqs-F-intro}.
\end{proof} 

\begin{cor}\label{F(2E)}
 Assume that the base field has characteristic zero.
Then,  $\FF^{2, E}_*$ is embedded in 
$\PP(\wedge^2 E) \times \prod_{\buw \in \La^\sort_{2,[n]}}  \PP_\buw$
 as the closed  subscheme defined by
binomial relations in $\cB^\wp$, $\cB^\frb$, 
and linearized $\pl$ relations in $L_\sF$.
\end{cor}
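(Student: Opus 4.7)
The plan is to invoke Theorem \ref{eqs-F-intro} and then eliminate the family $\sF^{\vp\vr}$ of $\hpl$ relations, using the special combinatorial structure of $\La^\sort_{2,[n]}$. By Theorem \ref{eqs-F-intro}, the scheme $\FF^{2,E}_*$ is already cut out inside $\PP(\wedge^2 E) \times \prod_{\buw \in \La^\sort_{2,[n]}} \PP_\buw$ by the four families $\cB^\wp$, $\cB^\frb$, $L_\sF$, and $\sF^{\vp\vr}$. So it suffices to show that every $\hpl$ relation lies in the ideal generated by the first three families. Following Remark \ref{PP is covered}, I will verify that for every $\buw \in \La^\sort_{2,[n]}$ the projective space $\PP_\buw$ is covered by the charts $\{(x_{(\uu_s,\uv_s)} \ne 0)\}_{s \in S_F}$ associated with the Pl\"ucker relation $F$ indexed by $\buw$, which is precisely the combinatorial condition that activates Proposition \ref{h-pl}(2).

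The key combinatorial observation is as follows. When $d=2$, a sorted pair $\buw=(\uwo,\uwe) \in \La^\sort_{2,[n]}$ with $\uwo=(w_1,w_3)$ and $\uwe=(w_2,w_4)$ satisfies $w_1<w_2<w_3<w_4$ and is therefore determined by the 4-element subset $\{w_1,w_2,w_3,w_4\} \subset [n]$. From the definition of $\La_\buw$, an unordered pair $(\uu,\uv)\in \La_\buw$ corresponds to a partition of this 4-set into two 2-subsets, of which there are exactly three, namely $(\{w_1,w_2\},\{w_3,w_4\})$, $(\{w_1,w_3\},\{w_2,w_4\})$, and $(\{w_1,w_4\},\{w_2,w_3\})$. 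Consequently $\PP_\buw \cong \PP^2$, and its three homogeneous coordinates are in natural bijection with the three monomials $p_{\uu_s}p_{\uv_s}$ of the classical three-term Pl\"ucker relation $F = p_{w_1w_2}p_{w_3w_4} - p_{w_1w_3}p_{w_2w_4} + p_{w_1w_4}p_{w_2w_3}$. Hence the three standard affine charts $\{(x_{(\uu_s,\uv_s)} \ne 0)\}_{s \in S_F}$ cover $\PP_\buw$.

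Given this covering, the rest of the argument is a direct application of Proposition \ref{h-pl}(2). For any $f \in \sF^{\vp\vr}$ with $\vi(f)=hF$, that proposition provides, for each $s \in S_F$, the congruence $x_{(\uu_s,\uv_s)}\,f \equiv f_s L_F \pmod{\ker^\mh \vi}$. On the chart $(x_{(\uu_s,\uv_s)} \ne 0)$ the coordinate $x_{(\uu_s,\uv_s)}$ is invertible, so $f$ lies in the ideal generated by $L_F$ together with $\ker^\mh \vi$ on that chart. Because the three such charts cover $\PP_\buw$, and every standard chart of the ambient product has $\PP_\buw$ as a factor, the same conclusion holds globally. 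By Proposition \ref{eqs-F'-intro}, $\ker^\mh \vi$ is generated by $\cB^\wp \cup \cB^\frb$, so $f$ reduces modulo $\cB^\wp \cup \cB^\frb \cup L_\sF$ to zero, which is exactly what is needed.

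The heart of the proof, and the place where the hypothesis $d=2$ is indispensable, is the identification $\La_\buw = \{(\uu_s,\uv_s) : s \in S_F\}$; this is what makes the covering condition of Remark \ref{PP is covered} valid. For $d \ge 3$, $\La_\buw$ generally strictly contains $\{(\uu_s,\uv_s) : s \in S_F\}$ (since a single sorted pair supports several distinct Pl\"ucker configurations), so the covering fails, Proposition \ref{h-pl}(2) no longer suffices to reduce $\hpl$ relations to $L_F$, and the extra family $\sF^{\vp\vr}$ in Theorem \ref{eqs-F-intro} is genuinely required. Aside from checking this bijection, the only routine bookkeeping step is tracking that the local-to-global passage across the three charts of $\PP_\buw$ does not introduce spurious denominators, which follows directly from Proposition \ref{h-pl}(2).
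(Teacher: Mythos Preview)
Your proof is correct and follows essentially the same approach as the paper's own proof: invoke Theorem \ref{eqs-F-intro}, then use the combinatorial fact (recorded in \eqref{Pw=PF}) that for $d=2$ each $\La_\buw$ consists of exactly the three pairs $(\uu_s,\uv_s)$ appearing in the unique three-term Pl\"ucker relation $F$ with $\sort(\uu_s\vee\uv_s)=\buw$, so that the charts $(x_{(\uu_s,\uv_s)}\ne 0)$ cover $\PP_\buw$ and Proposition \ref{h-pl}(2) reduces every $\hpl$ relation to $L_F$ modulo $\ker^\mh\vi$. Your write-up is in fact more explicit than the paper's about why $|\La_\buw|=3$ and why this is the crucial point where $d=2$ enters.
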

\begin{proof} 
By Theorem \ref{eqs-F-intro}, it suffices to reduce
$\sF^{\vp\vr}$ to $L_\sF$.

Using \S \ref{2,E}, \eqref{Pw=PF},
we see that the projective space $\PP_\buw$ is covered by
the charts $$\{(x_{(\uu_s,\uv_s)}\ne 0) \mid s \in S_F\}$$
for  any $\buw \in \La_{2,n}^{\rm sort}$,
where
$F$ is the $\pl$ relation of \eqref{F=buw},
uniquely determined by $\buw \in \La_{2,n}^{\rm sort}$.

Then by Proposition \ref{h-pl} (2) (and using the notation therein),  
for any $\hpl$  relation $f$,
we have
$$x_{(\uu_s,\uv_s)} f = f_s L_F, 
\; \mod \ker^\mh \vi .$$
It follows that $f$ can be reduced to $L_F$. 
\end{proof}

\subsection{Equations defining 
 $\HH^{d,E}$ and  proofs of Proposition \ref{eqs-H'-intro}
Theorem \ref{eqs-H-intro} }
\label{sec:eqForY}

Consider the  homomorphism of algebras
\begin{equation} \label{vivr}
\varphi_\vr: R_\vr \lra R_0, \;\;
x_{(\uu,\uv)} \to p_\uu p_\uv
\end{equation}
where recall that
 $ R_\vr= \kk[(x_{(\uu,\uv)})]_{(\uu,\uv) \in \Lambda_\buw, \buw \in \La^\sort_{d,[n]}}$
and  $R_0=\kk[p_\uu]_{\uu \in \II_{d,[n]}}$.
It induces the  homomorphism
\begin{equation} \label{vivr-Gr}
\varphi_{\Gr,\vr}: R_\vr \lra R_0 \lra R_0/I_\whwp, \;\;
x_{(\uu,\uv)} \to p_\uu p_\uv + I_\whwp.
\end{equation}
We  aim to describe 
the subset $\ker^{\rm mh} \vi_\vr$  of $\ker \vi_\vr$, respectively,
the subset $\ker^\mh \vi_{\Gr,\vr}$ of $\ker \vi_{\Gr,\vr}$, that consists of multi-homogenous polynomials.

\begin{defn}\label{defn:cBvr}
We let $\cB^\vr = \cB^\frb \cap R_\vr$.
A binomial of $\cB^\vr$ is called a $\vr$-binomial.
\end{defn}

\begin{lemma}\label{ker-vivr}
$\ker^\mh \vi_\vr$ is generated by $\cB^\vr$
\end{lemma}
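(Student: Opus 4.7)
The plan is to deduce Lemma \ref{ker-vivr} from Proposition \ref{eqs-F'-intro} by observing that inside $R_\vr$ the complications requiring $\cB^\wp$ or any $\vp$-variable bearing $\frb$-binomials simply do not arise. Since $\vi_\vr$ is the restriction of $\vi$ to $R_\vr$, any $f\in\ker^\mh\vi_\vr$ lies in $\ker^\mh\vi$, and my task reduces to showing that the reduction witnessed by Proposition \ref{eqs-F'-intro} can be carried out while staying inside $R_\vr$.

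First, I would reduce to binomials by repeating, verbatim, the monomial-grouping argument opening the proof of Proposition \ref{eqs-F'-intro}; the characteristic-$p$ rewriting $\bm_{i_1}+\cdots+\bm_{i_p}-p\,\bm_{i_1}=\sum_a(\bm_{i_a}-\bm_{i_1})$ works identically. This shows $\ker^\mh\vi_\vr$ is generated by binomials $\bm-\bm'\in\ker^\mh\vi_\vr$ with $\bm,\bm'$ monomials in $R_\vr$.

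Next, I would observe that any such binomial $f=\bm-\bm'\in\ker^\mh\vi_\vr$ is automatically a root binomial and automatically $\wp$-irreducible. The root property is immediate from the definition: producing a nontrivial parent requires some factor $p_\uu p_\uv$ to be present so that it can be replaced by $x_{(\uu,\uv)}$, but $\bm,\bm'$ contain no $\vp$-variables. The $\wp$-irreducibility follows by inspecting Definition \ref{no common wp factor}: every $\wp$-reducibility condition demands a monomial such as $p_{\uu'}p_{\uv'}x_{(\uu,\uv)}$ to divide $\bm$ (or the analogous factor to divide $\bm'$), which again cannot hold for elements of $R_\vr$. In particular, Claims 1 and 2 inside the proof of Proposition \ref{eqs-F'-intro}, both of which rely on multiplying by $\vp$-containing $\wp$-binomials, can be skipped entirely in the present setting.

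Finally, I would apply Corollary \ref{reduce to rb} to decompose $f$ as $\prod_{i=1}^\ell\bm_i-\prod_{i=1}^\ell\bm_i'$ with each $\bm_i-\bm_i'\in\cB^\frb$, and then deploy the standard telescoping identity used inside that corollary to express $f$ inside the ideal generated by $\{\bm_i-\bm_i'\}$. The main (and only) subtle point will be verifying that every $\bm_i$ and every $\bm_i'$ is a monomial of $R_\vr$; this follows because $\bm_i\mid\bm$ and $\bm_i'\mid\bm'$ force them into $R_\vr$ once $\bm,\bm'\in R_\vr$, so no $\vp$-variable can enter a factor. Consequently each $\bm_i-\bm_i'\in\cB^\frb\cap R_\vr=\cB^\vr$, and the telescoping expresses $f$ in the ideal generated by $\cB^\vr$. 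Multi-homogeneity of the cofactors is preserved thanks to Lemma \ref{auto homo}, which guarantees multi-homogeneity of every $\bm_i-\bm_i'$ and hence matching multi-degrees on both sides. This completes the plan.
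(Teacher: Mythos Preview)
Your proposal is correct and follows essentially the same approach as the paper's proof: reduce to binomials, observe that any binomial in $R_\vr$ is automatically a root binomial and $\wp$-irreducible (since both notions require the presence of $\vp$-variables), decompose via the $\fb$-irreducible factorization underlying Corollary~\ref{reduce to rb}, note that the factors $\bm_i,\bm_i'$ inherit membership in $R_\vr$ from dividing $\bm,\bm'$, and conclude via telescoping. The paper's version is terser---it starts directly from a binomial and asserts the decomposition with $\bm_i-\bm_i'\in\ker^\mh\vi_\vr$---but the substance is identical.
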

\begin{proof}
Let $\bm-\bm' $ be any binomial in $\ker^\mh \vi_\vr$. We 
can decopose it as
$$\bm -\bm' =
\prod_{i=1}^\ell \bm_i -  \prod_{i=1}^\ell \bm_i',$$ 
such that $\bm_i -\bm'_i \in \ker^\mh \vi_\vr$ and
are $\fb$-irreducible root binomials,   for all $i \in [\ell]$.
Then $\bm_i -\bm'_i$ are automatically $\wp$-irreducible,
hence $\frb$-irreducible, therefore belong to $\cB^\vr$.
Then by the same line of argument as in the proof of 
Lemma \ref{reduce to rb}, $\bm -\bm'$ is generated by
$\bm_i -\bm'_i \in \cB^\vr, i \in [\ell]$. 

This establishes the lemma.
\end{proof}

\begin{defn}\label{vr pl complex}
Let $f \in  R$ be any $\hpl$.
If $f$ belongs to $R_\vr$, then we say $f$ is a $\vrpl$ relation.
We denote the set of all $\vrpl$ relations by $\sF^\vr$.
In other words, $\sF^\vr=\sF^{\vp\vr} \cap R_\vr$.
\end{defn}

The polynomial in Example \ref{exam-vr pl} is a $\vrpl$ relation.


Analogous to Lemma \ref{ker-hF-generate}, applying the similar discussions 
as in the three paragraphs immediately before Definition \ref{defn:h-pl}, 
using  $\vi_{\Gr,\vr}$ in place of  $ \vi_{\Gr}$, 
we obtain

\begin{lemma}\label{ker-vrF-generate} 
Assume that the base field has characteristic zero.
$ \ker^\mh \vi_{\Gr,\vr}=\langle \ker^\mh \vi_\vr, \sF^\vr  \rangle$.
\end{lemma}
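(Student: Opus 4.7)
The plan is to mirror the reasoning sketched in the three paragraphs preceding Definition \ref{defn:h-pl} and formalized in Lemma \ref{ker-hF-generate}, but restricted to the subalgebra $R_\vr$ and the morphism $\vi_{\Gr,\vr} : R_\vr \to R_0/I_{\whwp}$. Let $f \in \ker^\mh \vi_{\Gr,\vr}$ be multi-homogeneous. In characteristic zero, $I_{\whwp} = I_\wp$, so $\vi_\vr(f) = \sum_{F\in\sF} h_F F$ for some $h_F \in R_0$. The first step is to split $f$ itself as $f = f_0 + \sum_F f_F$ with $f_0 \in \ker^\mh \vi_\vr$ and each $f_F \in R_\vr$ multi-homogeneous and satisfying $\vi_\vr(f_F) = h_F F$. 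This splitting uses the fact that each $\pl$ relation $F$ lives in a well-defined sorted-pair component $\buw_F \in \Ladnsort$ of the multi-grading, so the monomials of $f$ whose images contribute to $h_F F$ can be cleanly separated from those contributing to $h_{F'} F'$ for $F' \neq F$; any monomials whose images cancel among themselves are absorbed into $f_0$.

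Next, I would reduce to a single summand and consider $f = f_F$ with $\vi_\vr(f) = hF$. Writing $F = \sum_{s\in S_F} \sgn(s)\, p_{\uu_s} p_{\uv_s}$, one accordingly writes $f = \sum_s \sgn(s) f_s$ with $\vi_\vr(f_s) = h\, p_{\uu_s} p_{\uv_s}$. If the $f_s$ share a common factor $g \in R_\vr$, one factors it out, using that the monomial $\vi_\vr(g)$ must divide $h$ in $R_0$, and iterates until the $f_s$ are coprime. The resulting reduced polynomial then satisfies the hypotheses of Definition \ref{defn:h-pl} restricted to $R_\vr$, so by Definition \ref{vr pl complex} it belongs to $\sF^\vr$; the degenerate case $f = L_F$ is subsumed by invoking Proposition \ref{h-pl}(2), which expresses $x_{(\uu_s,\uv_s)} L_F$ through a genuine $\vrpl$ relation modulo $\ker^\mh \vi_\vr$.

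The main obstacle I anticipate is the splitting step: one must argue carefully that the decomposition $f = f_0 + \sum_F f_F$ can be carried out inside $R_\vr$ without introducing spurious $\vp$-variables, and that the resulting pieces $f_F$ remain multi-homogeneous with respect to each $\PP_\buw$-grading. Both follow from the observation that $R_\vr$ is itself multi-graded compatibly with the $\PP_\buw$'s and that the $\pl$ relations respect this grading, but the bookkeeping must be handled carefully. Once this is in hand, the rest of the argument is a direct transcription of the reasoning that led to Lemma \ref{ker-hF-generate}, with every appeal to $\sF$, $L_\sF$, or $\sF^{\vp\vr}$ replaced by its $R_\vr$-counterpart in $\sF^\vr$.
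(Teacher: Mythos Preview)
Your core strategy—transcribing the three-paragraph argument before Definition~\ref{defn:h-pl} from $\vi_\Gr$ to $\vi_{\Gr,\vr}$—is exactly what the paper does; its entire proof is the single sentence preceding the lemma. Your expanded account of the splitting $f = f_0 + \sum_F f_F$ and the coprime reduction is a faithful elaboration of that.

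The gap is in your handling of the case $f = L_F$. Proposition~\ref{h-pl}(2) reads $x_{(\uu_s,\uv_s)}\, g \equiv g_s\, L_F \pmod{\ker^\mh \vi}$ for a $\hpl$ relation $g$; this puts $g_s L_F$, not $L_F$, into $\langle \ker^\mh\vi_\vr,\, \sF^\vr\rangle$, and it presupposes that some $g \in \sF^\vr$ sits over $F$ in the first place. For $\Gr^{2,E}$ with $\dim E = 4$ there is a single $\buw$, the map $\vi_\vr$ is injective so $\ker^\mh\vi_\vr = 0$, and every homogeneous $f \in R_\vr$ with $\vi_\vr(f) \in \langle F\rangle$ is a multiple of $L_F$—hence $\sF^\vr = \emptyset$ while $0 \ne L_F \in \ker^\mh\vi_{\Gr,\vr}$. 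The direct analogue of Lemma~\ref{ker-hF-generate} actually yields $\ker^\mh\vi_{\Gr,\vr} = \langle \ker^\mh\vi_\vr,\, L_\sF,\, \sF^\vr\rangle$ (the set $\sF$ drops out because $R_\vr$ contains no pure $\vp$-polynomials); the paper's own application of this lemma in the proof of Theorem~\ref{eqs-H-intro} lists $L_\sF$ among the defining relations, so its omission in the displayed statement is evidently a slip. Your argument establishes that corrected version without difficulty; the appeal to Proposition~\ref{h-pl}(2) is neither needed nor valid.
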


\begin{lemma}\label{ker-vivr-Gr} 
 Assume that the base field has characteristic zero. Then,
$\ker^{\rm mh} \vi_{\Gr,\vr}$ is generated 
by $\cB^\vr$ and  $\sF^\vr$. 
\end{lemma}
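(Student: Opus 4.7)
The plan is to obtain this lemma as an immediate two-step combination of the two preceding lemmas already established in the excerpt, so no genuinely new argument is required; the substantive work has been absorbed into them. Specifically, I would invoke Lemma~\ref{ker-vrF-generate} as the first step: given any multi-homogeneous $f \in \ker\vi_{\Gr,\vr}$, one uses the characteristic-zero identification $I_\whwp = I_\wp$ and factors $\vi_\vr(f) \in I_\wp$ through the Pl\"ucker relations, giving a decomposition $f = \sum_{F\in\sF} f_F$ in $R_\vr$ with $\vi_\vr(f_F)$ a multiple of $F$. Each individual $f_F$ is then, up to elements of $\ker^{\rm mh}\vi_\vr$, a $\vrpl$ relation; this is precisely the content of Lemma~\ref{ker-vrF-generate}, and yields $\ker^{\rm mh}\vi_{\Gr,\vr} = \langle \ker^{\rm mh}\vi_\vr,\, \sF^\vr\rangle$.

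For the second step I would substitute in Lemma~\ref{ker-vivr}, which already identifies $\ker^{\rm mh}\vi_\vr$ with the ideal generated by the $\vr$-binomials $\cB^\vr$. The point there is that any multi-homogeneous binomial in $\ker\vi_\vr$ admits an $\fb$-factorization into $\fb$-irreducible root binomials lying inside $R_\vr$, and since $R_\vr$ contains no $\vp$-variables these are automatically $\wp$-irreducible, hence $\frb$-irreducible, hence members of $\cB^\vr$. Plugging this identification into the conclusion of Lemma~\ref{ker-vrF-generate} gives $\ker^{\rm mh}\vi_{\Gr,\vr} = \langle \cB^\vr,\, \sF^\vr\rangle$, which is the claim.

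The only point one needs to be attentive to is that throughout the two reductions we stay inside the subalgebra $R_\vr \subset R$: the $\fb$-reduction of Lemma~\ref{ker-vivr} never introduces $\vp$-variables because it is performed on binomials already in $R_\vr$, and the Pl\"ucker-grouping of Lemma~\ref{ker-vrF-generate} produces pieces $f_F \in R_\vr$ by construction, with the associated $\hpl$-complexes (cf.\ Proposition~\ref{h-pl}) lying in $\sF^{\vp\vr}\cap R_\vr = \sF^\vr$. Consequently the main potential obstacle---that passing to the quotient $R_0/I_\whwp$ might force the generators to involve $\vp$-variables---does not materialize, and the proof reduces to a one-line synthesis of the two preceding lemmas.
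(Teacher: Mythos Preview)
Your proposal is correct and matches the paper's own proof exactly: the paper's argument is a single sentence invoking Lemma~\ref{ker-vrF-generate} followed by Lemma~\ref{ker-vivr}, which is precisely the two-step combination you describe. Your added remarks about staying inside $R_\vr$ are accurate elaborations but are already implicit in the statements of those two lemmas.
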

\begin{proof} 
 This follows from by applying 
 Lemma \ref{ker-vrF-generate} and then Lemma \ref{ker-vivr}.
 \end{proof}

\subsubsection{Proof of Proposition \ref{eqs-H'-intro}.}

\begin{proof}
 Consider the commutative diagram 
  \begin{equation}\label{d-schemes}
 \xymatrix{
 \FF^\wdep_*\ar[d] \ar @{^{(}->}[r]  & 
  \PP(\wedge^d E) \times  \prod_{\buw \in  \Ladnsort} \PP_{\buw} \ar[d]^{\rm proj} \\
 \HH^\wdep  \ar @{^{(}->}[r]  &   \prod_{\buw \in \Ladnsort} \PP_{\buw}  .
 }
 \end{equation}
Correspondingly, we have  the diagram of algebras and homomorphisms
\begin{equation}\label{d-algebras}
 \xymatrix{
 R_\vr \ar[d] \ar[r]^{\vi_\vr} &  R_0 \ar[d]^{=}  \\
 R  \ar[r]^{\vi} &  R_0,
 }
 \end{equation}
 where the left vertical arrow is the inclusion.

Note here that by the first statement of
Lemma \ref{defined-by-ker} (the case when $\fT= \Ladnsort$),
the scheme $\FF^\wdep_*$, as a closed subscheme of  
 $\PP(\wedge^d E) \times  \prod_{\buw \in  \Ladnsort} \PP_{\buw} $,
 is defined by $\ker^\mh \vi$.
Since the image of  $\FF^\wdep$ under the projection
(the right vertical arrow) of \eqref{d-schemes}
is precisely  $\HH^\wdep$, one sees that
the multi-homogeneous kernel,  $\ker^\mh \vi_{\vr}$,   
computes the ideal of 
$\HH^\wdep$ in $\prod_{\buw \in  \Ladnsort} \PP_{\buw}$.
Therefore, by  Lemma \ref{ker-vivr},
 $\HH^\wdep$, as a closed subscheme of 
$\prod_{\buw \in \La^\sort_{d,[n]}} \PP_\buw$, is defined by $\cB^\vr$.
 \end{proof}


\subsubsection{Proof of Theorem \ref{eqs-H-intro}.}

\begin{proof} 
 Consider the commutative diagram of schemes and morphisms
  \begin{equation}\label{d-schemes2}
 \xymatrix{
 \FF^{d, E}_*\ar[d] \ar @{^{(}->}[r]  & 
  \Gr^{d, E} \times  \prod_{\buw \in  \Ladnsort} \PP_{\buw} \ar[d]^{\rm proj} 
  \subset \PP\wdep\times  \prod_{\buw \in  \Ladnsort} \PP_{\buw} \ar[d]^{\rm proj}  \\
 \HH^{d, E} \ar @{^{(}->}[r]  &   \prod_{\buw \in  \Ladnsort} \PP_{\buw}  .
 }
 \end{equation}
Correspondingly, we have  the diagram of algebras and homomorphisms
\begin{equation}\label{d-algebras2}
 \xymatrix{
 R_\vr \ar[d] \ar[r]^{\vi_{\Gr,\vr }\;\;\;\;\;} &  R_0/I_\wp \ar[d]^{=}  \\
 R  \ar[r]^{\vi_\Gr \;\;\;\;\;} &  R_0/I_\wp,
 }
 \end{equation}
 where the left vertical arrow is the inclusion.

Note here that by the second statement of
Lemma \ref{defined-by-ker} (the case when $\fT= \Ladnsort$),
the scheme $\FF^{d, E}_*$, as a closed subscheme of  
 $\PP(\wedge^d E) \times  \prod_{\buw \in  \Ladnsort} \PP_{\buw} $,
 is defined by $\ker^\mh \vi_\Gr$.
Since the image of  $\FF^{d, E}$ under the projection 
(the right vertical arrow) of \eqref{d-schemes2}
is precisely  $\HH^{d, E}$, one sees that
the multi-homogeneous kernel, $\ker^\mh \vi_{\Gr,\vr}$,  
computes the ideal of 
$\HH^{d, E}$ in  $\prod_{\buw \in  \Ladnsort} \PP_{\buw}$.
Hence, according to Lemma \ref{ker-vivr-Gr},
 $\HH^{d, E}$, as a closed subscheme of 
$\prod_{\buw \in \La^\sort_{d,[n]}} \PP_\buw$, is defined by
 $\cB^\vr$, $L_\sF$,  and $\sF^\vr$.

This proves the theorem. 
\end{proof}

 Modulo the relations of  $L_\sF$, we can state the following.
\begin{cor}\label{eqForY(d,n)}
Assume that the base field has characteristic zero.
Then, the $\fG$-quotient $\HH^{d, E}$ is embedded in
$\prod_{\buw \in \La^\sort_{d,[n]}} \LL_\buw$, a product of linear projective subspaces, 
as the closed subscheme defined by the binomials of $\cB^\vr$ and
$\sF^\vr$.
 \end{cor}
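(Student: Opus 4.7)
The plan is to deduce the corollary directly from Theorem~\ref{eqs-H-intro} together with the definition of $\LL_\buw$ in Definition~\ref{defn:Lw}. The key observation is that by definition $\LL_\buw \subset \PP_\buw$ is precisely the linear subspace cut out by all linearized $\pl$ relations $L_{\uh,\uk}$ with $\sort(\uh \vee \uk) = \buw$, so that the product $\prod_{\buw \in \La_{d,[n]}^\sort} \LL_\buw$ is exactly the closed subscheme of $\prod_{\buw \in \La_{d,[n]}^\sort} \PP_\buw$ cut out by the entire set $L_\sF$ of linearized $\pl$ relations.

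First I would record the inclusion $\HH^{d,E} \subset \prod_{\buw} \LL_\buw$, which is already established in Corollary~\ref{imageIn} (the rational map $\Theta_\Gr$ factors through $\prod_\buw \LL_\buw$, so its closure does too). This gives a well-defined closed embedding
\[
\HH^{d,E} \hookrightarrow \prod_{\buw \in \La_{d,[n]}^\sort} \LL_\buw \subset \prod_{\buw \in \La_{d,[n]}^\sort} \PP_\buw.
\]

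Next I would invoke Theorem~\ref{eqs-H-intro}: assuming $\mathrm{char}\,\kk = 0$, the scheme $\HH^{d,E}$ is defined inside the ambient product $\prod_\buw \PP_\buw$ by the three families $\cB^\vr$, $L_\sF$, and $\sF^\vr$. Since $L_\sF$ is exactly the set of equations cutting out the intermediate subscheme $\prod_\buw \LL_\buw$, the scheme-theoretic intersection description
\[
\HH^{d,E} \; = \; V(\cB^\vr) \cap V(L_\sF) \cap V(\sF^\vr) \; = \; \bigl(\prod_{\buw} \LL_\buw\bigr) \cap V(\cB^\vr) \cap V(\sF^\vr)
\]
shows that as a closed subscheme of $\prod_\buw \LL_\buw$, $\HH^{d,E}$ is cut out precisely by the binomials in $\cB^\vr$ together with the $\vrpl$ relations in $\sF^\vr$. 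One should verify that the restrictions of the elements of $\cB^\vr$ and $\sF^\vr$ to $\prod_\buw \LL_\buw$ are still well-defined multi-homogeneous relations, which is immediate because they are multi-homogeneous in the homogeneous coordinates of the $\PP_\buw$, and linear inclusions $\LL_\buw \hookrightarrow \PP_\buw$ preserve multi-homogeneity.

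There is essentially no substantive obstacle here: the work has already been done in Theorem~\ref{eqs-H-intro}, and the corollary is just a bookkeeping statement that re-packages the defining relations by absorbing the linearized $\pl$ equations into the ambient product of linear subspaces. The only minor point worth a sentence is that one could in principle further reduce certain $\vrpl$ relations modulo the $\LL_\buw$'s, but this is not claimed in the statement; the corollary simply asserts $\cB^\vr \sqcup \sF^\vr$ is a defining set, not a minimal one.
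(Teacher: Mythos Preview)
Your proposal is correct and follows essentially the same approach as the paper: observe that $\prod_{\buw}\LL_\buw$ is by definition the closed subscheme of $\prod_{\buw}\PP_\buw$ cut out by $L_\sF$, and then apply Theorem~\ref{eqs-H-intro} to conclude that the remaining defining relations are $\cB^\vr$ and $\sF^\vr$. The paper's proof is in fact terser than yours, omitting the explicit appeal to Corollary~\ref{imageIn} and the remark on multi-homogeneity, but the argument is identical.
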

\begin{proof}
Note that 
$\prod_{\buw \in \La^\sort_{d,[n]}} \LL_\buw$  as a closed subscheme of 
$\prod_{\buw \in \La^\sort_{d,[n]}} \PP_\buw$ is precisely defined by
all the linearized $\pl$ relations of $L_\sF$. Hence, the statement of
the corollary follows from Theorem \ref{eqs-H-intro}.
\end{proof}

\begin{cor}\label{H(2E)}
 Assume that the base field has characteristic zero.
Then,  $\HH^{2, E}_*$ is embedded in 
$\prod_{\buw \in \La^\sort_{2,[n]}}  \PP_\buw$
 as the closed  subscheme defined by
binomial relations in $\cB^\vr$
and linearized $\pl$ relations in $L_\sF$.
\end{cor}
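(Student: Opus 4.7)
The plan is to deduce Corollary \ref{H(2E)} from Theorem \ref{eqs-H-intro} by showing that, in the special case $d=2$, every $\vr$-Pl\"ucker relation in $\sF^\vr$ can be reduced, modulo the binomials of $\cB^\vr$, to a linearized Pl\"ucker relation in $L_\sF$. Once this reduction is established, the three generating families produced by Theorem \ref{eqs-H-intro} collapse to just $\cB^\vr$ and $L_\sF$, which is what the corollary asserts.

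The key structural input is the $d=2$ analogue of the situation discussed in Remark \ref{PP is covered}: for every sorted pair $\buw \in \La^\sort_{2,[n]}$ the projective space $\PP_\buw$ comes equipped with coordinates that, by the description in \S \ref{2,E} (the identification \eqref{Pw=PF}), are in bijection with the terms $\{p_{\uu_s}p_{\uv_s} \mid s \in S_F\}$ of a single Pl\"ucker relation $F$. In particular, the coordinate charts $(x_{(\uu_s,\uv_s)}\neq 0)$ with $s \in S_F$ form an affine open cover of $\PP_\buw$. This is exactly the covering hypothesis of Remark \ref{PP is covered}, and it is the feature that fails for $d \ge 3$.

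Granted this, I would take any $f \in \sF^\vr$ with $\vi(f) = hF$, write $f = \sum_{s \in S_F}\sgn(s)\, f_s$ with $\vi(f_s) = h\, p_{\uu_s}p_{\uv_s}$, and invoke Proposition \ref{h-pl}(2) to obtain
\[
x_{(\uu_s,\uv_s)}\, f \;\equiv\; f_s\, L_F \pmod{\ker^\mh \vi},
\]
for every $s \in S_F$. Because $f$, $L_F$, $f_s$ and $x_{(\uu_s,\uv_s)}$ all live in $R_\vr$, this congruence actually holds modulo $\ker^\mh \vi_\vr$, which by Lemma \ref{ker-vivr} is generated by $\cB^\vr$. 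Restricting to the chart $(x_{(\uu_s,\uv_s)} \neq 0)$ we conclude that $f$ lies in the ideal $\langle \cB^\vr, L_F\rangle$; since the charts $\{(x_{(\uu_s,\uv_s)}\neq 0)\}_{s\in S_F}$ cover $\PP_\buw$ and the other factors $\PP_{\buw'}$ play no role in $f$, the reduction is valid globally.

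The main obstacle, and the step I would scrutinize most carefully, is the correct bookkeeping in showing that Proposition \ref{h-pl}(2), which is proved modulo $\ker^\mh \vi$, can be refined to hold modulo $\ker^\mh \vi_\vr$ when $f \in R_\vr$; this requires checking that the intermediate binomials produced by the proof of Proposition \ref{h-pl}(2) involve only $\vr$-variables (so that they belong to $\cB^\vr$ rather than to the larger set $\cB^\frb \cup \cB^\wp$). Apart from this verification, the remaining steps amount to invoking Theorem \ref{eqs-H-intro} and the same chart-by-chart gluing argument used in the proofs of Corollary \ref{F(2E)} and Theorem \ref{eqs-F-intro}.
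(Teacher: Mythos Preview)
Your proposal is correct and follows essentially the same route as the paper: invoke Theorem \ref{eqs-H-intro}, use the $d=2$ identification \eqref{Pw=PF} to see that the charts $(x_{(\uu_s,\uv_s)}\neq 0)$ cover $\PP_\buw$, and apply Proposition \ref{h-pl}(2) chart by chart. Your flagged ``obstacle'' is not a genuine one: since $f\in R_\vr$ forces each $f_s\in R_\vr$, the binomials $x_{(\uu_t,\uv_t)} f_s - x_{(\uu_s,\uv_s)} f_t$ from the proof of Proposition \ref{h-pl}(2) lie in $R_\vr\cap\ker^\mh\vi=\ker^\mh\vi_\vr$, which you already observed is generated by $\cB^\vr$ via Lemma \ref{ker-vivr}; the paper simply writes ``mod $\ker^\mh\vi$'' and leaves this refinement implicit.
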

\begin{proof} 
By Theorem \ref{eqs-H-intro}, it suffices to reduce
$\sF^\vr$ to $L_\sF$. The remaining proof is parallel to
that of Corollary \ref{F(2E)}

By Proposition \ref{h-pl} (2) (and using the notation therein),  
for any $\vr$  relation $f$, we have
$$x_{(\uu_s,\uv_s)} f = f_s L_F, 
\; \mod \ker^\mh \vi .$$
Using \S \ref{2,E}, \eqref{Pw=PF},
we see that the projective space $\PP_\buw$,
where $\buw$ and the $\pl$ relation $F$ determine each other
according to \eqref{F=buw},
 is covered by
the charts $$\{(x_{(\uu_s,\uv_s)}\ne 0) \mid s \in S_F\}$$
It follows that $f$ can be reduced to $L_F$. 
\end{proof}

\subsection{Morphisms among $\HH^{d,E}$
and proofs of Propositions \ref{pr-I-intro} and \ref{d2d-intro}}\label{sect:d-E}

The results of this section may be extended to include all  $\fG$-quotients $\HH^{d,E_\bcd}$.
To simplify presentation, we only focus on the $\fG$-quotients $\HH^{d,E}$ throughout this section.

For any subset $H \subset [n]$,
we let  $E_H=\bigoplus_{i \in H} E_i.$
Further, we define
$${\TT}_H=(\GG_m)^H/\GG_m=\{ (t_i)_{i \in H} \cdot \GG_m \mid t_i \in \GG_m, \;\hbox{for all}\;\; i \in H \}.$$
We have the obvious projection
 $${\TT} \lra {\TT}_H$$
 which is a group homomorphism.

Fix any $J \subset [n]$ with $|J|>d$.  
We let  
$$\II_{d,J}=\{(u_1<\cdots< u_d) \in \II_{d,[n]} \mid u_a \in J, \;\hbox{for all}\; a \in [d]\}
\subset \II_{d,[n]}.$$
Likewise, we define
$$\La_{d,J}^\sort=\{\buw=(\uwo,\uwe) \in  \Ladnsort \mid \uwo, \uwe \in \II_{d,J}\} \subset  \Ladnsort.$$

For any point $(K \hookrightarrow E)$ of $\UU^{d, E}$, we have the diagram
$$ \xymatrix{
 K \ar[r]^{\subset \;}  \ar[d]_{\rm projection} 
 & E \ar @{>}[d]^{\rm projection} \\
K^\vee_J \;\; \ar[r]^{\subset \;} & \; E_J ,
}
$$
where $K^\vee_J$ is the image of $K$ under the projection $E \to E_J$.
This gives rise to the projection 
$$\UU^{d, E} \lra \UU^{d, E_J},\;\;\;\;  [p_\uu]_{\uu \in \II_{d,[n]}} \to [p_\uu]_{\uu \in \II_{d,J}} $$
 by forgetting all $p_\uu$ with $\uu \notin \II_{d,I}$.
 This projection is obviously equivariant under the group homomorphism
 ${\TT} \lra {\TT}_J.$ Hence, we obtain
 \begin{equation}\label{proj-I}
 \UU^{d, E} /{\TT} \lra \UU^{d, E_J}/{\TT}_J.
 \end{equation}

 Likewise, suppose $|I| =r < d$. 
We write $I=\{i_1, \cdots, i_r\}$.
$$\II_{(d, [n]); {\supset  I}}=\{(u_1<\cdots< u_d) \in \II_{d,[n]} \mid  \{ u_1, \cdots, u_d\} \supset I \}
\subset \II_{d,[n]}.$$
$$\La_{(d, [n]); {\supset  I}}^\sort=\{\buw=(\uwo,\uwe) \in  \Ladnsort \mid \uwo, \uwe \in \II_{(d, [n]); {\supset  I}}\} \subset  \Ladnsort.$$
Observe here that we have a natural bijection
\begin{equation}\label{bj} 
\La_{(d, [n]); {\supset  I}}^\sort \longleftrightarrow \La_{d-r, [n-r]}^\sort.
\end{equation}
To see the bijection, first, we identify the naturally 
ordered set $[n-r]$ with the induced totally ordered set
$[n]\- I$.
Then, the right arrow of the bijection is given by
$$(\uwo,\uwe) \La_{(d, [n]); {\supset  I}}^\sort \to (\uwo \- I ,\uwe \- I) \in \La_{d-r, [n-r]}^\sort;$$
 Using the identification, $[n-r]=[n]\- I$, the left arrow of the bijection is given by
$$(\uw_{\rm o}',\uw_{\rm e}')  \in \La_{d-r, [n-r]}^\sort  
\to (\sort(\uw_{\rm o}' \cup I) , \sort(\uw_{\rm e}' \cup I)) \in  \La_{(d, [n]); {\supset  I}}^\sort .$$
In particular, for any $(\uwo,\uwe) \La_{(d, [n]); {\supset  I}}^\sort $, we have
a natural identification
\begin{equation}\label{iden}
\PP_{(\uwo,\uwe)} = \PP_{(\uwo \- I ,\uwe \- I)}.
\end{equation}

Now,
for any point $(K \hookrightarrow E)$ of $\UU^{d, E}$, we have the diagram
$$ \xymatrix{  K \ar[r]^{\subset \;}  \ar[d]_{\rm intersection} 
 & E \ar @{>}[d]^{\rm intersection} \\
K_{[n]\-I}=K\cap E_{[n]\- I} \;\; \ar[r]^{\subset \;} & \; E_{[n]\-I} =E\cap E_{[n]\-I}. } $$
This gives rise to the surjective morphism 
$$\UU^{d, E} \lra \UU^{d-r, E_{[n] \-I}}.$$ 
 This morphism is  equivariant under the  group homomorphism
 ${\TT} \lra {\TT}_{[n]\-I}.$ Hence, we obtain
  \begin{equation}\label{int-n-I}\UU^{d, E} /{\TT} \lra \UU^{d-r, E_{[n]\-I}}/{\TT}_{n\-[I]}.
  \end{equation} 


\subsubsection{ Proof of Proposition \ref{pr-I-intro}.}

\begin{proof}
Apply \eqref{proj-I} and consider the commutative diagram
\begin{equation}\label{7.0}
\xymatrix{
\UU^{d, E} /{\TT}  \ar[r]^{\subset \;\;\;\;\;\;\;}  \ar[d] & \prod_{\buw \in \La_{d,[n]}^\sort }\PP_\buw 
\ar @{>}[d]^{\rm projection} \\
\UU^{d,E_J} /{\TT}_J\ar[r]^{\subset \;\;\;\;\;\;\;} &  \prod_{\buw \in \La_{d,J}^\sort} \PP_\buw .
}
\end{equation}
By taking the closure of $\UU^{d, E}/{\TT}$ in 
$\prod_{\buw \in \La_{d,[n]}^\sort }\PP_\buw $ and the closure of $\UU^{d,E_J} /{\TT}_J$ in 
$\prod_{\buw \in \La_{d,J}^\sort }\PP_\buw $, we have
$\HH^{d, E} \lra \prod_{\buw \in \La_{d,J}^\sort} \PP_\buw$ whose image equals to $\HH^{d, E_J}.$
Hence, we obtain 
\begin{equation}\label{pi-I}
{\rm proj}_J: \HH^{d, E} \lra \HH^{d, E_J}.
\end{equation}
This proves the right arrow of the display in Proposition \ref{pr-I-intro}.

We can apply \eqref{int-n-I} and \eqref{iden} to obtain the commutative diagram
\begin{equation}\label{7.00}
\xymatrix{
\UU^{d, E} /{\TT}  \ar[r]  \ar[d] & \prod_{\buw \in \La_{d,[n]}^\sort }\PP_\buw 
\ar @{>}[d]^{\rm projection} \\
\UU^{d-r,E_{[n]\- I}} /{\TT}_{[n]\- I}  \ar[r] & 
\prod_{\buw \in \La_{(d,[n]); \supset I}^\sort} \PP_\buw = \prod_{\buw' \in \La_{d-r,[n]\- I}^\sort} \PP_{\buw'} .
}
\end{equation}
By taking the closure of $\UU^{d, E}/{\TT}$ in 
$\prod_{\buw \in \La_{d,[n]}^\sort }\PP_\buw $ and the closure of $\UU^{d-r,E_{[n] \-[I]}} /{\TT}_{[n]\-[I]}$ in 
$\prod_{\buw \in \La_{(d,[n]); \supset I}^\sort} \PP_\buw = \prod_{\buw' \in \La_{d-r,[n]\- I}^\sort} \PP_{\buw'}$, we obtain
$$\HH^{d, E} \lra \prod_{\buw' \in \La_{d-r,[n] \- I}^\sort} \PP_{\buw'} $$ such that its image 
is equal to $\HH^{d-r, E_{[n] \- I}}.$
This gives
\begin{equation}\label{int-I}
{\rm int}_{[n]\- I}: \HH^{d, E} \lra \HH^{d-r, E_{[n]\- I}}.
\end{equation}
This proves the left arrow of the display in Proposition \ref{pr-I-intro}. 
\end{proof}



As an interesting special case of \eqref{pi-I}, we look at the following.

Consider  $\Gr^{d, E}$. 
When $ 2d> n=\dim E$, we can replace $\Gr^{d, E}$ by $\Gr^{E,n-d}$ where
$$\Gr^{E, n-d}=\{ \hbox{quotient subspaces} \;  E \twoheadrightarrow  G \mid \dim G=n-d\}.$$
Hence, we may assume that $2d \le n$. 

\begin{defn} An element $\buw = \uwoe$ of $\La^\sort_{d,[n]}$ is called distinguished if
$$w_1 < w_2 < w_3< w_4 <\cdots < w_{2d-1}< w_{2d},$$
where $\uwo=(w_1,w_3,\cdots, w_{2d-1})$ and $\uwe=(w_2,w_4, \cdots, w_{2d})$,
that is, all the elements in the sequence are pair-wise distinct.
We  let $\rladnsort$ denote the set of all distinguished elements of $\La^\sort_{d,[n]}$.
\end{defn}
One counts and finds $|\rladnsort|={n \choose 2d}$.

In particular, when $\dim E=2d$, we have that $\rladnsort$ consists of a single element:
$$\buw_{(d,2d)}:=\uwoe=((1,3,\cdots, 2d-1),(2, 4, \cdots, 2d)).$$

For any $\buw$ of $\La^\sort_{d,[n]}$, its support, denoted supp($\buw$), is the {\it set}
of all the integers occurring in $\{w_1, \cdots, w_{2d}\}$ (without repetition, i.e.,
 with redundant elements 
being removed from the set $\{w_1, \cdots, w_{2d}\}$). Clearly, given any $\buw$ of $\La^\sort_{d,[n]}$,
there exists  an distinguished $\wtbuw \in \rladnsort$ such that 
${\rm supp}(\buw) \subset {\rm supp}(\wtbuw)$.

Now,  consider and fix an arbitrary (distinguished) element
$\wtbuw = (\tilde\uw_{\rm o},\tilde\uw_{\rm e}) \in \rladnsort$ 
with $\tilde\uw_{\rm o}=(w_1,w_3,\cdots, w_{2d-1})$
and $\tilde\uw_{\rm e}=(w_2,w_4, \cdots, w_{2d})$.
We let $$I_\wtbuw=\supp\; (\wtbuw)=\{w_1, w_2, w_3, w_4, \cdots, w_{2d-1}, w_{2d}\} \subset [n].$$
Then, we have
\begin{equation}\label{iff}
{\rm supp}(\buw) \subset {\rm supp}(\wtbuw) \; \iff \; \buw \in \La_{d,I_\wtbuw}.
\end{equation}

We abbreviate $E_{I_\wtbuw}$ as $E_\wtbuw$. Then, by applying \eqref{pi-I} to $I_\wtbuw$, we obtain
$$\pi_{\wtbuw}: \HH^{d, E_\bcd} \lra \HH^{d, E_\wtbuw}$$
where $\pi_{\wtbuw}=\pi_{I_\wtbuw}.$

The $\fG$-quotient, $\HH^{d, E}$ with $\dim E=2d$,  plays some special role
amongst all $\fG$-quotients.
In the sequel, we  write 
$\HH^{d,2d}$ for $\HH^{d, E}$ when $E=\kk^{2d}$. 

\subsubsection{Proof of Proposition \ref{d2d-intro}.}

\begin{proof} 
Fix any $\wtbuw \in \rladnsort$. 

First, observe that we have
 $$\HH^{d,E_\wtbuw} \cong \HH^{d,2d}.$$

Next, we have the projection
\begin{equation}\label{proj-to-wtbuw}
\varpi_{\wtbuw}:
\prod_{\buw \in \La^\sort_{d,[n]}} \PP_\buw \lra 
\prod_{{\rm supp}(\buw) \subset {\rm supp}(\wtbuw)} \PP_\buw = \prod_{\buw \in
\La_{d,I_\wtbuw}} \PP_\buw
\end{equation}
where the equality in the display is due to \eqref{iff}.
This gives rise to
\begin{equation}\label{d-2d}
\prod_{\buw \in \rladnsort}\varpi_{\wtbuw}: \; \prod_{\buw \in \La^\sort_{d,[n]}} \PP_\buw \lra
\prod_{\wtbuw \in \rladnsort} (\prod_{{\rm supp}(\buw) \subset {\rm supp}(\wtbuw)} \PP_\buw).
\end{equation}
Clearly, the morphism $\prod_{\buw \in \rladnsort}\varpi_{\wtbuw}$ is an 
 embedding.
Apply \eqref{7.0} and \eqref{pi-I} to $I=I_\wtbuw$, we see that
$\varpi_{\wtbuw}$ sends $\HH^{d, E}$ onto $\HH^{d, E_\wtbuw}$.
Hence, the morphism $\prod_{\buw \in \rladnsort}\varpi_{\wtbuw}$ sends
$\HH^{d, E}$ into $ \prod_{\wtbuw \in \rladnsort} \HH^{d,E_\wtbuw}.$
Because $\HH^{d,E_\wtbuw} \cong \HH^{d,2d}$,
the above implies the embedding 
$$\HH^{d, E}  \longrightarrow \prod_{\wtbuw \in \rladnsort} \HH^{d,E_\wtbuw}
  \cong (\HH^{d,2d})^{n \choose 2d}, $$
as stated in the proposition.
\end{proof}

\subsection{A stratification of $\HH^{d,E}$, $\HH^{2,E}$ and cubic binomials }

\subsubsection{A stratification of $\HH^{d,E}$ by multi-linear subspaces}
For any fixed $\buw \in  \Ladnsort$,
recall that $\LL_\buw$ is the linear subspace of $\PP_\buw$ defined by all the
 linearized $\pl$ relations of $\PP_\buw$. A linear subspace $L_\buw$
of $\LL_\buw$ is called a $\LL_\buw$-coordinate  subspace
if it is the intersection of $\LL_\buw$ with a coordinate  subspace of $\PP_\buw$.
Here, by a coordinate  subspace of $\PP_\buw$ with the homogeneous coordinates
$[x_{(\uu,\uv)}]_{(\uu, \uv) \in \La_\buw}$,
we mean an intersection of  the coordinate hyperplanes 
$$\{(x_{(\uu,\uv)}=0) \mid {(\uu, \uv) \in \La_\buw}\}.$$

Consider the multi-linear subspace  $\cL$ of $\prod_{\buw \in \La^\sort_{d,[n]}} \LL_\buw$ of the following form
$$\cL = \prod_{\buw \in \La^\sort_{d,[n]}} L_\buw,$$
where $L_\buw$  is a $\LL_\buw$-coordinate subspace of $\LL_\buw$,
called an $\LL$-coordinate subspace of
$\prod_{\buw \in \La^\sort_{d,[n]}} \LL_\buw$. 
Observe that an intersection of $\LL$-coordinate subspaces, if nonempty, is again an $\LL$-coordinate subspace.
We let 
$$\fL^\wdep=\{ \hbox{$\cL$ $\mid$ $\cL$ is a $\LL$-coordinate subspace of
$\prod_{\buw \in \La^\sort_{d,[n]}} \LL_\buw$} \}.$$
The set $\fL^\wdep$ is partially ordered by inclusion.
Take any $\cL \in \fL^\wdep$ and set
 $$\HH^\wdep_\cL = \{ \ba \in \HH^\wdep\mid \ba \in \cL \- \cL', \; \forall \; 
 \cL' \in \fL^\wdep \; \hbox{with}\; \cL' \subsetneqq \cL\}.$$
 Then, we obtain a stratification
 \begin{equation} \label{l-stra-0}
 \HH^\wdep=\bigsqcup_{\cL \in \fL^\wdep} \HH^\wdep_\cL .
 \end{equation}
We call \eqref{l-stra-0} the $\LL$-stratification of $\HH^\wdep$.

The stratification \eqref{l-stra-0} of $\HH^\wdep$ induces 
a stratification of $\HH^{d, E}$:
$$ \HH^{d, E}=\bigsqcup_{\cL \in \fL^\wdep} \HH^{d, E}\cap \HH^\wdep_\cL .$$
Put it differently, we let 
$$\fL^{d, E}=\{ \hbox{$\cL$ $\mid$ $\cL$ is a $\LL$-coordinate subspace of
$\prod_{\buw \in \La^\sort_{d,[n]}} \LL_\buw$} \mid \cL \cap \HH^{d, E}\ne \emptyset\}.$$
A member of $\fL^{d, E}$ is referred to as a relevant $\LL$-coordinate subspace.
The set $\fL^{d, E}$ is partially ordered by inclusion.
It gives rise to a stratification of $\HH^{d, E}$ as follows. Take any $\cL \in \fL^{d, E}$.
 $$\HH^{d, E}_\cL = \{ \ba \in \HH^{d, E}\mid \ba \in \cL \- \cL', \; \forall \; 
 \cL' \in \fL^{d, E} \; \hbox{with}\; \cL' \subsetneqq \cL\}.$$
 Then, we obtain a stratification
 \begin{equation} \label{l-stra}
 \HH^{d, E}=\bigsqcup_{\cL \in \fL^{d, E}} \HH^{d, E}_\cL ,
 \end{equation}
called the $\LL$-stratification. 
 The largest multi-linear subspace in $\fL^{d, E}$
is $\LL:=\prod_{\buw \in \La^\sort_{d,[n]}} \LL_\buw$ itself. 
It gives the open stratum $\HH^{d, E}_\LL=\UU^{d, E}/{\TT} .$

\begin{exam} 
The hypersimplex $\Delta^{2,[4]}$ is the convex hull of six lattice points
$$(1100), (0011); (1010),(0101); (1001), (0110) \in \ZZ^4 \subset {\mathbb R}^4.$$
There are exactly three matroid pavings given by
{\footnotesize
$$ 
P\{(1100); (1010),(0101); (1001), (0110) \}  \cup 
P\{(0011); (1010),(0101); (1001), (0110) \},$$
$$P\{(1100), (0011); (1010); (1001), (0110)\} \cup
P\{(1100), (0011); (0101); (1001), (0110) \},$$
$$P\{(1100), (0011); (1010),(0101); (1001)\} \cup P\{(1100), (0011); (1010),(0101); (0110)\},$$   }
where $P\{ \bullet\}$ stands for the convex hull generated by lattice points of $\bullet$.

\medskip


 
\bigskip

\begin{center}

\begin{tikzpicture}[thick,scale=3.1]
\coordinate (A1) at (0,0);
\coordinate (A2) at (0.6,0.2);
\coordinate (A3) at (1,0);
\coordinate (A4) at (0.4,-0.2);
\coordinate (B1) at (0.5,0.5);
\coordinate (B2) at (0.5,-0.5);

\begin{scope}[very thick,dashed,,opacity=0.6]
\draw (A1) -- (A2) -- (A3);
\draw (B1) -- (A2) -- (B2);
\end{scope}
\draw[fill=green!90!black,opacity=0.6] 
(A1) -- (A4) -- (B1);
\draw[fill=orange, opacity=0.6] 
(A1) -- (A4) -- (B2);
\draw[fill=green!90!black,opacity=0.6] 
(A3) -- (A4) -- (B1);
\draw[fill=orange, opacity=0.6] 
(A3) -- (A4) -- (B2);
\draw (B1) -- (A1) -- (B2) -- (A3) --cycle;
\draw [very thick, green!60!black] (B1) -- (A1);
\draw [very thick, green!60!black] (B1) -- (A3);
\draw [very thick, green!60!black ] (B1) -- (A4);
\draw [very thick, orange!60!black] (B2) - - (A1);
\draw [very thick, orange!60!black] (B2) - - (A3);
\draw [very thick, orange!60!black] (B2) - - (A4);

\coordinate (a1) at (1.3,0);
\coordinate (a2) at (1.9,0.2);
\coordinate (a3) at (2.3,0);
\coordinate (a4) at (1.7,-0.2);
\coordinate (b1) at (1.8,0.5);
\coordinate (b2) at (1.8,-0.5);

\begin{scope}[very thick,dashed,,opacity=0.6]
\draw (a1) -- (a2) -- (a3);
\draw (b1) -- (a2) -- (b2);
\end{scope}
\draw[fill=green!80!black,opacity=0.6] 
(a1) -- (a4) -- (b1); 
\draw[fill=green!80!black,opacity=0.6] 
(a1) -- (a4) -- (b2);
\draw[fill=orange!80!black, opacity=0.6] 
(a3) -- (a4) -- (b1);  \draw [thick, orange!90!] (a4) - - (a3);
\draw[fill=orange!80!black, opacity=0.6] 
 (a3) -- (a4) -- (b2);
\draw [very thick, green!60!black](b1) -- (a1) ;
\draw [very thick, orange!60!black](b1) - - (a3);
\draw [very thick, green!60!black]  (b2) -- (a1);
\draw [very thick, orange!60!black ](b2) -- (a3);

\draw [very thick, green!60!black] (a4) - - (a1);
\draw [very thick, orange!60!black] (a4) - - (a3);

\coordinate (c1) at (2.6,0);
\coordinate (c2) at (3.2,0.2);
\coordinate (c3) at (3.6,0);
\coordinate (c4) at (3,-0.2);
\coordinate (d1) at (3.1,0.5);
\coordinate (d2) at (3.1,-0.5);

\draw[fill=green!95!black,opacity=0.6] 
(c1) -- (c4) -- (d1);
\draw[fill=green!95!black,opacity=0.6] 
(c1) -- (c4) -- (d2);
\draw[fill=green!95!black,opacity=0.6] 
(c3) -- (c4) -- (d1);
\draw[fill=green!95!black,opacity=0.6] 
(c3) -- (c4) -- (d2);
\begin{scope}[very thick, red, dashed,,opacity=0.6]
\draw [very thick, red] (c1) -- (c2) -- (c3);
\draw [very thick, red] (d1) -- (c2) -- (d2);
\end{scope}
\begin{scope}[very thick, opacity=3]
\draw [thick, red!80!black](d1) -- (c1) -- (d2) -- (c3) --cycle;
\end{scope}
\draw [very thick, green!60!black] (c4) - - (c1); 
\draw [very thick, green!60!black] (c4) - - (c3);
 \draw [very thick, green!60!black] (c4) - - (d1); 
\draw [very thick, green!60!black] (c4) - - (d2); 
\end{tikzpicture}

\end{center}

\bigskip

\centerline{Three matroid pavings of $\Delta^{2,[4]}$}
(For the third paving, the matroid subpolytope behind has orange color.)

\bigskip

One calculates and finds that $\HH^{2,4}$ embeds into
$\PP^2_{[x,y,z]}$ as the line $$x-y+z=0,$$
where $\hbox{$x=x_{(12,34)}$, $y=x_{(13,24)}$, and $z=x_{(14,23)}$}.$

\bigskip

\begin{center}
\begin{tikzpicture}[thick,scale=3.1]
\coordinate (A1) at (-0.1,-0.1);
\coordinate (A2) at (0.43,0.69);
\coordinate (B1) at (-0.2, 0.1);
\coordinate (B2) at (1.2, 0.1);
\coordinate (C1) at (0.09,0.68);
\coordinate (C2) at (0.66,-0.13);
\coordinate (h1) at (-0.23,0.43);
\coordinate (h2) at (1.2, -0.01);

\draw (h1) - - (h2); 

\node[draw,align=left] at (2.3, 0.2) {\footnotesize \it  the three dashed lines \\
 \footnotesize \it are coordinate lines;\\ 
\footnotesize \it the solid line is $H^{2,4}$.};

\begin{scope}[thick,dashed,,opacity=2]
\draw (A1) -- (A2);
\draw (B1) - - (B2);
\draw (C1) - - (C2); 
\end{scope}

\draw[thick, purple](h1) - - (h2);
\end{tikzpicture}
\end{center}

\bigskip

The three intersection points $\ba$ of $\HH^{2,4}$ with the three coordinate lines of $\PP^2_{[x,y,z]}$,
$$\hbox{$[0,1,1],[1,0,-1],$ and $[1,1,0]$,}$$
are the only non-open $\LL$-strata of $\HH^{2,4}$, and, 
via the transformation $$\ba \lra \ff_\Gr(\fq_\Gr^{-1}(\ba)),$$
they precisely give rise to the above three matroid pavings of the hypersimplex $\Delta^{2,[4]}$, 
in the corresponding order, as listed.
\end{exam}

\subsubsection{Relations with matroid strata}

We let $\mathring{\Gr}^{d, E}=\{K \in \Gr^{d, E} \mid \dim \TT \cdot K =\dim \TT\}.$
Assume that $(d, n) > (2,4)$, lexicographically, and  $n - d>1$. Then, it should be useful to observe
the following: 
\begin{equation}\label{0isotropy-in-vJ} 
{\mathring\Gr}^{d, E} \cap V(J^{d, E}) \ne \emptyset.
\end{equation}

To prove  \eqref{0isotropy-in-vJ}, we can construct a point
in ${\mathring\Gr}^{d, E} \cap V(J^{d, E})$. 
We can assume $2d \le n$. Thus, $d < n-2$.
Hence, there exists a proper subset $I \subset [n] \- [4]$ such that $|I|=d-2$. When $d=2$, we let
$I=\emptyset$.

Then, we obtain the element 
$$\buw=((13I),(24I)) \in \Ladnsort.$$
The corresponding projective space $\PP_\buw$ is isomorphic to $\PP^2$ with the homogeneous
coordinates 
$$[x_{(12I),(34I)}, x_{(13I),(24I)}, x_{(14I),(23I)}].$$

We let $\bp \in \Gr^{d, E}$ be  any point such that
$$p_{12I}=p_{13I}=p_{14I}=0$$
but the remaining $\pl$ coordinates are nonzero.
Then, $$\bp \in  V(J_\buw^{d, E}) \subset V(J^{d, E}).$$
As $I \cup [4] \subsetneqq [n]$, there exists $k \in [n] \- I \cup [4]$. 
Suppose $\bt \in (\GG_m)^n$ fix the point $\bp$.
Then, we have
$$t_a t_k t_I =t_b t_k t_I, \; \forall \; a, b \notin I \cup \{k\}$$
because $p_{akI}p_{bkI} \ne 0$.
Hence $t_a=t_b$ for all $a, b \notin I \cup \{k\}$. Similarly,  because
$$t_2 t_k t_I=t_2 t_3 t_I, $$ we obtain $t_k=t_3$. 
Likewise, for any $i \in I$, because 
$$t_2 t_3  t_i t_{I \- i}=t_2 t_3 t_k t_{I\-i}, $$ we obtain $t_i=t_k$. 
Hence, $\bt \in \GG_m$.

\begin{rem}
Let $\ud$ any  matroid on $[n]$ [p1, \cite{La03}]. By [Proposition, p4, \cite{La03}], we have 
\begin{equation}\label{mtd-in-vJ} 
\Gr^{d, E}_\ud \cap V(J^{d, E_\bcd}) \ne \emptyset \iff \Gr^{d, E}_\ud \subset V(J^{d, E_\bcd}).
\end{equation} Equivalently, 
\begin{equation} \label{mtd-notin-vJ} \Gr^{d, E}_\ud \nsubseteq V(J^{d, E_\bcd})
\iff  \Gr^{d, E}_\ud \cap V(J^{d, E_\bcd}) =\emptyset.
\end{equation}
Also equivalently, $V(J^{d, E_\bcd})$ is a (disjoint) union of matroid strata.
\end{rem}


\begin{question}\label{relation with La03} 
For any matroid $\ud$ on the set $[n]$, 
Lafforgue  constructed a compactification 
of $\Gr^{d, E}_\ud/\TTb$ in {\rm Theorem,  pp. 7-8, \cite{La03}}.  
Suppose $$\hbox{$\Gr^{d, E}_\ud \nsubseteq V(J^{d, E_\bcd})$, \;
that is, \; $\Gr^{d, E}_\ud \cap V(J^{d, E_\bcd}) =\emptyset$,}$$ 
and $\Delta^{d, E_\bcd}_\ud$ (=$S_\RR$, p4, \cite{La03})
 has dimension $n-1$. 
Then, the rational map $\Theta^\bcd_\Gr$ of \eqref{theta-gr-intro} or \eqref{theta-Gr}
restricts to a morphism on $\Gr^{d, E}_\ud$
$$\Theta^\bcd_\Gr|_{\Gr^{d, E}_\ud}: \Gr^{d, E}_\ud \lra \HH^{d,E_\bcd},$$
and it descends to an embedding on the quotient
$$\bar\Theta^\bcd_\Gr|_{\Gr^{d, E}_\ud}: \Gr^{d, E}_\ud/\TTb \lra \HH^{d,E_\bcd}.$$
We let $\overline\Gr^{d, E_\bcd}_\ud/\TTb$ 
be the closure of the image of $\bar\Theta^\bcd_\Gr|_{\Gr^{d, E}_\ud}$.
This is also a compactification of $\Gr^{d, E}_\ud/\TTb$. How  does this relate to 
 Lafforgue's compactification?

When $d=2$, we were told that
$\HH^{2,E}$ coincides with the one
constructed in \cite{Fang2}, and in
[Theorem 1.4, \cite{Fang2}], it is
proved that $\HH^{2,E}$
 is isomorphic to the Lafforgue’s space over $\ZZ$.
\end{question}

\medskip

\subsubsection{$\HH^{2,E}$ and cubic binomials}\label{2,E}

The special case of  $\Gr^{2,E}$ might worth some additional comments.
We make some observations and raise a concrete question. 

One first observes and checks that 
\begin{equation}\label{Pw=PF}
\La^\sort_{2,[n]}=\{ \buw=((ik), (jl)) \mid 1 \le i <j <k <l \le n \}, 
\end{equation}
$$\Lambda_\buw =
\{(ij,kl), (ik,jl), (il,kj)\}, \;\; \forall \;\; \buw=((ik), (jl)).$$
Thus,  $\PP_\buw$ comes equipped with the homogeneous coordinates
$[x_{(hi,jk)},x_{(hj,ik)},x_{(hk,ij)}]$.  In particular, every $\PP_{\buw}$ is isomorphic to $\PP^2$, and,
$$\hbox{$\LL_\buw$ is isomorphic to $\PP^1$,} $$ embedded in  
$\PP_\buw$, as the line
 defined by $$x_{(ij,kl)}- x_{(ik, jl)} + x_{(il,kj)}=0,$$
which is the linearized $\pl$ relation induced from the $\pl$ relation 
\begin{equation}\label{F=buw}
p_{ij} p_{kl}- p_{ik}p_{ jl} + p_{il}p_{kj},
\end{equation}
which, in turn, is uniquely determined by $\buw=((ik), (jl))$.
 
Hence, by Theorem \ref{thm1}, we have
$$\HH^{2, E} \subset \prod_{\buw \in \La^\sort_{2,n}} \LL_\buw \;\;
(\cong (\PP^1)^{n \choose 4}) \; \subset \prod_{1\le i<j<k<l \le n} \PP_{((ik),(jl))}
\;\; (\cong (\PP^2)^{n \choose 4}).$$

 As a particular case, take  $n=4$, 
one  finds that $\HH^{2,4}$ is embeded into  $\PP^2_{[x,y,z]}$ as the line $x-y+z=0$,
where $x=x_{(12,34)}$, $y=x_{(13,24)}$, and $z=x_{(14,23)}$.

Now, we consider equations of the $\fG$-quotient $\HH^{2,E}$
in $\prod_{\buw \in \La^\sort_{2,n}} \LL_\buw$.

One checks directly that the following (multi-homogeneous) cubic binomials, 
\begin{equation}\label{cubic-b}
x_{(hi,jk)}x_{(hk,jl)}x_{(hl,ij)}-x_{(hk,ij)}x_{(hl,jk)}x_{(hi,jl)}, \; 1\le h <i < j <k<l \le n
\end{equation} 
belong to the toric part of the ideal for $\HH^{2,E}$
in $\prod_{1\le a<b<c<d \le n} \PP_{((ac),(bd))},$ besides the linear 
and $\vr$-$\pl$ relations.

\begin{conj}\label{conj:2E}
The toric $\fG$-quotient $\HH^{(\wedge^2 E)}$, as a closed subscheme of 
$$\prod_{1\le a<b<c<d \le n} \PP_{((ac),(bd))} \cong (\PP^2)^{n \choose 4},$$
is defined by the cubic binomial relations 
$$x_{(hi,jk)}x_{(hk,jl)}x_{(hl,ij)}-x_{(hi,jl)}x_{(hk,ij)}x_{(hl,jk)}$$
for all $1\le h <i < j <k<l \le n$.
 \end{conj}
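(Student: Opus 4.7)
The plan is to exploit the explicit combinatorial description of $\vr$-binomials in the $d=2$ setting. By Proposition \ref{eqs-H'-intro}, the $\fG$-quotient $\HH^{(\wedge^2 E)}$, as a closed subscheme of $\prod_{\buw \in \La^\sort_{2,[n]}} \PP_\buw$, is defined by the $\vr$-binomials in $\cB^\vr$, so it suffices to show that every binomial in $\cB^\vr$ lies in the ideal generated by the displayed cubics. First I would verify that each displayed cubic is itself in $\cB^\vr$. For $1 \le h<i<j<k<l \le n$ the three $4$-subsets $\{h,i,j,k\}$, $\{h,j,k,l\}$, $\{h,i,j,l\}$ are pairwise distinct, hence the three factors on each side lie in three distinct projective spaces $\PP_{((hj),(ik))}$, $\PP_{((hk),(jl))}$, $\PP_{((hj),(il))}$, yielding multi-homogeneity and $\vr$-linearity; a direct computation gives
\[
\vi_\vr\bigl(x_{(hi,jk)}x_{(hk,jl)}x_{(hl,ij)}\bigr) = \vi_\vr\bigl(x_{(hi,jl)}x_{(hk,ij)}x_{(hl,jk)}\bigr) = p_{hi}p_{hk}p_{hl}p_{ij}p_{jk}p_{jl},
\]
which is a square-free product of six Pl\"ucker variables. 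Since no $\vp$-variables appear, each cubic is a root binomial, and $\fb$-irreducibility (hence $\frb$-irreducibility, as $\wp$-irreducibility is automatic in $R_\vr$) follows by checking that no pair or single factor on the left has the same $\vi_\vr$-image as any pair or single factor on the right.

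For the reverse direction I would adopt a graph-theoretic viewpoint. In the $d=2$ setting each Pl\"ucker variable $p_{uv}$ corresponds to an edge of the complete graph $K_n$, each $\vr$-variable $x_{(\uu,\uv)}$ to a perfect matching on a $4$-subset of $[n]$, and (cf.\ \S\ref{2,E}) each $\buw \in \La^\sort_{2,[n]}$ is itself a $4$-subset of $[n]$. A monomial $\prod_i x_{(\uu_i,\uv_i)}$ thus encodes a choice of perfect matching on each $4$-subset in its support. By Corollary \ref{cor:linear and free}, a binomial $\bm - \bm'$ in $\cB^\vr$ is $\vr$-linear and $\vi$-square-free, so both sides use the same collection of $4$-subsets (each at most once per side) and yield the same underlying edge set of $K_n$. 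The conjecture is therefore equivalent to the following exchange lemma: any two families of perfect matchings on a common collection of $4$-subsets with identical underlying edge sets are connected by a sequence of elementary swaps, each supported on three $4$-subsets that together span exactly five vertices $\{h,i,j,k,l\}$ and each corresponding to one of the displayed cubics.

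I would attack this exchange lemma by induction on the number of $4$-subsets in the support. The base case is three $4$-subsets covering exactly five vertices, which after relabeling is precisely one of the displayed cubics. For the inductive step, given a $\frb$-irreducible $\vr$-binomial with larger support, the goal is to locate three $4$-subsets in the support that span a common $5$-element set and on which the matchings from the two sides differ; applying one cubic then strictly reduces the support. The main obstacle will be proving that such a $5$-element configuration always exists, which is a connectivity question for the graph whose vertices are families of matchings and whose edges are cubic swaps. A promising route is to impose a sorting-style term order analogous to Proposition \ref{bi-eq-0} and verify Buchberger's $S$-pair criterion (Lemma \ref{s-pair}) directly on the displayed cubics: although infinitely many cubics occur, the possible combinatorial shapes of $S$-polynomials among them depend only on the ambient vertex configuration (of size at most $8$), so the verification should reduce to a finite case analysis. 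If all such $S$-pairs reduce to zero modulo the cubics, the cubics form a Gr\"obner basis of $\ker^\mh \vi_\vr$ and in particular generate, proving the conjecture.
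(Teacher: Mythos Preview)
The statement you are attempting to prove is explicitly a \emph{Conjecture} in the paper, not a theorem; the paper offers no proof. It only exhibits a single quintic binomial as supporting evidence (showing it reduces modulo two of the listed cubics) and then discusses consequences \emph{assuming} the conjecture holds (regularity of $\HH^{2,E}$ via the Jacobian criterion). There is therefore no proof in the paper to compare against.

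Your proposal is a plausible research strategy rather than a proof, and you essentially say so yourself: you call the existence of a suitable $5$-element configuration ``the main obstacle'' and phrase the Gr\"obner verification conditionally (``If all such $S$-pairs reduce to zero\ldots''). Two concrete gaps deserve attention. First, the sentence ``applying one cubic then strictly reduces the support'' is not right as stated: a cubic swap replaces three matchings by three other matchings on the \emph{same} three $4$-subsets, so the support is unchanged. What you presumably intend is that the Hamming distance to $\bm'$ drops, but a cubic alters three positions simultaneously and there is no a priori reason any single swap moves closer to $\bm'$; this connectivity claim is exactly the heart of the conjecture. Second, the displayed cubics use, for each $5$-set $\{h,i,j,k,l\}$, only the three $4$-subsets containing the pair $\{h,j\}$; you have not argued that other $\frb$-irreducible cubics on five vertices (or higher-degree $\frb$-irreducible binomials) necessarily lie in the ideal these generate. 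The finite $S$-pair analysis you outline is in principle a route, but it is a substantial computation whose outcome is not guaranteed --- which is why the statement remains a conjecture.
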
 

Then, by Corollary \ref{H(2E)},
assuming Conjecture \ref{conj:2E},
 it would follow
that the $\fG$-quotient $\HH^{2,E}$, as  a closed subscheme of
$\prod_{1\le a<b<c<d\le n} \LL_{((ac),(bd))}
 \cong (\PP^1)^{n \choose 4},$ 
is defined by the cubic binomial relations in Conjecture \ref{conj:2E},
at least when the base field has characteristic zero.

As an evidence to Conjecture \ref{conj:2E}, consider the following quintic binomial relation 
$$g: x_{(12,34)}x_{(14,35)}x_{(15,26)}x_{(16,23)}x_{(13,25)}
-x_{(14,23)}x_{(15,34)}x_{(16,25)}x_{(13,26)}x_{(12,35)}.$$
One checks that this is an indeed a relation for $\HH^{2,E}$ when $\dim E \ge 6$.
But, we also have the following cubic binomial relations 
$f: x_{(12,34)}x_{(14,35)}x_{(15,23)}-x_{(14,23)}x_{(15,34)}x_{(12,35)}.$ 
Hence, modulo $f$, we obtain
$$g\equiv  x_{(12,34)}x_{(14,35)}(x_{(15,26)}x_{(16,23)}x_{(13,25)}-x_{(16,25)}x_{(13,26)}x_{(15,23)}),
\mod (f).$$
Observe that $x_{(15,26)}x_{(16,23)}x_{(13,25)}-x_{(16,25)}x_{(13,26)}x_{(15,23)}$
is cubic binomial  of \eqref{cubic-b}.

To close this section, we claim that  {\it given any of the cubic  binomials as in \eqref{cubic-b}, 
there exists at least one term such that it contains two nonzero factors. }
To verify this,  consider \eqref{cubic-b}.
Suppose first that $x_{(hi,jk)}x_{(hk,jl)} \ne 0$, then the claim holds.
Suppose now that $x_{(hi,jk)}=x_{(hk,jl)}=0$, then due to the linearized $\pl$ equations
$$x_{(hi,jk)}-x_{(hj,ik)}+x_{(hk,ij)}=0 \;\; \hbox{and} \;\; x_{(hj,kl)}-x_{(hk,jl)}+x_{(hl,jk)}=0,$$
we must have $x_{(hk,ij)}x_{(hl,jk)} \ne 0$. This proves the claim.

Assuming that Conjecture \ref{conj:2E} holds,
then, the above observation implies that 
$\HH^{2,E}$ is regular over any field by directly applying the Jacobian criterion.
 This would directly prove the smoothness of $\HH^{2,E}$ over any perfect field without 
 borrowing  the isomorphism 
$\HH^{2,E}  \cong \overline{M}_{0,n}$ (\cite{Kap93b}).

\section{Appendix. Inducing and extending term orders}\label{appendix}

Consider the polynomial ring $\kk[x_1,\cdots,x_n]$.
The monomials in $\kk[\bx]$, written as $\bx^\ba=x_1^{a_1}x_2^{a_2}\cdots x_n^{a_n}$,
can be identified with $\ba=(a_1,\cdots, a_n) \in \NN^n$ where $\NN$ is the set of all nonnegative
integers. A term order $\prec$ on the set of monomials of  $\kk[\bx]$ 
is (equivalent to) a total order on $\NN^n$ such that
the zero vector is the unique minimal element, and $\ba \prec {\bf b}$ implies
$\ba + {\bf c} \prec {\bf b} + {\bf c}$ for all $\ba, {\bf b}, {\bf c} \in \NN^n$.

Given a term order $\prec$ on the polynomial ring $\kk[\bx]=\kk[x_1,\cdots,x_n]$,
every non-zero polynomial $f \in \kk[\bx]$ has a unique initial monomial, denoted $in_\prec (f)$.
 If $I$ is an ideal of the polynomial ring $\kk[\bx]$,
 then its initial ideal is the monomial ideal 
 $$in_\prec (I) :=\langle in_\prec (f) \mid f \in I \rangle.$$

A finite set $\fG \subset I$ is a Gr\"obner basis of the ideal $I$ with respect to the term order $\prec$ if
the initial ideal $in_\prec (I)$ is generated by $\{in_\prec (g) \mid g \in \fG\}$.

\begin{defn}\label{reduced-grob}
A Gr\"obner basis $\fG$ of the ideal $I$ under a given term order $\prec$ is minimal 
 if all leading monomials of its elements are irreducible by the other elements of the basis;
it is reduced if for any two distinct elements $g, g' \in \fG$, 
no term of $g'$ is divisible by $in_\prec (g)$.
\end{defn} 

The reduced Gr\"obner basis is unique if in addition, 
 we require that {\it the coefficient of $in_\prec (g)$ equals 1 for all $g \in \fG$}.


A monomial is a $\prec$-standard monomial modulo  the ideal $I$ if it does not lie
in the initial ideal $in_\prec (I)$. Equivalently,  a monomial is  $\prec$-standard
 modulo $I$ if it is not a multiple of any leading monomial in the Gr\"obner basis. 
The images of all the standard monomials form
a $\kk$-vector space basis for the quotient ring $\kk[x_1,\cdots,x_n]/I$.

 \begin{lemma}\label{s-pair} Let $g_1=f_1+h_1$ and $g_2=f_2+h_2$ be binomials with $f_1, h_1, f_2, h_2$ 
 being monomials. Suppose $in_\prec (g_i)=f_i, \; i=1,2$. Assume $f_1$ and $f_2$ are co-prime.
 Then, for the $S$-polynomial $S(g_1,g_2)=f_2 g_1 - f_1 g_2$, a complete reduction 
 of $S(g_1,g_2)$ by $\{g_1, g_2\}$ produces zero.
 \end{lemma}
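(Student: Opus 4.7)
The plan is to compute $S(g_1,g_2)$ explicitly and then exhibit a length-two reduction chain ending at zero. First I would expand
$$S(g_1,g_2) \;=\; f_2 g_1 - f_1 g_2 \;=\; f_2(f_1+h_1) - f_1(f_2+h_2) \;=\; f_2 h_1 - f_1 h_2,$$
so the leading monomials $f_2 f_1$ and $f_1 f_2$ cancel at the outset, and only the two cross-terms remain.

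Next I would carry out one reduction step against $g_2$: since $in_\prec(g_2)=f_2$ divides the monomial $f_2 h_1$, subtracting $h_1 g_2$ from $S(g_1,g_2)$ gives
$$(f_2 h_1 - f_1 h_2) - (h_1 f_2 + h_1 h_2) \;=\; -h_2(f_1+h_1) \;=\; -h_2 g_1.$$
A single further reduction against $g_1$ (whose leading term $f_1$ divides $-h_2 f_1$) then eliminates this remainder, producing the chain
$$S(g_1,g_2) \;\longrightarrow\; S(g_1,g_2) - h_1 g_2 \;=\; -h_2 g_1 \;\longrightarrow\; 0.$$

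There is no serious obstacle: the coprimality of $f_1$ and $f_2$ is used precisely to guarantee that $in_\prec(f_2 g_1) = f_2 f_1 = in_\prec(f_1 g_2)$, so that these dominant monomials cancel in $S(g_1,g_2)$ and only the two lower-order cross-terms $f_2 h_1$ and $-f_1 h_2$ need to be reduced away. This is the standard instance of Buchberger's second criterion for pairs of generators with coprime leading monomials, and the only point to double-check is that each of the two reduction steps above is legal under $\prec$, which is immediate from the divisibility relations $f_2 \mid f_2 h_1$ and $f_1 \mid h_2 f_1$ together with the multiplicativity of the term order. By symmetry one could equally well start by reducing $-f_1 h_2$ against $g_1$ first; the process terminates at zero after two steps in either case.
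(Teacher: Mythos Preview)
Your proof is correct and takes essentially the same approach as the paper: compute $S(g_1,g_2)=f_2h_1-f_1h_2$ and then perform a two-step reduction to zero. The only cosmetic difference is the order of the two steps---the paper reduces first against $g_1$ (adding $h_2g_1$ to get $f_2h_1+h_2h_1$) and then against $g_2$, whereas you do the symmetric sequence, which you yourself note is equally valid.
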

 \begin{proof}
First, $$S(g_1,g_2)=f_2 g_1 - f_1 g_2=f_2h_1 - f_1 h_2.$$
 Then, we have
 $$
 \xymatrix{
 S(g_1,g_2) \ar[r]^{h_2g_1 \hskip 2cm} &  S(g_1,g_2)+ h_2g_1 =f_2h_1 + h_2h_1
 \ar[r]^{-h_1g_2 \hskip -.3cm} & f_2h_1 + h_2h_1- h_1 g_2=0. }$$
 \end{proof}

Consider a polynomial ring $R_\bx=\kk[x_1,\cdots, x_n]$ with $\bx=(x_1,\cdots, x_n)$.
For any $i \in [n]$, suppose we have
 a polynomial ring $R_{\by_i}=\kk[y_{i1},\cdots, y_{ik_i}]$ for some positive integer $k_i$ (depending on $i$)
 with $\by_i=(y_{i1},\cdots, y_{ik_i})$.
Then, we have the polynomial ring 
 $$R_\by =\kk[\by_1,\cdots, \by_n]=\kk[y_{11},\cdots, y_{1k_1}, \cdots, y_{n1},\cdots, y_{nk_n}].$$
 There is a natural homomorphism 
 $$\ze: \; R_\by \lra R_\bx, \;\;\;\;
 y_{ij} \to x_i, \;\;\;\; i \in [n], \; j \in [k_i].$$

Write $\ba_i=(a_{i1},\cdots, a_{ik_i}) \in \NN^{k_i},  \ba=(\ba_1, \dots, \ba_n) \in \NN^k$
where $k=\sum_{i=1}^n k_i$. Then, any monomial of $R_\by$ takes the form
 $$\by^\ba= \by_1^{\ba_1} \cdots  \by_n^{\ba_n} .$$
Write $|\ba_i|=\sum_{j=1}^{k_i}a_{ij}$. Then, $\ze$ takes the monomial $\by^\ba$ to its image
$$\bx_\ba= x_1^{|\ba_1|} \cdots  x_n^{|\ba_n|} .$$

 Suppose we have a term order $\prec_\bx$ on a polynomial ring $R_\bx=\kk[x_1,\cdots, x_n]$.
 For any $i \in [n]$, we let $\prec_i$ be any fixed term order on 
 a polynomial ring $R_{\by_i}=\kk[y_{i1},\cdots, y_{ik_i}]$.
 
 We can introduce a total order $\prec_\by$ on the set of monomials of $R_\by$ 
 induced by the orders $\prec_\bx$ and $\prec_i$ ($i \in [n]$) as follows.
 Fix any two distinct monomial $\by^\ba$ and $\by^\bb$ of $R_\by$ for some $\ba$ and $\bb$ 
 in $\NN^k$ as above.  We say
 \begin{equation}\label{order-y}
 \by^\ba \prec_\by \by^\bb
 \end{equation}
 if one of the following two holds:
 \begin{enumerate}
 \item $\bx_\ba \prec_\bx \bx_\bb$, that is,
  $x_1^{|\ba_1|} \cdots  x_n^{|\ba_n|}  \prec_\bx x_1^{|\bb_1|} \cdots  x_n^{|\bb_n|} $;
 \item $\bx_\ba = \bx_\bb$ and 
 $\by_i^{\ba_i}=\by_i^{\bb_i}, 1 \le i \le j-1$ but $\by_j^{\ba_j} \prec_j \by_j^{\bb_j}$
 for some $j\in [n]$.
 \end{enumerate}

 \begin{lemma}\label{ext-order}
  The  order $\prec_\by$ is a term order on $R_\by$.
 \end{lemma}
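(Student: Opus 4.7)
The plan is to check the three defining properties of a term order for $\prec_\by$ directly from its definition in \eqref{order-y}, using the fact that $\prec_\bx$ and each $\prec_i$ already have these properties. Throughout the verification I will repeatedly use the two elementary identities $\bx_{\ba+\bc}=\bx_\ba\cdot\bx_\bc$ and $\by_i^{\ba_i+\bc_i}=\by_i^{\ba_i}\cdot\by_i^{\bc_i}$, which follow from the definitions of the maps $\ba\mapsto\bx_\ba$ and $\ba\mapsto\by^\ba$.

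First I would verify that $\prec_\by$ is a total order on $\NN^k$. Given $\ba\neq\bb$, if $\bx_\ba\neq\bx_\bb$ then totality of $\prec_\bx$ supplies the comparison via clause~(1) of \eqref{order-y}. Otherwise $\bx_\ba=\bx_\bb$, and since $\ba\neq\bb$ there is a smallest index $j\in[n]$ with $\ba_j\neq\bb_j$; then $\by_i^{\ba_i}=\by_i^{\bb_i}$ for $i<j$, and the totality of $\prec_j$ lets clause~(2) apply. Transitivity and antisymmetry are lexicographic in nature: one groups cases by whether $\bx_\bullet$ agrees or not, and in each case reduces to the corresponding property of $\prec_\bx$ or of the relevant $\prec_j$.

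Next I would check that $\by^0$ is the unique minimum. If $\bb\neq 0$ then $|\bb_i|>0$ for some $i$, so $\bx_\bb\neq 1$, and since $1$ is the minimum of $\prec_\bx$ we get $1\prec_\bx\bx_\bb$, whence $\by^0\prec_\by\by^\bb$ by clause~(1).

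The final and main step is translation invariance: assume $\by^\ba\prec_\by\by^\bb$ and fix any $\bc\in\NN^k$. If clause~(1) gave the comparison, then $\bx_\ba\prec_\bx\bx_\bb$, so multiplying by $\bx_\bc$ and invoking the translation invariance of $\prec_\bx$ yields $\bx_{\ba+\bc}\prec_\bx\bx_{\bb+\bc}$, and clause~(1) delivers $\by^{\ba+\bc}\prec_\by\by^{\bb+\bc}$. If clause~(2) gave the comparison at index $j$, then $\bx_{\ba+\bc}=\bx_\ba\cdot\bx_\bc=\bx_\bb\cdot\bx_\bc=\bx_{\bb+\bc}$, while $\by_i^{\ba_i+\bc_i}=\by_i^{\bb_i+\bc_i}$ for $i<j$ and $\by_j^{\ba_j+\bc_j}\prec_j\by_j^{\bb_j+\bc_j}$ by the translation invariance of $\prec_j$; clause~(2) then concludes. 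There is no real obstacle here — the delicate point is only to be careful that the $j$ witnessing clause~(2) for the original pair is the same $j$ witnessing it for the translated pair, which is immediate from the component-wise identities above. This completes the verification that $\prec_\by$ satisfies all the axioms of a term order.
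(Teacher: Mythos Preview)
Your proof is correct and follows essentially the same approach as the paper's: checking totality, the minimality of $\by^{\mathbf{0}}$, and translation invariance via the two clauses, reducing each to the corresponding property of $\prec_\bx$ or $\prec_j$. You supply more detail on totality and minimality than the paper (which simply asserts these by direct inspection), but the translation-invariance argument is identical.
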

 \begin{proof}
 First, one inspects directly that $\prec_\by$ is a well defined total order.
 
 Clearly, $1=\by^{\bf 0}$ is the unique minimal element among all monomials of of $R_\by$.
 
 Thus, it suffices to show that $\by^\ba \prec_\by \by^\bb$ implies 
 $\by^{\ba+\bc} \prec_\by \by^{\bb +\bc}$ for all $\ba, \bb, \bc \in \NN^k$.
 
 Suppose $\bx_\ba \prec_\bx \bx_\bb$. Then, because $\prec_\bx$ is a term order,
 we have
 $$x_1^{|\ba_1|+|\bc_1|}  \cdots  x_n^{|\ba_n| +|\bc_n|} \prec_\bx  
 x_1^{|\bb_1|+|\bc|_1} \cdots  x_n^{|\bb_n|+|\bc_n|} .$$
 As $|\ba_i| +|\bc_i|=|\ba_i +\bc_i|$ for all $i \in [n]$, we obtain
 $$\bx_{\ba+\bc} \prec \bx_{\bb +\bc}.$$
Hence, $\by^{\ba+\bc} \prec \by^{\bb +\bc}$ by definition.

 Suppose $\bx_\ba = \bx_\bb$  and 
 $\by_i^{\ba_i}=\by_i^{\bb_i}, 1 \le i \le j-1$ but $\by_j^{\ba_j} \prec_j \by_j^{\bb_j}$.
 Then, we obtain $\bx_{\ba+\bc} = \bx_{\bb+\bc}$ and 
 $\by_i^{\ba_i +\bc_i}=\by_i^{\bb_i +\bc_i}, 1 \le i \le j-1$ but 
 $\by_j^{\ba_j+\bc_j} \prec_j \by_j^{\bb_j +\bc_j}$. This also implies
 $\by^{\ba+\bc} \prec \by^{\bb +\bc}$ by definition.
 Hence, the lemma is proved.
 \end{proof}

 \smallskip\noindent
{\bf Acknowledgements.}
The author thanks Matthew Baker 
for explaining to him that the multi-exchange $\pl$ relations (\cite{Abeasis})
should define the Grassmannian over any field. He also thanks Hanlong Fang for pointing out a mistake in an earlier version
 related to the definition \eqref{La-buw}.
He thanks J\'anos Koll\'ar for email correspondences 
 inspiring this version of the article.
 The final revision of this paper was completed when the author visited  
the Simons Center for Geometry and Physics from January 19 - 31, 2025.
He is deeply grateful to the organizers of the program 
{\it Recent Developments in Higher Genus Curve Counting}
 (January 6 – February 28, 2025) at SCGP, especially Melissa Liu, for their efforts in creating such a stimulating environment. 
He would also like to extend his gratitude to the Simons Center for its generous support and for providing an excellent environment during his two-week stay.
The author declares no conflict of interest.

\end{document}